\newcommand{\timestamp}{December 26, 2001}
\def\authornote#1{\relax\ignorespaces}
\newcommand{\bmath}[1]{\mbox{\mathversion{bold}$#1$}}
\newcommand{\C}{\bmath{C}}
\newcommand{\Z}{\bmath{Z}}
\newcommand{\R}{\bmath{R}}
\newcommand{\CP}{\bmath{C\!P}}
\newcommand{\SL}{\operatorname{SL}}
\renewcommand{\sl}{\operatorname{\mathfrak{sl}}}
\newcommand{\SU}{\operatorname{SU}}
\newcommand{\PSL}{\operatorname{PSL}}
\newcommand{\PSU}{\operatorname{PSU}}
\newcommand{\cmcone}{\mbox{\rm CMC}-$1$}
\newcommand{\TA}{\operatorname{TA}}
\newcommand{\trans}{{}^t_{}\!}
\newcommand{\gO}{\mathbf{O}}
\newcommand{\gI}{\mathbf{I}}
\newcommand{\gGamma}{\bmath{\Gamma}}
\newcommand{\ord}{\operatorname{ord}}
\newcommand{\id}{\operatorname{id}}
\newcommand{\diag}{\operatorname{diag}}
\newcommand{\Hyp}{\mathcal{H}}
\newcommand{\metone}{\operatorname{Met}_1}
\newcommand{\typeP}{\operatorname{P}}
\newcommand{\typeN}{\operatorname{N}}
\newcommand{\Fl}{\mbox{$\mathcal F\!l$}}
   \newtheorem{theorem}{Theorem}[section]
   \newtheorem*{theorem*}{Theorem}
   \newtheorem{proposition}[theorem]{Proposition}
   \newtheorem{corollary}[theorem]{Corollary}
   \newtheorem{lemma}[theorem]{Lemma}
 \theoremstyle{definition}
   \newtheorem{definition}[theorem]{Definition}
   \newtheorem{example}[theorem]{Example}
 \theoremstyle{remark}
   \newtheorem{remark}[theorem]{Remark}
   \newtheorem*{remark*}{Remark}
\numberwithin{equation}{section}
\title[CMC-1 surfaces]{
   Period problems\\
   for mean curvature one surfaces in \boldmath$H^3$\\
   {\scriptsize
     (with application to surfaces of low total curvature)
   }
}
\author{Wayne Rossman}
\author{Masaaki Umehara}
\author{Kotaro Yamada}
\date{\timestamp}
\address[Rossman]{%
   Department of Mathematics, Faculty of Science,
   Kobe University,
   Rokko, Kobe 657-8501, Japan%
}
\email{wayne@math.kobe-u.ac.jp}
\address[Umehara]{%
   Department of Mathematics, Graduate School of Science,
   Osaka University,
   Osaka 560-0043, Japan%
}
\email{umehara@math.sci.osaka-u.ac.jp}
\address[Yamada]{%
   Faculty of Mathematics,
   Kyushu University 36, 6-10-1
   Hakozaki, Higashi-ku, Fukuoka 812-8581, Japan%
}
\email{kotaro@math.kyushu-u.ac.jp}
\begin{document}
\maketitle
\section{Introduction}
There is a wide body of knowledge about minimal surfaces in Euclidean 
$3$-space $\R^3$, and there is a canonical local isometric
correspondence (sometimes called the Lawson correspondence) between
minimal surfaces in $\R^3$ and \cmcone{} (constant mean curvature one)
surfaces in hyperbolic $3$-space $H^3$ (the complete simply-connected
$3$-manifold of constant sectional curvature $-1$).  
This has naturally led to the recent interest in and development of 
\cmcone{} surfaces in $H^3$ in the last decade.  
There are now many known examples, and it is a natural next step 
to classify all such surfaces with low total absolute curvature.  

By this canonical local isometric correspondence, 
minimal immersions in $\R^3$ are locally equivalent to \cmcone{}
immersions in $H^3$.  
But there are interesting differences between these two types of
immersions on the global level.  
There are period problems on non-simply-connected domains of the
immersions, 
which might be solved for one type of immersion but not the other.  
Solvability of the period problems is usually more likely in the $H^3$
case, leading to a wider variety of surfaces there.  
For example, a genus $1$ surface with finite total curvature and two 
embedded ends cannot exist as a minimal surface in $\R^3$, but it 
does exist as a \cmcone{} surface in $H^3$ \cite{rs}.  
And a genus $0$ surface with finite total curvature and two embedded ends 
exists as a minimal surface in $\R^3$ only if it is a surface of 
revolution, but it may exist as a \cmcone{} surface in $H^3$ without being 
a surface of revolution (see Example~\ref{exa:catenoid}).  
So there are many more possibilities for \cmcone{} surfaces in $H^3$
than there are for minimal surfaces in $\R^3$.  
This means that it is more difficult to classify \cmcone{} surfaces with
low total curvature in $H^3$.  

To find complete \cmcone{} surfaces in $H^3$ with low total curvature, 
we must first determine the meromorphic data in the Bryant
representation of the surfaces that can admit low total curvature, 
and then we must analyse when the parameters in the data can be adjusted
to close the period problems.  
Generally, finding the data is the easier step, and solving the period
problems is the more difficult step.  
As the period problems are generally the crux of the problem, 
we have chosen the title of this paper to reflect this.  

The total absolute curvature of a minimal surface in $\R^3$ is 
equal to the area of the image (counted with multiplicity) of the Gauss map 
of the surface, and complete minimal surfaces in $\R^3$ with total 
curvature at most $8\pi$ have been classified 
(see Lopez~\cite{Lopez} and also Table~\ref{tab:minimal}).
Furthermore, as the Gauss map of a complete conformally parametrized 
minimal surface is meromorphic, and has a well-defined limit at each end 
when the surface has finite total curvature, 
the area of the Gauss image must be an integer multiple of $4\pi$. 

However, unlike minimal surfaces in $\R^3$, when searching for \cmcone{}
surfaces in $H^3$ with low total absolute curvature, we have a choice of
two different Gauss maps: 
the {\em hyperbolic Gauss map\/} $G$ and the {\em secondary Gauss map\/}
$g$.  
So there are two ways to pose the question in $H^3$, with two very
different answers.
One way is to consider the true total absolute curvature, 
which is the area of the image of $g$, but since $g$ might not be
single-valued on the surface, the total curvature might not be an
integer multiple of $4\pi$, 
and this allows for many more possibilities.  
Furthermore, the Osserman inequality does not hold for the true total 
absolute curvature.  
The weaker Cohn-Vossen inequality is the best general lower bound for
true absolute total curvature (with equality never holding \cite{uy1}).  
So the true total absolute curvature is difficult to analyse, 
but it is important because of its clear geometric meaning.  

The second way is to study the area of the image of $G$, 
which we call the {\it dual\/} total absolute curvature, as it is the
true total curvature of the dual \cmcone{} surface (which we define in 
Section~\ref{sec:prelim}) in $H^3$.  
This way has the advantage that $G$ is single-valued on the surface, 
and so the dual total absolute curvature is always an integer multiple
of $4\pi$, like the case of minimal surfaces in $\R^3$.  
Furthermore, the dual total curvature satisfies not only the Cohn-Vossen
inequality, but also the Osserman inequality \cite{uy5,Yu2} 
(see also \eqref{eq:osserman} in Section~\ref{sec:prelim}).  
So the dual total curvature shares more properties with the total
curvature of minimal surfaces in $\R^3$, motivating our interest in it.  

We shall refer to the true total absolute curvature of a \cmcone{}
immersion $f\colon{}M\to H^3$ of a Riemann surface as $\TA(f)$,
and the dual total absolute curvature as $\TA(f^{\#})$.

We review the classification results for surfaces with $\TA(f)\leq 4\pi$
or $\TA(f^{\#})\leq 4\pi$ in Section~\ref{sec:four-pi}, which are 
results from \cite{ruy4} and \cite{ruy3}. 
An inequality for $\TA(f)$ stronger than the Cohn-Vossen inequality
\cite{ruy4} (for surfaces of genus zero with odd number of ends) 
is also introduced.
In Section~\ref{sec:prelim}, we review basic notions and terminology.
We introduce some important examples of \cmcone{} surfaces in
Section~\ref{sec:Added}.
Section~\ref{sec:dualcurvatureatmost8pi} is devoted to describing the
results in \cite{ruy3}, a partial classification of \cmcone{} surfaces
with $\TA(f^{\#})\leq 8\pi$.
In the final section \ref{sec:higher}, we introduce new results on
partial classification of \cmcone{} surfaces with $\TA(f)\leq 8\pi$.
Since the proofs of these results are more technical and delicate than
those of the results on $\TA(f^{\#})$, we include them in
Appendix~\ref{app:detailed}.
\section{The cases $\TA(f)$ or $\TA(f^\#) \leq 4\pi$, and a natural extension}
\label{sec:four-pi}

In \cite{ruy4} the following theorem was proven: 
\begin{theorem}\label{thm:four-pi}
 Let $f\colon{}M\to H^3$ be a complete \cmcone{} immersion
 of total absolute curvature $\TA(f) \leq 4\pi$.  Then $f$ is either 
 \begin{itemize}
 \item a horosphere {\rm(}Example~{\rm\ref{exa:horosphere})},
 \item an Enneper cousin {\rm(}Example~{\rm\ref{exa:enneper})},
 \item an embedded catenoid cousin 
       {\rm (}$0<l<1$, $\delta=1$ and $b=0$ in
       Example~{\rm\ref{exa:catenoid})}, 
 \item a finite $\delta$-fold covering of an embedded catenoid cousin
       {\rm (}$\delta\geq 2$, $0<l \leq 1/\delta$ and $b=0$ in
       Example~{\rm\ref{exa:catenoid})},
       or
 \item a warped catenoid cousin with injective secondary Gauss map
       {\rm (}$l=1$, $\delta\in\Z^+$ and $b>0$ in 
              Example~{\rm\ref{exa:catenoid})}.
 \end{itemize}
\end{theorem}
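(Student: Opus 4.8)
\section*{Proof proposal}

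The plan is to reconstruct $f$ from its Bryant holomorphic null representation and read the classification off the resulting meromorphic data, in the spirit of Osserman's classification for minimal surfaces. I would first invoke the structure theory for complete \cmcone{} immersions of finite total curvature: since $\TA(f)<\infty$, the domain is conformally $M=\overline{M}\setminus\{p_1,\dots,p_n\}$ for a compact Riemann surface $\overline{M}$ of genus $\gamma$, each puncture $p_j$ is a regular end, and the holomorphic null lift $F$ (with $f=FF^{*}$) has Maurer--Cartan form $\Phi=F^{-1}dF=\begin{pmatrix}g&-g^{2}\\1&-g\end{pmatrix}\omega$, where $g$ is the secondary Gauss map and $\omega$ a holomorphic one-form; the induced metric is $ds^{2}=(1+|g|^{2})^{2}|\omega|^{2}$, the Hopf differential is $Q=\omega\,dg$, and the hyperbolic Gauss map $G$ extends meromorphically to $\overline{M}$. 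Single-valuedness of $f=FF^{*}$ forces the monodromy of $F$ about each end into $\SU(2)$, which is precisely the source of the period conditions.

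Next I would convert the hypothesis $\TA(f)\le 4\pi$ into topological restrictions. Since $\TA(f)$ is the area of the image of $g$ in the Fubini--Study metric, the Cohn--Vossen inequality $\TA(f)>2\pi(2\gamma+n-2)$ --- strict, as equality never holds \cite{uy1} --- gives $2\gamma+n<4$, leaving $(\gamma,n)\in\{(0,1),(0,2),(0,3),(1,1)\}$. The sharper bound of \cite{ruy4} for genus zero with an odd number of ends rules out $(0,3)$, and a direct examination of the admissible data rules out $(1,1)$, so that only the one- and two-ended genus-zero cases survive. I would then split the argument accordingly.

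For $n=1$ we have $M=\C$, so $G$ is rational on $\overline{M}=\C\cup\{\infty\}$, and the curvature bound leaves only two possibilities: $g$ constant, giving the horosphere (Example~\ref{exa:horosphere}) with $\TA(f)=0$; or $g$ of the lowest nontrivial order, forcing the Enneper cousin (Example~\ref{exa:enneper}) with $\TA(f)=4\pi$. For $n=2$ I would normalise the ends to $0$ and $\infty$, so that $M=\C^{\ast}$ and $\overline{M}=\C\cup\{\infty\}$; the regular-end conditions together with the curvature bound confine $g$, $G$ and $Q$ to the one-parameter catenoid-cousin data of Example~\ref{exa:catenoid}, governed by the parameters $(l,\delta,b)$.

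The crux, and the step I expect to be the genuine obstacle, is the period problem in the two-ended case: deciding exactly which $(l,\delta,b)$ make $F$ globally well defined on $\C^{\ast}$. I would solve the regular-singular system $dF=F\,\Phi$ near the puncture, a hypergeometric-type equation whose indicial exponents depend on $(l,\delta,b)$, compute the resulting monodromy matrix, and impose that it be conjugate into $\SU(2)$ (equivalently, that $f=FF^{*}$ descend single-valuedly while the ends remain complete and regular). These algebraic conditions sort the admissible surfaces into precisely the embedded catenoid cousins ($0<l<1$, $\delta=1$, $b=0$), their $\delta$-fold covers ($\delta\ge 2$, $0<l\le 1/\delta$, $b=0$), and the warped catenoid cousins with injective secondary Gauss map ($l=1$, $\delta\in\Z^{+}$, $b>0$). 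Finally I would confirm that each surviving surface is complete with $\TA(f)\le 4\pi$, which closes the list.
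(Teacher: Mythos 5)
Your proposal rests on a structural claim that is false and that contradicts one of the very surfaces being classified: you assert that $\TA(f)<\infty$ forces every end to be regular and the hyperbolic Gauss map $G$ to extend meromorphically to $\overline M$. Finiteness of $\TA(f)$ only makes the Hopf differential $Q$ extend meromorphically; an end $p_j$ is regular precisely when $\ord_{p_j}Q\geq -2$, and otherwise $G$ has an essential singularity there. The Enneper cousin (Example~\ref{exa:enneper}) --- one of the five surfaces in the conclusion --- is of type $\gO(-4)$, so its single end is irregular and $G=a^{-1}\tanh(az)$ is not rational. Thus in your $n=1$ case the deduction ``$G$ is rational'' would exclude the Enneper cousin outright; the correct argument runs through the secondary Gauss map $g$, which is single-valued on $M=\C$, must be rational of degree at most $1$ (an essential singularity would give infinite spherical area by Picard's theorem), and even then one must still pin down $\omega=a\,dz$ using nondegeneracy and completeness of $ds^2$ together with meromorphy of $Q$ at $\infty$.

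In the two-ended case the same false premise erases what is actually the crux of the proof that this paper relies on (given in \cite{ruy4}). The numerology \eqref{eq:fact-a}, \eqref{eq:fact-b}, \eqref{eq:fact-c}, \eqref{eq:fact-g} allows $(d_1,d_2)=(-1,-3)$, $(-2,-3)$, $(-3,-3)$ as well as $(-2,-2)$, and ruling out the irregular types is the delicate part --- residue conditions and vanishing of log-term coefficients of the associated ordinary differential equation --- which is exactly the portion of the proof of this theorem that Proposition~\ref{prop:o-2-3} of this paper explicitly reuses (``the case $d_1+d_2=-5$''). Once you restrict to regular ends, the type is $\gO(-2,-2)$ and Example~\ref{exa:catenoid} (Theorem~6.2 of \cite{uy1}) already encodes the solved period problem, so your proposed $\SU(2)$-monodromy computation merely re-derives known data; the genuine work sits precisely where you assumed it away. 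Finally, your exclusion of $(\gamma,n)=(0,3)$ via Theorem~\ref{thm:odd} is incomplete: that bound is $\TA(f)\geq 4\pi$, non-strict, so a trinoid with $\TA(f)=4\pi$ is not excluded by it (Example~\ref{exa:trinoid} shows the value $4\pi$ is approached by irreducible trinoids), and the equality case requires its own argument.
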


The horosphere is the only flat (and consequently totally umbilic)
\cmcone{} surface in $H^3$.  
The catenoid cousins are the only \cmcone{} surfaces of revolution
\cite{Bryant}.  
The Enneper cousins are isometric to minimal Enneper surfaces
\cite{Bryant}.
The warped catenoid cousins \cite{uy1,ruy3} are less well known
and are described more precisely in Section~\ref{sec:Added}, as well as
the other above three examples.  

Although this theorem is simply stated, for the reasons given in the 
introduction the proof is more delicate than it would be if the
condition $\TA(f)\le 4\pi$ is replaced with $\TA(f^\#)\le 4\pi$, or if
minimal surfaces in $\R^3$ with $\TA\le 4\pi$ are considered.  
\cmcone{} surfaces $f$ with $\TA(f^\#)\le 4\pi$ are classified in
Theorem~\ref{thm:four-pi-dual} below.  
It is well-known that the only complete minimal surfaces in $\R^3$ with
$\TA \leq 4\pi$ are the plane, the Enneper surface, and the catenoid
(see Table~\ref{tab:minimal}).  

We extend the above result in Section~\ref{sec:higher} to find an
inclusive list of possibilities for \cmcone{} surfaces with $\TA(f)\le
8\pi$, 
and we consider which possibilities we can classify or find examples
for, see Table~\ref{tab:ta-8pi}.  
(Minimal surfaces in $\R^3$ with $\TA\le 8\pi$ are classified by Lopez
\cite{Lopez}. See Table~\ref{tab:minimal}.)

For a complete \cmcone{} immersion $f$ in $H^3$, equality in the
Cohn-Vossen inequality never holds (\cite[Theorem~4.3]{uy1}).  
In particular, if $f$ is of genus $0$ with $n$ ends, then 
\begin{equation}\label{eq:cohn-vossen}
    \TA(f)>2\pi (n-2)\;.
\end{equation}
When $n=2$, the catenoid cousins show that \eqref{eq:cohn-vossen} is
sharp.  
However, we see from the above theorem that 
\[
   \TA(f)>4\pi \quad\text{for}\quad  n=3 \; , 
\]
which is stronger than the Cohn-Vossen inequality \eqref{eq:cohn-vossen}.
The following theorem, which extends the above theorem and is 
proven in \cite{ruy4}, gives a sharper inequality than the 
Cohn-Vossen inequality when $n$ is any odd integer:  
\begin{theorem}\label{thm:odd}
  Let $f\colon{}\C\cup\{\infty\}\setminus\{p_1,\dots,p_{2m+1}\}\to H^3$
  be a complete conformal genus $0$ \cmcone{} immersion with $2m+1$ ends, 
  $m \in \Z^+$.  Then 
\[
  \TA(f)\ge 4\pi m\;.
\]
\end{theorem}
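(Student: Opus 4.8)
The plan is to turn the statement into a lower bound on the end contributions through Gauss--Bonnet, and then to split the argument according to whether the secondary Gauss map $g$ is single-valued. Write $\overline{M}=\C\cup\{\infty\}$ for the conformal compactification. Completeness together with $\TA(f)<\infty$ makes each puncture $p_j$ a finite-total-curvature end, so by Huber's theorem the Gauss--Bonnet formula applies in the form
\[
   \TA(f)=-2\pi\chi(\overline{M})+2\pi\sum_{j=1}^{2m+1}\nu_j
         =-4\pi+2\pi\sum_{j=1}^{2m+1}\nu_j ,
\]
where $\nu_j>0$ is the order of the conformal factor of $ds^2=(1+|g|^2)^2|\omega|^2$ at $p_j$. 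Since $-K\,dA=g^{*}(dA_{S^2})$ is the pull-back of the round area form and is single-valued even though $g$ need not be, this also exhibits $\TA(f)$ as the multiplicity-counted spherical area of the image of $g$. The goal $\TA(f)\ge4\pi m$ is thus equivalent to $\sum_j\nu_j\ge2m+2$, i.e.\ to improving the Cohn--Vossen bound $\sum_j\nu_j>2m+1$ coming from \eqref{eq:cohn-vossen} by a single unit.

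The easy case is when the $\PSU(2)$-monodromy of $g$ is trivial, so that $g$ is single-valued and (by finiteness of $\TA(f)$) extends meromorphically to $\overline{M}=S^2$. Then $\int_M g^{*}(dA_{S^2})=4\pi\deg(g)$ is an integer multiple of $4\pi$, and the strict Cohn--Vossen inequality $\TA(f)>2\pi((2m+1)-2)=4\pi m-2\pi$ gives $\deg(g)>m-\tfrac12$, hence $\deg(g)\ge m$ and $\TA(f)\ge4\pi m$. (For $m=1$ one can instead quote Theorem~\ref{thm:four-pi} directly: every surface on its list has at most two ends, so a genus-$0$ surface with three ends already satisfies $\TA(f)>4\pi$.)

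The substance is the multivalued case, and here the oddness of the number of ends is essential: for an even number of ends the analogous bound already fails, the catenoid cousins having arbitrarily small true total curvature and nontrivial unitary monodromy of $g$. I would encode the monodromy as a homomorphism $\rho\colon\pi_1(M)\to\SU(2)$ coming from the residual right $\SU(2)$-ambiguity of the null lift $F$ with $f=FF^{*}$, under which each loop $\gamma_j$ about $p_j$ acts on $g$ as a rotation by $2\pi a_j$, where $a_j\in\R$ is the real order of $g$ at the (regular) end $p_j$; the non-integrality of $a_j$ is exactly this monodromy, and it enters the end bookkeeping through a relation of the shape $\nu_j=|a_j|-q_j-1$, with $q_j=\operatorname{ord}_{p_j}Q\in\Z$ the integer order of the globally single-valued Hopf differential $Q$. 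The loop relation $\gamma_1\cdots\gamma_{2m+1}=1$ yields $\rho(\gamma_1)\cdots\rho(\gamma_{2m+1})=I$ in $\SU(2)$, and I would combine this closure relation with the regular-end normalization of $Q$ and the parity of the number of factors to show that the fractional part of $\TA(f)/(4\pi)$ cannot land $\TA(f)$ in the forbidden window $(4\pi m-2\pi,\,4\pi m)$: oddness of $2m+1$ is precisely what forbids the end rotations from pairing off to realize the minimal Cohn--Vossen value, forcing $\sum_j\nu_j\ge 2m+2$.

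The main obstacle is exactly this last step. The $\SU(2)$-monodromies lie on different axes and need not commute, so the closure relation by itself does \emph{not} bound $\sum_j|a_j|$ from below; one must feed in the global existence of the developing map $g$ on the universal cover together with the precise regular-end asymptotics—the pole structure of $Q$ and the compatibility of $\omega$ with the immersion condition—to extract the needed quantization. The genuinely delicate points are the non-generic configurations: zeros of $Q$ or branch points of $g$ colliding with the ends, and ends where $g$ and the hyperbolic Gauss map $G$ degenerate simultaneously, where the naive end-order count must be corrected before the parity argument can be closed.
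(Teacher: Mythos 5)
Your Gauss--Bonnet normalization and your ``easy case'' are correct and match the easy half of the argument: with $\nu_j=\mu_j-d_j$ this is exactly \eqref{eq:fact-b}, and when the monodromy of $g$ is trivial, $g$ is rational, $\TA(f)=4\pi\deg g$, and the strict inequality \eqref{eq:cohn-vossen} forces $\deg g\ge m$. But the multivalued case, which you rightly call the substance of the theorem, is not proved: you describe a hoped-for parity obstruction and then concede that ``the closure relation by itself does not bound $\sum_j|a_j|$ from below.'' That concession \emph{is} the gap --- as written, nothing excludes $\TA(f)\in(4\pi m-2\pi,4\pi m)$ once the monodromy is nontrivial. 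You also skip reductions that the argument needs: if some $\mu_j\in\Z$, then \eqref{eq:fact-c} gives $\mu_j-d_j\ge2$ and counting alone yields $\TA(f)>4\pi m$; if all $\mu_j\notin\Z$, then \eqref{eq:fact-d} forces every $d_j\le-2$ (so regularity of the ends, which your bookkeeping assumes, must be \emph{derived}, not assumed), and if some $d_j\le-3$ counting again suffices. The irreducible hard case is therefore: all $\mu_j\notin\Z$, all $d_j=-2$, whence by \eqref{eq:fact-a} the umbilic orders satisfy $\sum_k\xi_k=2n-4$, an even number.

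The paper itself defers the proof to \cite{ruy4}, and the mechanism there (cited in this paper as Corollary~4.7 of \cite{ruy4}, and used in precisely this parity form in Remark~\ref{rem:o-3-4} and in the $(\gamma,n)=(0,3)$ analysis of Appendix~\ref{app:detailed}) is the idea your sketch lacks. The $\PSU(2)$ monodromy of $g$ has a \emph{canonical} $\SU(2)$ lift: $g$ is a ratio of solutions of a linear ODE $u''+\tfrac12\hat S u=0$ with $S(g)=\hat S\,dz^2$ single-valued on $M$ (equivalently one can use $\rho_F$ of \eqref{eq:F-repr}, \eqref{eq:g-F-repr}), and the local sign of this lift at each singularity of $d\sigma^2$ is pinned down by indicial exponents: a loop around a singular point of integer order $\xi$ lifts to $(-1)^{\xi}\id$, while a loop around an end with $\mu_j\notin\Z$ lifts to $-V_j$ with $V_j\in\SU(2)$ conjugate to $\diag\bigl(e^{i\pi(\mu_j+1)},e^{-i\pi(\mu_j+1)}\bigr)$. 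The relation in $\pi_1$ of the sphere punctured at the ends and umbilic points then gives $V_1\cdots V_n=(-1)^{\,n+\sum_k\xi_k}\id=-\id$, because $n=2m+1$ is odd and $\sum_k\xi_k$ is even --- this sign, not the $\PSU(2)$ relation, is where oddness enters. The quantitative step then needs no control of axes at all: in the bi-invariant metric on $\SU(2)$ one has $d(\id,V_j)=\pi\operatorname{dist}(\mu_j+1,2\Z)\le\pi(\mu_j+1)$ and $d(\id,-\id)=\pi$, so the triangle inequality applied to $V_1\cdots V_n=-\id$ gives $\sum_j(\mu_j+1)\ge1$, i.e.\ $\sum_j\nu_j=\sum_j(\mu_j+2)\ge n+1=2m+2$, which is exactly the missing unit. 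So the non-commutativity you identify as the obstacle is in fact harmless (only distances to the center $\{\pm\id\}$ are used); what is missing from your proposal is the computation of the sign of the canonical lift from the integer data $\xi_k,d_j$, and the distance-to-center inequality converting the product being $-\id$ into the lower bound.
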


\begin{remark*}
 When $m=1$, we know that
 the lower bound $4\pi$ in the theorem is sharp 
 (see Example~\ref{exa:trinoid}).
 However, we do not know if it is sharp for general $m$.  
 For genus $0$ \cmcone{} surfaces with an even number $n \geq 4$ of ends, 
 it is still an open question whether 
 there exists any stronger lower bounds than that of the 
 Cohn-Vossen inequality.  
 It should be remarked that in Section~\ref{sec:Added} we have 
 numerical examples with $n=4$ whose total absolute 
 curvature tends to $4\pi$.
\end{remark*}

For the case of $\TA(f^\#)$, the following theorem was proven in
\cite{ruy3}:
\begin{theorem}\label{thm:four-pi-dual}
  A complete \cmcone{} immersion $f$ with 
  $\TA(f^{\#})\le 4\pi$ is congruent to one of the following{\rm :}
\begin{enumerate}
 \item a horosphere {\rm(}Example~{\rm\ref{exa:horosphere})}, 
 \item an Enneper cousin dual {\rm(}Example~{\rm\ref{exa:enneper})},  
 \item a catenoid cousin {\rm(}$\delta=1$, $l\neq 1$ and $b=0$ in
       Example~{\rm\ref{exa:catenoid})}, or 
 \item a warped catenoid cousin with embedded ends and injective
       hyperbolic Gauss map
       {\rm(}$\delta=1$, $l\in\Z$, $l\geq 2$ and $b>0$ in
       Example~{\rm\ref{exa:catenoid})}. 
\end{enumerate}
\end{theorem}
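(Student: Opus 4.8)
My plan is to control everything through the degree of the hyperbolic Gauss map $G$. Because $f$ is complete with finite dual total curvature, the analogue of Osserman's finiteness theorem makes $M$ conformally a compact Riemann surface $\overline M$ punctured at the ends, with $G$ extending holomorphically across the punctures; since $G$ is single-valued, $\TA(f^{\#})=4\pi\deg(G)$. Hence $\TA(f^{\#})\le 4\pi$ forces $\deg(G)\in\{0,1\}$. If $\deg(G)=0$ then $G$ is constant, which means that every oriented normal geodesic of $f$ is asymptotic to a single point of the ideal boundary $\partial_\infty H^3$; this characterizes the horosphere, giving case (1). All remaining work is the case $\deg(G)=1$.

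When $\deg(G)=1$ the holomorphic map $G\colon\overline M\to\CP^1$ has degree one, so by Riemann--Hurwitz it is a biholomorphism and $\overline M=\CP^1$. Thus $f$ has genus $0$ and $M=\C\cup\{\infty\}\setminus\{p_1,\dots,p_n\}$. Post-composing $f$ with an isometry of $H^3$ moves $G$ by a M\"obius transformation, so I may normalize $G=\id$; in particular $G$ is injective, matching the injectivity required in cases (3) and (4). Since $f$ has genus $0$, the Osserman-type inequality \eqref{eq:osserman} reduces to $1=\deg(G)\ge n-1$, hence $n\le 2$; equality forces the ends to be embedded, so only $n=1$ (with a non-embedded end) and $n=2$ (with two embedded ends) can occur.

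It remains to recover the holomorphic data, namely the secondary Gauss map $g$ and the Hopf differential $Q$. Completeness together with finiteness of the total curvature bounds the order of $Q$ at each end by $2$. For $n=2$, placing the ends at $0$ and $\infty$, this pins down $Q=c\,dz^2/z^2$ for a constant $c$, while for $n=1$, with the end at $\infty$, it leaves $Q=c\,dz^2$. The hyperbolic and secondary Gauss maps are tied together through $Q$ by a Riccati equation, so once $G=\id$ and $Q$ are fixed, $g$ is determined up to a single integration constant. Since $G$ is injective the covering number is $\delta=1$; the coefficient $c$ corresponds to the parameter $l$ and the integration constant to the warping parameter $b$ of Example~\ref{exa:catenoid}. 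For $n=1$ the data is rigid and reproduces the Enneper cousin dual, giving case (2).

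The crux is $n=2$: the Riccati solution $g$ is a priori multivalued on $\C\cup\{\infty\}\setminus\{0,\infty\}$, and one must solve the period problem, determining exactly which $c$ and which integration constant make the $\SL(2,\C)$-valued lift $F$ of $f$ single-valued so that $f$ itself closes up on $M$. I expect this computation, together with the bookkeeping that matches the admissible exponents to the parameter ranges, to be the main obstacle; carrying it out should single out the catenoid cousins with $\delta=1$, $l\ne 1$, $b=0$ (case (3)) and the warped catenoid cousins with $\delta=1$, $l\in\Z$, $l\ge 2$, $b>0$ (case (4)), with embedded ends by the equality discussion and injective $G$ by the normalization. As a conceptual check, the statement is dual to Theorem~\ref{thm:four-pi}: $\TA(f^{\#})$ is the true total curvature of the dual surface $f^{\#}$, and for $\deg(G)=1$ the forms of $Q$ above have no zeros on $M$, so $f^{\#}$ is again a complete \cmcone{} immersion; applying Theorem~\ref{thm:four-pi} to $f^{\#}$ and dualizing recovers the same four families.
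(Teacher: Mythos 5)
Your reduction follows the same route as the paper's source \cite{ruy3}: finiteness of $\TA(f^{\#})$ makes $G$ meromorphic on the compactification, so $\TA(f^{\#})=4\pi\deg G$ forces $\deg G\in\{0,1\}$; $\deg G=0$ gives the horosphere; $\deg G=1$ forces $\overline M_\gamma=\CP^1$, the Osserman inequality \eqref{eq:osserman} gives $n\le 2$ (with embedded ends when $n=2$), and the order bookkeeping pins down $Q=c\,dz^2$ (type $\gO(-4)$, Enneper cousin dual, no period problem since $\C$ is simply connected) for $n=1$ and $Q=c\,z^{-2}dz^2$ (type $\gO(-2,-2)$) for $n=2$. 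All of this is correct. The genuine gap is that you stop exactly at what you yourself call the crux: for $n=2$ you never solve the period problem, writing only that ``carrying it out should single out'' cases (3) and (4). That step \emph{is} the content of those two cases --- determining for which solutions $F$ of \eqref{eq:Fsharp} with data $(z,\,c\,z^{-2}dz^2)$ the monodromy $\rho_F$ lies in $\SU(2)$, and identifying the resulting surfaces --- and nothing in your text does it. As written you have proved only (1), (2), and the statement that any remaining surface is of type $\gO(-2,-2)$ with injective $G$. The gap can be closed with material already in the paper: Example~\ref{exa:catenoid} (Theorem~6.2 of \cite{uy1}, refined in \cite{ruy4}) classifies all complete \cmcone{} immersions $\C\setminus\{0\}\to H^3$ with regular ends; their hyperbolic Gauss map is $G=z^\delta$, so $\deg G=1$ forces $\delta=1$, and the data \eqref{eq:catcousdata} with $\delta=1$ gives exactly (3) (when $b=0$, $l\neq1$) and (4) (when $b>0$, $l\in\Z$, $l\geq 2$). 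Citing that classification is precisely what \cite{ruy3} does; re-deriving it from scratch would amount to reproving \cite[Theorem~6.2]{uy1}.

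Two further inaccuracies deserve mention. First, $g$ is not ``determined up to a single integration constant'' by a Riccati equation: it solves the Schwarzian equation \eqref{eq:schwarz}, $S(g)=S(G)+2Q$, whose solutions form a full $\PSL(2,\C)$-orbit; equivalently, the freedom is $F\mapsto Fb$ with $b\in\SL(2,\C)$, a three-parameter family, and the period problem is exactly the question of which members of this family have unitary monodromy. Second, your closing ``conceptual check'' is not valid: for a catenoid cousin with $l\notin\Z$ the surface is $\Hyp^1$-reducible, so by Lemma~\ref{lem:hyp-3-red-f-sharp} the dual $f^{\#}$ is defined only on the universal cover $\widetilde M$; it is not a complete finite-total-curvature immersion of any punctured compact surface (its total curvature computed on $\widetilde M$ is infinite), so Theorem~\ref{thm:four-pi} cannot be applied to it. Hence duality cannot substitute for the missing period-problem argument.
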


\section{Preliminaries}
\label{sec:prelim}

Before we can state any results for the cases of higher $\TA(f)$ and
higher $\TA(f^\#)$, we must give some preliminaries here.  

Let $f\colon{}M\to H^3$ be a conformal \cmcone{} immersion of a Riemann 
surface $M$ into $H^3$.  
Let $ds^2$, $dA$ and $K$ denote the induced metric, induced area element
and Gaussian curvature, respectively.  
Then $K \leq 0$ and $d\sigma^2:=(-K)\,ds^2$ is a conformal pseudometric
of constant curvature $1$ on $M$.  
We call this pseudometric's developing map $g\colon{}\widetilde
M(:=\text{the universal cover of $M$})\to \CP^1=\C\cup\{\infty\}$ 
the {\em secondary Gauss map\/} of $f$.  
Namely, $g$ is a conformal map so that its pull-back of the 
Fubini-Study metric of $\CP^1$ equals $d\sigma^2$:
\begin{equation}\label{eq:pseudo}
    d\sigma^2 = (-K)\,ds^2 = \frac{4\,dg\,d\bar g}{(1+g\bar g)^2}\;.
\end{equation}
Such a map $g$ is determined by $d\sigma^2$ uniquely up to the change
\begin{equation}\label{eq:g-change}
   g\mapsto a\star g :=\frac{a_{11}g + a_{12}}{a_{21}g +a_{22}}\;,
    \qquad
   a=\begin{pmatrix}
        a_{11} & a_{12} \\
        a_{21} & a_{22}
     \end{pmatrix}\in\SU(2)\;.
\end{equation}
Since $d\sigma^2$ is invariant under the deck transformation group
$\pi_1(M)$, there is a representation
\begin{equation}\label{eq:g-repr}
    \rho_g\colon{}\pi_1(M)\longrightarrow  \PSU(2)
    \quad\text{such that}\quad
    g\circ \tau^{-1}  = \rho_g(\tau)\star g \quad \bigl(\tau\in\pi_1(M)\bigr)\;,
\end{equation}
where $\PSU(2)=\SU(2)/\{\pm\id\}$.
The metric $d\sigma^2$ is called {\em reducible\/} if the image 
of $\rho_g$ can be diagonalized simultaneously, and is called 
{\em irreducible\/} otherwise.  
In the case $d\sigma^2$ is reducible,
we call it is {\em $\Hyp^3$-reducible\/} if the image of $\rho_g$ is the
identity, and is called 
{\em $\Hyp^1$-reducible\/} otherwise.
We call a \cmcone{} immersion $f\colon{}M\to H^3$ $\Hyp^1$-reducible
(resp.\ $\Hyp^3$-reducible) if the corresponding pseudometric
$d\sigma^2$ is $\Hyp^1$-reducible (resp.\ $\Hyp^3$-reducible).
For details on reducibility, see Section~\ref{sec:red}.

In addition to $g$, two other holomorphic invariants $G$ and $Q$ are closely 
related to geometric properties of \cmcone{} surfaces.  
The {\em hyperbolic Gauss map\/} $G\colon{}M\to \CP^1$ is holomorphic
and is defined geometrically by identifying the ideal boundary of $H^3$
with $\CP^1$: 
$G(p)$ is the asymptotic class of the normal geodesic of $f(M)$ starting
at $f(p)$ and oriented in the mean curvature vector's direction.  
The {\em Hopf differential} $Q$ is a holomorphic symmetric
$2$-differential on $M$ such that $-Q$ is the $(2,0)$-part of the
complexified second fundamental form.
The Gauss equation implies 
\begin{equation}\label{eq:metrics-rel}
  ds^2 \cdot d\sigma^2 = 4 \, Q \cdot\overline Q\;,
\end{equation}
where $\cdot$ means the symmetric product.  
Moreover, these invariants are related by 
\begin{equation}\label{eq:schwarz}
   S(g)-S(G) = 2 Q\;,
\end{equation}
where $S(\cdot)$ denotes the Schwarzian derivative: 
\[
   S(h):=
    \left[
        \left(\frac{h''}{h'}\right)'-
        \frac{1}{2}\left(\frac{h''}{h'}\right)^2
    \right]\,dz^2\qquad \left ('=\frac{d}{dz}\right)
\]
with respect to a local complex coordinate $z$ on $M$.

In terms of $g$ and $Q$, the induced metric $ds^2$ and complexification
of the second fundamental form $I\!I$ are 
\[
    ds^2 = (1+|g|^2)^2 \left| \frac{Q}{dg} \right|^2 \; , \qquad
    I\!I= -Q-\overline{Q}+ds^2 \; . 
\]  

Since $K\leq 0$, we can define the {\em total absolute curvature} as 
\[
    \TA(f):=\int_M (-K)\,dA\in [0,+\infty] \;\; .
\]
Then $\TA(f)$ is the area of the image of $M$ in $\CP^1$ of the
secondary Gauss map $g$.  
$\TA(f)$ is generally not an integer multiple of $4\pi$ --- 
for catenoid cousins \cite[Example~2]{Bryant} and their $\delta$-fold 
covers, $\TA(f)$ admits {\em any\/} positive real number.  

For each conformal \cmcone{} immersion $f\colon{}M\to H^3$, there is a 
holomorphic null immersion $F\colon{}\widetilde M\to\SL(2,\C)$, 
the {\em lift\/} of $f$, satisfying the differential equation 
\begin{equation}\label{eq:bryant}
   dF = F \begin{pmatrix} 
             g & -g^2 \\ 1 & -g\hphantom{^2}
          \end{pmatrix}\omega \;\; ,\qquad \omega=\frac{Q}{dg}
\end{equation}
so that $f=FF^{*}$, where $F^{*}=\trans\overline{F}$ \cite{Bryant,uy1}.  
Here we consider 
\[
    H^3=\SL(2,\C)/\SU(2)=\{aa^{*}\,|\,a\in\SL(2,\C)\}\;.
\]
We call a pair $(g,\omega)$ the {\em Weierstrass data\/} of $f$.
The lift $F$ is said to be {\em null\/} because $\det F^{-1}dF$,
the pull-back of the Killing form of $\SL(2,\C)$ by $F$, vanishes
identically on $\widetilde{M}$.
Conversely, for a holomorphic null immersion 
$F\colon{}\widetilde M\to\SL(2,\C)$,
$f:=FF^*$ is a conformal \cmcone{} immersion of $\widetilde M$ into
$H^3$.
If $F=(F_{ij})$, equation \eqref{eq:bryant} implies 
\begin{equation}\label{eq:g-F}
   g=-\frac{dF_{12}}{dF_{11}} =-\frac{dF_{22}}{dF_{21}} \; , 
\end{equation}
and it is shown in \cite{Bryant} that 
\begin{equation}\label{eq:G-F}
   G = \frac{dF_{11}}{dF_{21}} = \frac{dF_{12}}{dF_{22}} \; . 
\end{equation}
The inverse matrix $F^{-1}$ is also a holomorphic null immersion,
and produces a new \cmcone{} immersion 
$f^{\#}=F^{-1}(F^{-1}{})^{*}\colon{}\widetilde M\to H^3$, 
called the {\em dual\/} of $f$ \cite{uy5}.  
The induced metric $ds^2{}^{\#}$ and the Hopf differential $Q^{\#}$ of 
$f^{\#}$ are 
\begin{equation}\label{eq:fund-forms}
    ds^2{}^{\#}= (1+|G|^2)^2 \left|\frac{Q}{dG}\right|^2\;,
    \qquad
    Q^{\#}= - Q \; . 
\end{equation}
So $ds^2{}^{\#}$ and $Q^{\#}$ are well-defined on $M$ itself, 
even though $f^{\#}$ might be defined only on $\widetilde M$.
This duality between $f$ and $f^{\#}$ interchanges the roles of the 
hyperbolic Gauss map $G$ and secondary Gauss map $g$.  
In particular, one has 
\begin{equation} \label{eq:Fsharp}
  dF\, F^{-1}= -(F^{-1})^{-1}d(F^{-1})=
    \begin{pmatrix} G & -G^2 \\ 1 & -G\hphantom{^2} \end{pmatrix}
     \frac{Q}{dG}\;.
\end{equation} 
Hence $dFF^{-1}$ is single-valued on $M$, whereas $F^{-1}dF$ generally
is not.

Since  $ds^2{}^{\#}$ is single-valued on $M$,
we can define the {\em dual total absolute curvature\/} 
\[
    \TA(f^{\#}) := \int_M (-K^{\#})\,dA^{\#},
\]
where $K^{\#}$ ($\leq 0$) and $dA^{\#}$ are the Gaussian curvature and
area element of $ds^2{}^{\#}$, respectively.  As 
\begin{equation}\label{eq:pseudo-sharp}
   d\sigma^2{}^{\#}:=(-K^{\#})ds^2{}^{\#} = 
        \frac{4\,dG\,d\overline{G}}{(1+|G|^2)^2}
\end{equation}
is a pseudo-metric of constant curvature $1$ with developing map $G$,
$\TA(f^{\#})$ is the area of the image of $G$ on $\CP^1=S^2$.
The following assertion is important for us:  
\begin{lemma}[\cite{uy5,Yu2}] \label{lem:complete} 
  The Riemannian metric $ds^2{}^{\#}$ is complete
  $($resp.\ non\-degen\-erate$)$
  if and only if $ds^2$ is complete $($resp.~nondegenerate$)$.  
\end{lemma}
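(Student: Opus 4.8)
The plan is to prove the two equivalences separately, in each case using the duality symmetry $(f^\#)^\#=f$ (which interchanges $g\leftrightarrow G$ and sends $Q\mapsto -Q$) so that only one implication needs to be checked. For \emph{nondegeneracy} I would argue pointwise. From \eqref{eq:metrics-rel} one has $ds^2\cdot d\sigma^2=4\,Q\overline{Q}$, and applying the same identity to $f^\#$ together with $Q^{\#}=-Q$ gives $ds^2{}^\#\cdot d\sigma^2{}^\#=4\,Q\overline{Q}$ as well. Writing $d\sigma^2=\rho\,|dz|^2$ in a local coordinate $z$, the curvature-one pseudometric $\rho$ vanishes to order $2b_g(p)$ at a point $p$, where $b_g(p)$ is the branching order of the developing map $g$; hence $ds^2=4|Q|^2/\rho$ has a finite positive conformal factor at $p$ exactly when $\operatorname{ord}_p Q=b_g(p)$, and likewise $ds^2{}^\#$ is nondegenerate at $p$ exactly when $\operatorname{ord}_p Q=b_G(p)$.

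It therefore suffices to show $b_g\equiv b_G$, and this is where I would invoke the Schwarzian relation \eqref{eq:schwarz}. Since $2Q=S(g)-S(G)$ is holomorphic, the principal parts of $S(g)$ and $S(G)$ must agree at every point. A branch point of order $b$ produces in the Schwarzian a double pole with leading coefficient $-b(b+2)/2$ (this is the unique obstruction to holomorphicity, and is Möbius-invariant, so it detects branching of $g$ and $G$ as maps to $\CP^1$ uniformly), and $b\mapsto b(b+2)$ is injective on the nonnegative integers. Matching the double-pole coefficients of $S(g)$ and $S(G)$ thus forces $b_g(p)=b_G(p)$ for all $p$, so the two nondegeneracy conditions coincide.

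For \emph{completeness} I would pass to the lift $F$ and write $ds,ds^{\#}$ for the line elements of $ds^2,ds^2{}^\#$. A direct computation from \eqref{eq:bryant} gives $\operatorname{tr}\bigl((F^{-1}dF)(F^{-1}dF)^{*}\bigr)=(1+|g|^2)^2|\omega|^2$, which is exactly $ds^2$, and dually, using \eqref{eq:Fsharp}, $\operatorname{tr}\bigl((dF\,F^{-1})(dF\,F^{-1})^{*}\bigr)=(1+|G|^2)^2|Q/dG|^2=ds^2{}^\#$. Since $dF\,F^{-1}=F(F^{-1}dF)F^{-1}$, the two differ by conjugation by $F$, and for the Hilbert--Schmidt norm this yields the pointwise bound $\kappa(F)^{-1}\,ds\le ds^{\#}\le \kappa(F)\,ds$, where $\kappa(F)=\|F\|\,\|F^{-1}\|$ is the condition number in the operator norm. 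Because $\det F=1$, $\kappa(F)$ equals the larger eigenvalue $\lambda$ of $f=FF^{*}$, so $\log\kappa(F)=d_{H^3}(o,f)$ with $o=I$ the base point of $H^3=\SL(2,\C)/\SU(2)=\{aa^{*}\}$; one checks both of these descend to $M$, since the single-valuedness of $f$ forces the deck group to act on $F$ by right multiplication by $\SU(2)$.

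The main obstacle is precisely that $\kappa(F)$ is unbounded, so the metrics are not globally bi-Lipschitz and completeness cannot simply be transported. I would rescue this with one extra fact: $f=FF^{*}$ and $f^{\#}=F^{-1}(F^{-1})^{*}$ have the same eigenvalues. Indeed, setting $A=F^{-1}$ and $B=(F^{-1})^{*}$ exhibits $f^{\#}=AB$ and $f^{-1}=(F^{-1})^{*}F^{-1}=BA$, which share eigenvalues; and $f^{-1}$ has eigenvalues reciprocal to those of $f$, hence the same pair since $\det f=1$. Thus $d_{H^3}(o,f^{\#})=d_{H^3}(o,f)=:r$. Now suppose $ds^2{}^\#$ is incomplete, so some divergent path $\gamma$ in $M$ has $\int_\gamma ds^{\#}<\infty$. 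As $ds^2{}^\#$ is the metric induced by $f^{\#}$, the image $f^{\#}\circ\gamma$ has finite length in the complete space $H^3$ and so converges; hence $r=d_{H^3}(o,f^{\#}\circ\gamma)$ stays bounded along $\gamma$, so $\kappa(F)=\lambda=e^{r}$ is bounded there, and the pointwise comparison gives $\int_\gamma ds\le\bigl(\sup_\gamma\kappa(F)\bigr)\int_\gamma ds^{\#}<\infty$. Thus $ds^2$ is incomplete as well, and the reverse implication is identical by the symmetry $f\leftrightarrow f^{\#}$.
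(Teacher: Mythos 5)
First, a point of reference: the paper never proves Lemma~\ref{lem:complete} --- it is imported from \cite{uy5,Yu2} --- so there is no internal proof to compare against, and your argument must be judged on its own terms. It is correct. The completeness half is essentially the mechanism of the cited sources: write $ds^2=\operatorname{tr}\bigl((F^{-1}dF)(F^{-1}dF)^{*}\bigr)$ and $ds^2{}^{\#}=\operatorname{tr}\bigl((dF\,F^{-1})(dF\,F^{-1})^{*}\bigr)$, observe the two $\sl(2,\C)$-valued forms are conjugate by $F$, and control the conjugation where the surface stays at bounded distance from the base point of $H^3$, which is automatic along a finite-length divergent path; your supporting identities ($\kappa(F)$ equals the larger eigenvalue of $FF^{*}$, $\log\kappa(F)=d_{H^3}(o,f)$, $f$ and $f^{\#}=(F^{*}F)^{-1}$ have the same eigenvalues, and everything descends to $M$ because the monodromy of $F$ is unitary) all check out. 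The genuine divergence is your nondegeneracy half, which is correct but far heavier than needed and is the weaker link: it requires $f$ to be an immersion (so that $Q$ is holomorphic and \eqref{eq:schwarz} applies) plus a duality step for the converse, whereas your own two-sided bound $\kappa(F)^{-1}ds\leq ds^{\#}\leq\kappa(F)\,ds$, with $0<\kappa(F)<\infty$ pointwise --- equivalently, the fact that conjugation by the invertible matrix $F$ preserves the vanishing locus of $F^{-1}dF$ --- already shows the two metrics degenerate at exactly the same points, with no asymmetry between the two directions. Incidentally, your Schwarzian computation amounts to reproving \eqref{eq:fact-h}, and one parenthetical there is imprecise: at a branch point the Schwarzian generally acquires a simple pole as well, so the double pole is not the ``unique obstruction to holomorphicity''; this is harmless, since your argument uses only the equality of the $(z-p)^{-2}$ coefficients.
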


We now assume that the induced metric $ds^2$ 
(and consequently $ds^2{}^\#$) on $M$ is complete and that either
$\TA(f)<\infty$ or $\TA(f^\#)<\infty$, 
hence there exists a compact Riemann surface 
$\overline M_\gamma$ of genus $\gamma$ and a finite set of points 
$\{p_1,\dots,p_n\}\subset \overline M_\gamma$ ($n\geq 1$) so that 
$M$ is biholomorphic to $\overline M_\gamma\setminus\{p_1,\dots,p_n\}$ 
(see Theorem 9.1 of \cite{O}).  
We call the points $p_j$ the {\em ends\/} of $f$.  

Unlike the Gauss map for minimal surface with $\TA < \infty$ in $\R^3$,
the hyperbolic Gauss map $G$ of the surface might not extend to a
meromorphic function on $\overline M_\gamma$, as the Enneper cousin 
(Example~\ref{exa:enneper}) shows.  
However, the Hopf differential $Q$ does extend 
to a meromorphic differential on $\overline M_\gamma$ \cite{Bryant}.  
We say an end $p_j$ $(j=1,\dots,n)$ of a \cmcone{} immersion is 
{\em regular\/} if $G$ is meromorphic at $p_j$.  
When $\TA(f) < \infty$, an end $p_j$ is regular precisely when the order
of $Q$ at $p_j$ is at least $-2$, and otherwise $G$ has an essential
singularity at $p_j$ \cite{uy1}. 
Moreover, the pseudometric $d\sigma^2$ as in \eqref{eq:pseudo} has a
{\em conical singularity\/} at each end $p_j$ \cite{Bryant}.
For a definition of conical singularity, see Section~\ref{sec:red} 
(see also \cite{uy3,uy7}).

Thus the orders of $Q$ at the ends $p_j$ are important for understanding the 
geometry of the surface, so we now introduce a notation that reflects this.  
We say a \cmcone{} surface is of {\em type $\gGamma(d_1,\dots,d_n)$} if it is 
given as a conformal immersion 
$f\colon{}\overline M_\gamma\setminus\{p_1,\dots,p_n\}\to H^3$, 
where $\ord_{p_j} Q = d_j$ for $j=1,\dots,n$ (for example, if $Q=z^{-2}dz^2$ 
at $p_1 = 0$, then $d_1=-2$).  We use $\gGamma$ because it is the capitalized 
form of $\gamma$, the genus of $\overline M_\gamma$.  
For instance, the class $\gI(-4)$  means the class of surfaces of genus
$1$ with $1$ end so that $Q$ has a pole of order $4$ at the end, and 
the class $\gO(-2,-3)$ is the class of surfaces of genus $0$ with 
two ends so that $Q$ has a pole of order $2$ at one end and a pole of 
order $3$ at the other.

\subsection*{Analogue of the Osserman inequality.} 

For a \cmcone{} surface of genus $\gamma$ with $n$ ends, 
the second and third authors showed that the equality of the Cohn-Vossen
inequality for the total absolute curvature never holds \cite{uy1}:
\begin{equation}\label{eq:cohn-vossen-general}
   \frac{1}{2\pi}\TA(f) > -\chi(M) = 2\gamma-2+n\;.
\end{equation}
The catenoid cousins (Example~\ref{exa:catenoid}) show that this 
inequality is the best possible.

On the other hand,  the dual total absolute curvature satisfies an
Osserman-type inequality \cite{uy5}: 
\begin{equation}\label{eq:osserman}
   \frac{1}{2\pi}\TA(f^{\#})\geq -\chi(M)+n = 2(\gamma+n-1) \; . 
\end{equation}   
Moreover, equality holds exactly when all the ends are embedded: 
This follows by noting that equality is equivalent to all ends being
regular and embedded (\cite{uy5}), 
and that any embedded end must be regular (proved recently by Collin,
Hauswirth and Rosenberg \cite{chr} and Yu \cite{Yu3}).  

\subsection*{Effects of transforming the lift $\bmath{F}$.} 
Here we consider the change $\hat F=aFb^{-1}$ of the lift $F$,
where $a,b\in \SL(2,\C)$.  Then $\hat F$ is also a holomorphic null
immersion, and the hyperbolic Gauss map $\hat G$, the secondary Gauss
map $\hat g$ and the Hopf differential $\hat Q$ of $f=\hat F\hat F^*$ 
are given by (see \cite{uy3})
\begin{equation}\label{eq:changes}
  \hat G=a \star G = \frac{a_{11}G+a_{12}}{a_{21}G+a_{22}}\;,\quad
  \hat g=b\star g = \frac{b_{11}g+b_{12}}{b_{21}g+b_{22}}\;,\quad
  \text{and}\quad
  \hat Q=Q\;,
\end{equation}
where $a=(a_{ij})$ and $b=(b_{ij})$.
In particular, the change $\hat F=aF$ moves the surface by a rigid
motion of $H^3$, and does not change $g$ and $Q$.  

\subsection*{\boldmath$\SU(2)$-monodromy conditions.}  

Here we recall from \cite{ruy1} the construction of \cmcone{}
surfaces with given hyperbolic Gauss map $G$ and Hopf differential $Q$.

Let $\overline{M}_{\gamma}$ be a compact Riemann surface
and $M:=\overline{M}_{\gamma} \setminus\{p_1,\dots,p_n\}$.
Let $G$ and $Q$ be a meromorphic function and meromorphic
$2$-differential on $\overline{M}_{\gamma}$.
We assume the pair $(G,Q)$ satisfies the following two compatibility
conditions:
\begin{align}
  \label{eq:cond1}
      &\text{For all $q \in M$, 
        $\ord_q Q$ is equal to the branching order of $G$, and}\\
  \label{eq:cond2} 
      &\text{for each end $p_j$, (branching order of $G$)$-d_j\geq 2$.}
\end{align}
The first condition implies that the metric $ds^2{}^{\#}$ as in
\eqref{eq:fund-forms} is non-degenerate at $q\in M$.
The second condition implies that the metric ${ds^2}^\#$ is complete at
$p_j\in \overline{M}_{\gamma}$ ($j=1,\dots,n$).
Our goal is to get a \cmcone{} immersion $f\colon{}M\to H^3$ with
hyperbolic Gauss map $G$ and Hopf differential $Q$.
If such an immersion exists, the induced metric $ds^2$ of $f$ is
non-degenerate and complete, by Lemma~\ref{lem:complete}.

Since a pair $(G,Q)$ satisfies \eqref{eq:cond1} and \eqref{eq:cond2}, 
the differential equation \eqref{eq:Fsharp}  may have singularities at
$\{p_1,\dots,p_n\}$, but is regular on $M$.  
Then there exists a solution $F\colon{}\widetilde M\to \SL(2,\C)$,
where $\widetilde M$ is the universal cover of $M$.
Since the solution $F$ of \eqref{eq:Fsharp} is unique up to the change
$F\mapsto Fa$ ($a\in\SL(2,\C)$), there exists a representation
$\rho_F\colon{}\pi_1(M)\to \SL(2,\C)$ such that
\begin{equation}\label{eq:F-repr}
     F\circ\tau = F\rho_F(\tau)  \qquad 
       \bigl(\tau\in\pi_1(M)\bigr)\;.
\end{equation}
Here we consider an element $\tau$ of the fundamental group $\pi_1(M)$
as a deck transformation on $\widetilde M$.
Thus:
\begin{proposition}\label{prop:sutwo}
 If there exists a solution $F\colon{}\widetilde M\to\SL(2,\C)$ 
 of \eqref{eq:Fsharp} for $(G,Q)$ satisfying \eqref{eq:cond1} and
 \eqref{eq:cond2}, 
 then $f:=FF^*$ is a complete conformal \cmcone{} immersion into $H^3$ 
 which is well-defined on $M$ if $\rho_F(\tau) \in \SU(2)$ for all 
 $\tau\in\pi_1(M)$.
 Moreover, the hyperbolic Gauss map and the Hopf differential of $f$
 are $G$ and $Q$, respectively.  
\end{proposition}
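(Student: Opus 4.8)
The plan is to assemble the statement from the facts collected in the preliminaries, the only real work being to confirm that the surface produced from the right-invariant (dual) structure equation \eqref{eq:Fsharp} genuinely carries $G$ and $Q$ as its hyperbolic Gauss map and Hopf differential. First I would verify that $F$ is a holomorphic null immersion. Since the connection form on the right-hand side of \eqref{eq:Fsharp} is holomorphic on $M$, the solution $F$ is holomorphic on $\widetilde M$. The coefficient matrix $\left(\begin{smallmatrix} G & -G^2 \\ 1 & -G \end{smallmatrix}\right)$ has vanishing trace and determinant, so $dF\,F^{-1}$ is everywhere nilpotent; as $F^{-1}dF$ is conjugate to $dF\,F^{-1}$ by $F$, the pull-back $\det(F^{-1}dF)$ of the Killing form vanishes identically, which is precisely the null condition. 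Moreover, condition \eqref{eq:cond1} makes the dual metric $ds^2{}^{\#}$ of \eqref{eq:fund-forms} nondegenerate on $M$, equivalently $dF$ is nowhere zero, so $F$ is an immersion. The converse half of the Bryant representation recalled just after \eqref{eq:bryant} then gives that $f=FF^{*}$ is a conformal \cmcone{} immersion of $\widetilde M$ into $H^3$.

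Next I would read the two invariants off $F$. Expanding \eqref{eq:Fsharp} entrywise yields $dF_{11}=G\,dF_{21}$ and $dF_{12}=G\,dF_{22}$, so by \eqref{eq:G-F} the hyperbolic Gauss map of $f$ is exactly the prescribed $G$. For the Hopf differential I would pass to the dual surface $f^{\#}=F^{-1}(F^{-1})^{*}$, whose lift is $F^{-1}$. Its left-invariant form is $(F^{-1})^{-1}d(F^{-1})=-dF\,F^{-1}=\left(\begin{smallmatrix} G & -G^2 \\ 1 & -G \end{smallmatrix}\right)(-Q/dG)$ by \eqref{eq:Fsharp}. Comparing with \eqref{eq:bryant}, this displays $G$ as the secondary Gauss map of $f^{\#}$ and $-Q/dG$ as its Weierstrass form, so the Hopf differential of $f^{\#}$ is $(-Q/dG)\,dG=-Q$. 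Since duality always satisfies $Q^{\#}=-Q_f$ by \eqref{eq:fund-forms}, comparing gives $Q_f=Q$, the prescribed differential.

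It remains to descend to $M$ and to check completeness. By the monodromy \eqref{eq:F-repr} one has $F\circ\tau=F\rho_F(\tau)$, hence $f\circ\tau=F\,\rho_F(\tau)\rho_F(\tau)^{*}\,F^{*}$ for every $\tau\in\pi_1(M)$; when $\rho_F(\tau)\in\SU(2)$ the middle factor $\rho_F(\tau)\rho_F(\tau)^{*}$ is the identity, so $f\circ\tau=f$ and $f$ is well defined on $M$ itself. Completeness then comes for free: conditions \eqref{eq:cond1} and \eqref{eq:cond2} force $ds^2{}^{\#}$ to be nondegenerate on $M$ and complete at each end $p_j$, and Lemma~\ref{lem:complete} transfers completeness from $ds^2{}^{\#}$ to $ds^2$.

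I expect the one delicate step to be the identification of the Hopf differential. Because $F$ is manufactured from the right-invariant equation \eqref{eq:Fsharp}, which carries $G$ rather than the secondary Gauss map $g$ of the usual equation \eqref{eq:bryant}, one cannot read $Q$ off $F$ directly but must route through the $f\leftrightarrow f^{\#}$ duality, keeping careful track of the sign in $Q^{\#}=-Q$ so as to recover $+Q$ and not $-Q$.
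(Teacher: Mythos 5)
Your proposal is correct and takes essentially the same route as the paper, which states this proposition as a recollection of the construction in \cite{ruy1}: the surrounding text supplies exactly your ingredients --- regularity of \eqref{eq:Fsharp} on $M$, the $\SU(2)$ monodromy giving $f\circ\tau=F\rho_F(\tau)\rho_F(\tau)^*F^*=f$, and nondegeneracy/completeness of $ds^2{}^{\#}$ from \eqref{eq:cond1}--\eqref{eq:cond2} transferred to $ds^2$ by Lemma~\ref{lem:complete}. Your explicit identification of the invariants (reading $G$ off \eqref{eq:Fsharp} via \eqref{eq:G-F}, and recovering $Q$ through the dual surface with the sign $Q^{\#}=-Q$ from \eqref{eq:fund-forms}) is precisely the duality bookkeeping the paper leaves implicit.
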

\section{Important Examples with $\TA(f)$ or $\TA(f^{\#})\leq 8\pi$}
\label{sec:Added}
In this section, we shall introduce several important \cmcone{}
surfaces with $\TA(f)$ $\leq 8\pi$ or $\TA(f^{\#})\leq 8\pi$.
\begin{figure}
\begin{center}
\begin{tabular}{c@{\hspace{3em}}c@{\hspace{3em}}c}
       \includegraphics[width=0.9in]{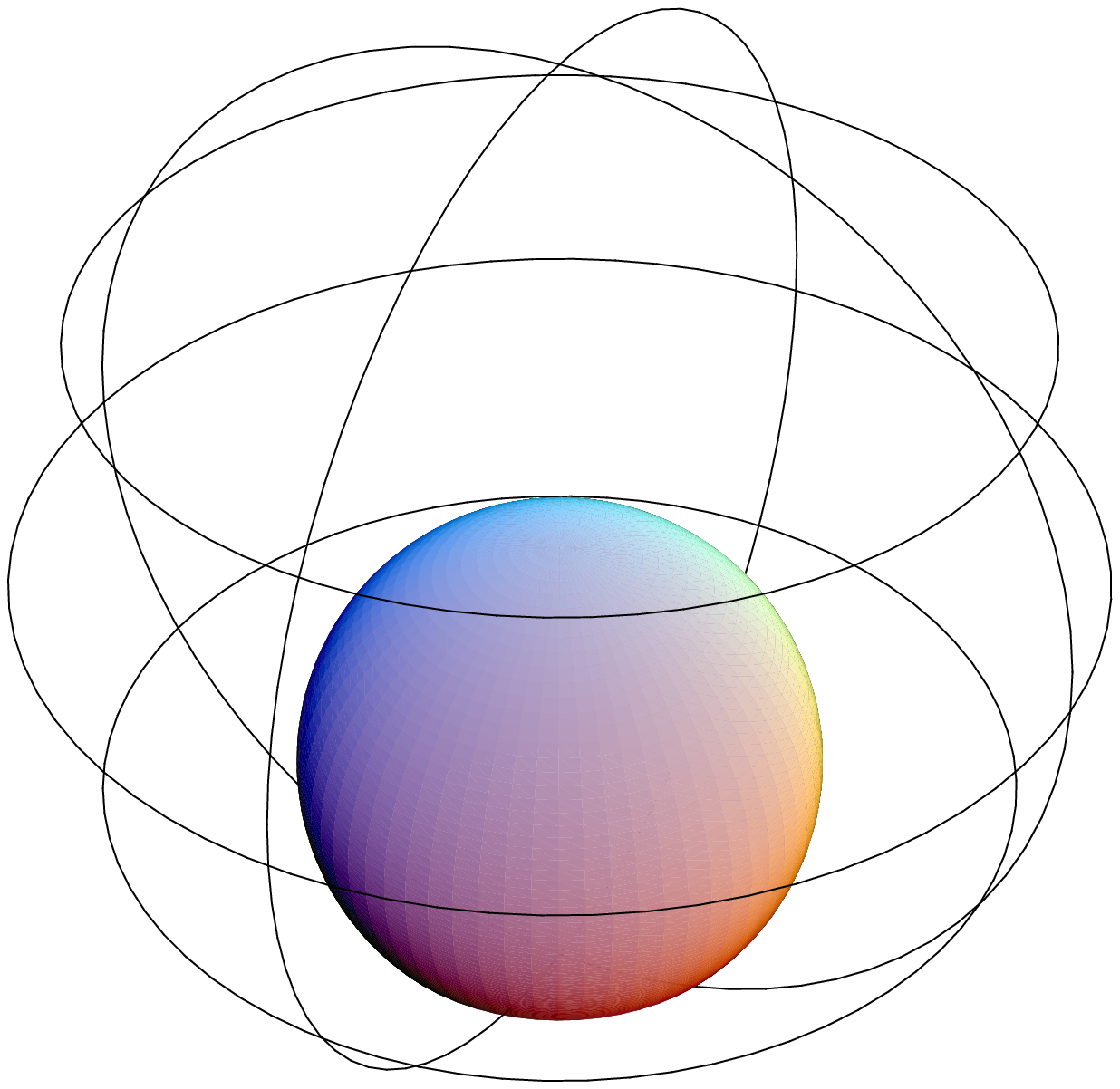}& 
       \includegraphics[width=0.9in]{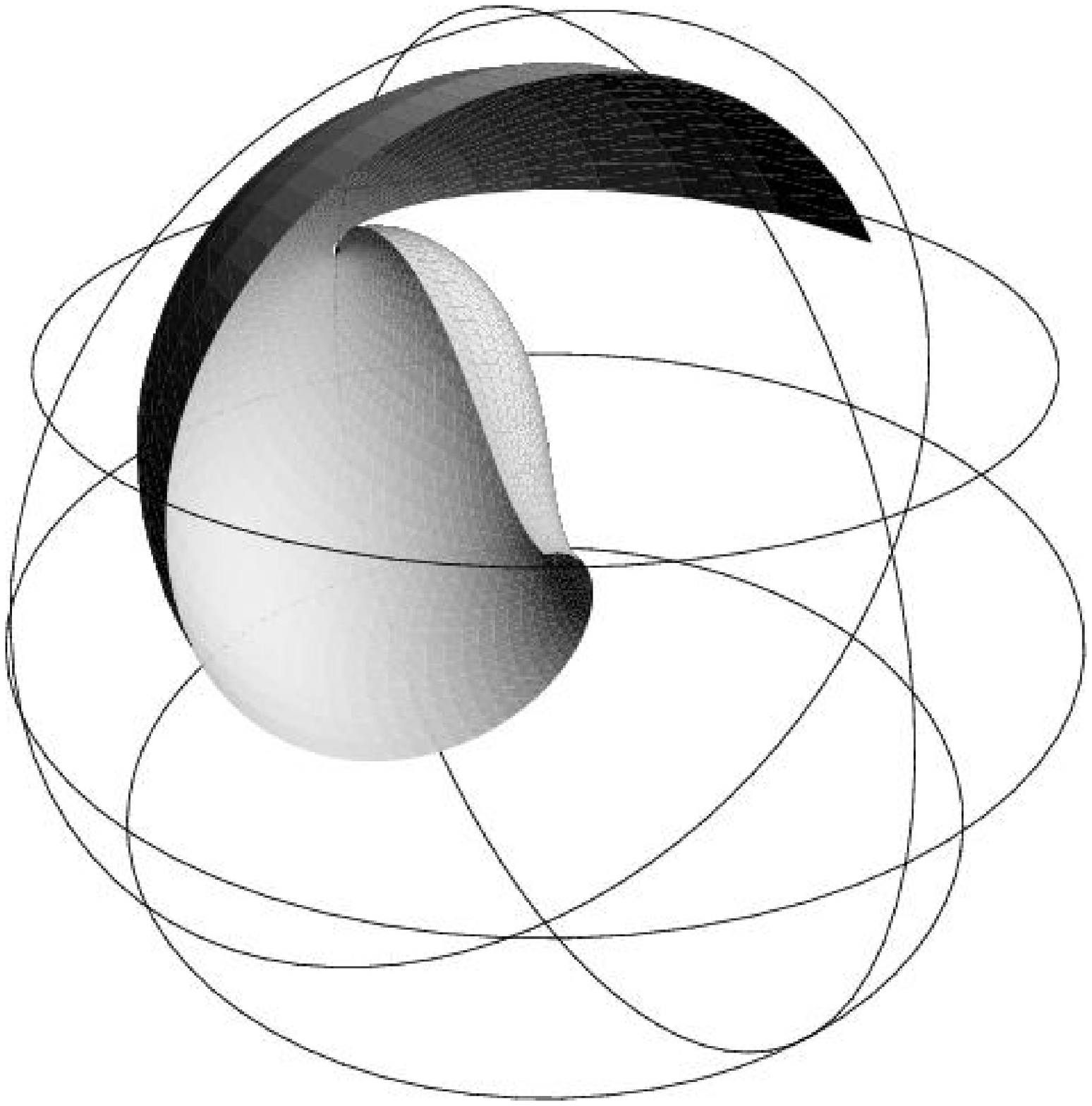}&
       \includegraphics[width=0.9in]{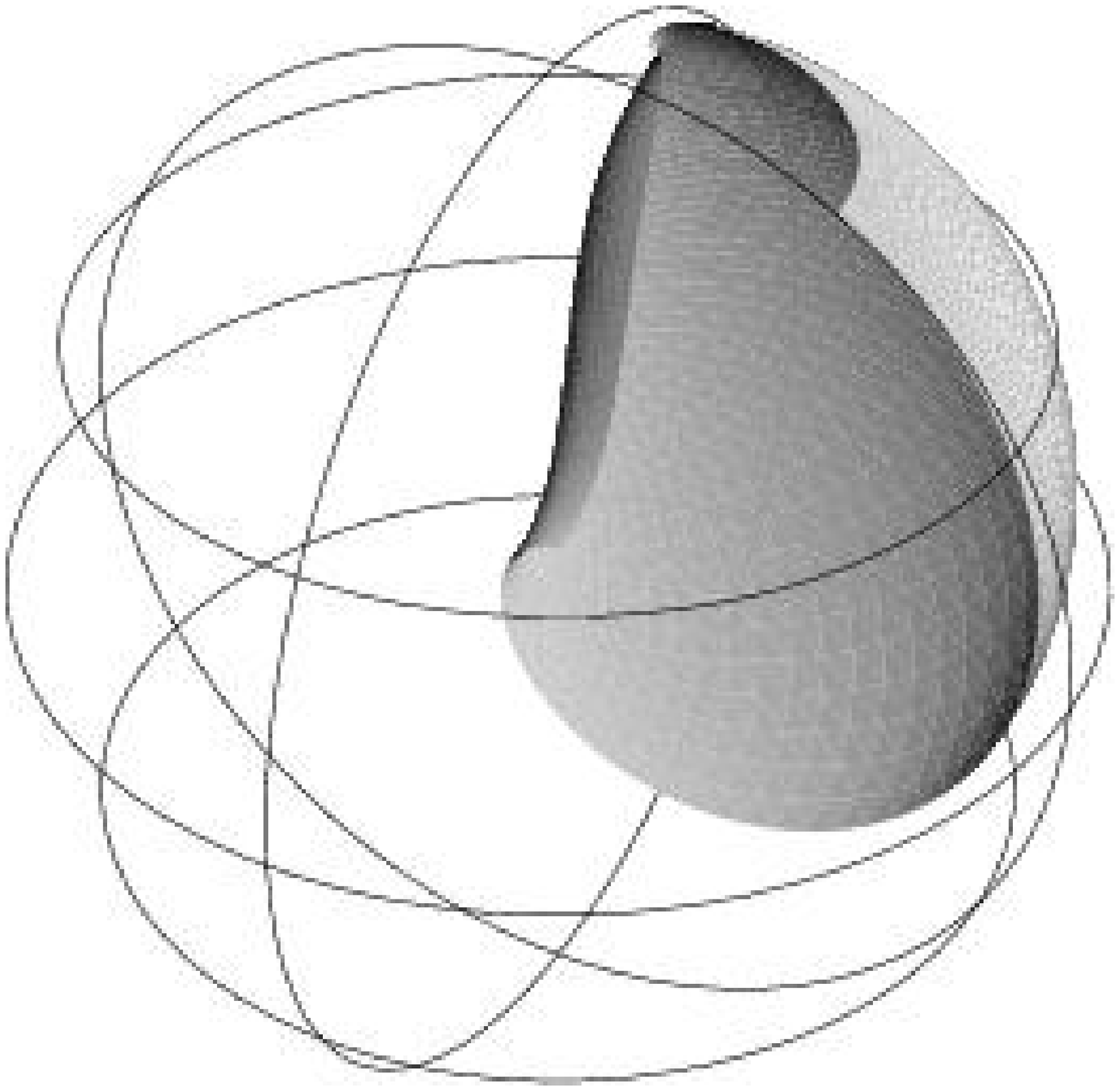} 
\end{tabular}
\end{center}
\caption{
 A horosphere, and fundamental pieces (one-fourth of the surfaces 
 with the ends cut away) of an Enneper cousin and the dual of an Enneper
 cousin.
 Figures are shown in the Poincar\'e model of $H^3$.
}
\label{fig:enneper}
\end{figure}
\begin{example}[Horosphere]\label{exa:horosphere}
 A horosphere (Figure~\ref{fig:enneper}) is the only surface of
 type $\gO(0)$,
 with Weierstrass data given by
\[
    g=0,\qquad \omega=a\, dz\qquad (a\in \C\setminus\{0\})\;.
\]
 The holomorphic lift $F\colon{}\C\to \SL(2,\C)$ of the surface
 with initial condition $F(0)=\id$ is given by
\[
 F=\begin{pmatrix}
    1  & 0 \\
    az & 1 
   \end{pmatrix}\;.
\]
 In particular the hyperbolic Gauss map is a constant function, 
 as well as the secondary Gauss map $g=0$.
 This surface is flat and totally umbilic. 
 In particular, the total curvature and the dual total 
 curvature of the surface are both equal to zero.
 Any flat or totally umbilic \cmcone{} surfaces are
 parts of this surface. Planes in $\R^3$ are 
 the corresponding minimal surfaces with the same
 Weierstrass data $(g,\omega)=(0,a\,dz)$.
\end{example}

\begin{example}[Enneper cousin and dual of Enneper cousin]
\label{exa:enneper}

 The Enneper cousin is  given in \cite{Bryant}
 (Figure~\ref{fig:enneper}), with the same Weierstrass data as the
 Enneper surface in $\R^3$:
\[
      g=z,\qquad \omega=a\, dz\qquad \bigl(a\in \C\setminus\{0\}\bigr)\;.
\]
 The holomorphic lift $F\colon{}\C\to \SL(2,\C)$ of the surface
 with initial condition $F(0)=\id$ is given by
 \[
  F=
  \begin{pmatrix}
   \cosh(az) & a^{-1}\sinh(az)-z \cosh(az) \\
   a\sinh(a z) & \cosh(az)- az \sinh(az) 
  \end{pmatrix}\;.
 \]
 In particular the hyperbolic Gauss map $G$ is given by
 \[
   G=a^{-1}\tanh (az)\;.
 \]
 The Enneper cousin is in the class $\gO(-4)$ and has a complete induced
 metric of total absolute curvature $4\pi$.
 If one takes the inverse of $F$, one gets the dual of the Enneper
 cousin (Figure~\ref{fig:enneper}).
 Since
 \[
   F d (F^{-1})=-dF F^{-1}=
   \begin{pmatrix}
     -a \cosh(az)\sinh(az) & \sinh^2(az) \\
     -a^2 \cosh^2(a z) & a \cosh(a z)\sinh(az) 
   \end{pmatrix}\;,
 \]
 the Weierstrass data $(g^\#,\omega^\#)$ of the dual of the Enneper cousin
 given by
 \[
    g^\#=a^{-1}\tanh (az),\qquad \omega^\#=a^2 \cosh^2(az)\, dz\;.
 \]
 This surface is also in  the class $\gO(-4)$ and has a 
 complete induced metric of infinite total absolute curvature
 (see Lemma~\ref{lem:complete}).
\end{example}
\begin{figure}
\begin{center}
\begin{tabular}{c@{\hspace{3em}}c@{\hspace{3em}}c}
       \includegraphics[width=0.9in]{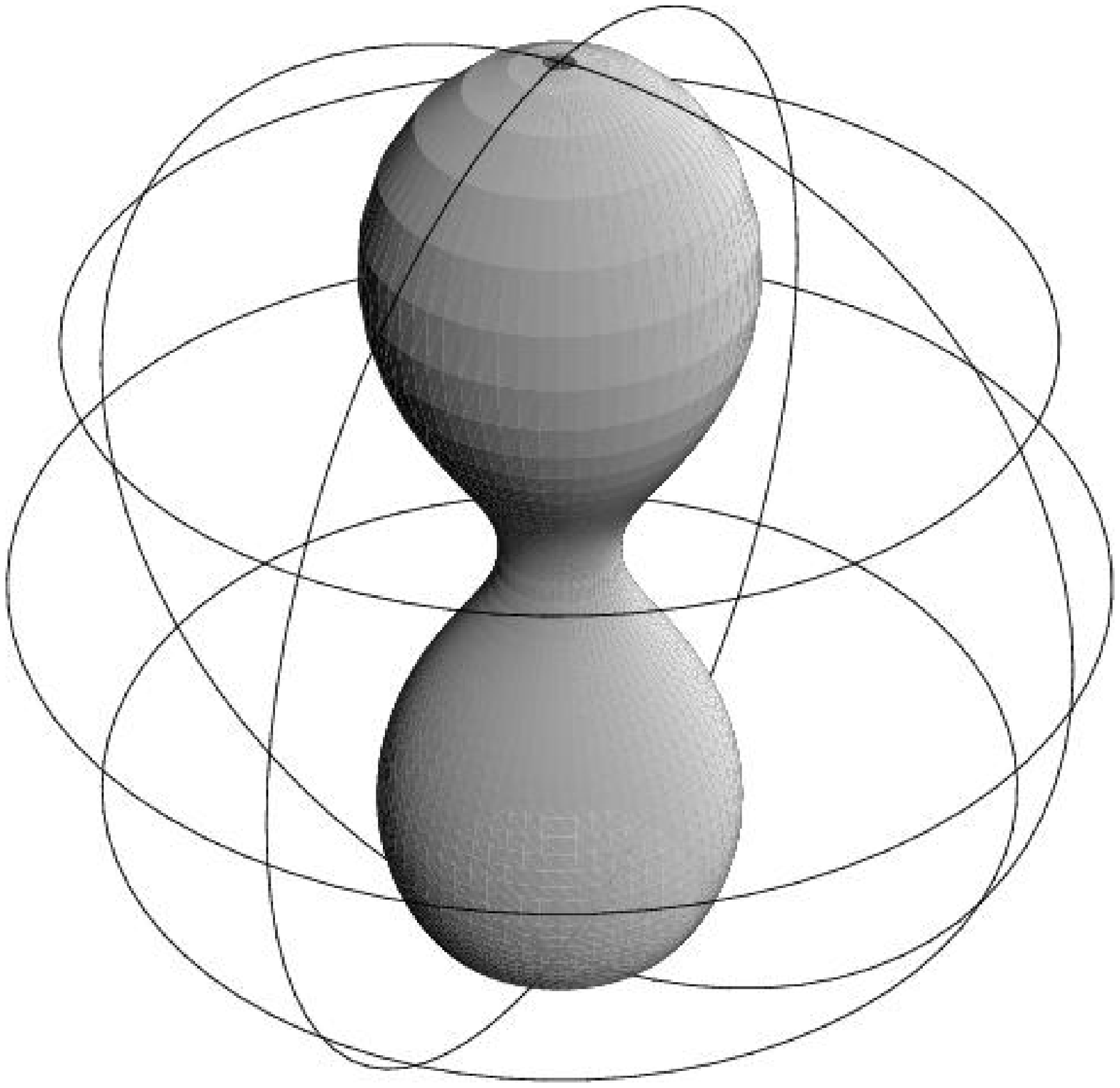}& 
       \includegraphics[width=0.9in]{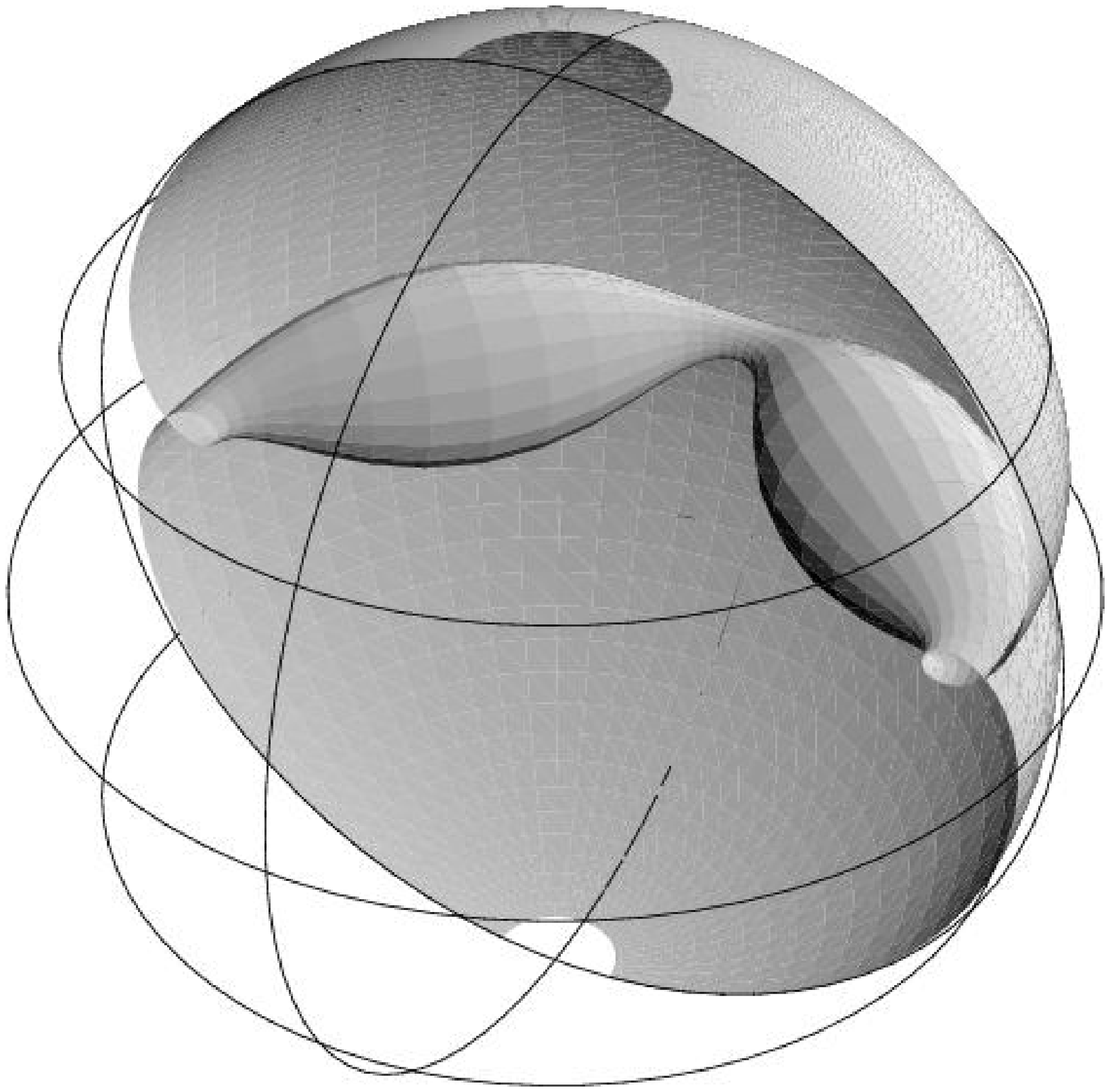}&
       \includegraphics[width=0.9in]{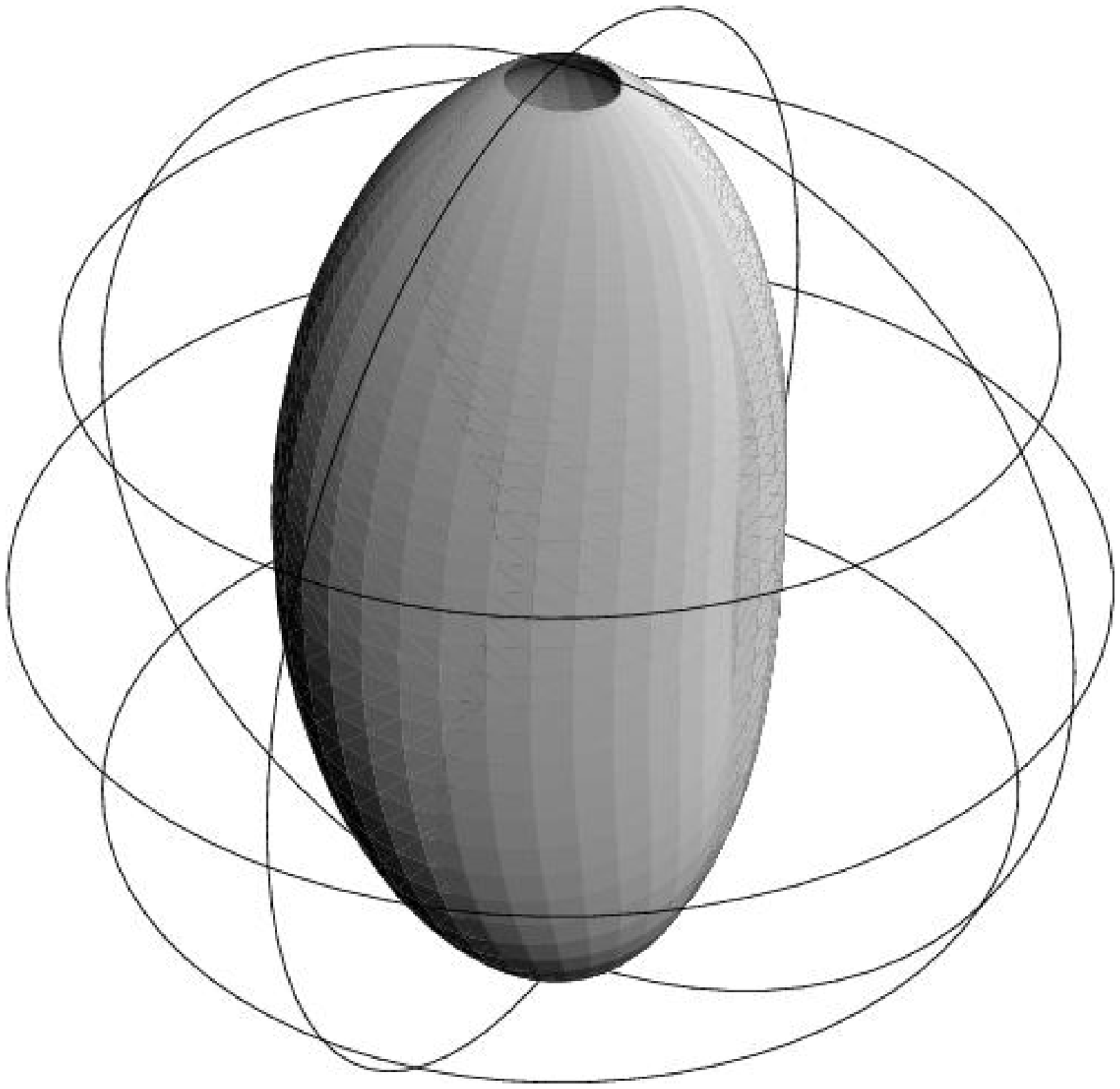} 
\end{tabular}
\end{center}
\caption{
 A catenoid cousin with $l=0.8$, and warped catenoid cousins with
 $(l,\delta,b)=(4,1,1/2)$ and $(1,2,1/2)$.
 The third surface has $\TA(f)=4\pi$ because $l=1$ even though its ends
 are not embedded.
}
\label{fig:catenoid}
\end{figure}
\begin{figure}
\begin{center}
\begin{tabular}{c@{\hspace{5em}}c@{\hspace{3em}}c}
       \includegraphics[height=0.9in]{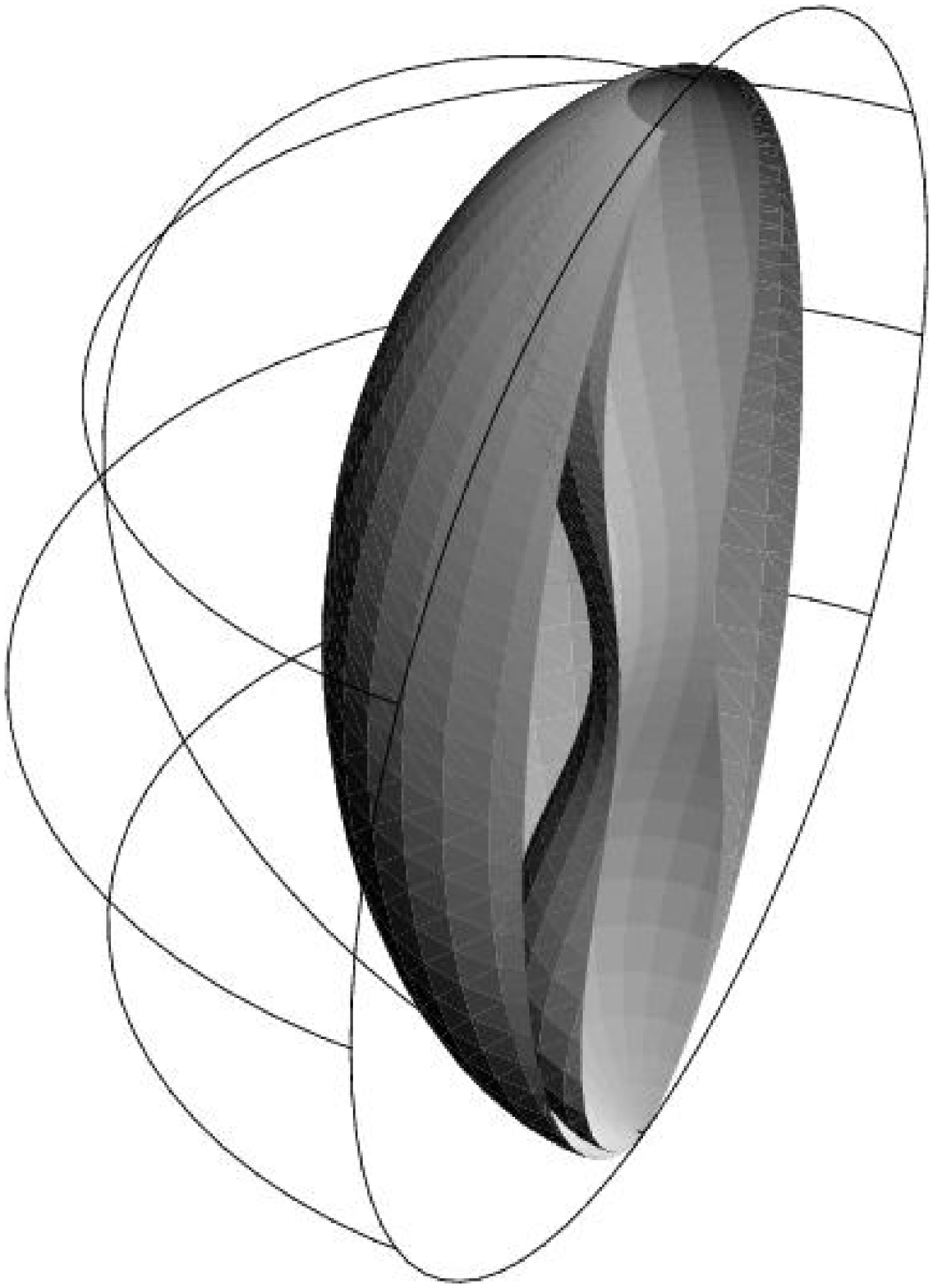}& 
       \includegraphics[height=0.9in]{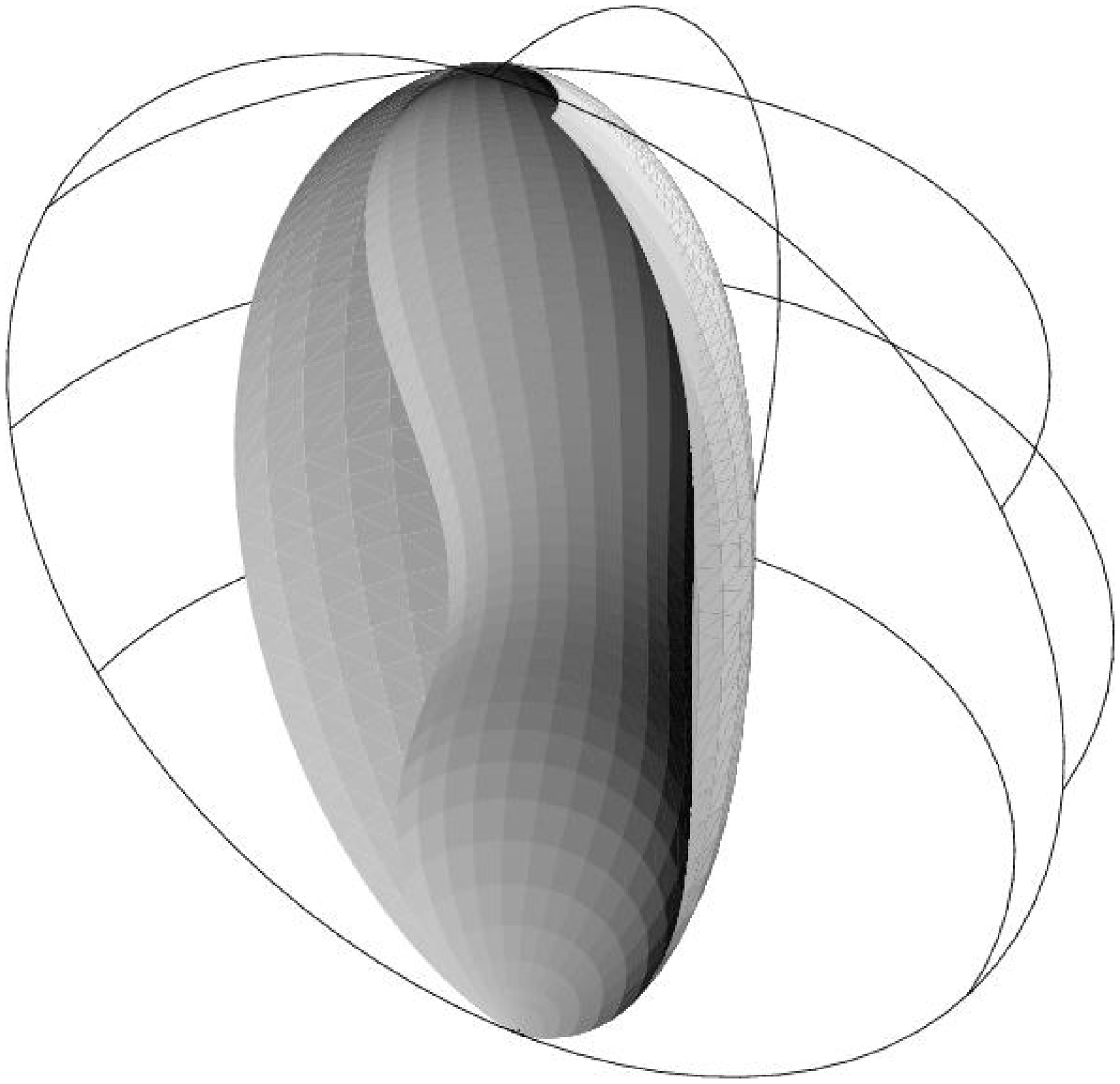}&
       \includegraphics[width=0.9in]{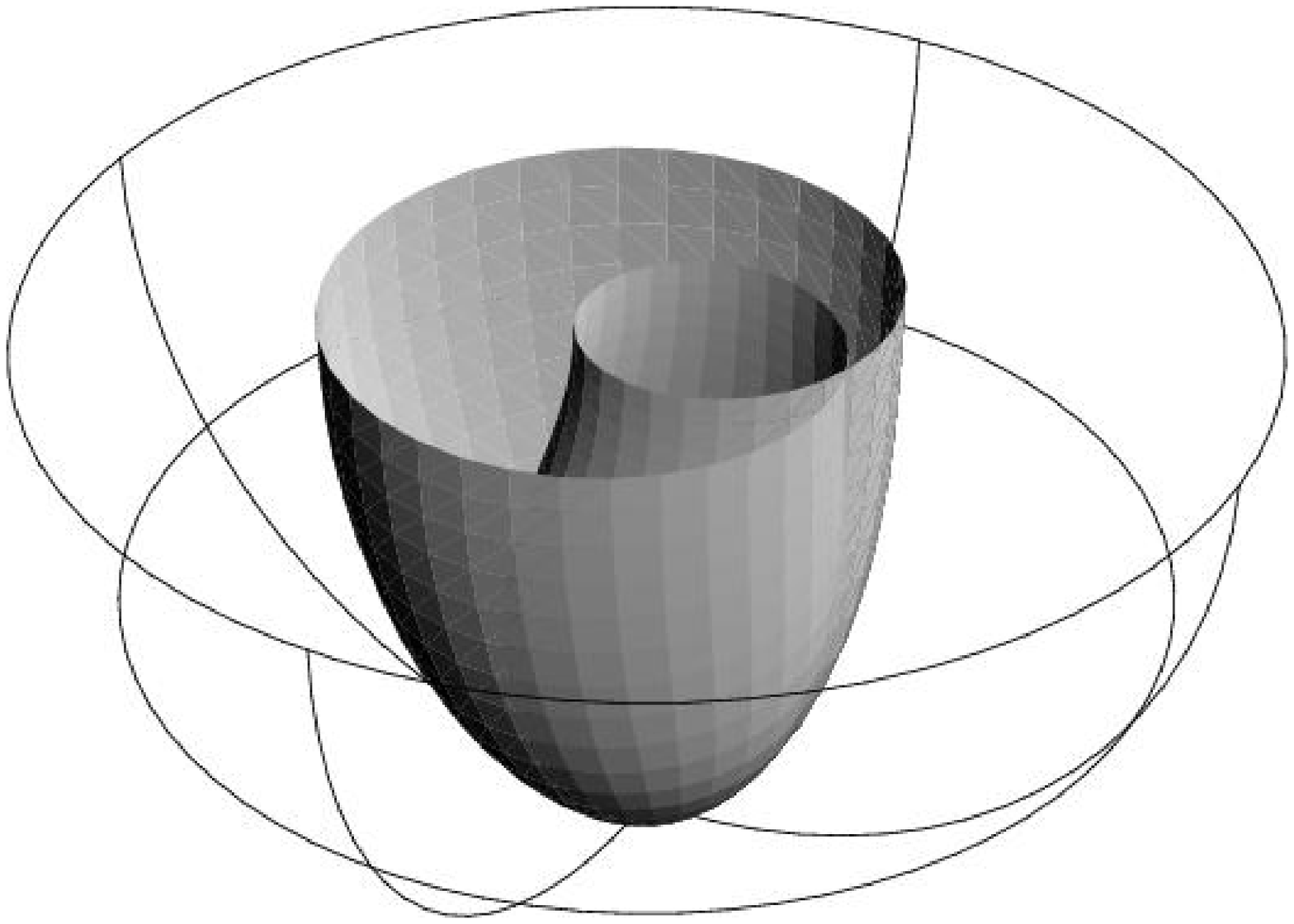}  
\end{tabular}
\end{center}
\caption{
  Cut-away views of the third warped catenoid cousin in
  Figure~\ref{fig:catenoid}.
}
\label{fig:warped}
\end{figure}
\begin{example}[Catenoid cousins and warped catenoid cousins]
\label{exa:catenoid}
 \cmcone{} surfaces \newline
 of type $\gO(-2,-2)$ are classified in Theorem~6.2
 in \cite{uy1}.
 Here we give a slightly refined version given  in \cite{ruy4}:  
 A complete conformal \cmcone{} immersion 
 $f\colon{}M=\C\setminus \{0\} \to H^3$ with regular ends have the
 following Weierstrass data
\begin{equation}
\label{eq:catcousdata}
    g = \frac{\delta^2-l^2}{4l}z^l+b \; ,\qquad
    \omega=\frac{Q}{dg} = z^{-l-1}dz \; , 
\end{equation}
 with $l>0$, $\delta\in\Z^+$, and $l \neq \delta$, and $b \geq 0$,
 where the case $b>0$ occurs only when $l\in \Z^+$. 
 When $b=0$ and $\delta=1$, the surface is called a 
 {\em catenoid cousin}, which is rotationally symmetric. 
 (The Weierstrass data of the catenoid cousin is often written as 
  $g=z^\mu$ and $\omega=(1-\mu^2)z^{-\mu-1}\,dz/(4\mu)$.  
 This is equivalent to \eqref{eq:catcousdata} for $b=0$ and $\delta=1$
 and $l=\mu$ by a coordinate change $z\mapsto ((1-\mu^2)/4\mu)^{(1/\mu)} z$.) 
 Catenoid cousins are embedded when
 $0<l<1$ and have one curve of self-intersection when $l>1$. 
 When $b=0$, $f$ is a $\delta$-fold cover of a catenoid cousin.  
 When $b>0$ (then automatically $l$ is a positive integer), 
 we call $f$ a {\em warped catenoid cousin}, 
 and its  discrete symmetry group is the natural $\Z_2$ extension of the
 dihedral group $D_l$.  
 Furthermore, the warped catenoid cousins can be written explicitly as 
 \[ 
   f= F F^{*},\qquad F = F_0 B \; , 
 \]
 where 
 \[ 
    F_0 = \sqrt{\frac{\delta^2-l^2}{\delta}}
              \begin{pmatrix}
                 \dfrac{1}{l-\delta}    z^{(\delta-l)/2}  &
                 \dfrac{\delta-l}{4l}   z^{(l+\delta)/2} \\
                 \dfrac{1}{l+\delta}    z^{-(l+\delta)/2} &
                 \dfrac{-(l+\delta)}{4l}z^{(l-\delta)/2}
              \end{pmatrix} \quad 
          \text{and} \quad
        B =   \begin{pmatrix}
                    1 & -b \\
                    0 & \hphantom{-}1
              \end{pmatrix}   \,.
 \]
 In particular, the hyperbolic Gauss map and Hopf differential 
 are given by 
 \[
    G=z^\delta,\qquad Q=\frac{\delta^2-l^2}{4z^2}\,dz^2 \; , 
 \]
 which are equal to the Gauss map and Hopf differential of
 the catenoids in $\R^3$.
 The dual total curvature of a catenoid cousin is $4\pi$, but
 its total curvature is $4\pi l$ ($l>0$), which can take
 any value in $(0,4\pi) \cup (4\pi,\infty)$. 
 On the other hand, the total absolute curvature and the dual total
 absolute curvature of warped catenoid cousins are always integer
 multiples of $4\pi$.
 (See Figures~\ref{fig:catenoid} and \ref{fig:warped}).
\end{example}

\begin{figure}
\begin{center}
\begin{tabular}{c@{\hspace{6em}}c}
       \includegraphics[width=0.9in]{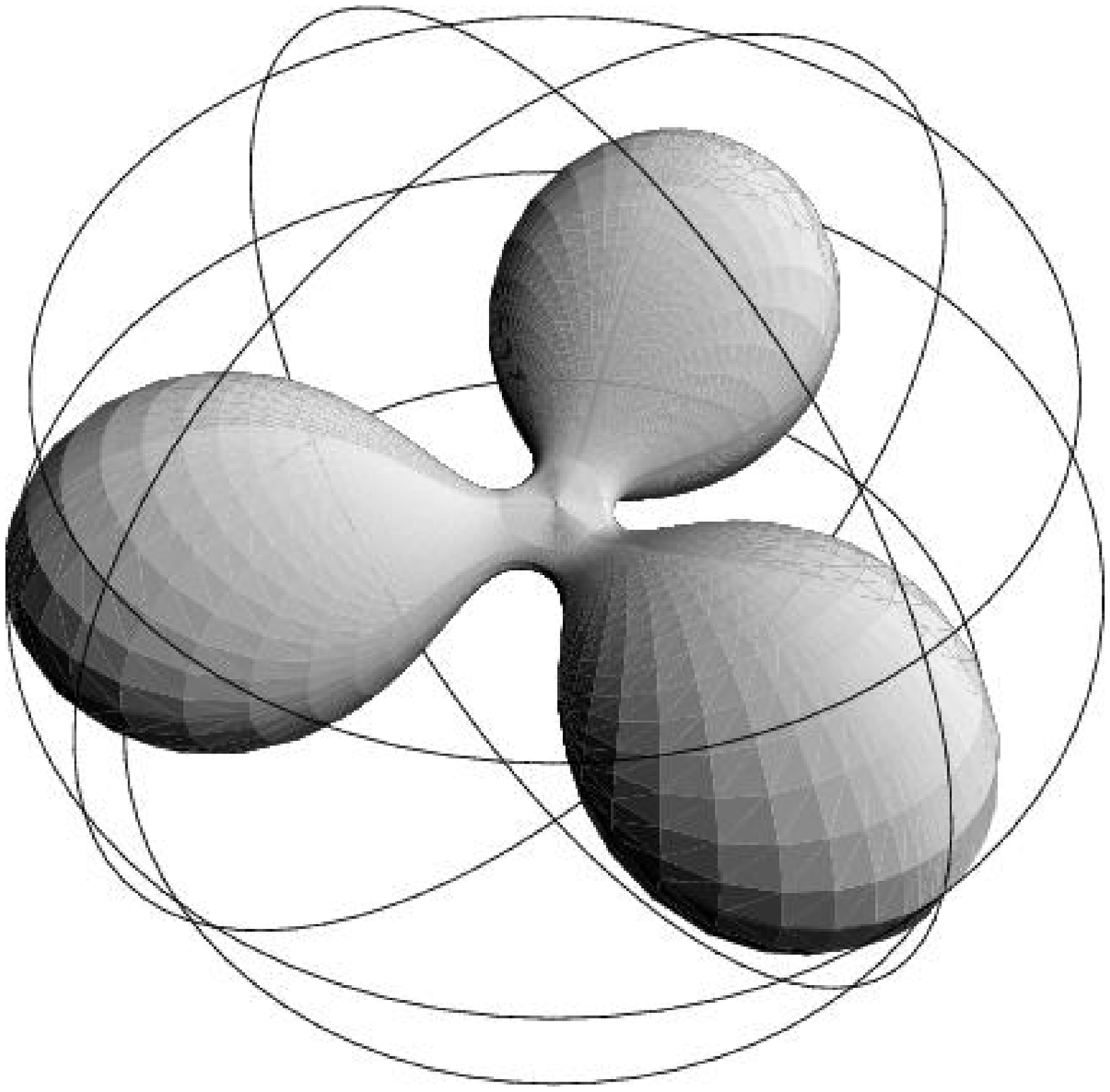} & 
       \includegraphics[width=0.9in]{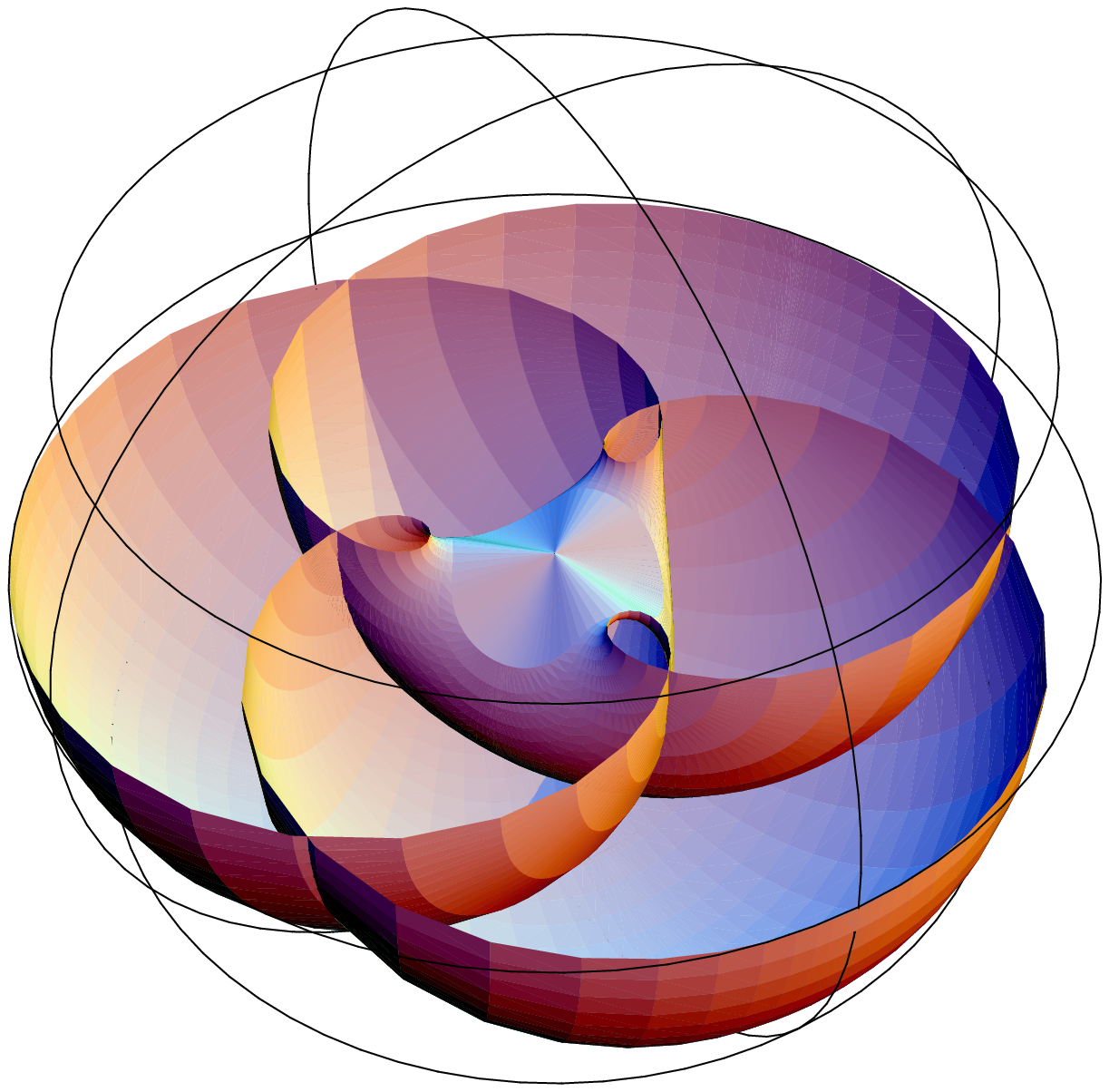}
\end{tabular}
\end{center}
\caption{
 Two different \cmcone{} trinoids (proven to exist in \cite{uy3}).
 Although these surfaces are proven to exist, and numerical 
 experiments show that some of them are embedded (as one of the 
 pictures here is), none have yet been proven to be embedded.}
\label{fig:trinoid2}
\end{figure}
\begin{example}[Irreducible trinoids]
\label{exa:trinoid}
 We take three real numbers $\mu_1$, $\mu_2$, $\mu_3>-1$ such that
 \begin{equation}\label{eq:condition-eq}
    \cos^2 B_1 +\cos^2 B_2 + \cos^2 B_3+
               2\cos B_1\cos B_2 \cos B_3< 1,
 \end{equation}
 where $B_j=\pi(\mu_j+1)$ $(j=1,2,3)$.
 We also assume
\begin{equation}\label{eq:nodouble}
    c_1^2+c_2^2+c_3^2-2(c_1c_2+c_2c_3+c_3c_1)\neq 0,
\end{equation}
 where $c_j=-\mu_j(\mu_j+2)/2\in\R$  ($j=1,2,3$).
 Then it is shown in \cite{uy7} that there exists a unique \cmcone{}
 surface $f_{\mu_1,\mu_2,\mu_3}\colon{}\C\setminus \{0,1\}\to H^3$ of 
 type $\gO(-2,-2,-2)$ such that the pseudometric $d\sigma^2=(-K)ds^2$
 defined by \eqref{eq:pseudo} is irreducible and has conical
 singularities of orders $\mu_1$, $\mu_2$, $\mu_3$ at $z=0,1,\infty$,
 respectively.
 Moreover, any irreducible \cmcone{} surface of type $\gO(-2,-2,-2)$
 whose ends are all embedded is congruent to some 
 $f_{\mu_1,\mu_2,\mu_3}$.
 All ends of these surfaces are asymptotic to catenoid cousin ends.
 The inequality \eqref{eq:condition-eq} implies  $\mu_1$, $\mu_2$ and
 $\mu_3$ are all non-integers.

 If we allow equality in \eqref{eq:condition-eq},
 one of the $\mu_1,\mu_2,\mu_3$ must be an integer.
 The corresponding \cmcone{} surface might not exist for such 
 $\mu_1,\mu_2,\mu_3$ in general \cite{uy7}. If it exists, its induced 
 pseudometric $d\sigma^2$ must be reducible (see
 Lemmas~\ref{lem:hyp-3-red-0} and \ref{lem:hyp-1-red-0}).

 The  Hopf differential $Q$
 of $f_{\mu_1,\mu_2,\mu_3}$ is given by
\begin{equation}
\label{eq:Q-surface}
  Q=\frac{1}{2}\left(
               \frac{c_3 z^2+(c_2-c_1-c_3)z+c_1}{z^2(z-1)^2}
            \right)\,dz^2\;.
\end{equation}
 Let $q_1$ and $q_2$ be zeros of $Q$, that is
\begin{equation}\label{eq:umbilic}
   c_3 q_l^2 + (c_2-c_1-c_3)q_l+c_1=0\qquad (l=1,2)\;.
\end{equation}
 By \eqref{eq:nodouble}, $q_1\ne q_2$ holds.
 The hyperbolic Gauss map is then given by
\begin{equation}\label{eq:G-surface}
      G =  z + \frac{(q_1-q_2)^2}{2\{2z-(q_1+q_2)\}}\;.
\end{equation}
 In particular, all of these surfaces have dual total 
 absolute curvature $8\pi$.
 On the other hand, the total curvature is equal to
 $2\pi(4+\mu_1+\mu_2+\mu_3)$. 
 If we set $\mu=\mu_1=\mu_2=\mu_3$, the condition \eqref{eq:condition-eq}
 implies that $\mu>-2/3$, and then there exist $f_{\mu,\mu,\mu}$ for
 any $\mu$ arbitrarily close to $-2/3$, whose total curvatures 
 tend to $4\pi$. This implies Theorem~\ref{thm:odd} is
 sharp for $m=1$.

\begin{figure}
\begin{center}
  \begin{tabular}{c@{\hspace{4em}}c}
      \includegraphics[width=1in]{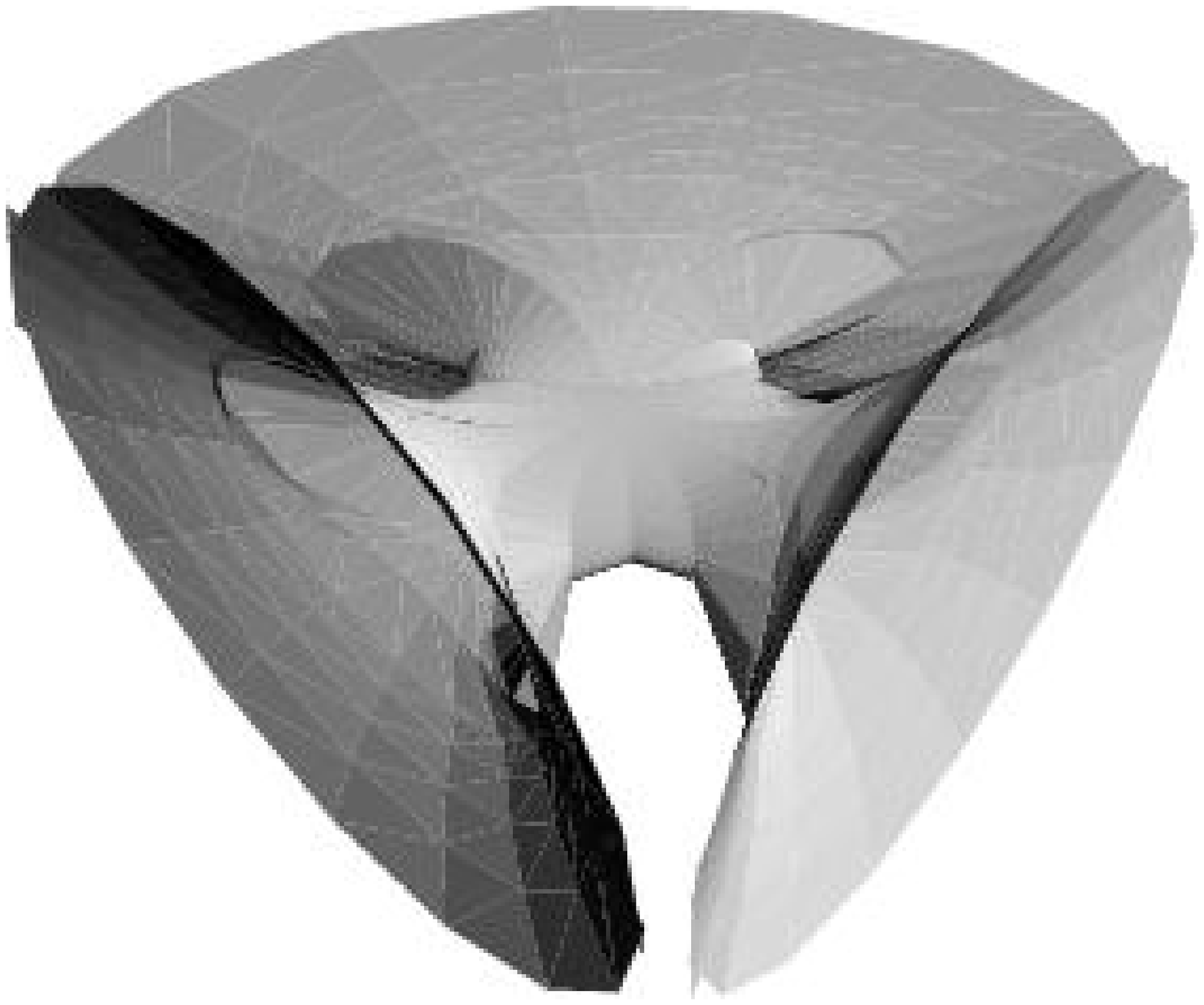} &
      \includegraphics[width=1in]{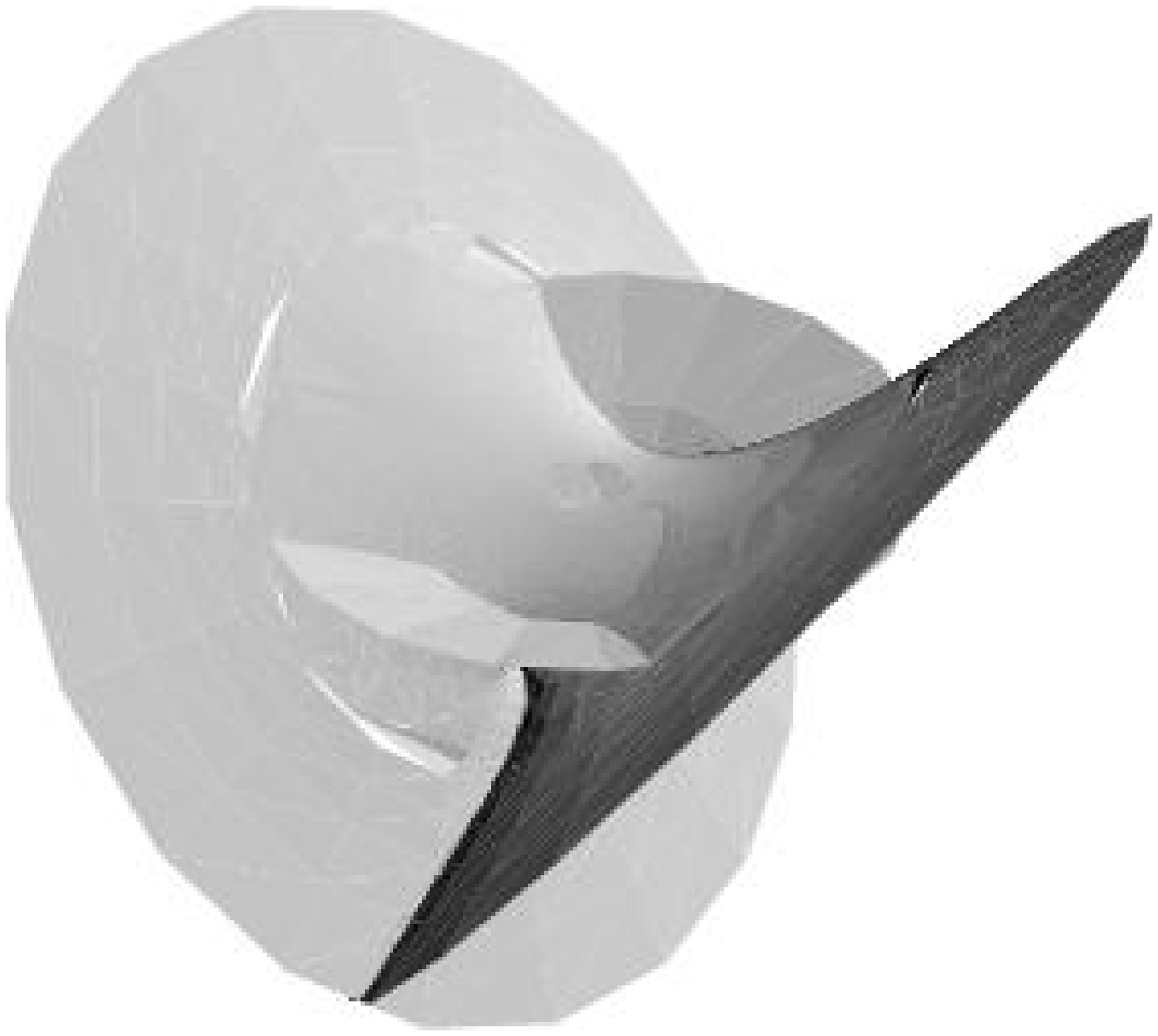} \\[6pt]
      {\footnotesize Type $\typeP$} &
      {\footnotesize Type $\typeN$} 
   \end{tabular}
\end{center}
\caption{Minimal trinoids of types $\typeP$ and $\typeN$.
         The graphics are made by S. Tanaka of Hiroshima University.}
\label{fig:PNtype}
\end{figure}
 It is interesting to compare these surfaces with minimal trinoids in 
 $\R^3$.
 Minimal trinoids with three catenoid ends are classified in 
 Barbanel~\cite{ba}, Lopez~\cite{Lopez} and Kato~\cite{Ka}.
 Here, we adopt Kato's notation \cite{Ka}:
 The Weierstrass data of these trinoids
 $x_0:\C\cup\{\infty\}\setminus\{0,p_1,p_2\}\to \R^3$
 are given by 
 \[
    g=z-\frac{b(p_1^2 {p_2}^2+p_1^2+{p_2}^2-p_1p_2)}{f(z)},\qquad
    \omega=-f(z)^2\,dz \qquad (b\in \R)
 \]
 where $p_1$ and $p_2$ are real numbers such that 
 $(p_1-p_2)(1+p_1p_2)\ne 0$ and
 \[
   f(z):=b\left(\frac{p_1(p_1-p_2)}{z-p_1}
            +\frac{p_2(p_2-p_1)}{z-p_2}+\frac{p_1p_2(p_1p_2+1)}{z}
          \right)\;.
 \]
 If the coefficients of $1/z^2$, $1/(z-p_1)^2$, $1/(z-p_2)^2$
 in the Laurent expansion of the Hopf differential  $Q=\omega\, dg$ at
 $z=0,p_1,p_2$ are all the same signature,
 the surface is called of {\em type $\typeP$} and otherwise it is
 called of {\em type $\typeN$}.
 Type $\typeP$ surface are all Alexandrov-embedded.
 On the other hand, type $\typeN$ surfaces are not.
 (For a definition of Alexandrov embedded, see Cos\'\i{}n and Ros \cite{CR}.)
 These two classes consist of the two connected  components of the set
 of minimal trinoids (Tanaka \cite{ta}; see Figure~\ref{fig:PNtype}).
\begin{figure}
\begin{center}
\footnotesize
\begin{tabular}{cccc}
  \includegraphics[width=0.9in]{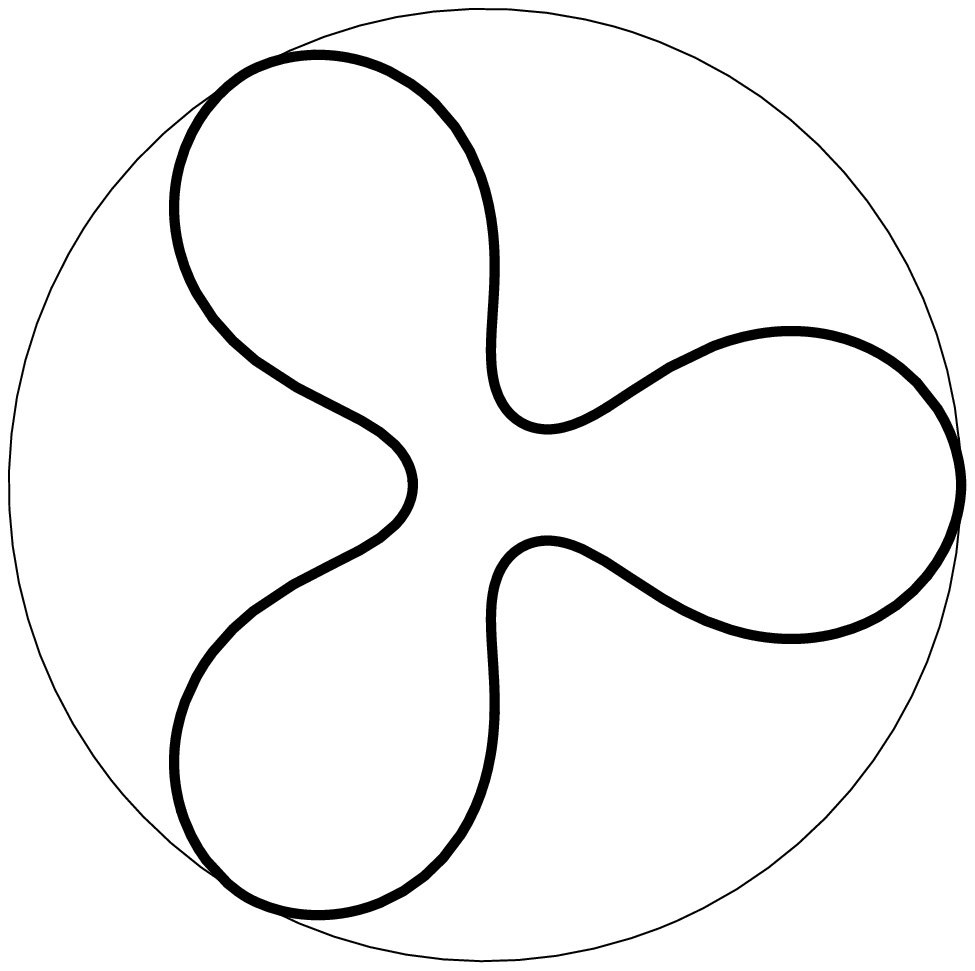} &
  \includegraphics[width=0.9in]{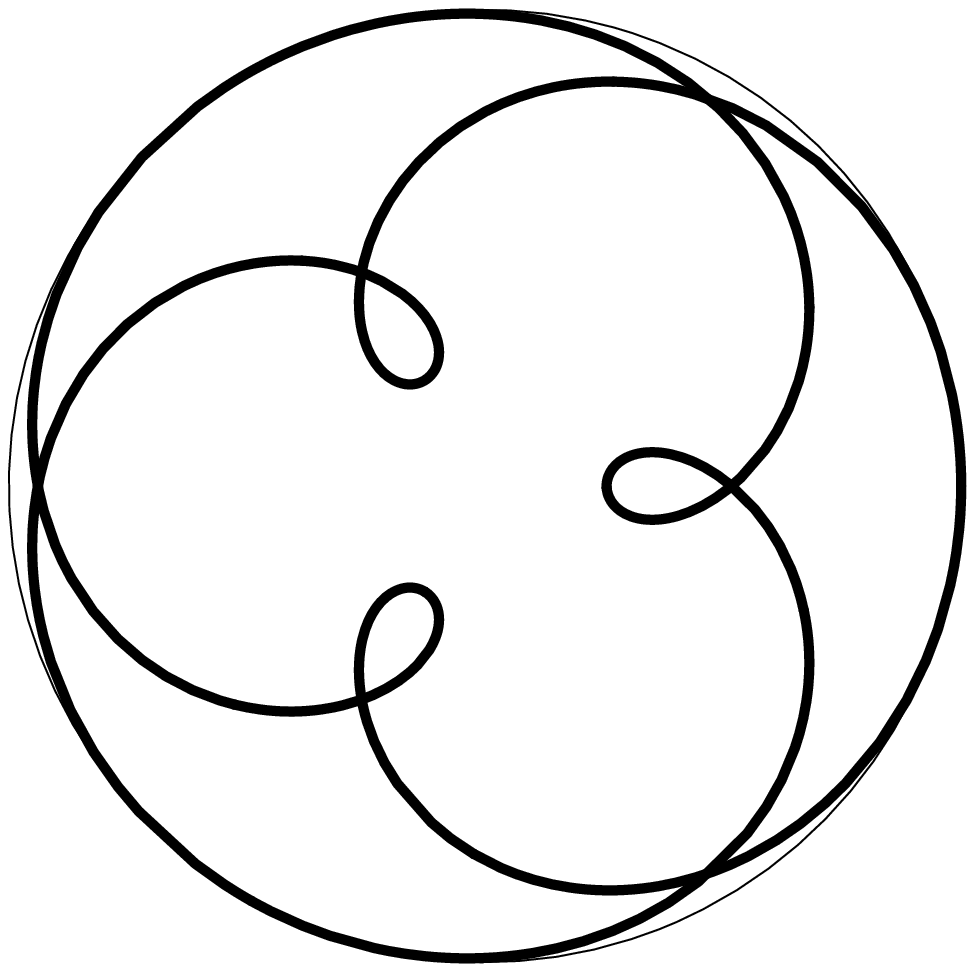} &
  \includegraphics[width=0.9in]{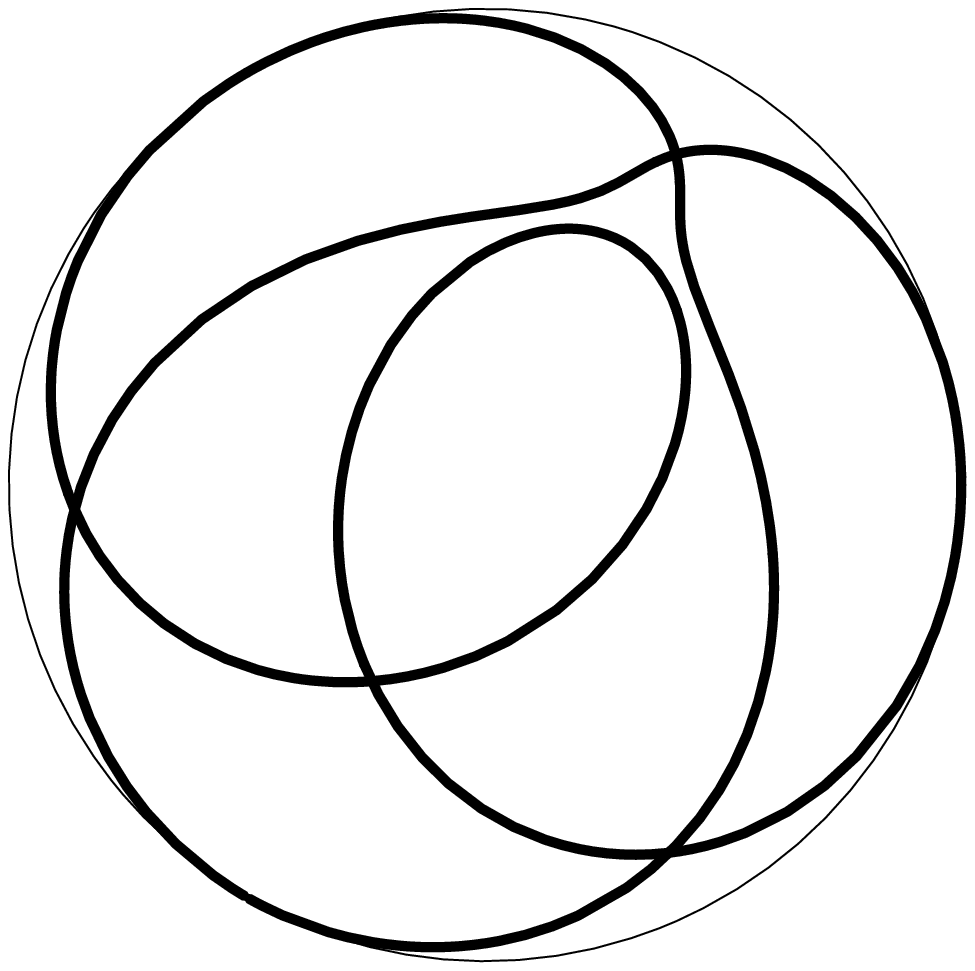} &
  \includegraphics[width=0.9in]{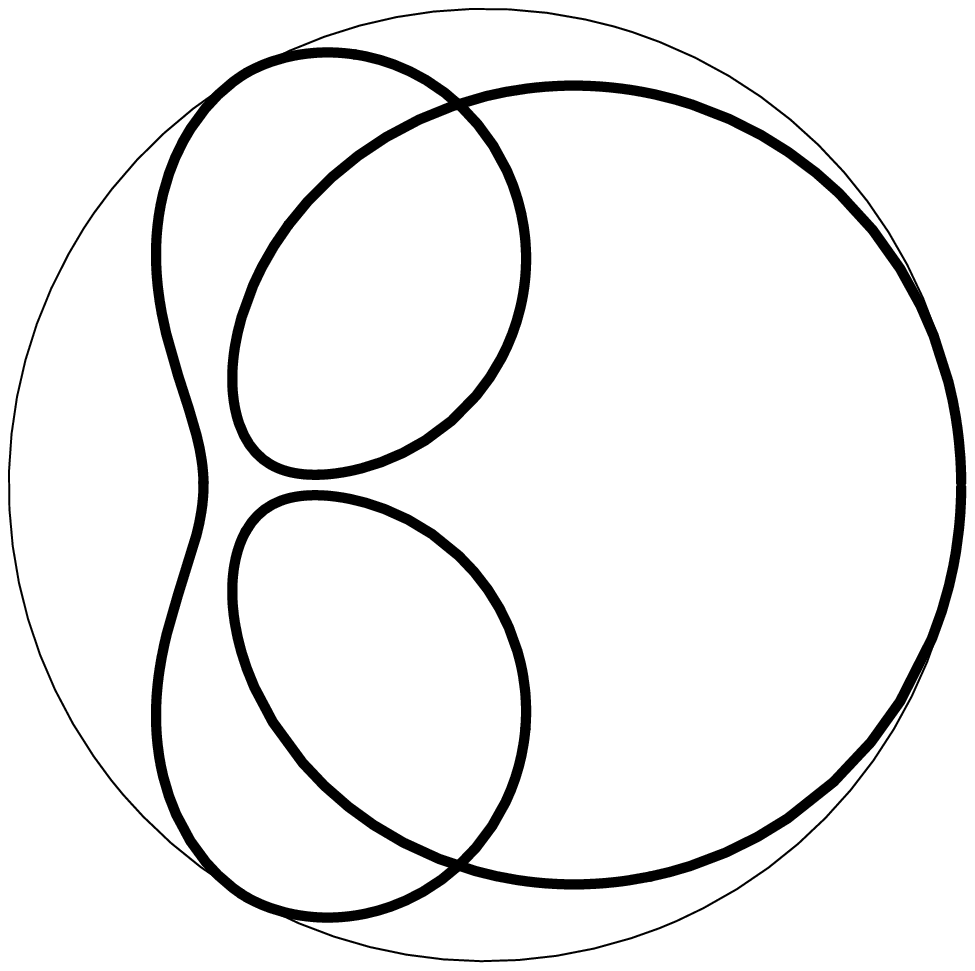} \\
  Type $(+,+,+)$ &
  Type $(-,-,-)$ &
  Type $(+,-,-)$ &
  Type $(+,+,-)$
\end{tabular}
\end{center}
\caption{Profile curves of trinoids $f_{\mu_1,\mu_2,\mu_3}$.}
\label{fig:trinoid}
\end{figure}
 In the case of \cmcone{} trinoids in $H^3$, 
 we would like to group the surfaces by the 
 signatures of $c_1$, $c_2$, $c_3$.
 For example, $f_{\mu_1,\mu_2,\mu_3}$ is called
 of type $(+,+,+)$ if $c_1$, $c_2$, $c_3$ are all positive, and it 
 is called of type $(-,+,+)$ if one of $c_1$, $c_2$, $c_3$
 is negative and the other two are positive.  
 By numerical experiment, we see that these four types $(+,+,+)$, $(-,+,+)$,
 $(-,-,+)$ and $(-,-,-)$ are topologically distinct (see 
 Figure~\ref{fig:trinoid}).
 Surfaces of type $(+,+,+)$ have total curvature less than $8\pi$, and it 
 seems that only surfaces in this class can be embedded.
\end{example}
\begin{figure}
\begin{center}
\begin{tabular}{c@{\hspace{3em}}c}
\setlength{\unitlength}{1.8cm}
\begin{picture}(2.2,2.2)(-0.5,-0.5)
  \put(-0.5,0){\vector(1,0){2}}
  \put(0,-0.5){\vector(0,1){2}}
  \put(0,0){\arc{2}{-1.57}{0}}
  \put(0.6,0){\circle*{0.05}}
  \put(0.3,0){\vector(0,1){0.2}}
  \put(0.3,0){\vector(0,-1){0.2}}
  \put(0.8,0){\vector(0,1){0.2}}
  \put(0.8,0){\vector(0,-1){0.2}}
  \put(0.707,0.707){\vector(1,1){0.2}}
  \put(0.707,0.707){\vector(-1,-1){0.2}}
  \put(0,0.5){\vector(1,0){0.2}}
  \put(0,0.5){\vector(-1,0){0.2}}
  \put(1.3,-0.15){\mbox{\footnotesize$\operatorname{Re}$}}
  \put(0.1,1.3){\mbox{\footnotesize$\operatorname{Im}$}}
  \put(1,-0.15){\mbox{\footnotesize$1$}}
  \put(0.6,-0.15){\mbox{\footnotesize$a$}}
  \put(0.3,-0.3){\mbox{\footnotesize$\tau_1$}}
  \put(0.8,-0.3){\mbox{\footnotesize$\tau_4$}}
  \put(0.9,0.9){\mbox{\footnotesize$\tau_3$}}
  \put(-0.4,0.5){\mbox{\footnotesize$\tau_2$}}
  \put(0.3,0.3){\mbox{\footnotesize$D$}}
\end{picture} &
  \raisebox{4ex}{
   \includegraphics[width=1.1in]{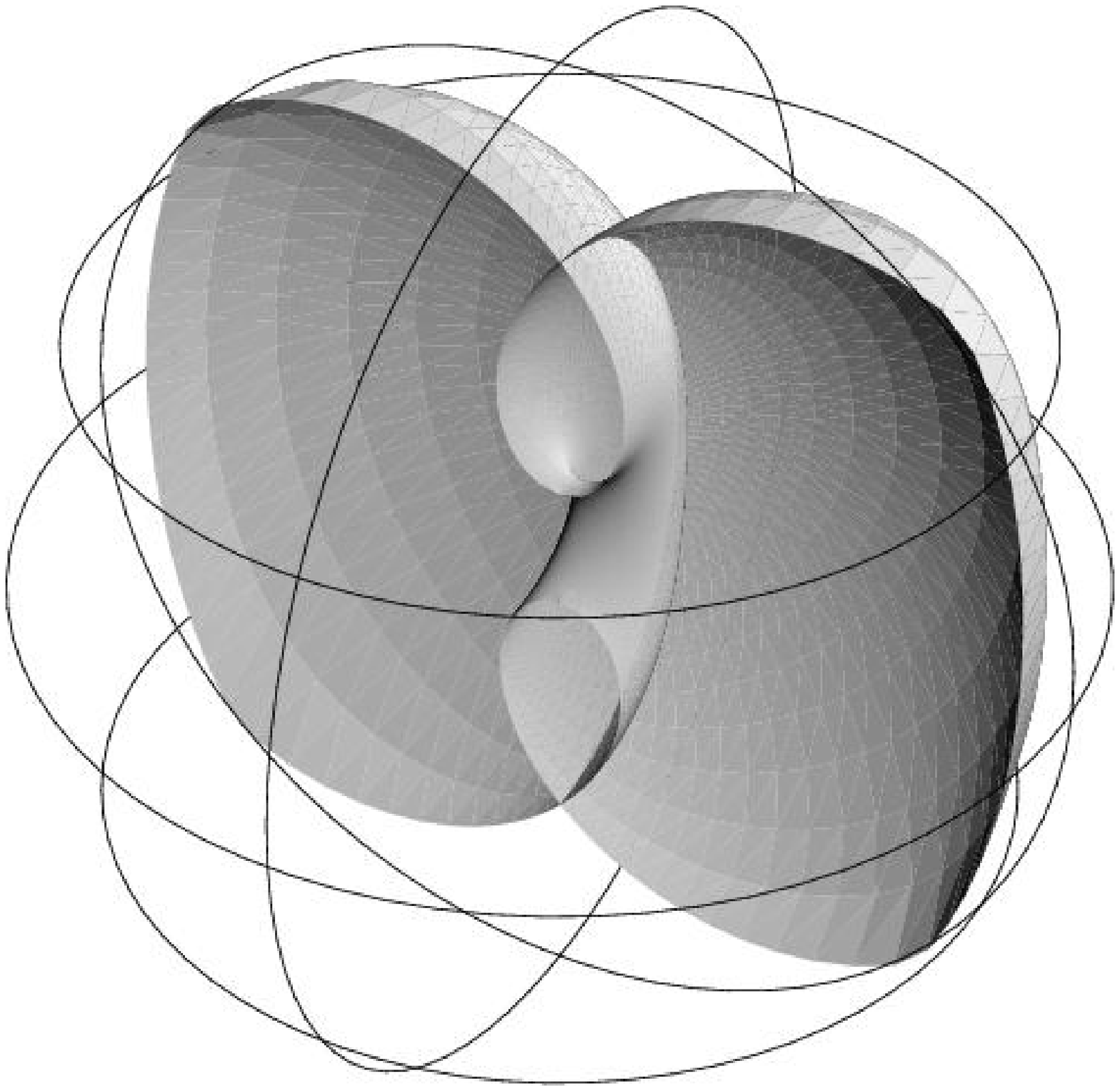}
  }
\\
   \footnotesize{Fundamental region of a $4$-noid} & 
   \footnotesize{A $4$-noid with $\TA(f)=5\pi$}
\end{tabular}
\end{center}
\caption{$4$-noid}
\label{fig:four-noid}
\end{figure}

\begin{example}[$4$-noids with $\TA(f)< 8\pi$]
\label{exa:four-noids}
 A \cmcone{} surface of genus $0$ with $4$ ends satisfies the
 Cohn-Vossen inequality $\TA(f)>4\pi$ (see
 \eqref{eq:cohn-vossen-general}).
 Though genus $0$ surfaces with an odd number of ends satisfy a sharper
 inequality (Theorem~\ref{thm:odd}), 
 it seems that the Cohn-Vossen inequality is sharp for $4$-noids, 
 by numerical experiment:
 Let $a\in (0,1)$ be a real number and
 $M=\C\cup\{\infty\}\setminus\{a,-a,a^{-1},-a^{-1}\}$.
 We set
\begin{align*}
   G &:= \frac{pz^3-z}{z^2-p}\;,\\
   Q &:=
      -\frac{\mu(\mu+2)a^2(a^2-a^{-2})^2}{(p a^4- ( 3p^2-1 )a^2+p) }\,
                 \frac{(pz^4 -(3p^2-1)z^2+p)}{
                ( z^2 - a^2 )^2(z^2-a^{-2} )^2}\,dz^2\;,
\end{align*}
 where $\mu>-1$ and $p\in\R\setminus\{0,1\}$ with
 $pa^4-(3p^2-1)a^2+p\neq 0$.
 If there exists a \cmcone{} immersion $f\colon{}M\to H^3$ with
 hyperbolic Gauss map $G$ and Hopf differential $Q$, then 
 $\TA(f)= 4\pi(2\mu+3)$.
 We shall solve the period problems using the method in \cite{ruy1}:
 Let $D:=\{ z=re^{i\theta}\in \C\; | \; 0<r<1, 0<\theta<\pi/2\}$.
 Then the Riemann surface $M$ is obtained by reflection of $D$ about 
 $\partial D$.
 Let $\tau_1$, $\tau_2$, $\tau_3$ and $\tau_4$ be 
 the reflections on the universal cover $\widetilde M$ of $M$, which are
 the lifts of the reflections on $M$ about the segment $(0,a)$ on the real
 axis, the segment $(0,i)$ on the imaginary axis, the unit circle $|z|=1$,
 and the segment $(a,1)$ on the real axis, respectively 
 (see Figure~\ref{fig:four-noid}, left).
 Let $F\colon{}\widetilde M\to\SL(2,\C)$ be a solution of \eqref{eq:Fsharp}.
 Since
\[
    \overline{G\circ\tau_j}=\sigma_j\star G,\qquad
    \overline{Q\circ\tau_j}=Q\qquad (j=1,2,3,4)
\]
 holds, where
\[
    \sigma_1=\sigma_4=\id,\qquad
    \sigma_2=\begin{pmatrix}
                i & \hphantom{-}0 \\
                0 & -i
             \end{pmatrix},\qquad
    \sigma_3=\begin{pmatrix}
                   0 & i \\
                   i & 0
             \end{pmatrix}\;,
\]
 there exist matrices $\rho_F(\tau_j)\in\SL(2,\C)$ $(j=1,2,3,4)$ 
 such that
\[
     \overline{F\circ\tau_j}=\sigma_j F\rho_F(\tau_j)\qquad
      (j=1,2,3,4)\;.
\]
 Moreover, by a similar argument to that in \cite[pp.~462--464]{ruy1},
 one can choose $F$ such that
\[
   \rho_F(\tau_1)=\id,\qquad
   \rho_F(\tau_2)=\sigma_2,\qquad
   \rho_F(\tau_j)=
       \begin{pmatrix}
          q_j & i \gamma_j^1 \\
          i\gamma_j^2 & \overline q_j
       \end{pmatrix}
   \qquad (j=3,4)\;,
\]
 where $\gamma_j^k\in\R$ and $q_j\bar q_j + \gamma_j^1\gamma_j^2=1$.
 Assume $\gamma_3^1\gamma_3^2>0$.
 Then there exists a unique solution $F$ of \eqref{eq:Fsharp} such that
\[
   \rho_F(\tau_1)=\id,\quad
   \rho_F(\tau_2)=\sigma_2,\quad
   \rho_F(\tau_3)=
       \begin{pmatrix}
          q & i \gamma \\
          i\gamma & \overline q
       \end{pmatrix},\quad
   \rho_F(\tau_3)=
       \begin{pmatrix}
          q_4 & i \gamma_4^1 \\
          i\gamma_4^2 & \overline q_4
       \end{pmatrix}\;.
\]
 For given $\mu$ and $a$, if one can choose $p$ so that 
 $\gamma_4^1=\gamma_4^2$, that is $\rho_F(\tau_j)\in\SU(2)$, then 
 there exists a \cmcone{} immersion $f$
 of $M$ into $H^3$ with hyperbolic Gauss map $G$ and Hopf differential $Q$,
 by Proposition~4.7 in \cite{ruy1}.
 
 By numerical calculation, for $\mu=-0.5$ and $a=0.8$, there exists
 $p\simeq 1.4$ such that the period problem is solved.  This surface thus 
 has $\TA(f)=8\pi$, and by continuity of the solvability of the period 
 problems, clearly there exist surfaces with $\TA(f) < 8\pi$.  
 Moreover, there exist such parameters $a$ and $p$ for
 $\mu\simeq -1$.
 So it seems that the Cohn-Vossen inequality for genus-zero $4$-ended
 \cmcone{} surfaces is sharp.
 Figure~\ref{fig:four-noid} shows the half cut of the surface with
 $\TA(f)=5\pi$.
\end{example}
\section{Reducibility}
\label{sec:red}
To state the results for higher $\TA(f)$ or $\TA(f^{\#})$, we review the
notion of {\em reducibility}.
For details, see \cite{uy3,uy7,ruy1}.
\subsection*{Metrics with conical singularities}
Let $\overline M$ be a compact Riemann surface.
A pseudometric $d\sigma^2$ on $\overline M$ is said to be an element
of $\metone(\overline M)$ if there exists a finite set of points
$\{p_1,\dots,p_n\}\subset \overline M$ such that
\begin{enumerate}
 \item $d\sigma^2$ is a conformal metric of constant curvature $1$
       on $\overline M\setminus\{p_1,\dots,p_n\}$, and
 \item $\{p_1,\dots,p_n\}$ is the set of {\em conical singularities\/}
       of $d\sigma^2$, that is, for each $j=1,\dots,n$, there exists
       a real number $\beta_j>-1$ so that $d\sigma^2$ is asymptotic to
       $c|z-p_j|^{2\beta_j}\,dz\,d\bar z$, where $z$ is a complex
       coordinate of $\overline M$ around $p_j$ and $c$ is a positive
       constant.
\end{enumerate}
We call the real number $\beta_j$ the {\em order\/} of the conical 
singularity $p_j$, and denote $\beta_j=\ord_{p_j}d\sigma^2$.
The formal sum
\begin{equation}\label{eq:metone-divisor}
   \beta_1p_1+\dots +\beta_np_n
\end{equation}
is called the {\em divisor corresponding to\/} $d\sigma^2$.

Let $d\sigma^2\in\metone(\overline M)$ with divisor
as in \eqref{eq:metone-divisor} and set $M:=\overline
M\setminus\{p_1,\dots,p_n\}$.
Then there exists a holomorphic map 
$g\colon{}\widetilde M\longrightarrow \C\cup\{\infty\}=\CP^1$
defined on the universal cover $\widetilde M$ of $M$ such that
\begin{equation}\label{eq:metone-develop}
    d\sigma^2 = \frac{4\,dg\,d\bar g}{(1+|g|^2)^2}=g^* ds_0^2\;,
\end{equation}
where $ds_0^2$ is the Fubini-Study metric of $\CP^1$.
We call $g$ the {\em developing map\/} of $d\sigma^2$.
The developing map is unique up the change
\begin{equation}\label{eq:metone-moebius}
 g\longmapsto a\star g\qquad (a\in\PSU(2))\;,
\end{equation}
where $a\star g$ denotes the M\"obius transformation of $g$
with respect to $a$ as in \eqref{eq:g-change}.
Here we write $a\in\PSU(2)$ as a $2\times 2$ matrix in $\SU(2)$
and identify $a$ with $-a$.

For each deck transformation $\tau\in\pi_1(M)$ on $\widetilde M$, 
$d\sigma^2=d\sigma^2\circ\tau$ holds.
So there exists a representation 
\begin{equation}\label{eq:metone-repr}
    \rho_g\colon{}\pi_1(M)\longrightarrow \PSU(2) 
    \quad\text{such that}\quad
    \rho\circ\tau^{-1}=\rho_g(\tau)\star g\quad
    \text{for $\tau\in\pi_1(M)$}\;.
\end{equation}
By a change of $g$ as in \eqref{eq:metone-moebius}, the corresponding
representation changes by conjugation:
\begin{equation}\label{eq:metone-repr-change}
    \rho_{a\star g}=a \rho_g a^{-1}\;.
\end{equation}
Let $\tau_j$ be a deck transformation induced from a small loop on
$\overline M$ surrounding a singularity $p_j$.
Then by \eqref{eq:metone-repr-change},
one can choose the developing map $g$ such that $\rho_g(\tau_j)$
is diagonal:
\[
    \rho_g(\tau_j) = \begin{pmatrix}
		         e^{\pi i \nu_j} & 0 \\
                         0 & e^{-\pi i \nu_j}
                     \end{pmatrix}
        \qquad (\nu_j\in\R)\;,
\]
namely, $g\circ\tau_j=e^{2\pi i \nu_j}g$.
This implies that $(z-p_j)^{-\nu_j}g$ is single-valued on a neighborhood
of $p_j$, where $z$ is a complex coordinate around $p_j$.
Then, replacing $\nu_j$ with $\nu_j+m$ ($m\in\Z$) if necessary,
we can normalize
\begin{equation}\label{eq:metone-g-norm}
     g = (z-p_j)^{\nu_j}\bigl(g_0 + g_1 (z-p_j) + g_2 (z-p_j)^2 +\dots\bigr)
          \qquad (g_0\neq 0)\;.
\end{equation}
By definition of the order and by equation \eqref{eq:metone-develop}, we have
\[
    \nu_j = \beta_j+1 \qquad\text{or}\qquad -\beta_j-1\;.
\]

\begin{definition}\label{def:metone-reducible}
 A pseudometric $d\sigma^2\in\metone(\overline M)$ is called
 {\em reducible\/} if the representation  $\rho_g$ can be diagonalized 
 simultaneously, where $g$ is the developing map of $d\sigma^2$.
 More precisely, a reducible metric $d\sigma^2$ is called 
 {\em $\Hyp^3$-reducible\/} if the representation is trivial, and called
 {\em $\Hyp^1$-reducible\/} otherwise.
 A pseudometric $d\sigma^2$ is called {\em irreducible\/} if it
 is not reducible.
\end{definition}

By definition, a developing map $g$ of an $\Hyp^3$-reducible metric is 
a meromorphic function on $\overline M$ itself.
Moreover, by \eqref{eq:metone-g-norm}, all conical singularities 
have integral orders, which coincide with the branching orders of the 
meromorphic function $g$.
In this case, for any $a\in\PSL(2,\C)$, $g_a:=a\star g$ induces a new
metric $d\sigma^2_a:=g_a^{\star}ds_0^2\in\metone(\overline M)$ with
the same divisor as $d\sigma^2$.
Since $d\sigma_a^2=d\sigma^2$ if $a\in\PSU(2)$, we have a non-trivial
deformation of $d\sigma^2$ preserving the divisor parametrized by a real
$3$-dimensional space $\Hyp^3=\PSL(2,\C)/\PSU(2)$, which is the
hyperbolic $3$-space.

On the other hand, assume $d\sigma^2\in\metone(\overline M)$ is
$\Hyp^1$-reducible.
Then there exists a developing map $g$ such that the image of $\rho_g$
consists of diagonal matrices.
Let $t$ be a positive real number and set
\[
    g_t:= t g =\begin{pmatrix}
		  t^{1/2} & 0 \\ 
                     0    & t^{-1/2}
               \end{pmatrix}\star g\;.
\]
Then by \eqref{eq:metone-repr-change}, $\rho_{g_t}=\rho_g$ holds.
Thus, $g_t$ induces a new metric $d\sigma_t^2\in\metone(\overline M)$.
So we have one parameter family of pseudometrics $\{d\sigma_t^2\}$
preserving the corresponding divisor.
This family is considered as a deformation of pseudometric parameterized
by a geodesic line in $\Hyp^3$.
For details, see the Appendix in \cite{ruy1}.

We introduce a criterion for reducibility:
\begin{lemma}\label{lem:metone-red}
  A metric $d\sigma^2\in\metone(\overline M)$ is reducible if and only
  if there exists a developing map such that $d\log g$ is a meromorphic 
  $1$-form on $\overline M$.
\end{lemma}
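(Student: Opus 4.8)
The plan is to reduce both implications to a single computation. For any $A=\begin{pmatrix}a&b\\c&d\end{pmatrix}$ with $\det A=ad-bc=1$ and any non-constant $g$ one has
\[
   d\log(A\star g)=\frac{(ad-bc)\,dg}{(ag+b)(cg+d)}=\frac{dg}{(ag+b)(cg+d)}\,,
\]
so the whole lemma turns on comparing the right-hand side with $d\log g=dg/g$. The bookkeeping I would set up first is this: since $g\circ\tau^{-1}=\rho_g(\tau)\star g$ by \eqref{eq:metone-repr}, pulling back along a deck transformation gives $(\tau^{-1})^{*}(d\log g)=d\log\!\bigl(\rho_g(\tau)\star g\bigr)$. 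Hence $d\log g$ descends to a single-valued $1$-form on $M$ if and only if $d\log\bigl(\rho_g(\tau)\star g\bigr)=d\log g$ for every $\tau\in\pi_1(M)$.

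For the forward implication, assume $d\sigma^2$ is reducible. Using the change $g\mapsto a\star g$ of \eqref{eq:metone-moebius}, which conjugates $\rho_g$ by \eqref{eq:metone-repr-change}, I may take $\rho_g(\tau)$ diagonal for all $\tau$; then $b=c=0$ above, $\rho_g(\tau)\star g$ is a constant multiple of $g$, and the displayed identity yields $d\log(\rho_g(\tau)\star g)=d\log g$. Thus $d\log g$ is single-valued on $M$. To obtain meromorphy across an end $p_j$, I would invoke the normalization \eqref{eq:metone-g-norm}, available because $\rho_g(\tau_j)$ is diagonal: from $g=(z-p_j)^{\nu_j}\bigl(g_0+g_1(z-p_j)+\cdots\bigr)$ with $g_0\neq 0$ one reads off $d\log g=\nu_j\,dz/(z-p_j)+(\text{holomorphic})$, a simple pole. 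So $d\log g$ extends meromorphically to $\overline M$.

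For the converse, suppose $d\log g$ is meromorphic on $\overline M$ for some developing map; then it is single-valued on $M$, so by the bookkeeping above $d\log(\rho_g(\tau)\star g)=d\log g$ for all $\tau$. Writing $A=\rho_g(\tau)$ and cancelling the non-zero factor $dg$, the displayed formula forces $(ag+b)(cg+d)=g$ identically in the non-constant function $g$. Comparing the coefficients of $g^2,g,1$ gives $ac=0$, $ad+bc=1$, $bd=0$, which, together with $\det A=1$, exclude the case $a=0$ and force $c=b=0$: $A$ is diagonal. Hence $\rho_g$ is diagonal, i.e.\ $d\sigma^2$ is reducible by Definition~\ref{def:metone-reducible}.

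The one genuinely delicate point is the meromorphy at the ends in the forward direction: it is essential that reducibility lets us diagonalize all the local monodromies $\rho_g(\tau_j)$ simultaneously for a single choice of $g$, so that \eqref{eq:metone-g-norm} holds at every $p_j$ at once. Note too that $\nu_j$ is in general non-integral, so $g$ itself is not meromorphic at $p_j$ while $d\log g$ is, with residue $\nu_j$. Everything else is the routine coefficient comparison, so I expect no further obstacle.
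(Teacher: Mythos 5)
Your proposal is correct and takes essentially the same route as the paper: both directions reduce to the observation that $\rho_g$ is simultaneously diagonal exactly when each $\rho_g(\tau)\star g$ is a constant multiple of $g$, i.e.\ exactly when $d\log g$ is deck-invariant. The only differences are minor --- where the paper integrates $d\log(g\circ\tau)=d\log g$ to conclude $g\circ\tau=\lambda_\tau g$, you compare coefficients of the M\"obius transformation to force the matrix to be diagonal, and you additionally verify meromorphy of $d\log g$ at the conical singularities via \eqref{eq:metone-g-norm}, a point the paper's proof leaves implicit.
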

\begin{proof}
 Assume $d\sigma^2$ is reducible.
 Then one can choose the developing map $g$ such that $\rho_g$ is
 diagonal.
 Then for each deck transformation $\tau\in\pi_1(M)$,
\[
     g\circ\tau^{-1} = \begin{pmatrix}
			   e^{\pi i\nu_{\tau}}  &   0 \\
                              0         &   e^{-\pi i\nu_{\tau}}
                       \end{pmatrix}\star g
                     = e^{2\pi i\nu_{\tau}} g
         \qquad (\nu_{\tau}\in\R)
\]
 holds.
 Hence we have $\log g\circ\tau = g + 2\pi i\nu_{\tau}$.
 Differentiating this, $d\log g\circ\tau=d\log g$ holds.
 Hence $d\log g$ is single-valued on $\overline M$.

 Conversely, we assume $d\log g$ is well-defined on $\overline M$ 
 for a developing map $g$.
 Then $\log g\circ\tau- \log g$ is a constant. 
 Hence we have $g\circ\tau=\lambda_\tau g$ for some constant
 $\lambda_{\tau}$.
 Then $\rho_g$ is diagonal.
\end{proof}
\subsection*{Relationship with CMC-1 surfaces} 
Let $f\colon{}\overline M_{\gamma}\setminus\{p_1,\dots,p_n\}\to H^3$ be
a complete conformal \cmcone{} immersion, where $\overline M_{\gamma}$
is a compact Riemann surface.
If $\TA(f)<\infty$, then the pseudometric $d\sigma^2$ as in
\eqref{eq:pseudo} is considered as an element of 
$\metone(\overline{M}_{\gamma})$ 
(see \cite{Bryant}), and the secondary Gauss map $g$ is the developing
map of $d\sigma^2$. 
Let $\{q_1,\dots,q_m\}$ be the set of umbilic points of $f$, that is 
the zeros of $Q$ and set $\xi_k:=\ord_{q_k}Q$ ($k=1,\dots,m$).
Then by \eqref{eq:metrics-rel}, $d\sigma^2$ has a conical singularity
of order $\xi_k$ for each $k=1,\dots,m$.
Hence the divisor of $d\sigma^2$ is in the form
\begin{equation}\label{eq:cmcone-divisor}
    \mu_1 p_1 + \dots +\mu_n p_n + 
    \xi_1 q_1 + \dots +\xi_m q_m\;,
\end{equation}
where the $\mu_j$ ($j=1,\dots,n$) are the branch orders of $g$ at each 
$p_j$.

Let $F$ be a holomorphic lift of $f$ as in \eqref{eq:bryant}.
Then there exists a representation $\rho_F\colon{}\pi_1(M)\to \SU(2)$
as in \eqref{eq:F-repr}.
By \eqref{eq:g-F}, the secondary Gauss map $g$ of
$F$ changes as $g\circ \tau^{-1} = \rho_F(\tau)\star g$
for each deck transformation $\tau\in\pi_1(M)$.
Hence the representation $\rho_g$ defined in \eqref{eq:metone-repr} satisfies
\begin{equation}\label{eq:g-F-repr}
   \rho_g(\tau) = \pm \rho_F(\tau)\qquad\bigl(\tau\in\pi_1(M)\bigr)\;.
\end{equation}

The immersion $f$ is called $\Hyp^3$-reducible
(resp.~$\Hyp^1$-reducible) if the corresponding pseudometric $d\sigma^2$
is $\Hyp^3$-reducible (resp.~$\Hyp^1$-reducible).
\begin{lemma}\label{lem:hyp-3-red-f-sharp}
  A \cmcone{} immersion $f\colon{}M\to H^3$ is $\Hyp^3$-reducible
  if and only if the dual immersion $f^{\#}$ is well-defined on $M$.
\end{lemma}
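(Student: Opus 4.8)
The plan is to track how the dual lift transforms under the deck group and thereby reduce the descent of $f^{\#}$ to a purely algebraic condition on the monodromy $\rho_F$. Recall that $f^{\#}=F^{-1}(F^{-1})^{*}$ is a priori defined only on $\widetilde M$, and that it descends to a well-defined map on $M$ exactly when $f^{\#}\circ\tau=f^{\#}$ for every $\tau\in\pi_1(M)$. Since $f$ itself is a well-defined immersion on $M$, the monodromy lies in $\SU(2)$, i.e.\ $\rho_F(\tau)\in\SU(2)$ (this is implicit in \eqref{eq:g-F-repr}), so $\rho_F(\tau)^{-1}=\rho_F(\tau)^{*}$.

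First I would compose the defining relation \eqref{eq:F-repr}: from $F\circ\tau=F\rho_F(\tau)$ one gets $F^{-1}\circ\tau=\rho_F(\tau)^{-1}F^{-1}$, and hence
\[
  f^{\#}\circ\tau
   =(F^{-1}\circ\tau)\,(F^{-1}\circ\tau)^{*}
   =\rho_F(\tau)^{-1}\,f^{\#}\,\bigl(\rho_F(\tau)^{-1}\bigr)^{*}
   =\rho_F(\tau)^{-1}\,f^{\#}\,\rho_F(\tau),
\]
where the last equality uses $\bigl(\rho_F(\tau)^{-1}\bigr)^{*}=\rho_F(\tau)$ for $\rho_F(\tau)\in\SU(2)$. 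Thus $f^{\#}$ descends to $M$ if and only if $\rho_F(\tau)$ commutes with the positive-definite Hermitian matrix $f^{\#}(p)$ for every $p\in\widetilde M$ and every $\tau\in\pi_1(M)$.

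The crux is the commutant step. Fix $\tau$ and set $A:=\rho_F(\tau)\in\SU(2)$. If $A\neq\pm\id$, then $A$ is diagonalizable with two distinct unit-modulus eigenvalues, so the set of determinant-one positive-definite Hermitian matrices commuting with $A$ is the one-parameter geodesic $\{\,U\diag(t,t^{-1})U^{*}\mid t>0\,\}$, where $U\in\SU(2)$ diagonalizes $A$. By Lemma~\ref{lem:complete} the metric $ds^2{}^{\#}$ is nondegenerate (because $ds^2$ is), so $f^{\#}$ is a genuine immersion whose image is locally a $2$-dimensional surface in $H^3$; it cannot lie in a $1$-dimensional set. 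Hence $A$ can commute with every $f^{\#}(p)$ only if $A=\pm\id$. (Geometrically, $f^{\#}\circ\tau$ is $f^{\#}$ moved by the orientation-preserving rigid motion $A^{-1}$ of $H^3$, and a nontrivial such motion cannot fix a surface pointwise.) Therefore $f^{\#}$ descends to $M$ if and only if $\rho_F(\tau)=\pm\id$ for all $\tau$.

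Finally I would translate this back into reducibility. By \eqref{eq:g-F-repr} we have $\rho_g(\tau)=\pm\rho_F(\tau)$ as elements of $\PSU(2)=\SU(2)/\{\pm\id\}$, so the condition $\rho_F(\tau)=\pm\id$ for all $\tau$ is exactly the condition that $\rho_g$ be the trivial representation, which by Definition~\ref{def:metone-reducible} means $f$ is $\Hyp^3$-reducible. Reading the chain of equivalences in both directions gives the lemma. The only non-routine ingredient is the commutant/rigid-motion argument, and it hinges solely on $f^{\#}$ being an honest immersion, which Lemma~\ref{lem:complete} guarantees.
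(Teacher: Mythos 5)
Your proof is correct and follows essentially the same route as the paper's: reduce well-definedness of $f^{\#}$ on $M$ to the condition $\rho_F(\tau)=\pm\id$ for all $\tau\in\pi_1(M)$ via the transformation law $f^{\#}\circ\tau=\rho_F(\tau)^{-1}f^{\#}\rho_F(\tau)$, and then translate this into triviality of $\rho_g$ using \eqref{eq:g-F-repr}. The paper simply asserts the first equivalence without justification, whereas your commutant/rigid-motion argument (a nontrivial rotation of $H^3$ fixes only a geodesic, which cannot contain the image of an immersed surface, with nondegeneracy of $ds^2{}^{\#}$ supplied by Lemma~\ref{lem:complete}) fills in exactly the detail needed for the nontrivial direction.
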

\begin{proof}
  Let $F$ be a lift of $f$.
  Then $f^{\#}=F^{-1}(F^{-1})^*$ is well-defined on $M$ if and only if
  $\rho_F=\pm\id$.
  This is equivalent to $\rho_g$ being the trivial representation, 
  by \eqref{eq:g-F-repr}.
\end{proof}
\section{The case $\TA(f^\#) \leq 8 \pi$}
\label{sec:dualcurvatureatmost8pi}
We now have enough notations and facts to describe results on the case
$\TA(f^\#) \leq 8 \pi$ \cite{ruy3}.

Let $f\colon{}\overline M_{\gamma}\setminus\{p_1,\dots,p_n\}\to H^3$
be a complete, conformal \cmcone{} immersion, where 
$\overline M_{\gamma}$ is a Riemann surface of genus $\gamma$.
Now we assume $\TA(f^{\#})\leq 8\pi$.
If the hyperbolic Gauss map $G$ has an essential singularity at any end
$p_j$, then $\TA(f^{\#})=+\infty$, 
since $\TA(f^{\#})$ is the area of the image of $G$.  
So $G$ is meromorphic on all of $\overline M_{\gamma}$.
In particular, $\TA(f^{\#})=4\pi\deg G=0$, $4\pi$, or $8\pi$.  

\begin{table}
\begin{footnotesize}
  \begin{tabular}{|l||c|l|l|l|l|}
    \hline
      Type & {\footnotesize$\TA(f^{\#})$} & {\footnotesize Reducibility} 
          & Status & 
                                \multicolumn{1}{|c|}{c.f.} \\
    \hline\hline
      $\gO(0)$  & 0 & $\Hyp^3$-red.
                & classified$^0$
                & Horosphere    \\
    \hline
    \hline
      $\gO(-4)$ & $4\pi$ & $\Hyp^3$-red.
                & classified
                & \parbox[t]{4.4cm}{
                    Duals of Enneper cousins \\
                    \hspace*{\fill}\cite[Example~5.4]{ruy1}}    \\
    \hline
      $\gO(-2,-2)$ & $4\pi$
                & 
               reducible
                                & classified
                & \parbox[t]{4.6cm}{
                Catenoid cousins \\
                and warped
                catenoid cousins with embedded ends (i.e. $\delta=1$)\par
                \cite[Example~2]{Bryant},\par
                \cite{uy1,ruy3,ruy4}\hfill} \\
    \hline
    \hline
      $\gO(-5)$ & $8\pi$ & $\Hyp^3$-red.
                & classified
                & \cite{ruy3} \\
    \hline
      $\gO(-6)$ & $8\pi$ & $\Hyp^3$-red.
                & classified
                & \cite{ruy3} \\
    \hline
      $\gO(-2,-2)$ 
                & $8\pi$ & red.
                & classified
                & \parbox[t]{4.5cm}{Double covers of 
                    catenoid cousins and warped 
                    catenoid cousins with  $\delta=2$ \\
                    \cite[Theorem~6.2]{uy1},\\
                    \cite{ruy3,ruy4}}
                    \\
    \hline
      $\gO(-1,-4)$ 
                & $8\pi$ & $\Hyp^3$-red.
                & classified$^0$
                & \cite{ruy3} \\
    \hline
      $\gO(-2,-3)$ 
                & $8\pi$ & $\Hyp^1$-red.
                & classified
                & 
                \cite{ruy3} \\
    \hline
      $\gO(-2,-4)$ 
                & $8\pi$ & $\Hyp^1$-red.
                & classified &
              \cite{ruy3} \\
                &  & $\Hyp^3$-red.
                & classified &
              \cite{ruy3} \\
    \hline
      $\gO(-3,-3)$ 
                & $8\pi$ & red.
                & existence
                & \cite{ruy3} \\
    \hline
      $\gO(-1,-1,-2)$ 
                & $8\pi$ &  $\Hyp^3$-red.
                & classified$^0$
                & \cite{ruy3} \\
    \hline
      $\gO(-1,-2,-2)$ 
                & $8\pi$ & $\Hyp^1$-red.
                & classified
                & \cite{ruy3} \\

                &  & $\Hyp^3$-red.
                & classified    
                & \cite{ruy3} 
\\
    \hline
      $\gO(-2,-2,-2)$ 
                & $8\pi$ & irred.
                & classified
                &  \cite[Theorem~2.6]{uy6}            \\
                &  & $\Hyp^1$-red.
                & existence$^+$
                & \cite{ruy3}    \\
                &  & $\Hyp^3$-red.
                & existence$^+$
                & \cite{ruy3}    \\
    \hline
      $\gI(-3)$ 
                & $8\pi$ &
                & unknown
                & \\ 
    \hline 
      $\gI(-4)$ 
                & $8\pi$ &
                & existence
                & Chen-Gackstatter cousins \cite{ruy3}    \\
    \hline
      $\gI(-1,-1)$ 
                & $8\pi$ &
                & unknown$^{+}$
                & \cite{ruy3}    \\
    \hline
      $\gI(-2,-2)$ 
                & $8\pi$ &
                & existence
                & Genus $1$ catenoid cousins~\cite{rs}    \\
    \hline
  \end{tabular}
\end{footnotesize}
\caption{\cmcone{} surfaces in $H^3$ with $\TA(f^{\#})\leq 8\pi$ \cite{ruy3}.}
\label{tab:class}
\end{table}
Since $f^\#$ has finite total curvature, the Hopf differential
$Q^{\#}=-Q$ can be extended to $\overline M_{\gamma}$ as a meromorphic
$2$-differential~\cite[Proposition 5]{Bryant}.
Hence $d_j = \ord_{p_j}Q$ is finite for each $j=1,\dots,n$.  
Our results from \cite{ruy3} are shown in Table~\ref{tab:class}.
In the table, 
\begin{itemize}
  \item {\em classified\/} means the complete list of the surfaces
        in such a class is known (and this means not only that we know 
        all the possibilities for the form of the data $(G,Q)$, 
        but that we also know exactly for which $(G,Q)$ the period problems 
        of the immersions are solved).  
  \item {\em classified\/$^0$} means there exists a unique surface
        (up to isometries of $H^3$ and deformations that come from its
           reducibility).  
  \item {\em existence\/} means that examples exist, but they are not yet 
                  classified.  
  \item {\em existence{$^+$}} means that all possibilities for the 
        data $(G,Q)$ are determined, 
        but the period problems are solved only for special cases.  
  \item {\em unknown\/} means that neither existence nor non-existence is 
        known yet.
  \item {\em unknown{$^+$}} means that all possibilities for the 
        data $(G,Q)$ are determined, but the period 
	problems are still unsolved.
 \end{itemize}
Any class and type of reducibility not listed in Table \ref{tab:class} cannot 
contain surfaces with $\TA(f^{\#}) \leq 8\pi$.  For example, any irreducible 
or $\Hyp^3$-reducible surface of type $\gO(-2,-3)$ must have dual total
absolute curvature at least $12 \pi$.  
\begin{table}
\begin{footnotesize}
  \begin{tabular}{|l||c|l|l|}
    \hline
      Type & $\TA$ & The surface & \multicolumn{1}{|c|}{c.f.} \\
    \hline\hline
      $\gO(0)$  & 0 
                & Plane
                &     \\
    \hline
      $\gO(-4)$ & $4\pi$ 
                & Enneper's surface
                &    \\
    \hline
      $\gO(-5)$ & $8\pi$ 
                & 
                & \cite[Theorem~6]{Lopez}   \\
    \hline
      $\gO(-6)$ & $8\pi$ 
                & 
                & \cite[Theorem~6]{Lopez}   \\
    \hline
      $\gO(-2,-2)$ 
                & $4\pi$
                & Catenoid
                & \\
                & $8\pi$
                & Double cover of the catenoid
                & \\
    \hline
      $\gO(-1,-3)$ 
                & $8\pi$
                & 
                & \cite[Theorem~5]{Lopez}\\
    \hline
      $\gO(-2,-3)$ 
                & $8\pi$
                & 
                & \cite[Theorem~4, 5]{Lopez}\\
    \hline
      $\gO(-2,-4)$ 
                & $8\pi$
                & 
                & \cite[Theorem~5]{Lopez}\\
    \hline
      $\gO(-3,-3)$ 
                & $8\pi$
                & 
                & \cite[Theorem~4]{Lopez}\\
    \hline
      $\gO(-1,-2,-2)$ 
                & $8\pi$
                & 
                & \cite[Theorem~5]{Lopez}\\
    \hline
      $\gO(-2,-2,-2)$ 
                & $8\pi$
                & 
                & \cite[Theorem~5]{Lopez}\\
    \hline
      $\gI(-4)$ 
                & $8\pi$
                & Chen-Gackstatter surface
                & \cite[Theorem~5]{Lopez}, \cite{cg}\\
    \hline
  \end{tabular}
\end{footnotesize}
\caption{
 The classification of complete minimal surfaces in $\R^3$ with 
 $\TA\leq 8\pi$ (\cite{Lopez}), for comparison with Table~\ref{tab:class}.}
\label{tab:minimal}
\end{table}

Table \ref{tab:minimal} shows the corresponding results for minimal surfaces
in $\R^3$, the classification of complete minimal surfaces with $\TA\leq
8\pi$ \cite{Lopez}.
Comparing these two tables, one sees differences between the classes of
minimal surfaces with $\TA\leq 8\pi$ and the classes of 
\cmcone{} surfaces with $\TA(f^{\#})\leq 8\pi$.
For example, there exist no mimimal surfaces of classes $\gO(-1,-4)$ and 
$\gO(-1,-1,-2)$ with $\TA\leq 8\pi$, but \cmcone{} surfaces of such types 
do exist.  
\section{The case of $\TA(f)\leq 8\pi$}
\label{sec:higher}
In the remainder of this paper, we shall give new results on the case of
higher $\TA(f)$.  
\subsection*{Preliminaries}
First, we give further notations and facts that will be needed in our
discussion.
\subsubsection*{Orders of the Gauss maps}
Let $\overline M_{\gamma}$ be a compact Riemann surface of genus
$\gamma$.
For a complete conformal \cmcone{} immersion
$f\colon{}M=\overline M_{\gamma} \setminus\{p_1,\dots,p_n\}\to H^3$
with $\TA(f)<\infty$, we define $\mu_j$ and $\mu^{\#}_j$ to be the
branching orders of the Gauss maps $g$ and $G$, respectively, at an end
$p_j$.  
Then the pseudometric $d\sigma^2$ as in \eqref{eq:pseudo} has a conical 
singularity of order $\mu_j>-1$ at each end $p_j$ ($j=1,\dots,n$).
Let $d_j=\ord_{p_j}Q$ $(j=1,\dots,n)$.
Then an end $p_j$ is regular if and only if $d_j\geq -2$ (see
Section~\ref{sec:prelim}, or \cite{uy1}).
If an end $p_j$ is irregular, then $\mu_j^{\#}=\infty$.  
At a regular end $p_j$, the relation \eqref{eq:schwarz} implies that
the Hopf differential $Q$ expands as 
\begin{equation}\label{eq:q-first}
    Q = \left(\frac{1}{2}\frac{c_j-c_j^{\#}}{(z-p_j)^2} + \dots \right)
     \,dz^2 \; , 
\end{equation}
where
\begin{equation}\label{eq:c-j}
    c_j = -\frac{1}{2}\mu_j(\mu_j+2),\qquad
    c_j^{\#} = -\frac{1}{2}\mu_j^{\#}(\mu_j^{\#}+2)
\end{equation}
and $z$ is a local complex coordinate around $p_j$.

Let $\{q_1,\dots,q_m\} \subset M$ be the $m$ umbilic points of the surface, 
and let $\xi_k=\ord_{q_k}Q $.  
Since the total order of a holomorphic $2$-differential is
$-2\chi(\overline M_{\gamma})$, we have
\begin{equation}\label{eq:fact-a}
   \sum_{j=1}^n d_j+\sum_{k=1}^m 
             \xi_k=4\gamma-4, 
   \qquad
   \text{in particular,}
   \quad  \sum_{j=1}^n d_j\le 4\gamma-4\;.
\end{equation}
By \eqref{eq:metrics-rel} and \eqref{eq:schwarz}, it holds that
\begin{align}\label{eq:fact-h}
    \xi_k &= \text{[the branching order of $G$ at $q_k$]}
           = \text{[the branching order of $g$ at $q_k$]}\\
          &= \ord_{q_k} d\sigma^2
           = \ord_{q_k} Q\;.
\nonumber
\end{align}
As in (2.4) of \cite{ruy3}, the Gauss-Bonnet theorem for 
$(\overline M_{\gamma},d\sigma^2)$ implies 
\begin{equation}\label{eq:ta-non-dual}
   \frac{\TA(f)}{2\pi}=\chi(\overline M_{\gamma})+
                    \sum_{j=1}^{n}\mu_j + \sum_{k=1}^m \xi_k 
    =(2\gamma-2)+ \sum_{j=1}^{n}\mu_j + \sum_{k=1}^m \xi_k 
\end{equation}
as well as 
\begin{equation}\label{eq:ta-dual}
   \frac{\TA(f^{\#})}{2\pi}=\chi(\overline M_{\gamma})+
                    \sum_{j=1}^{n}\mu_j^{\#} + \sum_{k=1}^m \xi_k \;,
\end{equation}
which is obtained from the Gauss-Bonnet theorem for 
$d\sigma^2{}^{\#}=(-K^{\#})ds^2{}^{\#}$ \cite{ruy3}.
Combining this with \eqref{eq:fact-a}, we have 
\begin{equation}\label{eq:fact-b}
  \frac{\TA(f)}{2\pi}=2\gamma-2+\sum_{j=1}^n(\mu_j-d_j)\;.
\end{equation}
Proposition~4.1 in \cite{uy1} implies that
\begin{equation}\label{eq:fact-c}
   \mu_j-d_j>1, \qquad
   \text{in particular,} \qquad
   \mu_j-d_j\geq 2\quad 
   \text{if $\mu_j\in\Z$}\;.
\end{equation}
An end $p_j$ is regular if and only if $d_j\geq -2$, and then $G$ is 
meromorphic at $p_j$. Thus 
\begin{equation}\label{eq:fact-f}
  \text{$\mu_j^{\#}$ is a non-negative integer if $d_j\geq -2$}\;.
\end{equation}
In this case, it holds that (Lemma~3 of \cite{uy5})
\begin{equation}\label{eq:fact-dual}
 \mu_j^{\#}-d_j\geq 2\quad 
  \text{and the equality holds if and only if $p_j$ is embedded.}
\end{equation}  
By Proposition 4 of \cite{Bryant}, 
\begin{equation}\label{eq:fact-g}
   \mu_j > -1 \; , 
\end{equation}
hence equation \eqref{eq:q-first} implies 
\begin{equation}\label{eq:fact-d}
   \mu_j=\mu_j^\#\qquad 
   \text{if}\quad d_j\geq -1\;.
\end{equation}
Finally, we note that
\begin{equation}\label{eq:fact-e}
  \parbox[t]{10cm}{
    any meromorphic function on a Riemann surface 
    $\overline M_{\gamma}$ of genus $\gamma\geq 1$
    has at least three distinct branch points.
  }
\end{equation}
To prove this, let $\varphi$ be a meromorphic function on $\overline M_\gamma$ 
with branch points $\{q_1$, $\dots$, $q_N\}$ with branching order $\nu_k$ at 
$q_k$.  Then the Riemann-Hurwitz relation implies 
\[
   2\deg \varphi = 2 - 2\gamma + \sum_{k=1}^{N}{\nu_k}\;.
\]
On the other hand, since the multiplicity of $\varphi$ at $q_k$ 
is $\nu_k+1$, $\deg\varphi\geq \nu_k+1$ ($k=1,\dots,m$).  Thus 
\[
   (N-2)\deg\varphi \geq 2(\gamma-1)+N\;.
\]
If $\gamma\geq 1$, then $\deg \varphi \geq 2$, and so $N \geq 3$.  
\begin{remark*}
 Facts \eqref{eq:fact-b} and \eqref{eq:fact-c} imply that, 
 for \cmcone{} surfaces, equality never holds in the Cohn-Vossen 
 inequality (see \eqref{eq:cohn-vossen-general} and \cite{uy1}).
\end{remark*}
\subsubsection*{Flux for CMC-1 surfaces}
Let $f\colon{}\overline M_{\gamma}\setminus\{p_1,\dots,p_n\}\to H^3$ be
a complete \cmcone{} immersion.
For each end $p_j$, the {\em flux\/} at $p_j$ is defined as
(\cite{ruy2})
\begin{equation}\label{eq:flux}
 \Fl_j:=\frac{1}{2\pi i}\int_{\tau_j}
           \begin{pmatrix}
	      G & -G^2 \\
              1 & -G\hphantom{^2}
	   \end{pmatrix}
           \frac{Q}{dG}\in\sl(2,\C)\qquad (j=1,\dots,n)\;,
\end{equation}
where $G$ and $Q$ are the hyperbolic Gauss map and the Hopf differential
of $f$ respectively, and $\tau_j$ is a loop surrounding the end $p_j$.
Then the following balancing formula holds (Theorem~1 in \cite{ruy2}):
\begin{equation}\label{eq:balancing}
 \sum_{j=1}^n \Fl_j=0\;.
\end{equation}
Moreover, it holds that (Proposition~2 and Corollary 5 in \cite{ruy2}):
\begin{proposition}\label{prop:flux}
 For a complete \cmcone{} immersion 
 $f\colon{}\overline M_{\gamma}\setminus\{p_1,\dots,p_n\}\to H^3$,
 \begin{enumerate}
  \item\label{item:flux:1}
       If an end $p_j$ is regular and $\ord_{p_j}Q=-2$, 
       then $\Fl_j\neq 0$.
  \item\label{item:flux:2}
       If and end $p_j$ is regular and embedded, $\Fl_j=0$ if and only
       if $\ord_{p_j}Q\geq 0$.
 \end{enumerate}
\end{proposition}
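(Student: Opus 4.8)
The plan is to read off $\Fl_j$ as a residue. By \eqref{eq:Fsharp} the matrix appearing in \eqref{eq:flux} is exactly the single-valued meromorphic $\sl(2,\C)$-valued $1$-form $\Phi:=dF\,F^{-1}=\begin{pmatrix} G & -G^2 \\ 1 & -G \end{pmatrix}\omega^{\#}$, where $\omega^{\#}:=Q/dG$, so that $\Fl_j=\operatorname{Res}_{p_j}\Phi$ (entrywise residue). Since replacing the lift by $aF$ ($a\in\SL(2,\C)$) is a rigid motion of $H^3$ that by \eqref{eq:changes} sends $G\mapsto a\star G$, fixes $Q$, and conjugates $\Fl_j\mapsto a\Fl_j a^{-1}$, it preserves both $\ord_{p_j}Q$ and the vanishing of $\Fl_j$. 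So I first normalize $G(p_j)=0$; in a local coordinate $z$ at $p_j$ this gives $G=az^{k}+\cdots$ (with $a\neq0$, $k=\mu_j^{\#}+1$) and $Q=(b\,z^{d_j}+\cdots)\,dz^2$ (with $b\neq0$), hence $\omega^{\#}=\tfrac{b}{ak}\,z^{\,d_j-\mu_j^{\#}}(1+\cdots)\,dz$.

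For (1), where $d_j=-2$, the $(1,1)$-entry of $\Phi$ is $G\,\omega^{\#}=\tfrac{b}{k}\,z^{-1}(1+\cdots)\,dz$, whose residue is $b/k\neq0$; thus $\Fl_j\neq0$ regardless of the branching order $\mu_j^{\#}$.

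For (2), regularity and embeddedness give equality $\mu_j^{\#}-d_j=2$ in \eqref{eq:fact-dual}, so $\omega^{\#}$ has a double pole and $G$ vanishes to order $k=d_j+3$. If $d_j=-2$, then $k=1$ and (1) applies. If $d_j\geq-1$, then $k\geq2$, so $G\,\omega^{\#}$ and $G^2\omega^{\#}$ have orders $\geq k-2\geq0$ and $\geq2k-2>0$ and are holomorphic at $p_j$; the diagonal and upper-right entries of $\Fl_j$ therefore vanish and
\[
   \Fl_j=\begin{pmatrix} 0 & 0 \\ \operatorname{Res}_{p_j}\omega^{\#} & 0 \end{pmatrix}.
\]
Hence for these ends $\Fl_j=0$ if and only if $\operatorname{Res}_{p_j}(Q/dG)=0$, and everything reduces to showing that this residue vanishes precisely when $d_j\geq0$.

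This reduction isolates the main obstacle: $\operatorname{Res}_{p_j}(Q/dG)$ is a subleading coefficient not determined by the orders $d_j,\mu_j^{\#}$ alone. Writing $G=\zeta^{k}$ in a suitable coordinate $\zeta$, one gets $\operatorname{Res}_{p_j}\omega^{\#}=\tfrac1k c_{k-2}$, where $c_{k-2}$ is the coefficient of $\zeta^{k-2}$ in $Q/d\zeta^2$; then \eqref{eq:schwarz} together with $S(\zeta^{k})=-\tfrac{k^2-1}{2}\zeta^{-2}\,d\zeta^2$ turns this into the corresponding coefficient of $S(g)/d\zeta^{2}$. Here embeddedness is essential: by \eqref{eq:fact-d} it forces $\mu_j=\mu_j^{\#}$, so $g$ is meromorphic with the same branching order $k-1$ as $G$, and I would invoke the sharp asymptotic normal form for embedded regular ends (from \cite{uy1,uy5}) to evaluate this coefficient, showing it is nonzero exactly in the borderline case $d_j=-1$ and zero once $d_j\geq0$. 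Geometrically this is the dichotomy between catenoid-cousin-type ends (nonzero flux) and horosphere-type ends (zero flux), and carrying out the normal-form computation is the delicate part of the argument.
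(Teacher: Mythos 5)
The paper does not actually prove this proposition; it is quoted from Proposition~2 and Corollary~5 of \cite{ruy2}, so your attempt has to be judged on its own merits. Your part (1) is correct and complete: the normalization $G(p_j)=0$ via $F\mapsto aF$ is legitimate (it conjugates $\Fl_j$ and fixes $Q$, by \eqref{eq:changes} and \eqref{eq:Fsharp}), and then the $(1,1)$-entry of $\Fl_j$ is $\operatorname{Res}_{p_j}(G\,Q/dG)=b/k\neq 0$ whenever $\ord_{p_j}Q=-2$. Your reduction of part (2) is also correct: embeddedness gives $\mu_j^{\#}=d_j+2$ by \eqref{eq:fact-dual}, so for $d_j\geq -1$ the diagonal and upper-right entries of $\Fl_j$ vanish and $\Fl_j=0$ if and only if $\operatorname{Res}_{p_j}(Q/dG)=0$.

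The gap is that this last equivalence, $\operatorname{Res}_{p_j}(Q/dG)=0\iff d_j\geq 0$, \emph{is} the substance of part (2), and you do not prove it: you defer it to a ``sharp asymptotic normal form for embedded regular ends'' from \cite{uy1,uy5}, but no off-the-shelf normal form pins down this subleading coefficient, and (as you yourself observe) no information about orders alone can do so. The mechanism that actually forces the dichotomy is a monodromy/log-term constraint, the same one used in Propositions~\ref{prop:o0-2-3}, \ref{prop:o1-2-3} and \ref{prop:o-1-2-2-a} of this paper. Concretely, write $G=z^{k}$, $k=d_j+3$, and $Q=\bigl(b_{k-3}z^{k-3}+b_{k-2}z^{k-2}+\cdots\bigr)dz^2$, so that $\operatorname{Res}_{p_j}(Q/dG)=b_{k-2}/k$. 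The entries $F_{21},F_{22}$ of the lift solve the ordinary differential equation $X''-(\log\hat\omega^{\#})'X'+\hat QX=0$ with $\omega^{\#}=Q/dG=\hat\omega^{\#}dz$, whose indicial roots at $p_j$ are $0$ and $-1$; since $f$ is single-valued near the end, $\rho_F(\tau_j)\in\SU(2)$ is diagonalizable, and a diagonalizable monodromy whose eigenvalues are both $e^{2\pi i\cdot 0}=e^{-2\pi i}=1$ must be the identity, so the log-term coefficient of this equation vanishes. Computing that coefficient by the Frobenius method gives exactly $b_{k-2}=0$ when $k\geq 3$ (i.e.\ flux zero when $d_j\geq 0$) and $b_0=-b_{-1}^{\,2}\neq 0$ when $k=2$ (i.e.\ flux nonzero when $d_j=-1$). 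Equivalently, since $\mu_j=\mu_j^{\#}$ by \eqref{eq:fact-d} makes $g$ meromorphic at $p_j$, a direct expansion of $S(g)-S(z^k)=2Q$ yields the same two identities. Without carrying out one of these computations, neither direction of the ``if and only if'' is established for $d_j\geq -1$, so part (2) of your proof remains open.
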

Then by the balancing formula \eqref{eq:balancing}, we have
\begin{corollary}\label{cor:flux}
 There exists no complete \cmcone{} surface of finite total curvature
 with only one end $p$ that is regular, such that either one of the
 following holds{\rm :}
 \begin{enumerate}
  \item $\ord_pQ=-2$.
  \item $\ord_pQ<0$ and the end is embedded.
 \end{enumerate}
\end{corollary}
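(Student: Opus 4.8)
The plan is to obtain both cases as immediate consequences of combining the balancing formula~\eqref{eq:balancing} with Proposition~\ref{prop:flux}. The first observation is that when the surface has a single end~$p$, the sum in the balancing formula~\eqref{eq:balancing} reduces to its one term, so it collapses to $\Fl_p=0$. Since the end is regular by hypothesis, $G$ is meromorphic at~$p$, so $\Fl_p$ is well-defined by~\eqref{eq:flux} and the balancing formula indeed applies. The strategy in each case is then to show that the hypotheses on $\ord_p Q$ force $\Fl_p\neq 0$, producing a contradiction.

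For case~(1), where $\ord_p Q=-2$, I would apply part~(1) of Proposition~\ref{prop:flux} directly: it asserts $\Fl_p\neq 0$ for any regular end with $\ord_p Q=-2$. This immediately contradicts $\Fl_p=0$, ruling out such a surface. For case~(2), where $\ord_p Q<0$ and the end is embedded, I would invoke part~(2) of Proposition~\ref{prop:flux}, which states that a regular embedded end satisfies $\Fl_p=0$ if and only if $\ord_p Q\geq 0$. As $\ord_p Q<0$ by hypothesis, this forces $\Fl_p\neq 0$, again contradicting the balancing formula.

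No genuine obstacle arises in this deduction: the substantive work is entirely contained in Proposition~\ref{prop:flux}, and the corollary follows simply by specializing the balancing formula to the one-ended case. The only point worth noting is the consistency between the two cases. When $\ord_p Q=-2$ both parts of Proposition~\ref{prop:flux} apply, and for a regular end one always has $\ord_p Q\geq -2$; hence in case~(2) the hypothesis $\ord_p Q<0$ means $\ord_p Q\in\{-2,-1\}$, each of which is covered by one of the two parts. Thus the one-ended balancing identity $\Fl_p=0$ is incompatible with every configuration listed, completing the argument.
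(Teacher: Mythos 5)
Your proposal is correct and is exactly the paper's argument: the paper introduces this corollary with the words ``Then by the balancing formula \eqref{eq:balancing}, we have,'' i.e., the one-end balancing identity forces $\Fl_p=0$, which contradicts Proposition~\ref{prop:flux}~(1) in case~(1) and Proposition~\ref{prop:flux}~(2) in case~(2). Your closing remark that a regular end with finite total curvature automatically has $\ord_pQ\geq -2$ is also consistent with the paper's characterization of regular ends, so nothing is missing.
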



\subsection*{Results for \boldmath$\TA(f)\leq 8\pi$}
First, we prepare the following lemma.

\begin{lemma}\label{lem:general-class}
 Let $f\colon{}M \to H^3$ be a complete \cmcone{} immersion of genus
 $\gamma$ and $n$ ends with $\TA(f) \leq 2\pi\rho$.
 If $f$ is not totally umbilic {\rm (}not a horosphere{\rm )}, 
 then the following hold{\rm:}
\begin{enumerate}
 \item $2 \gamma < \rho+1$ and $1 \leq n < \rho-2\gamma + 2$.  
 \item If $n=1$, then $2\gamma-\rho-3<d_1 \leq 4\gamma-4$ and $d_1 \neq -2$.  
 \item If $\gamma=n=1$, then $-\rho-1<d_1 \leq -3$.  
 \item If $2 \leq n = \rho+1-2\gamma$, then $d_j=-2$ at all ends.  
 \item If $1= n = \rho+1-2\gamma$, then $d_1 \geq 0$ and $\mu_1 = 2 + d_1$.  
\end{enumerate}
\end{lemma}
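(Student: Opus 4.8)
The plan is to reduce everything to the single curvature identity \eqref{eq:fact-b}, which under the hypothesis $\TA(f)\le 2\pi\rho$ reads
\[
   \sum_{j=1}^n(\mu_j-d_j)\le \rho-2\gamma+2,
\]
and then feed it the two lower bounds of \eqref{eq:fact-c}, namely $\mu_j-d_j>1$ always and $\mu_j-d_j\ge 2$ whenever $\mu_j\in\Z$. With these in hand the bulk of the lemma is a chain of elementary inequalities, sharpened at the extremal cases by the integrality facts \eqref{eq:fact-d}, \eqref{eq:fact-f}, the branch-point count \eqref{eq:fact-e}, and the flux obstruction Corollary~\ref{cor:flux}. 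For (1) I would sum $\mu_j-d_j>1$ over the $n$ ends to get $n<\sum_j(\mu_j-d_j)\le\rho-2\gamma+2$; combined with $n\ge 1$ this gives both $n<\rho-2\gamma+2$ and $2\gamma<\rho+1$. For (2), with $n=1$ the bound $d_1\le 4\gamma-4$ is immediate from \eqref{eq:fact-a}; the lower bound follows by rearranging the displayed inequality and using $\mu_1>-1$ from \eqref{eq:fact-g}; and $d_1\ne-2$ is exactly Corollary~\ref{cor:flux}(1), since a lone end with $d_1=-2$ is automatically regular.

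Part (3) needs one genuinely new ingredient. With $\gamma=n=1$, part (2) already yields $-\rho-1<d_1\le 0$ and $d_1\ne-2$, so it remains to exclude $d_1=0$ and $d_1=-1$. In both cases $d_1\ge-2$, so the end is regular and $G$ extends to a meromorphic function on the closed genus-one surface $\overline M_1$; by \eqref{eq:fact-e} it must then have at least three distinct branch points. However, \eqref{eq:fact-a} gives $\sum_k\xi_k=-d_1$, and since each umbilic point carries branching order $\xi_k\ge 1$ there are at most $-d_1$ umbilic branch points of $G$ in $M$; adding the single possible branch point at the end $p_1$ leaves at most $-d_1+1\le 2$ distinct branch points when $d_1\in\{0,-1\}$. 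This contradiction forces $d_1\le-3$.

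Parts (4) and (5) treat the extremal case $n=\rho+1-2\gamma$, where the master inequality becomes $\sum_j(\mu_j-d_j)\le n+1$ with every summand exceeding $1$. The pivotal observation is that if even one summand were $\ge 2$ the total would strictly exceed $n+1$, so in fact $1<\mu_j-d_j<2$ for every $j$; by \eqref{eq:fact-c} this forces each $\mu_j$ to be non-integral. Then $\mu_j-d_j<2$ together with $\mu_j>-1$ gives $d_j>-3$, hence each end is regular with $d_j\ge-2$; and if some $d_j\ge-1$, facts \eqref{eq:fact-d} and \eqref{eq:fact-f} would make $\mu_j=\mu_j^\#$ a non-negative integer, contradicting non-integrality. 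So $d_j=-2$ for all $j$, proving (4). For (5), where $n=1$, the same dichotomy shows the strict inequality $\mu_1-d_1<2$ is impossible: it would force $d_1\le-2$, but $d_1=-2$ is barred by (2) while $d_1\le-3$ violates $\mu_1>-1$. Hence $\mu_1=d_1+2$; then $d_1\ge-2$ and $d_1\ne-2$, while $d_1=-1$ would give $\mu_1^\#=\mu_1=1$ and hence equality in \eqref{eq:fact-dual}, making the end embedded and contradicting Corollary~\ref{cor:flux}(2). Thus $d_1\ge 0$.

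I expect the bookkeeping in (3) to be the main obstacle, since it is the only place where the topological constraint \eqref{eq:fact-e} enters and one must first verify that regularity lets $G$ descend to $\overline M_1$ before counting its branch points correctly. The remaining parts are inequality manipulations steered by the integrality and embeddedness dichotomies, and the only subtlety there is keeping track of why a single regular end of order $-2$ is excluded (Corollary~\ref{cor:flux}) while several such ends are permitted, which is precisely what distinguishes the conclusion of (5) from that of (4).
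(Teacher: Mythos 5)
Your proposal is correct and follows essentially the same route as the paper's proof: both rest on the master identity \eqref{eq:fact-b} fed with \eqref{eq:fact-c} and \eqref{eq:fact-g}, sharpened by \eqref{eq:fact-a}, the integrality facts \eqref{eq:fact-d} and \eqref{eq:fact-f}, the branch-point count \eqref{eq:fact-e} for part (3), and the flux obstruction of Corollary~\ref{cor:flux} with the balancing formula \eqref{eq:balancing} for the exclusions $d_1\neq -2$ and $d_1\neq -1$. The only deviations are cosmetic reorderings (e.g.\ in (4) you get $d_j\geq -2$ pointwise from $\mu_j-d_j<2$ and $\mu_j>-1$, where the paper instead combines $d_j\leq -2$ with the summed bound $-2n\leq \sum_j d_j$), which do not change the substance of the argument.
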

\begin{proof}
 The first item of the lemma is obtained from the Cohn-Vossen inequality
 \eqref{eq:cohn-vossen-general}.
 In particular, if $n=1$, \eqref{eq:fact-b}, \eqref{eq:fact-g} and 
 \eqref{eq:fact-a} imply
 \[
    \rho\geq 2\gamma-2+\mu_1-d_1>2\gamma-3-d_1\qquad
    \text{and}\qquad
     d_1\leq 4\gamma-4\;.
 \]
 In this case, by the balancing formula \eqref{eq:balancing},
 the flux $\Fl_1$ must be vanish.
 Hence by Corollary~\ref{cor:flux},
 $d_1 \neq -2$ holds, and the second item of the theorem follows.  
 Even more particularly, if $\gamma=n=1$, then  $-\rho-1 < d_1 \leq 0$
 and $d_1 \neq -2$.  
 Assume $d_1\geq -1$.
 In this case, the end is regular, and then $G$ is a meromorphic
 function on $\overline M_{\gamma}$.
 On the other hand, \eqref{eq:fact-a} implies that there is at most 
 one umbilic point.
 Since a branch point of $G$ is an umbilic point or an end, this implies 
 that the number of branch points of $G$ is at most $2$, which
 contradicts  \eqref{eq:fact-e}.
 Hence the third item is proven.
 
 Suppose $n=\rho-2\gamma$.  
 Then \eqref{eq:fact-a} implies 
\begin{equation}\label{eq:critical-sum}
   n+1 \geq \sum_{j=1}^n (\mu_j-d_j) \; , 
\end{equation}
 and we consider two cases:  
\paragraph{\it Case 1}
 If $n \geq 2$, then \eqref{eq:fact-c} implies that 
 $1 < \mu_j - d_j < 2$ for all $j$, so $\mu_j \not\in \Z$ 
 for all $j$, 
 and hence \eqref{eq:fact-d} implies that $d_j \leq -2$ for all $j$.  
 But by \eqref{eq:critical-sum} and \eqref{eq:fact-g},
 we have $-2n\leq \sum_{j=1}^n d_j$,
 and so $d_j=-2$ for all $j$.
\paragraph{\it Case 2}
 If $n=1$, then $1 < \mu_1-d_1 \leq 2$ holds because of
 \eqref{eq:critical-sum} and \eqref{eq:fact-c}.
 Hence by \eqref{eq:fact-g}, $d_1 \geq -2$.
 But Corollary~\ref{cor:flux} implies $d_1\geq -1$.
 Then by \eqref{eq:fact-d}, $\mu_1\in\Z$ and $\mu_1-d_1=2$ holds.
 Suppose $d_1 = -1$.  
 Then $\mu_1^{\#}=\mu_1=d_1+2=1$, and then by \eqref{eq:fact-dual},
 the only end $p_1$ is regular and embedded.
 This contradicts to  \ref{item:flux:2} and \eqref{eq:balancing}.
 Hence $d_1 \geq 0$.  
\end{proof}
\begin{table}
\small
  \begin{tabular}{|l||c|l|c|l|}
    \hline
      Type & $\TA(f)$ & Reducibility &  Status & \multicolumn{1}{|c|}{cf.} \\
    \hline\hline
      $\gO(0)$  & 0 & $\Hyp^3$-red.
                & classified
                & Horosphere    \\
    \hline
      $\gO(-4)$ & $4\pi$ & $\Hyp^3$-red.
                & classified
                & Enneper cousins \hspace*{\fill}\cite{Bryant}    \\
    \hline
      $\gO(-5)$ & $8\pi$ & $\Hyp^3$-red.
                & classified
                & Same as ``dual'' case \\
    \hline
      $\gO(-6)$ & $8\pi$ & $\Hyp^3$-red.
                & classified
                & Same as ``dual'' case \\
    \hline\hline
      $\gO(-2,-2)$ & $(0,8\pi]$ & $\Hyp^1$-red.
                & classified
                & \parbox[t]{3.1cm}{
                     {\footnotesize Catenoid cousins and}\newline
                     {\footnotesize their $\delta$-fold covers}\newline
                \hspace*{\fill}{\footnotesize\cite[Ex.~2]{Bryant},\cite{uy1}}}   \\
    \hline
      $\gO(-2,-2)$ 
              &
               \parbox[t]{0.7cm}{
                    \hspace*{\fill}$4\pi$\hspace*{\fill}\\
                    \hspace*{\fill}$8\pi$\hspace*{\fill}}
                & $\Hyp^3$-red.
                & classified
                & \parbox[t]{3.1cm}{
                     {\footnotesize Warped cat.~cous.~$l=1$}\newline
                     {\footnotesize Warped cat.~cous.~$l=2$}\newline
 \hspace*{\fill}{\footnotesize \cite[Thm~6.2]{uy1}, Exa. \ref{exa:catenoid}}}   \\
    \hline
      $\gO(-1,-4)$
                & $8\pi$ & $\Hyp^3$-red.
                & classified
                & Same as ``dual'' case \\
    \hline
      $\gO(-2,-4)$ 
                & $8\pi$ 
                & $\Hyp^3$-red.
                & classified
                & Same as ``dual'' case
    \\
                & $(4\pi,8\pi)$
                & $\Hyp^1$-red.
                & existence
                & Remark~\ref{rem:o-2-4} \\
    \hline
      $\gO(-2,-5)$ 
                & $8\pi$
                & $\Hyp^1$-red.
                & existence
                & Remarks~\ref{rem:o-2-5}, \ref{rem:o-3-4} \\
    \hline
      $\gO(-3,-3)$ 
                &
                & reducible
                & unknown
                & Remark~\ref{rem:o-3-3} \\
    \hline
      $\gO(-3,-4)$ 
                & $8\pi$
                & reducible
                & unknown
                & Remark~\ref{rem:o-3-4} \\
    \hline\hline
      $\gO(0,-2,-2)$
                & $(4\pi,8\pi)$
                & $\Hyp^1$-red.
                & classified
                & Proposition~\ref{prop:o0-2-2} \\
   \hline
      $\gO(-1,-2,-3)$
                & $8\pi$
                & $\Hyp^1$-red.
                & unknown
                &      \\
   \hline
      $\gO(-1,-1,-2)$
                & $8\pi$ 
                & $\Hyp^3$-red.
                & classified
                & Same as ``dual'' case\\
    \hline
      $\gO(-1,-2,-2)$
                & $8\pi$
                & $\Hyp^3$-red.
                & classified
                & Same as ``dual'' case \\
                & $(4\pi,8\pi)$
                & $\Hyp^1$-red.
                & classified
                &  Proposition \ref{prop:o-1-2-2} \\
                & $8\pi$
                & $\Hyp^1$-red.
                & classified
                &  Proposition \ref{prop:o-1-2-2-a} \\
    \hline
      $\gO(-2,-2,-2)$
                & $(4\pi,8\pi]$
                & 
                & existence
                & \parbox[t]{3.1cm}{
                     {\footnotesize Classified for irred.}\\
                     {\footnotesize embedded end case}\\
                     {\footnotesize \hspace*{\fill}\cite{uy6}}
                  }\\
    \hline
      $\gO(-2,-2,-3)$
                & 
                & irred./$\Hyp^1$-red.
                & unknown
                &  \\
    \hline
      $\gO(-2,-2,-4)$
                & $8\pi$
                & irred./$\Hyp^1$-red.
                & unknown
                &  \\
    \hline
      $\gO(-2,-3,-3)$
                & $8\pi$
                & irred./$\Hyp^1$-red.
                & unknown
                &  \\
    \hline
    \hline
      $\gO(-2,-2,-2,-2)$
                &
                & 
                & existence
                & Example~\ref{exa:four-noids} \\
    \hline
      $\gO(-2,-2,-2,0)$
                & $8\pi$
                & 
                & existence
                & Remark~\ref{rem:o-2-2-20}\\
    \hline
      $\gO(-2,-2,-2,d)$
                & $8 \pi$ when 
                & 
                & unknown
                &  \\
      \hspace*{1em}$d=-3,-2,-1,1$
                & $d \geq -1$
                &
                &
                & \\
    \hline
    \hline
      $\gO(-2,-2,-2,-2,-2)$
                & $8\pi$
                &
                & unknown
                & Corollary~\ref{cor:8pi-class} \\
    \hline
    \hline
      $\gI(-3)$
                &
                & 
                & unknown
                & \\ 
    \hline 
      $\gI(-4)$
                &
                & 
                & unknown
                &    \\
    \hline
      $\gI(-1,-1)$
                & $8\pi$ 
                &
                & unknown
                &     \\
    \hline
      $\gI(-2,-2)$
                &
                & 
                & unknown
                & Remark~\ref{rem:i-2-2}  \\
    \hline
      $\gI(-2,-3)$
                &
                & 
                & unknown
                &   \\
    \hline
      $\gI(-2,-2,-2)$
                &
                & 
                & unknown
                & Remark~\ref{rem:i-2-2-2}  \\
    \hline
  \end{tabular}
\caption{Classification of \cmcone{} surfaces in $H^3$ with 
         $\TA(f) \leq 8\pi$}
\label{tab:ta-8pi}
\end{table}
Lemma~\ref{lem:general-class} gives the following corollary: 
\begin{corollary}
\label{cor:8pi-class}
  If $f\colon{}M \to H^3$ is a complete \cmcone{} immersion with 
  $\TA(f) \leq 8\pi$, then it is either 
\begin{enumerate}
 \item a surface of genus $0$ with at most $5$ ends.
       {\rm (}if it has $5$ ends, 
       then all $5$ ends are regular with
       $d_1=d_2=d_3=d_4=d_5=-2${\rm)},
       or 
 \item a surface of genus $1$ with at most $3$ ends 
       {\rm (}if it has $3$ ends, 
       then all $3$ ends are regular with $d_1=d_2=d_3=-2$; if it has 
       $1$ end, 
       then the end is irregular with $d_1=-3$ or $d_1=-4${\rm )}.  
\end{enumerate}
\end{corollary}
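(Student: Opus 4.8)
The plan is to apply Lemma~\ref{lem:general-class} with $\rho = 4$, since $\TA(f)\leq 8\pi = 2\pi\cdot 4$. If $f$ is totally umbilic it is a horosphere, which is of type $\gO(0)$ and hence already of the form asserted in item (1), so I would assume $f$ is not totally umbilic, allowing the lemma to apply. Item (1) then gives $2\gamma < 5$ and $1\leq n < 6-2\gamma$, so $\gamma\in\{0,1,2\}$, with $n\leq 5$ when $\gamma=0$, $n\leq 3$ when $\gamma=1$, and $n=1$ forced when $\gamma=2$. This already produces the genus and end-count bounds in the two listed cases, plus the unwanted possibility $\gamma = 2$ that must be excluded.

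For the extremal end-counts I would simply read off the remaining data from the lemma. When $\gamma=0$ and $n=5$, or when $\gamma=1$ and $n=3$, one has $n = \rho+1-2\gamma$ with $n\geq 2$, so item (4) forces $d_j=-2$ at every end; since $d_j=-2\geq -2$, all such ends are regular, matching the parenthetical claims. When $\gamma=1$ and $n=1$, item (3) gives $-5 < d_1\leq -3$, hence $d_1\in\{-3,-4\}$, and since $d_1<-2$ this single end is irregular. These are precisely the genus $0$ and genus $1$ statements of the corollary, so no further work is needed there.

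The one case not handled by a direct specialization of the lemma is $\gamma=2$, and ruling it out is the crux. Here $n=1=\rho+1-2\gamma$, so item (5) applies and yields $d_1\geq 0$ together with $\mu_1 = d_1+2$. In particular the end is regular, so $G$ extends meromorphically to $\overline{M}_2$, and by \eqref{eq:fact-d} we get $\mu_1^{\#}=\mu_1=d_1+2\geq 2$. The branch points of $G$ are the umbilic points $q_k$, with branching order $\xi_k$ by \eqref{eq:fact-h}, together with the end $p_1$, with branching order $\mu_1^{\#}$; and \eqref{eq:fact-a} gives $\sum_k\xi_k = 4\gamma-4-d_1 = 4-d_1$. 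Hence the total branching order of $G$ is $\mu_1^{\#}+\sum_k\xi_k = (d_1+2)+(4-d_1)=6$, and the Riemann--Hurwitz relation $2\deg G = 2-2\gamma+6 = 4$ forces $\deg G = 2$. But the multiplicity of $G$ at $p_1$ is $\mu_1^{\#}+1 = d_1+3\geq 3 > \deg G$, a contradiction, so $\gamma = 2$ cannot occur.

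The main obstacle is exactly this last step: the genus $0$ and genus $1$ conclusions are bookkeeping from Lemma~\ref{lem:general-class}, but excluding genus $2$ is not immediate from the lemma and requires combining the sharp identity $\mu_1 = d_1+2$ of item (5) with a Riemann--Hurwitz count for the now-meromorphic hyperbolic Gauss map $G$ --- the same mechanism used to prove \eqref{eq:fact-e}. I expect the only delicacy to be confirming that the branch divisor of $G$ consists precisely of the umbilic points and the (regular) end, which follows from \eqref{eq:fact-h} and the nondegeneracy of $ds^2{}^{\#}$ on $M$.
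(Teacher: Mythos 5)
Your proposal is correct, and its skeleton coincides with the paper's where the skeleton is forced: both apply Lemma~\ref{lem:general-class} with $\rho=4$ (after setting aside the horosphere), and items (1), (3), (4) of that lemma already give the genus and end-count bounds, the conclusion $d_j=-2$ for $(\gamma,n)=(0,5)$ and $(1,3)$, and $d_1\in\{-3,-4\}$ for $(\gamma,n)=(1,1)$; the only substantive step, in both arguments, is excluding $(\gamma,n)=(2,1)$. There you and the paper part ways. The paper argues: $d_1\geq 0$ and \eqref{eq:fact-d} give $\mu_1=\mu_1^{\#}$, so \eqref{eq:fact-b} and \eqref{eq:ta-dual} force $\TA(f^{\#})=\TA(f)\leq 8\pi$, hence $\deg G\leq 2$ and $\mu_1^{\#}\leq 1$; then the Osserman-type end inequality \eqref{eq:fact-dual} gives $d_1\leq\mu_1^{\#}-2<0$, contradicting $d_1\geq 0$. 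You instead exploit the sharper output of item (5) --- the equality $\mu_1=d_1+2$, which the paper's proof never invokes --- convert it via \eqref{eq:fact-d} into $\mu_1^{\#}=d_1+2\geq 2$, count the total branching of $G$ as $(d_1+2)+(4-d_1)=6$ using \eqref{eq:fact-h} and \eqref{eq:fact-a}, and conclude $\deg G=2$ from Riemann--Hurwitz, which the multiplicity $d_1+3\geq 3$ at the end contradicts. The two contradictions are cousins (both pit the multiplicity of $G$ at $p_1$ against $\deg G$), but your route is more self-contained: it needs neither \eqref{eq:ta-dual} nor \eqref{eq:fact-dual}, only the elementary branching bookkeeping the paper itself uses to prove \eqref{eq:fact-e}; and the one point you flag --- that the branch divisor of $G$ is exactly the umbilic points plus the end --- is indeed the only thing to check, and follows from \eqref{eq:fact-h} together with nondegeneracy of $ds^2{}^{\#}$ on $M$. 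The paper's route buys brevity by reusing the dual-total-curvature machinery already developed for Table~\ref{tab:class}, and needs only the inequality $0\leq d_1\leq 4$ from the lemma rather than the exact value of $\mu_1$.
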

\begin{proof}
  We only have to show that a \cmcone{} surface with 
  $\TA(f) \leq 8\pi$ of genus $2$ and with $1$ regular end satisfying
  $0 \leq d_1 \leq 4$ cannot exist.  
  By \eqref{eq:fact-b}, \eqref{eq:fact-d} and \eqref{eq:ta-dual}, such a
  surface would satisfy  $\TA(f^\#) = \TA(f) \leq 8\pi$ and hence the
  hyperbolic Gauss map $G$ is a meromorphic function on a compact
  Riemann surface $\overline M_2$ of genus $2$ with $\deg G \leq 2$.  
  Therefore $\mu_1^\#$ can be only $0$ or $1$, and so $d_1 \leq \mu_1^\#-2 < 
  0$, a contradiction.  
\end{proof}


Now we compile an {\em unfinished\/} classification of 
\cmcone{} surfaces with $\TA(f) \leq 8\pi$ (see Table~\ref{tab:ta-8pi}).  
In the ``status'' column of the table, {\em classified\/} means that 
the surfaces of such a class are completely classified (i.e.\ not only is 
the holomorphic data known, but the period problems are also completely 
solved), {\em existence\/} means that there exists such a surface, and 
{\em unknown\/} means that it is unknown if such a surface exists.  
Surfaces of any type not appearing in the table cannot exist
with $\TA(f)\leq 8\pi$.
The proofs of the existence and non-existence results are given in
Appendix~\ref{app:detailed}.

\appendix
\section{Detailed discussion on the case $\TA(f)\leq 8\pi$}
\label{app:detailed}
In this appendix, we give a precise discussion on complete \cmcone{}
surfaces with $\TA(f)\leq 8\pi$.
\subsection*{Detailed Preliminaries}
Here, we review facts which will be used to prove existence and
non-existence for special cases.
\subsubsection*{Metrics in  $\metone(\C\cup\{\infty\})$}
In this subsection, we introduce special properties  of pseudometrics
in $\metone(\C\cup\{\infty\})$ (see Section~\ref{sec:red}). 
\begin{lemma}\label{lem:hyp-3-red-0}
  A pseudometric $d\sigma^2\in\metone(\C\cup\{\infty\})$ with divisor as
  in \eqref{eq:metone-divisor} is $\Hyp^3$-reducible if and only if all
  orders of conical singularities are integers.
\end{lemma}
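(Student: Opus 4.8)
The plan is to reduce the statement to a local computation at each singular point, combined with the special topology of the punctured sphere. Write $M = (\C\cup\{\infty\})\setminus\{p_1,\dots,p_n\}$ and fix one developing map $g$ of $d\sigma^2$, together with its representation $\rho_g\colon{}\pi_1(M)\to\PSU(2)$ as in \eqref{eq:metone-repr}. Let $\tau_j$ be the deck transformation coming from a small loop around $p_j$. Since the underlying surface $\C\cup\{\infty\}=\CP^1$ is simply connected, the loops $\tau_1,\dots,\tau_n$ generate $\pi_1(M)$ (they satisfy a single relation $\tau_1\cdots\tau_n=\id$, but only generation is needed). This is the one place where the genus-$0$ hypothesis is essential.

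The key local fact is the relation between the order $\beta_j$ of the conical singularity and the monodromy $\rho_g(\tau_j)$. By the normalization \eqref{eq:metone-g-norm} and the discussion preceding it, one may conjugate so that $\rho_g(\tau_j)=\diag(e^{\pi i\nu_j},e^{-\pi i\nu_j})$ with $\nu_j=\pm(\beta_j+1)$. Since conjugacy classes, and in particular triviality in $\PSU(2)$, are independent of this local choice, for the fixed global $g$ the element $\rho_g(\tau_j)$ equals the identity in $\PSU(2)$ precisely when $e^{\pi i\nu_j}=\pm 1$, i.e.\ when $\nu_j\in\Z$. As $\nu_j\in\Z$ if and only if $\beta_j\in\Z$, this yields the equivalence
\[
   \rho_g(\tau_j)=\id \text{ in } \PSU(2)
   \iff \beta_j\in\Z\;.
\]

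For the forward direction, if $d\sigma^2$ is $\Hyp^3$-reducible then $\rho_g$ is trivial by Definition~\ref{def:metone-reducible}, so each $\rho_g(\tau_j)$ is trivial and hence each $\beta_j$ is an integer (as already noted after \eqref{eq:metone-g-norm}). For the converse, suppose every $\beta_j\in\Z$. Then each generator $\tau_j$ satisfies $\rho_g(\tau_j)=\id$, and since the $\tau_j$ generate $\pi_1(M)$ and $\rho_g$ is a homomorphism, $\rho_g$ is the trivial representation; thus $d\sigma^2$ is $\Hyp^3$-reducible. Working directly in $\PSU(2)$ avoids any lifting ambiguity between $\SU(2)$ and $\PSU(2)$.

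The main obstacle will be the second step: pinning down that the monodromy eigenvalues are governed exactly by $\beta_j \bmod 1$, and keeping the $\PSU(2)$ versus $\SU(2)$ bookkeeping straight (the matrix $\diag(e^{\pi i\nu_j},e^{-\pi i\nu_j})$ represents the identity in $\PSU(2)$ as soon as it equals $\pm\id$ in $\SU(2)$, that is, when $\nu_j\in\Z$, not $2\Z$). Everything else is either the topological input---generation of $\pi_1(M)$ by the puncture loops, valid only because $\CP^1$ is simply connected---or a direct homomorphism argument.
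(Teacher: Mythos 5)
Your proof is correct and follows essentially the same route as the paper's: integrality of the orders forces each local monodromy $\rho_g(\tau_j)$ to be trivial via the normalization \eqref{eq:metone-g-norm}, and since the puncture loops generate $\pi_1$ of the punctured sphere, the whole representation is trivial. The only cosmetic difference is in the forward direction, where the paper invokes meromorphicity of $g$ on $\C\cup\{\infty\}$ while you run the same local eigenvalue computation in reverse (and you are more explicit about the conjugation-invariance of triviality, which the paper leaves implicit); both rest on the same relation $\nu_j=\pm(\beta_j+1)$.
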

\begin{proof}
  If $d\sigma^2$ is $\Hyp^3$-reducible, then the developing map $g$
  is a meromorphic function on $\C\cup\{\infty\}$.
  So the branch orders must all be integers.

  Conversely, assume all conical singularities have integral orders.
  Then by \eqref{eq:metone-g-norm}, $\rho_g(\tau_j)=\pm\id$ for each $j$,
  where $\tau_j$ is the deck transformation on
  $M:=\C\cup\{\infty\}\setminus\{p_1,\dots,p_n\}$ corresponding to the loop
  surrounding $p_j$.
  Since $\pi_1(M)$  is generated by  $\tau_1,\dots,\tau_n$,
  $\rho_g$ is the trivial representation.
\end{proof}
\begin{lemma}\label{lem:hyp-1-red-0}
  Let $d\sigma^2\in\metone(\C\cup\{\infty\})$ with divisor as in 
  \eqref{eq:metone-divisor}.
  Assume the orders $\beta_1$ and $\beta_2$ are not integers,
  and $\beta_j$ {\rm(}$j\geq 3${\rm)} are integers.
  Then $d\sigma^2$ is $\Hyp^1$-reducible.
\end{lemma}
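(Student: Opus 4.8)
The plan is to exploit the presentation of $\pi_1(M)$ for $M=\C\cup\{\infty\}\setminus\{p_1,\dots,p_n\}$ together with the observation that cone points of \emph{integral} order contribute trivial monodromy in $\PSU(2)$. Recall that $\pi_1(M)$ is generated by loops $\tau_1,\dots,\tau_n$ encircling the punctures, subject to the single relation $\tau_1\tau_2\cdots\tau_n=\id$. Applying the representation $\rho_g$ of \eqref{eq:metone-repr} to this relation yields $\rho_g(\tau_1)\cdots\rho_g(\tau_n)=\id$ in $\PSU(2)$, and my strategy is to show that this forces the image of $\rho_g$ to be cyclic, hence simultaneously diagonalizable.

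First I would record the local monodromy at each singularity. By the normalization \eqref{eq:metone-g-norm}, $\rho_g(\tau_j)$ has eigenvalues $e^{\pm\pi i\nu_j}$ with $\nu_j=\pm(\beta_j+1)$. When $\beta_j\in\Z$ (which holds for all $j\geq 3$ by hypothesis) we have $\nu_j\in\Z$, so both eigenvalues equal $(-1)^{\nu_j}$; since a unitary $2\times2$ matrix with a single repeated eigenvalue is scalar, $\rho_g(\tau_j)=(-1)^{\nu_j}\id=\pm\id$, which is trivial in $\PSU(2)$. The point I want to stress, and which I expect to be the only delicate step, is that this conclusion is \emph{conjugation-invariant}: the eigenvalues of $\rho_g(\tau_j)$ depend only on $\beta_j$, and $\pm\id$ is central, so $\rho_g(\tau_j)=\id$ in $\PSU(2)$ irrespective of which developing map we use among the choices allowed by \eqref{eq:metone-moebius}. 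Thus there is no conflict between diagonalizing one monodromy and evaluating the others, and all the integral-order monodromies are simultaneously trivial.

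Next I would feed this into the relation. Since $\rho_g(\tau_j)=\id$ for $j\geq 3$, these factors drop out of the product regardless of their position, and the relation collapses to $\rho_g(\tau_1)\rho_g(\tau_2)=\id$, so $\rho_g(\tau_2)=\rho_g(\tau_1)^{-1}$. Consequently the image of $\rho_g$ is generated by the single element $\rho_g(\tau_1)$; it is cyclic, hence abelian. Any single element of $\PSU(2)\cong\operatorname{SO}(3)$ is a rotation and is diagonalizable, so by a change \eqref{eq:metone-repr-change} the entire image becomes simultaneously diagonal. By Definition~\ref{def:metone-reducible} (equivalently, by Lemma~\ref{lem:metone-red}), $d\sigma^2$ is reducible.

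Finally I would distinguish $\Hyp^1$ from $\Hyp^3$ reducibility. Because $\beta_1\notin\Z$, we have $\nu_1=\pm(\beta_1+1)\notin\Z$, so the eigenvalues $e^{\pm\pi i\nu_1}$ of $\rho_g(\tau_1)$ are distinct and $\rho_g(\tau_1)\neq\pm\id$; hence $\rho_g$ is nontrivial in $\PSU(2)$. By Definition~\ref{def:metone-reducible} the metric is therefore $\Hyp^1$-reducible rather than $\Hyp^3$-reducible, completing the argument. The remaining details are routine; the conjugation-invariance in the second step is the only point requiring genuine care, since it is what guarantees the integral-order monodromies are \emph{simultaneously} trivial without committing to a single normalization of $g$.
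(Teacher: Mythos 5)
Your proof is correct and follows essentially the same route as the paper's: trivial monodromy ($\pm\id$) at the integral-order singularities plus the relation $\tau_1\cdots\tau_n=\id$ forces the image of $\rho_g$ to be generated by the single element $\rho_g(\tau_1)$, which is then diagonalized. Your two explicit additions --- the conjugation-invariance of the statement $\rho_g(\tau_j)=\pm\id$ for $j\geq 3$, and the verification via $\beta_1\notin\Z$ that the representation is nontrivial (hence $\Hyp^1$- rather than $\Hyp^3$-reducible) --- are points the paper uses implicitly but does not spell out.
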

\begin{proof}
  Let $g$ be a developing map such that $\rho_g(\tau_1)$ is diagonal.
  Here, as in the proof of the previous lemma, we have
  $\rho_g(\tau_j)=\pm\id$ ($j\geq 3$).
  Then we have $\rho_g(\tau_1)\rho_g(\tau_2)=\pm\id$ because
  $\tau_1\circ\dots\circ\tau_n=\id$.
  Hence $\rho_g(\tau_2)$ is also a diagonal matrix.
\end{proof}
\begin{lemma}[{\cite[Proposition~A.1]{ruy4}}]
\label{lem:metone-tear-drop}
  There exists no metric $d\sigma^2\in\metone(\C\cup\{\infty\})$ with 
  divisor as in \eqref{eq:metone-divisor} such that 
  only one $\beta_j$ is a non-integer and all others are integers.
\end{lemma}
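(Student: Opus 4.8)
The plan is to read off the monodromy of the developing map $g$ around each conical singularity and to play it against the single relation in the fundamental group of the punctured sphere. Write $M=\C\cup\{\infty\}\setminus\{p_1,\dots,p_n\}$ and let $g$ be a developing map of $d\sigma^2$, with monodromy representation $\rho_g\colon\pi_1(M)\to\PSU(2)$ as in \eqref{eq:metone-repr}. After relabeling, suppose $\beta_1\notin\Z$ is the unique non-integer order and $\beta_2,\dots,\beta_n\in\Z$. I aim to show that the relation among the loops $\tau_j$ forces $\rho_g(\tau_1)$ to be trivial, which is incompatible with $\beta_1\notin\Z$.

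First I would record the local monodromy at each end. Near $p_j$ the normal form \eqref{eq:metone-g-norm} shows that, for a suitable choice of developing map, $\rho_g(\tau_j)$ is conjugate in $\PSU(2)$ to the rotation $\diag(e^{\pi i\nu_j},e^{-\pi i\nu_j})$ with $\nu_j=\pm(\beta_j+1)$. The key point is that this element is the identity of $\PSU(2)$ precisely when $\nu_j\in\Z$, i.e.\ when $\beta_j\in\Z$, and that the identity is central; hence for integer order $\beta_j$ the monodromy $\rho_g(\tau_j)$ equals the identity \emph{regardless} of the normalization of $g$, since altering $g$ conjugates $\rho_g$ by an element of $\PSU(2)$ (see \eqref{eq:metone-repr-change}) and conjugation fixes central elements. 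Consequently $\rho_g(\tau_j)=\id$ in $\PSU(2)$ for all $j\ge 2$, whereas $\rho_g(\tau_1)\neq\id$ because $\beta_1\notin\Z$.

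The contradiction then follows from the standard presentation of $\pi_1(M)$: the loops $\tau_1,\dots,\tau_n$ generate $\pi_1(M)$ subject to $\tau_1\tau_2\cdots\tau_n=\id$. Applying $\rho_g$ and cancelling the identities $\rho_g(\tau_j)=\id$ for $j\ge 2$ leaves $\rho_g(\tau_1)=\id$ in $\PSU(2)$, contradicting the previous paragraph. (The degenerate case $n=1$ is even simpler: then $M=\C$ is simply connected, $\tau_1$ is already nullhomotopic, so $\rho_g(\tau_1)=\id$ directly — this is the teardrop.) Hence no such pseudometric exists. The step I expect to require the most care is the justification that an integer-order singularity contributes trivial monodromy \emph{in $\PSU(2)$ and independently of the choice of $g$}; this rests on the fact that the conjugacy class of $\rho_g(\tau_j)$ is determined by $\nu_j\bmod 1$ together with the observation that for $\nu_j\in\Z$ this class is the single central element. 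Everything else is bookkeeping with \eqref{eq:metone-g-norm} and the relation in $\pi_1(M)$.
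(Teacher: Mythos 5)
Your proof is correct and is essentially the intended argument: the paper itself does not reprove this lemma (it defers to \cite[Proposition~A.1]{ruy4}), but your monodromy argument --- integer conical orders force $\rho_g(\tau_j)=\id$ in $\PSU(2)$ via the normal form \eqref{eq:metone-g-norm} with $\nu_j=\beta_j+1$ or $-\beta_j-1$ together with the conjugation-invariance \eqref{eq:metone-repr-change}, after which the relation $\tau_1\circ\dots\circ\tau_n=\id$ in $\pi_1$ forces $\rho_g(\tau_1)=\id$ as well, contradicting $\beta_1\notin\Z$ --- is precisely the technique the paper uses to prove the neighboring Lemmas~\ref{lem:hyp-3-red-0} and~\ref{lem:hyp-1-red-0}, and it matches the cited proof. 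Both directions of your key equivalence ($\rho_g(\tau_j)=\id$ in $\PSU(2)$ if and only if $\beta_j\in\Z$), including the step you flag as delicate, are supplied by the paper's own local structure statement following \eqref{eq:metone-g-norm}, so there is no gap.
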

A developing map of a reducible metric in $\metone(\C\cup\{\infty\})$
can be written explicitly as follows:
\begin{lemma}[{\cite[Proposition~B.1]{ruy4}}]\label{lem:metone-0-normal}
  Let $d\sigma^2\in\metone(\C\cup\{\infty\})$ be reducible with divisor 
  as in \eqref{eq:metone-divisor}.
  Assume 
\[
   p_{n}=\infty\;,\qquad
   \beta_1,\dots,\beta_m\not\in\Z\;,\qquad
   \beta_{m+1},\dots,\beta_{n-1}\in\Z\;.
\]
  Then there exists a developing map $g$ of $d\sigma^2$ such that
\[
    g = (z-p_1)^{\nu_1}\dots(z-p_m)^{\nu_m}\, r(z)
      \qquad (\nu_1,\dots,\nu_m\in\R\setminus\Z)\;,
\]
  where $r(z)$ is a rational function on $\C\cup\{\infty\}$.
\end{lemma}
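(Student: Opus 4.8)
The plan is to use the characterization of reducibility in Lemma~\ref{lem:metone-red}: since $d\sigma^2$ is reducible, I may choose the developing map $g$ so that $\omega:=d\log g=dg/g$ is a meromorphic $1$-form on $\C\cup\{\infty\}$. The strategy is then to read off the divisor of $\omega$, integrate, and separate the non-integer contributions (which must sit at $p_1,\dots,p_m$) from the integer ones (which assemble into a rational function).

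First I would determine the poles of $\omega$ and their residues. Away from $p_1,\dots,p_n$ the metric is smooth of curvature $1$, so $g$ is locally biholomorphic there; hence $\omega$ is holomorphic on $M$ except at the points where $g$ takes the value $0$ or $\infty$, where $\omega$ has a simple pole whose residue is $\pm 1$ times the order of that zero or pole of $g$, an integer. At each conical singularity $p_j$ the normal form \eqref{eq:metone-g-norm} gives $g=(z-p_j)^{\nu_j}(g_0+g_1(z-p_j)+\cdots)$ with $g_0\neq 0$, so $\omega$ has a simple pole at $p_j$ with residue $\nu_j$, where $\nu_j=\beta_j+1$ or $-\beta_j-1$. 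In particular $\nu_j\in\R\setminus\Z$ for $j\leq m$ and $\nu_j\in\Z$ for $m<j\leq n-1$, so the finite poles of $\omega$ with \emph{non-integer} residue are exactly $p_1,\dots,p_m$.

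Next, since every pole of $\omega$ is simple, $\omega$ has at worst a simple pole at $\infty$ as well, and a partial-fraction decomposition gives $\omega=\sum_\ell R_\ell\,(z-b_\ell)^{-1}\,dz$ with no polynomial part, the sum running over the finite poles $b_\ell$ with residues $R_\ell$. Integrating, and using that $g$ and $\exp\int\omega$ have the same logarithmic derivative, I obtain $g=c\prod_\ell(z-b_\ell)^{R_\ell}$ for some constant $c\neq 0$; the point $\infty=p_n$ contributes no explicit factor, its (possibly non-integer) exponent being recorded automatically by the growth $g\sim c\,z^{\sum_\ell R_\ell}$ at infinity. Splitting the product into the factors with non-integer exponent, namely $(z-p_1)^{\nu_1}\cdots(z-p_m)^{\nu_m}$, and the remaining factors, whose exponents are all integers, the latter combine with $c$ into a rational function $r(z)$, yielding $g=(z-p_1)^{\nu_1}\cdots(z-p_m)^{\nu_m}\,r(z)$ as claimed.

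The residue computations are routine; the point that needs care is the claim that the only non-integer residues occur at $p_1,\dots,p_m$. Here I rely on two facts established above: at the interior points and at the integer-order conical points the residues are manifestly integers, and at $\infty$ one takes no factor at all, so the residue of $\omega$ at $\infty$ never enters the product even when $\beta_n\notin\Z$. Equivalently, from the monodromy viewpoint, the diagonal form of $\rho_g$ forces $g\circ\tau_j=e^{2\pi i\nu_j}g$, so that $g$ and the branched product $\prod_{j\leq m}(z-p_j)^{\nu_j}$ have identical monodromy about each finite puncture; their quotient is single-valued on $M$ and, by the local expansions above, extends meromorphically across every $p_j$ and across $\infty$, hence is rational. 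This consistency of the exponents — that the $\nu_j$ in \eqref{eq:metone-g-norm} is literally the residue of $\omega$ at $p_j$ — is what makes the separation clean, and is the main thing to verify.
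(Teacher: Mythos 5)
Your proof is correct, but note that this paper never proves Lemma~\ref{lem:metone-0-normal} itself: it is imported from \cite[Proposition~B.1]{ruy4}, and only the subsequent Corollary~\ref{cor:metone-0-normal} is proved here, so there is no internal proof to compare against. Your route is a legitimate, essentially self-contained derivation from facts the paper does state: Lemma~\ref{lem:metone-red} gives a developing map with $\omega=d\log g$ meromorphic on $\C\cup\{\infty\}$, the local normal form \eqref{eq:metone-g-norm} controls the residues, every pole of $\omega$ is simple so the partial-fraction expansion has no polynomial part, and integration yields the product formula; your closing monodromy argument (the quotient $g/\prod_{j\le m}(z-p_j)^{\nu_j}$ is single-valued on $M$, extends meromorphically across all punctures, hence is rational) is an equally valid and arguably cleaner variant of the same idea. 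One imprecision, which is harmless: at an integer-order conical point $p_j$ ($j>m$), where the local monodromy is trivial, nothing forces $g(p_j)\in\{0,\infty\}$; if $g(p_j)$ is a finite nonzero value, then $g=g(p_j)+a(z-p_j)^{\beta_j+1}+\dots$, so $\omega$ has a \emph{zero} of order $\beta_j$ at $p_j$ rather than a simple pole, and the formula $\nu_j=\beta_j+1$ or $-\beta_j-1$ fails there (the leading exponent is $0$). Since all you use at these points is that the residue of $\omega$ is an integer — which holds in every case, the residue being $0$ or $\pm(\beta_j+1)$ with $\beta_j\in\Z$ — the argument is unaffected; and at $p_1,\dots,p_m$ the caveat is vacuous, because the non-trivial monodromy $g\circ\tau_j=e^{2\pi i\nu_j}g$ does force $g(p_j)\in\{0,\infty\}$, which is precisely why those residues are the non-integers $\pm(\beta_j+1)$, as your last paragraph correctly isolates.
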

\begin{corollary}\label{cor:metone-0-normal}
  Let $d\sigma^2\in\metone(\C\cup\{\infty\})$ be reducible with divisor as in 
  \eqref{eq:metone-divisor} and $p_n=\infty$.
  Then there exists a developing map $g$ such that
\begin{equation}\label{eq:devel-can}
    dg = t\,\frac{(z-p_1)^{\alpha_1}\dots(z-p_{n-1})^{\alpha_{n-1}}}{
                 \prod_{k=1}^{N}(z-a_k)^2}\,dz
    \qquad 
    (\alpha_j = \beta_j \quad\text{or}\quad -\beta_j-2)\;,
\end{equation}
  where $a_1,\dots,a_N\in\C\setminus\{p_1,\dots,p_{n-1}\}$ are mutually
  distinct, $t$ is a positive real number.
  Moreover, it holds that
\begin{equation}\label{eq:devel-can-inf}
    -(\alpha_1 + \dots + \alpha_{n-1})+2N -2 = 
         \beta_n \quad \text{or}\quad -\beta_n-2\;.
\end{equation}
\end{corollary}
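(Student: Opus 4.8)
The plan is to start from the explicit developing map supplied by Lemma~\ref{lem:metone-0-normal} and to read off the shape of $dg$ by computing the order of the $1$-form $dg$ at every point of $\C\cup\{\infty\}$. After relabeling we may assume the non-integer orders are $\beta_1,\dots,\beta_m$ and the finite integer orders are $\beta_{m+1},\dots,\beta_{n-1}$, so that Lemma~\ref{lem:metone-0-normal} gives a developing map $g=(z-p_1)^{\nu_1}\cdots(z-p_m)^{\nu_m}\,r(z)$ with $r$ rational. The essential observation is that, although $g$ is multivalued, its monodromy around each $p_j$ is the unimodular scalar $e^{2\pi i\nu_j}$ coming from \eqref{eq:metone-g-norm}; hence $dg$ is multiplied by the same scalar, and the leading exponent $\ord_{z_0}dg$ of $dg$ is well defined at every $z_0\in\C\cup\{\infty\}$.

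First I would compute these orders. At a finite conical singularity $p_j$ ($1\le j\le n-1$) the normalization \eqref{eq:metone-g-norm} gives $g=(z-p_j)^{\nu_j}(g_0+\cdots)$ with $g_0\ne 0$ and $\nu_j=\beta_j+1$ or $-\beta_j-1$; since $\beta_j>-1$ we have $\nu_j\ne 0$, so $dg$ has order $\nu_j-1=\beta_j$ or $-\beta_j-2$, which is exactly the exponent $\alpha_j$ of \eqref{eq:devel-can}. At a regular point $a\in\C$ (one not in the divisor) the metric \eqref{eq:metone-develop} is smooth and nondegenerate: if $g(a)$ is finite then $d\sigma^2$ is comparable to $|dg|^2$ there, forcing $dg$ to be holomorphic and nonvanishing, i.e.\ $\ord_a dg=0$; if $g(a)=\infty$ then the same reasoning applied to the coordinate $1/g$ forces $g$ to have a simple pole at $a$ (a higher-order pole would produce a conical singularity of positive integer order, contradicting that $a$ is regular), whence $\ord_a dg=-2$. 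Labeling the latter points $a_1,\dots,a_N$, one sees that on $\C$ the form $dg$ and the candidate form $\prod_{j=1}^{n-1}(z-p_j)^{\alpha_j}\big/\prod_{k=1}^N(z-a_k)^2\,dz$ have exactly the same zeros and poles with the same multiplicities.

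Next I would globalize. Because the two forms also carry the same monodromy $e^{2\pi i\nu_j}$ about each $p_j$, their quotient is single-valued, holomorphic, and nowhere zero on $\C$; since $g$ is of the finite algebraic type provided by Lemma~\ref{lem:metone-0-normal}, this quotient is meromorphic at $\infty$ as well (the conical behavior of $g$ at $p_n=\infty$ gives $dg$ a definite order there), hence rational, and a rational function with neither zeros nor poles in $\C$ is a nonzero constant $c$. Replacing $g$ by $e^{-i\arg c}g$ --- a diagonal, hence $\PSU(2)$, change that preserves the divisor and the form in Lemma~\ref{lem:metone-0-normal} --- rotates $c$ to the positive real number $t:=|c|$, giving \eqref{eq:devel-can}.

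Finally, the ``moreover'' statement \eqref{eq:devel-can-inf} is just the order of $dg$ at $p_n=\infty$ computed in two ways. From the explicit form \eqref{eq:devel-can}, passing to $w=1/z$ shows $\ord_\infty dg=-(\alpha_1+\cdots+\alpha_{n-1})+2N-2$; on the other hand the conical behavior of $g$ at $\infty$, exactly as in the first computation above, gives $\ord_\infty dg=\beta_n$ or $-\beta_n-2$. Equating the two yields \eqref{eq:devel-can-inf}. I expect the only delicate step to be the regular-point analysis --- specifically, ruling out higher-order poles of $g$ off the divisor and confirming that zeros of $g$ there contribute order $0$ to $dg$ --- while the remaining steps are routine order bookkeeping.
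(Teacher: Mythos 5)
Your proposal is correct and follows essentially the same route as the paper's proof: both rest on Lemma~\ref{lem:metone-0-normal} together with identifying the divisor of $dg$ (order $\beta_j$ or $-\beta_j-2$ at the conical singularities, double poles exactly at the simple poles of $g$ away from the divisor, no other zeros or poles), followed by the rotation $g\mapsto e^{i\theta}g$ to make $t>0$ and the order count at $p_n=\infty$ to get \eqref{eq:devel-can-inf}. The only organizational difference is that the paper splits into the $\Hyp^3$-reducible case (where $g$ is rational and the counting is immediate) and the $\Hyp^1$-reducible case, whereas you treat both uniformly and make explicit the globalization step --- the monodromy-matching quotient of $dg$ by the candidate form being a nonzero constant --- which the paper leaves implicit in the phrase ``hence we have \eqref{eq:devel-can}.''
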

\begin{proof}
  If $d\sigma^2$ is $\Hyp^3$-reducible, $g$ is a meromorphic 
  function on $\C\cup\{\infty\}$ which branches at $p_1,\dots,p_n$ with
  branch orders $\beta_j\in\Z^+$.
  Hence $p_j$ is a zero of order $\beta_j$ or a pole of order
  $\beta_j+2$ of $dg$ for each $j=1,\dots,n$.
  Let $\{a_1,\dots,a_N\}$ be the simple
  poles of $g$ on $\C\setminus\{p_1,\dots,p_{n-1}\}$, then each $a_k$ is a
  pole of order $2$ of $dg$.  (The $a_j$ are not branch points of $g$.)  
  The zeros and poles of $dg$ are the branch points and the 
  simple poles of $g$.  
  Hence we have \eqref{eq:devel-can} for $t\in\C\setminus\{0\}$.
  By a suitable change  $g\mapsto e^{i\theta}g$ (which is a special form
  of the change \eqref{eq:metone-moebius}), we can choose $g$ such
  that $t\in\R^+$.
  Since $\infty=p_n$ is a zero of order $\beta_n$ or a pole of order
  $\beta_n+2$ of $dg$, we have \eqref{eq:devel-can-inf}.

  Next we assume $d\sigma^2$ is $\Hyp^1$-reducible.
  Without loss of generality, we may assume
  $\beta_1,\dots,\beta_{m}\not\in\Z$ and
  $\beta_{m+1},\dots,\beta_{n-1}\in\Z$.
  Then by Lemma~\ref{lem:metone-0-normal}, we can choose the developing
  map $g$ as
  $g = (z-p_1)^{\nu_1}\dots(z-p_m)^{\nu_m}\, r(z)$,
  where $r(z)$ is a rational function.
  By \eqref{eq:metone-g-norm}, we have
  $\nu_j=\beta_j+1$ or $\nu_j=-\beta_j-1$
  ($j=1,\dots,m$).
  Differentiating this, we have
 \[
    dg = (z-p_1)^{\alpha_1}\dots (z-p_m)^{\alpha_m} r_1(z)\,dz
      \qquad   (\text{$\alpha_j = \beta_j$  or $\alpha_j=-\beta_j-2$})\;
 \]
  where $r_1(z)$ is a rational function.
  Since each $p_j$ ($j=m+1,\dots,n-1$) is a branch point of $g$ of order
  $\beta_j\in\Z$, we have \eqref{eq:devel-can} by an argument similar to the 
  $\Hyp^3$-reducible case.
  Moreover, since $\ord_{\infty}d\sigma^2=\beta_n$, we have 
  \eqref{eq:devel-can-inf}.
\end{proof}
\begin{remark}\label{rem:residue}
  Let $M=\C\cup\{\infty\}\setminus\{p_1,\dots,p_n\}$, and $\widetilde M$
  the universal cover.
  Then there exists a meromorphic function 
  $g\colon{}\widetilde M\to \C\cup\{\infty\}$ satisfying
  \eqref{eq:devel-can} if and only if all of the residues of $dg$ at
  the following points vanish: 
  \begin{enumerate}
   \item $a_k$ ($k=1,\dots,n$), and 
   \item $p_j$ such that $\alpha_j$ is a negative integer.
  \end{enumerate}
\end{remark}
  
\subsubsection*{Construction of \cmcone{} surface from two Gauss maps}
In addition to the $\SU(2)$-conditions for the period problem
(Proposition~\ref{prop:sutwo}), we introduce another method
to construct \cmcone{} surfaces \cite{uy3}:
Let $M$ be a Riemann surface and $\widetilde M$ the universal cover of
$M$.
\begin{proposition}\label{prop:uy3}
 Let $G$ and $g$ be meromorphic functions defined on $M$ and $\widetilde M$,
 respectively.
 Assume
 \begin{enumerate}
  \item  $d\sigma^2:=4\,dg\,d\bar g/(1 + g \bar g)^2$ is a pseudometric
	 with conical singularities which is single-valued on $M$.
  \item  The meromorphic differential $Q:=(S(g)-S(G))/2$ is holomorphic on
	 $M$.
  \item  The metric $ds^2:=(1+|g|^2)^2|Q/dg|^2$ is a non-degenerate
	 complete metric on $M$.
 \end{enumerate}
 then there exists a complete \cmcone{} immersion $f\colon{}M\to H^3$
 with hyperbolic Gauss map $G$ and secondary Gauss map $g$.
\end{proposition}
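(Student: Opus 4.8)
The plan is to produce the immersion as $f=FF^{*}$, where $F$ is a holomorphic null lift obtained by integrating the hyperbolic-Gauss-map equation \eqref{eq:Fsharp}, and then to use the datum $g$ only to close the period problem. Write
\[
   \hat\Omega:=\begin{pmatrix} G & -G^2\\ 1 & -G\end{pmatrix}\frac{Q}{dG}\,.
\]
Since $\operatorname{tr}\hat\Omega=0=\det\hat\Omega$, any solution of $dF\,F^{-1}=\hat\Omega$ is a null map that can be normalized into $\SL(2,\C)$; because a holomorphic $\sl(2,\C)$-valued $1$-form on a Riemann surface automatically satisfies the Maurer--Cartan (flatness) equation, a solution $F\colon\widetilde M\to\SL(2,\C)$ exists as soon as $\hat\Omega$ is holomorphic on $M$. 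Thus the first---and, I expect, the hardest---step is to prove that $\hat\Omega$ has no poles on $M$.

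I would establish holomorphicity through a local analysis at the branch points and poles of $G$, using the compatibility $2Q=S(g)-S(G)$ built into hypothesis~(2). At a branch point of $G$ of order $\nu$, the Schwarzian $S(G)$ has a double pole with leading coefficient $-\nu(\nu+2)/2\neq0$; since $Q$ is holomorphic there, $S(g)=2Q+S(G)$ must also have a double pole with the same coefficient, so $g$ branches at that point with the same order $\nu$ (and conversely). Hypothesis~(3), that $ds^2=(1+|g|^2)^2|Q/dg|^2$ is nondegenerate, then forces $\ord Q=\nu$ there, exactly the branching order of $G$ (this simultaneously verifies \eqref{eq:cond1} and recovers \eqref{eq:fact-h}). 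Hence $Q/dG$ is holomorphic and nonvanishing at the finite branch points, while at the poles of $G$ a direct check using the matrix entries $G,\,G^2$ absorbs the vanishing of $Q/dG$; away from these points holomorphicity is immediate from hypothesis~(2).

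With $F$ in hand, I would match the secondary Gauss map and transfer the monodromy. By the general identity \eqref{eq:schwarz}, the secondary Gauss map $\tilde g$ of $F$ satisfies $S(\tilde g)=S(G)+2Q=S(g)$, so $\tilde g=a\star g$ for a constant $a\in\SL(2,\C)$; replacing $F$ by $Fa^{-1}$---a right multiplication that fixes $G$ by \eqref{eq:changes}---I may assume the secondary Gauss map of $F$ is exactly $g$. Now hypothesis~(1), that $d\sigma^2$ is single-valued on $M$, says precisely that the monodromy $\rho_g$ of $g$ lands in $\PSU(2)$. Since the secondary Gauss map of $F$ is $g$, the relation $g\circ\tau^{-1}=\rho_F(\tau)\star g$ forces $\rho_F(\tau)=\pm\rho_g(\tau)\in\SU(2)$ for every $\tau\in\pi_1(M)$ (cf.\ \eqref{eq:g-F-repr}), because two M\"obius transformations agreeing on the nonconstant map $g$ differ only by $\pm\id$.

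Finally, $\rho_F(\tau)\in\SU(2)$ gives $(F\circ\tau)(F\circ\tau)^{*}=F\rho_F(\tau)\rho_F(\tau)^{*}F^{*}=FF^{*}$, so $f:=FF^{*}$ is single-valued on $M$ (this is the mechanism behind Proposition~\ref{prop:sutwo}). Its induced metric is $(1+|g|^2)^2|Q/dg|^2$, which is complete and nondegenerate by hypothesis~(3), so $f$ is a complete \cmcone{} immersion, and by construction its hyperbolic and secondary Gauss maps are $G$ and $g$. The main obstacle is the holomorphicity of $\hat\Omega$ in the second step: this is the only place where the three hypotheses interact delicately, since one must rule out poles of $Q/dG$ coming from the branching of $G$, and it is hypothesis~(3)---not the Schwarzian cancellation alone---that pins the vanishing order of $Q$ to the branching order of $G$.
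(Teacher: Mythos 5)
Your proof is correct, but it follows a genuinely different route from the paper's. The paper disposes of the proposition in two lines: it notes that the hypotheses give $\ord_p Q=\ord_p d\sigma^2$ at every $p\in M$, and then cites Theorem~2.2 and Remark~2.3 of \cite{uy3} to produce a \cmcone{} immersion with hyperbolic Gauss map $G$, secondary Gauss map $g$ and Hopf differential $Q$, reading completeness off from hypothesis~(3). You instead unpack what that citation does, entirely inside the present paper's toolkit: you check pointwise that $\ord_q Q$ equals the branching order of $G$ (Schwarzian cancellation matches the branch orders of $g$ and $G$, since both are meromorphic near any $q\in M$; nondegeneracy of $ds^2$ then pins $\ord_q Q$ to that common value, so \eqref{eq:cond1} holds and $\hat\Omega$ is holomorphic), integrate \eqref{eq:Fsharp} over $\widetilde M$, normalize the secondary Gauss map of $F$ to be $g$ itself by a constant right factor (harmless to $G$ by \eqref{eq:changes}), and convert the $\PSU(2)$-monodromy of $g$ furnished by hypothesis~(1) into $\SU(2)$-monodromy of $F$ via \eqref{eq:g-F-repr}, i.e.\ the mechanism of Proposition~\ref{prop:sutwo}. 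What your route buys is self-containedness and sharper bookkeeping of how each hypothesis enters: in particular you are right that hypothesis~(2) alone only equates branch orders, and that it is hypothesis~(3) that forces $\ord_q Q$ to equal the branching order of $G$ --- the paper's own first sentence attributes this identity to assumption~(2), which is loose. What the paper's route buys is brevity, at the cost of resting on an external theorem whose proof is essentially your argument. Two trivial blemishes, neither a gap: under the convention \eqref{eq:changes} the normalizing replacement is $F\mapsto Fa$ rather than $Fa^{-1}$ (or relabel the constant), and at a pole of $G$ of multiplicity $k$ it is the order-$2k$ zero of $Q/dG$ that absorbs the poles of $G$ and $G^2$, as your ``direct check'' indeed shows.
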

\begin{proof}
 By the second assumption , $\ord_p Q=\ord_p d\sigma^2$  for any point
 $p\in M$.
 Then, by Theorem~2.2 and Remark~2.3 in \cite{uy3}, there exists
 a \cmcone{} immersion $f\colon{}M\to H^3$ whose hyperbolic Gauss map,
 secondary Gauss map and Hopf differential are $G$, $g$ and $Q$,
 respectively.
 Moreover, by the third assumption, the induced metric is complete.
\end{proof}
\subsection*{Partial classification for \boldmath$\TA(f)\leq 8\pi$}
By Corollary~\ref{cor:8pi-class}, a complete \cmcone{} surface with
$\TA(f)\leq 8\pi$ is either a surface of genus $0$ with at most $5$
ends or a surface of genus $1$ with at most $3$ ends.
We denote by $\gamma$ and $n$ the genus and the number of the ends,
respectively.
\subsubsection*{The case {$(\gamma,n)=(0,1)$}}
  In this case, we may assume $M=\C$ and the only end is $p_1=\infty$.
  Since $M$ is simply-connected, the representation $\rho_g$ as in
  \eqref{eq:g-repr} is trivial, that is, such a surface is
  $\Hyp^3$-reducible.
  Then by Lemma~\ref{lem:hyp-3-red-f-sharp}, the dual immersion $f^{\#}$
  is also well-defined on $M$. 
  And since the dual surface of $f^{\#}$ is $f$ itself, 
  the classification reduces to that for \cmcone{} surfaces 
  with dual absolute total curvature at most $8\pi$, 
  which is done in \cite{ruy3}.  
  
\subsubsection*{The case {$(\gamma,n)=(0,2)$}}
  In this case, the pseudometric $d\sigma^2$ as in \eqref{eq:pseudo} has
  the divisor $\mu_1 p_1 + \mu_2 p_2 + \xi_1 q_1 + \dots + \xi_m q_m$
  (see \eqref{eq:cmcone-divisor}),
  where $p_1$ and $p_2$ are the ends and $q_1,\dots,q_m$ are umbilic
  points.
  Since $\xi_k$ ($k=1,\dots,m$) are integers,
  Lemma~\ref{lem:metone-tear-drop} implies that a surface in this class
  satisfies either 
  \begin{enumerate}
   \item\label{item:g02:1}
        Both $\mu_1$ and $\mu_2$ are integers (the case of
	$\Hyp^3$-reducible),
	or 
   \item\label{item:g02:2}
        Both $\mu_1$ and $\mu_2$ are non-integral real numbers (the
	case of	$\Hyp^1$-reducible).
  \end{enumerate}
  If both ends are regular, such surfaces are completely classified
  (see Example~\ref{exa:catenoid} or \cite{uy1}), 
  and the only possible case is $\gO(-2,-2)$.

  So we may assume at least one end is irregular: $d_2\leq -3$.
  If $d_1\geq -1$, then $\mu_1\in\Z$ by \eqref{eq:fact-d}, and hence
  we have the case \ref{item:g02:1}.
  Hence $g$ is a meromorphic function on the genus $0$ Riemann surface
  $\C\cup\{\infty\}$, and $\TA(f)=4\pi\deg g$.
  Thus $\deg g\leq 2$, and hence $\mu_1$ and $\mu_2$ are $0$ or $1$.
  Then by \eqref{eq:fact-c}, we have $d_1\leq -1$ and $\mu_1=1$.
  Moreover, by \eqref{eq:fact-b}, we have $d_2 \geq -4$.
  On the other hand, if $d_1\leq -2$,
  by \eqref{eq:fact-a}, \eqref{eq:fact-b} and \eqref{eq:fact-g}, we
  have $-7\leq d_1+d_2\leq -4$.
  Hence the possible cases are 
  \begin{align*}
    (d_1,d_2)=&(-1,-3)\;,\quad (-1,-4)\;,\quad
               (-2,-3)\;,\quad (-2,-4)\;, \\
              &(-2,-5)\;, \quad (-3,-3)
              \quad\text{and}\quad (-3,-4)\;.
  \end{align*}
  Throughout this subsection, we set $M=\C\setminus\{0\}$.
\begin{proposition}\label{prop:o-2-3}
  There exists no complete 
  \cmcone{} immersion $f\colon{}\C\setminus\{0\}\to H^3$
  with $\TA(f)\leq 8\pi$ and of class $\gO(-1,-3)$ or\/ $\gO(-2,-3)$.
\end{proposition}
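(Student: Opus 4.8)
The plan is to argue by contradiction and to organize the proof around the reducibility dichotomy for type $(\gamma,n)=(0,2)$ obtained just above. After fixing the ends as $p_1=0$ and $p_2=\infty$ (so $p_2$ is irregular since $d_2=-3<-2$, while $p_1$ is regular since $d_1\ge -2$), I note from \eqref{eq:fact-a} that the umbilic divisor is empty for $\gO(-1,-3)$ and is a single point $q_1$ of order $1$ for $\gO(-2,-3)$, and that by Lemma~\ref{lem:metone-tear-drop} the pseudometric $d\sigma^2$ is either $\Hyp^3$-reducible ($\mu_1,\mu_2\in\Z$) or $\Hyp^1$-reducible ($\mu_1,\mu_2\notin\Z$). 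I would then treat the $\Hyp^3$-reducible cases by duality together with the $\TA(f^\#)\le 8\pi$ classification of \cite{ruy3}, and treat the one remaining $\Hyp^1$-reducible case of $\gO(-2,-3)$ by a direct analysis of the Weierstrass data.

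For $\gO(-1,-3)$ I would first observe that $d_1=-1\ge -1$ forces $\mu_1=\mu_1^\#\in\Z$ by \eqref{eq:fact-d} and \eqref{eq:fact-f}, whence $\mu_2\in\Z$ by Lemma~\ref{lem:metone-tear-drop}, so $f$ is $\Hyp^3$-reducible. Then by Lemma~\ref{lem:hyp-3-red-f-sharp} the dual $f^\#$ is defined on all of $M$; it is complete by Lemma~\ref{lem:complete}, and is itself $\Hyp^3$-reducible because $(f^\#)^\#=f$ is defined on $M$. Since $Q^\#=-Q$, the surface $f^\#$ has type $\gO(-1,-3)$ and dual total curvature $\TA((f^\#)^\#)=\TA(f)\le 8\pi$, so it ought to appear in the classification summarized in Table~\ref{tab:class}. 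As type $\gO(-1,-3)$ is absent from that table, this is the desired contradiction.

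The same device disposes of the $\Hyp^3$-reducible case of $\gO(-2,-3)$: there $f^\#$ is a complete $\Hyp^3$-reducible surface of type $\gO(-2,-3)$ with dual total curvature $\TA((f^\#)^\#)=\TA(f)\le 8\pi$, contradicting the statement recorded after Table~\ref{tab:class} that every $\Hyp^3$-reducible (or irreducible) surface of type $\gO(-2,-3)$ has dual total curvature at least $12\pi$.

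The hard part will be the $\Hyp^1$-reducible case of $\gO(-2,-3)$, where the duality shortcut fails because $f^\#$ is no longer single-valued on $M$. Here $\mu_1,\mu_2\notin\Z$, and from \eqref{eq:fact-b}, \eqref{eq:fact-c} and $\TA(f)\le 8\pi$ one gets $\mu_1+\mu_2\le 1$ with $\mu_1,\mu_2>-1$; writing the developing map $g$ in the normal form of Corollary~\ref{cor:metone-0-normal} with $p_n=\infty$ and reading off \eqref{eq:devel-can-inf} further forces $\mu_1\pm\mu_2\in\Z$ (modulo integers), which sharply restricts the cone data. I would then set $Q=c\,(z-q_1)z^{-2}\,dz^2$ and $\omega=Q/dg$, and impose simultaneously: the residue conditions of Remark~\ref{rem:residue} (so that $g$ descends to $\widetilde M$); completeness and nondegeneracy of $ds^2=(1+|g|^2)^2|\omega|^2$; and, decisively, regularity of the end $p_1=0$, i.e.\ that the hyperbolic Gauss map $G$ with $S(G)=S(g)-2Q$ be meromorphic at $0$. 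This last condition amounts to the vanishing of the logarithmic term at the resonant index of the Schwarzian equation $u''+\tfrac12 q\,u=0$ (where $S(G)=q\,dz^2$) at $z=0$; I expect the difficulty to lie precisely in showing that this no-logarithm requirement cannot be met together with the residue conditions and completeness for any of the cone data allowed by $\mu_1\pm\mu_2\in\Z$ and $\mu_1+\mu_2\le 1$.
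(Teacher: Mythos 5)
Your handling of the $\Hyp^3$-reducible cases is correct and coincides with the paper's own argument. For $\gO(-1,-3)$, $d_1=-1$ forces $\mu_1\in\Z$ by \eqref{eq:fact-d} and \eqref{eq:fact-f}, Lemma~\ref{lem:metone-tear-drop} then forces $\mu_2\in\Z$, hence $f$ is $\Hyp^3$-reducible; the dual $f^{\#}$ is then a complete immersion of the same type with dual total curvature $\TA(f)\le 8\pi$, contradicting the absence of $\gO(-1,-3)$ from Table~\ref{tab:class}. The same device disposes of $\gO(-2,-3)$ with $\mu_1\in\Z$, and your observation that $f^{\#}$ is itself $\Hyp^3$-reducible (because $(f^{\#})^{\#}=f$ is single-valued on $M$, by Lemma~\ref{lem:hyp-3-red-f-sharp}) is exactly what is needed to invoke the $12\pi$ lower bound recorded after Table~\ref{tab:class}; the paper leaves that point implicit, so here you are if anything more careful than the text.

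The gap is the $\Hyp^1$-reducible case of $\gO(-2,-3)$, which is the only hard case, and there your text is a plan rather than a proof. You correctly pin down the admissible data ($\mu_1,\mu_2\notin\Z$, $\mu_1+\mu_2\le 1$, $\mu_1\pm\mu_2\in\Z$ via Corollary~\ref{cor:metone-0-normal} and \eqref{eq:devel-can-inf}, one umbilic point, $Q=c\,(z-q_1)z^{-2}\,dz^2$), but the claim that the no-log-term condition at $z=0$ cannot hold simultaneously with the residue conditions of Remark~\ref{rem:residue} is precisely the content of the proposition in this case, and you explicitly defer it (``I expect the difficulty to lie\dots''). Nothing in your setup yields this incompatibility: one must enumerate the finitely many exponent patterns allowed by the normal form, and for each solve the residue equations at the poles $a_k$ together with the vanishing of the log-term coefficient of equation (E.1) of \cite{uy1}, deriving a contradiction in each case --- the computation carried out in full for the neighboring types $\gO(0,-2,-3)$ and $\gO(1,-2,-3)$ in Propositions~\ref{prop:o0-2-3} and \ref{prop:o1-2-3}. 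The paper itself does not redo this computation here: it closes the case by citing the argument for $d_1+d_2=-5$, $(\gamma,n)=(0,2)$, in the proof of Theorem~\ref{thm:four-pi} given in \cite{ruy4}. So your route is methodologically the same as the actual one, but the decisive step is asserted rather than established; to complete the proof you must either carry out that elimination or invoke \cite{ruy4} as the paper does.
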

\begin{proof}
  Assume $f$ is of class $\gO(-1,-3)$.
  In this case, $\mu_1\in\Z$ by \eqref{eq:fact-d}, and then $f$ is 
  $\Hyp^3$-reducible (the case of \ref{item:g02:1}).
  Then the dual immersion $f^{\#}$ is also well-defined on  $M$
  whose dual absolute total curvature is not greater than $8\pi$.
  Such a surface cannot exist because of the results in \cite{ruy3} (see 
  Table~\ref{tab:class}).

  Now suppose $f$ is of class $\gO(-2,-3)$.
  If $\mu_1\in\Z$, then for the same reason as in the $\gO(-1,-3)$
  case, such a surface does not exist.
  Now assume $\mu_1\not\in\Z$.
  Then the surface is of type \ref{item:g02:2}: $\mu_2\not\in\Z$.
  By the same argument as in the case $d_1+d_2=-5$ of 
  $(\gamma,n)=(0,2)$ in the proof in \cite{ruy4} of
  Theorem~\ref{thm:four-pi} (in this paper), such a surface cannot exist.
\end{proof}
  By similar argument as in the proof of Proposition~\ref{prop:o-2-3},
  if $\mu_1\in\Z$, the classification is the same as the dual case in
  \cite{ruy3}.
  Hence the case $\gO(-1,-4)$, and also the case $\gO(-2,-4)$ with 
  $\mu_1 \in \Z$ ($\Hyp^3$-reducible), are classified.  
  Furthermore, for the same reason, the $\gO(-2,-5)$ case with $\mu_1
  \in \Z$ and $\TA(f) \leq 8 \pi$ does not exist.  
\begin{remark}\label{rem:o-2-4}
 In the case $\gO(-2,-4)$ with \ref{item:g02:2} holding,
 we have the following examples: 
 Let 
\[
    dg = t\,z^{\mu}\frac{z^2-a^2}{(z^2-1)^2}\,dz\;, \qquad
    Q  = \theta \frac{z^2-a^2}{z^2}dz^2\;,
\]
 where
\[
    a^2 = \frac{\mu+1}{\mu-1}\;, \qquad 
    \theta = \frac{\mu(\mu+2)(\mu-1)}{4(\mu+1)}\;,
    \qquad -1<\mu<0\;.
\]
 Here $t$ is a positive real number corresponding to the one parameter 
 deformation coming from reducibility (see Section~\ref{sec:red}).
 Then the residues of $dg$ at $-1$ and $1$ vanish, and there exists the
 meromorphic function $g$ defined on the universal cover of 
 $\C\setminus\{0\}$ (see Remark~\ref{rem:residue}).
 We set $\omega=Q/dg$.
 Then by Theorem~2.4 in \cite{uy1}, one can check that 
 there exists an immersion  $f\colon{}\C\setminus\{0\}\to H^3$ with
 data $(g,Q)$.
 For this example, 
 $\mu_1 = \mu_2 = |\mu+1|-1 = \mu$
 because $-1<\mu<0$ (see Corollary~\ref{cor:metone-0-normal}).
 Then, 
 $\TA(f)/2\pi = 2(\mu+2)\in (4\pi,8\pi)$.
\end{remark}
\begin{remark}\label{rem:o-2-5}
  For the $\gO(-2,-5)$ case, the following data gives examples:  
  We set 
  \[
     dg = t\,z^{\mu}\frac{z^3-a^3}{(z^3-1)^2}\,dz\;, \quad
      Q = \theta \frac{z^3-a^3}{z^2}\,dz^2\quad
     \left(a^3 = \frac{\mu+1}{\mu-2}\;, ~
             \theta = \frac{\mu(\mu^2-4)}{4(\mu+1)}\right),
  \]
 where $\mu\in \R\setminus\{0,-1,\pm 2\}$, $t\in\R^+$.
 Here $t$ is a parameter corresponding to a deformation 
 which comes from reducibility (see Section~\ref{sec:red}).
 The ends are $0,\infty$ and the umbilic points are
 $a, a e^{2\pi/3 i}, a e^{4\pi/3 i}$.

 In this case, we have
 $\mu_1 = |\mu+1|-1$ and $\mu_2 = |\mu|-1$.
 Hence
\[
    \mu_1 + \mu_2 = |\mu+1|+|\mu|-2 \geq -1\;,
\]
 where equality holds if and only if $-1\leq  \mu\leq 0$.
 Thus the total absolute curvature is
 $\TA(f) = 2\pi (-2 + \mu_1 + \mu_2 -d_1-d_2 ) \geq 8\pi$
 and equality holds if and only if $-1<\mu<0$.
\end{remark}
\begin{remark}\label{rem:o-3-3}
  For the cases $\gO(-3,-3)$ and $\gO(-3,-4)$, all ends are irregular, 
  and then one cannot solve the period problem immediately.
  In the dual total curvature case, a deformation procedure as in 
  \cite{ruy1} can be used to construct examples of type $\gO(-3,-3)$ \cite{ruy3}.
  Unfortunately, this procedure cannot be used here, because
  the hyperbolic Gauss map is not a rational function.  
\end{remark}

\begin{remark}\label{rem:o-3-4}
 In the cases of $\gO(-3,-4)$ and $\gO(-2,-5)$, it can be shown that 
 $\TA(f)\geq 8\pi$.  
 In fact, in these cases, the divisor corresponding the pseudometric 
 $d\sigma^2$ is $\mu_1 p_1 + \mu_2 p_2 + \xi_1 q_1+\dots+\xi_mq_m$,
 and by \eqref{eq:fact-a}, we have $\xi_1+\dots+\xi_m = 3$ is an odd integer.
 Then by Corollary~4.7 of \cite{ruy4}, we have $\mu_1+\mu_2\geq -1$. 
 This shows that $\TA(f)\geq 8\pi$.
\end{remark}
\subsubsection*{The case $(\gamma,n)=(0,3)$}
 If $\mu_1$, $\mu_2$ and $\mu_3$ are integers, then 
 by Lemma~\ref{lem:hyp-3-red-0} and Lemma~\ref{lem:hyp-3-red-f-sharp},
 the surface is $\Hyp^3$-reducible  and its dual is also well-defined on
 $M$ with dual total absolute curvature at most $8 \pi$. 
 By \cite{ruy3}, such surfaces must be of type $\gO(-1,-1,-2)$, 
 $\gO(-1,-2,-2)$, or $\gO(-2,-2,-2)$, and the first two cases are classified. 
 Also, examples exist in the third case as well \cite[Example~4.4]{ruy3}.
 Moreover, for any surface of type $\gO(-1,-1,-2)$, $\mu_1$ and $\mu_2$
 are integers, by \eqref{eq:fact-d}.
 Then, by Lemma~\ref{lem:metone-tear-drop}, $\mu_3$ is also an integer.
 Thus, surfaces of type $\gO(-1,-1,-2)$ must be $\Hyp^3$-reducible and are 
 completely classified.

 Next, we assume all $\mu_j \not\in \Z$.  
 Then \eqref{eq:fact-a}, \eqref{eq:fact-b} and \eqref{eq:fact-g}
 imply that $-8\leq d_1+d_2+d_3\leq -4$, and  \eqref{eq:fact-d} implies
 that $d_j\leq -2$ ($j=1,2,3$).
 Hence the possible cases are $\gO(-2,-2,-2)$, $\gO(-2,-2,-3)$,
 $\gO(-2,-2,-4)$ and $\gO(-2,-3,-3)$.
 For the case $\gO(-2,-2,-2)$, that is, for surfaces with three regular ends,
 the second and third authors classified the irreducible ones with
 embedded ends (\cite{uy7}, see Example~\ref{exa:trinoid}).

 For the cases $\gO(-2,-3,-3)$ and $\gO(-2,-2,-4)$, the sum of the 
 orders of the umbilic points are an even integer, by \eqref{eq:fact-a}. 
 Then by Corollary~4.7 in \cite{ruy4} and \eqref{eq:fact-b}, we have 
 $\TA(f) \geq 8\pi$, hence $\TA(f) = 8 \pi$.  
 
 By Lemma~\ref{lem:metone-tear-drop},
 there exists no surface with only one non-integer $\mu_j$.  
 Then the remaining case is to  assume that one $\mu_j$, say $\mu_1$, is
 an integer and $\mu_2, \mu_3 \not\in \Z$.  
 Then by \eqref{eq:fact-d}, $d_2$, $d_3\leq -2$.
 Also, by \eqref{eq:fact-b}, \eqref{eq:fact-c} and
 \eqref{eq:fact-g}, we have $-5\leq d_2+d_3$.  
 Hence we have two possibilities: $(d_2,d_3)=(-2,-2)$ or
 $(d_2,d_3)=(-2,-3)$.

 When $(d_2,d_3)=(-2,-2)$, by \eqref{eq:fact-b} and \eqref{eq:fact-c}, we 
 have $\mu_1-d_1=2$ or $3$.
 And by \eqref{eq:fact-a}, $d_1\leq 0$.
 Hence we have the possibilities
 $\gO(-3,-2,-2)$, $\gO(-2,-2,-2)$, $\gO(-1,-2,-2)$ and $\gO(0,-2,-2)$.

 Similarly, when $(d_2,d_3)=(-2,-3)$, we have $\mu_1-d_1=2$ and $d_1\leq
 1$.
 Hence the possibilities are $\gO(-2,-2,-3)$, $\gO(-1,-2,-3)$,
 $\gO(0,-2,-3)$, $\gO(1,-2,-3)$.
 In this case, the corresponding divisor of the pseudometric $d\sigma^2$
 is 
\[
      \mu_1 p_1 + \mu_2 p_2 + \mu_3 p_3
          + \sum_{k=1}^m \xi_k q_k
         = \mu_2 p_2 + \mu_3 p_3 + (2+d_1) p_1 + \sum_{k=1}^m \xi_k q_k\;,
\]
 where the $q_k$ ($k=1,\dots,m$) are the umbilic points and $\xi_k$ is the 
 order of $Q$ at $q_k$ (see \eqref{eq:cmcone-divisor}).
 Here, by \eqref{eq:fact-a}, $\xi_1+\dots+\xi_m=1-d_1$, so 
\[
     \mu_1 + \sum_{k=1}^m \xi_k = d_1+2+\sum_{k=1}^m \xi_k=3 \; . 
\]
 Hence if $d_1 \geq -1$ (and so $\mu_1 \in \Z^+$), Corollary 4.7 of 
 \cite{ruy4} implies $\mu_2+\mu_3\geq -1$.
 This implies that $\TA(f)\geq 8\pi$, and so
\begin{equation}\label{eq:g0n3-8pi}
   \text{For a surface of type $\gO(d,-2,-3)$ ($d\geq -1$),}\qquad
   \TA(f)\geq 8\pi\;.
\end{equation}

\begin{proposition}\label{prop:o0-2-3}
 There exists no complete \cmcone{} surface $f$ with $\TA(f)\leq 8\pi$
 and of type $\gO(0,-2,-3)$.
\end{proposition}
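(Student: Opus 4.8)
The plan is to reduce the type $\gO(0,-2,-3)$ to an explicit reducible Weierstrass datum and then show that its period problem cannot be closed.

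First I would fix the data. As $d_1=0\geq -1$, the end $p_1$ is regular and \eqref{eq:fact-d} gives $\mu_1=\mu_1^{\#}$; the discussion leading to \eqref{eq:g0n3-8pi} forces $\mu_1-d_1=2$, so $\mu_1=2$, and \eqref{eq:g0n3-8pi} together with $\TA(f)\leq 8\pi$ gives $\TA(f)=8\pi$. Then \eqref{eq:ta-non-dual} yields $\mu_1+\mu_2+\mu_3=1$, hence $\mu_2+\mu_3=-1$; since $\mu_2,\mu_3>-1$ by \eqref{eq:fact-g}, both lie in $(-1,0)$ and are therefore non-integers. By \eqref{eq:fact-a} the umbilic divisor has total order $\sum_k\xi_k=1$, so there is exactly one umbilic point, of order $1$. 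As the divisor of $d\sigma^2$ has exactly two non-integral entries ($\mu_2,\mu_3$), Lemma~\ref{lem:hyp-1-red-0} shows $f$ is $\Hyp^1$-reducible.

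Next I would normalize $p_1=0$, $p_2=1$, $p_3=\infty$ and write $q_1=c$ for the umbilic point. The orders $\ord_0Q=0$, $\ord_1Q=-2$, $\ord_\infty Q=-3$, $\ord_cQ=1$ determine the Hopf differential up to a constant,
\[
   Q=A\,\frac{z-c}{(z-1)^2}\,dz^2\qquad(A\in\C\setminus\{0\}),
\]
and Corollary~\ref{cor:metone-0-normal}, applied with the irregular end at $\infty$, lets me write the secondary Gauss map in the reducible form $g=(z-1)^{\nu_2}r(z)$ with $\nu_2=\mu_2+1\in(0,1)$ and $r$ rational. Matching the conical orders at $0$, at $\infty$ and at $c$ forces $\deg r=-1$ and pins down the zero/pole structure of $r$ (so that $dg$ vanishes only at the prescribed branch points), leaving only finitely many free constants together with the one-parameter reducibility scaling $g\mapsto tg$ of Section~\ref{sec:red}.

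The crux is the period problem. With $\omega=Q/dg$, Proposition~\ref{prop:sutwo} says the surface descends to $M$ exactly when the lift's monodromy lies in $\SU(2)$. Because the $\mu_j$ are real the diagonal part of the monodromy is automatically unitary, and because $\mu_1\in\Z$ one has $\rho_g(\tau_1)=\pm\id$, so the only obstruction is the vanishing of the off-diagonal period of the $\sl(2,\C)$-valued form $\bigl(\begin{smallmatrix} g & -g^2\\ 1 & -g\end{smallmatrix}\bigr)\omega$ around $\tau_2$ (the condition around $\tau_3$ being dependent, via $\tau_1\tau_2\tau_3=\id$). This off-diagonal period is precisely the obstruction measured by the flux: here the embedded end $p_1$ has $\Fl_1=0$ by Proposition~\ref{prop:flux}~\ref{item:flux:2} (since $\ord_{p_1}Q=0$), so by the balancing formula \eqref{eq:balancing} $\Fl_3=-\Fl_2$, while $\Fl_2\neq 0$ by Proposition~\ref{prop:flux}~\ref{item:flux:1}. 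I would evaluate the relevant period integral explicitly from the forced $(g,Q)$ by the residue theorem and show that, even after using the scaling $t$ to normalize one real period, it cannot be made to vanish for any admissible choice of the remaining constants. Establishing this incompatibility — in essence, that adjoining the zero-flux embedded end $p_1$ to the two-ended configuration $\gO(-2,-3)$ of Proposition~\ref{prop:o-2-3} does not make the periods solvable — is the delicate technical heart of the argument, and the step I expect to be the main obstacle.
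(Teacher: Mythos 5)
Your setup is sound and matches the paper's: you correctly pin down $\mu_1=2$, $\TA(f)=8\pi$, $\mu_2+\mu_3=-1$ with $\mu_2,\mu_3\in(-1,0)$, the single umbilic point of order one, the form of $Q$, and the reducible form of $g$ via Corollary~\ref{cor:metone-0-normal} (though you gloss over the parity argument from \eqref{eq:devel-can-inf}, which in the paper yields exactly two admissible shapes for $dg$, namely $(\alpha,\beta,N)=(2,1,2)$ and $(2,-3,0)$, rather than a single pinned-down structure). But the heart of the proof --- the actual contradiction --- is missing from your proposal, and the mechanism you propose for producing it would not work. The monodromy of the lift $F$ around $\tau_2$ is the monodromy of a second-order linear ODE (a path-ordered exponential); it is \emph{not} equal to ``the off-diagonal period'' of the $\sl(2,\C)$-valued form $\bigl(\begin{smallmatrix} g & -g^2\\ 1 & -g\end{smallmatrix}\bigr)\omega$, and it cannot be computed by the residue theorem. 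Likewise the flux cannot detect the obstruction: $\Fl_1=0$ and $\Fl_3=-\Fl_2\neq 0$ is perfectly consistent with the balancing formula \eqref{eq:balancing}, since Proposition~\ref{prop:flux} places no restriction on the flux of the irregular end $p_3$; so no contradiction can come from this direction. You acknowledge that this step is ``the main obstacle,'' which is precisely the point --- without it there is no proof.

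The paper's contradiction comes from two \emph{algebraic} necessary conditions, neither of which appears in your proposal. First, for $g$ with $dg$ as in Corollary~\ref{cor:metone-0-normal} to exist on the universal cover with the prescribed reducible (pure power) monodromy, the residues of $dg$ at its auxiliary double poles $a_k$ must vanish (Remark~\ref{rem:residue}); this gives explicit polynomial equations in $(\mu,q,a,b)$. Second --- and this is the key idea you are missing --- at the end $z=0$ the indicial equation of the ODE \eqref{eq:o0-2-3-E1} (equation (E.1) of \cite{uy1}, built from $\omega=Q/dg$ and $Q$) has roots $0$ and $-1$ differing by an integer, and Theorem~2.2 of \cite{uy1} says that if the immersion exists then the log-term coefficient there must vanish. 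This log-term condition is the genuine unitarizability obstruction, and note that it lives at the \emph{regular} end $p_1$, not at $\tau_2$ where you located it: a nonvanishing log term forces parabolic monodromy of $F$ around $p_1$, which is incompatible with $\rho_F(\tau_1)\in\SU(2)$ even though $\rho_g(\tau_1)=\pm\id$. In each of the two cases the residue equations and the log-term equation are jointly unsolvable (in the first case they force $a=b=q=4/(\mu+2)$, contradicting $a\neq b$; in the second they are directly incompatible), which is the contradiction. To repair your proof you would need to replace the flux/period computation with this log-term analysis.
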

\begin{proof}
 Assume such an immersion
 $f\colon{}\C\cup\{\infty\}\setminus\{p_1,p_2,p_3\}\to H^3$ exists.
 By \eqref{eq:fact-a}, there is the only umbilic point $q_1$.
 We set the ends $(p_1,p_2,p_3)=(0,1,\infty)$ and the umbilic point
 $q_1=q\in\C\setminus\{0,1\}$.
 Then the Hopf differential $Q$ has a zero only at $q$ with order $1$, 
 and two poles at $1$ and $\infty$ with orders $2$ and $3$, respectively.
 Thus, $Q$ is written as
\[
       Q  := \theta\frac{z-q}{(z-1)^2}\,dz^2
       \qquad (\theta\in\C\setminus\{0\})\;.
\]
 
 On the other hand, by \eqref{eq:fact-b}, \eqref{eq:fact-g},
 \eqref{eq:fact-c} and \eqref{eq:fact-d}, we have $\mu_1=2$, 
 and
\begin{equation}\label{eq:o0-2-3-murange}
    -2 < \mu_2 + \mu_3 \leq -1\qquad\text{and}\qquad
      -1<\mu_j<0 \qquad (j=2,3)\;.
\end{equation}
 The secondary Gauss map branches at $(p_1,p_2,p_3)$ and $q$ with
 branch orders $2$, $\mu_2$, $\mu_3$ and $1$, respectively.
 Then by Corollary~\ref{cor:metone-0-normal}, we can take the secondary
 Gauss map $g$ such that
\[
   dg = t\,\frac{z^{\alpha}(z-1)^{\nu}(z-q)^{\beta}}{
              \prod_{j=1}^N(z-a_j)^2}\,dz
        \qquad (t\in\R\setminus\{0\})\;,
\]
 where
\[
    \nu=\mu_2\quad\text{or}\quad -\mu_2-2\;,\qquad
    \alpha=2\quad\text{or}\quad -4\;,\qquad
    \beta=1\quad\text{or}\quad -3\;,
\]
 and $a_j\in\C\setminus\{0,1,q\}$ ($j=1,\dots,N$) are mutually distinct
 points.
 
 Without loss of generality, we may assume $\nu=\mu_2$ (if not, we can
 take $1/g$ instead of $g$).
 Then by \eqref{eq:devel-can-inf} in Corollary~\ref{cor:metone-0-normal}, we
 have
\[
    -(\alpha+\beta)-\mu_2+2N-2= \mu_3 
      \quad\text{or}\quad -\mu_3-2\;,
\]
 so $\mu_2+\mu_3$ or $\mu_2-\mu_3$ is an odd integer.
 Then by \eqref{eq:o0-2-3-murange}, we have $\mu_2+\mu_3=-1$ and
 $(\alpha,\beta,N)=(2,1,2)$ or $(2,-3,0)$.
 
 First, we assume $(\alpha,\beta,N)=(2,1,2)$, and we set $\mu_2=\mu$.
 Then 
\begin{equation}\label{eq:o0-2-3-dg-a}
    dg= t\, \frac{z^2 (z-1)^{\mu}(z-q)}{(z-a)^2(z-b)^2}\,dz
    \qquad (a,b\in\C\setminus\{0,1,q\}, a\neq b)\;.
\end{equation}
 Such a $g$ exists if and only if the residues of the right-hand side 
 of \eqref{eq:o0-2-3-dg-a} vanish:
\begin{align}
\label{eq:o0-2-3-residue-a-1}
  \frac{2}{a}+\frac{\mu}{a-1}+\frac{1}{a-q}-\frac{2}{a-b}&=0 \;,\\
\label{eq:o0-2-3-residue-a-2}
  \frac{2}{b}+\frac{\mu}{b-1}+\frac{1}{b-q}-\frac{2}{b-a}&=0 \;.
\end{align}
 Since $a\neq b$, these equations are equivalent to 
 $\mbox{\eqref{eq:o0-2-3-residue-a-1}}- \mbox{\eqref{eq:o0-2-3-residue-a-2}}$
 and 
 $b\times\mbox{\eqref{eq:o0-2-3-residue-a-1}}- 
       a\times\mbox{\eqref{eq:o0-2-3-residue-a-2}}$:
\begin{equation}\label{eq:o0-2-3-residue-a}
  \begin{aligned}
  (\mu+1)(a^2+b^2)-(\mu q +1)(a+b) -2 a b +2 q &=0 \;,\\
   (\mu+1)(a+b)ab - 2 (\mu q +  q +2) ab + 2 q (a+b)&=0\;.
  \end{aligned}
\end{equation}
 On the other hand, let $\omega=Q/dg$, and consider the equation
\begin{equation}\label{eq:o0-2-3-E1}
      X'' -\bigl(\log\hat\omega\bigr)'X'-\hat Q X=0\qquad
      \bigl(\omega=\hat\omega\,dz, \quad Q = \hat Q\, dz^2\bigr)\;,
\end{equation}
 which is named (E.1) in \cite{uy1}.
 The roots of the indicial equation of \eqref{eq:o0-2-3-E1} at $z=0$
 are $0$ and $-1$.
 By Theorem 2.2 of \cite{uy1}, the log-term coefficient of 
 \eqref{eq:o0-2-3-E1} at $z=0$ must vanish if the surface exists:
\begin{equation}\label{eq:o0-2-3-E1-log}
     \mu+2-\frac{2}{a}-\frac{2}{b}=0\;.
\end{equation}
 (See Appendix A of \cite{ruy3} or Appendix A of \cite{uy1}).
 Here, the solution of equations \eqref{eq:o0-2-3-residue-a} 
 and  \eqref{eq:o0-2-3-E1-log} is 
 $a = b = q = 4/(\mu+2)$,  a contradiction.
 Hence the case $(\alpha,\beta,N)=(2,1,2)$ is impossible.

 Next, we consider the case $(\alpha,\beta,N)=(2,-3,0)$.
 Then it holds that 
\[
  dg = t\, \frac{z^2(z-1)^{\mu}}{(z-q)^3}\,dz\qquad 
       (t\in\C\setminus\{0\})\;.
\]
 The residue at $z=q$ vanishes if and only if 
\begin{equation}\label{eq:o0-2-3-residue-b}
  (\mu+2)(\mu+1)q^2 - 4 (\mu+1)q +2 =0\;.
\end{equation}
 On the other hand, in the same way as the first case, 
 the log-term coefficient of \eqref{eq:o0-2-3-E1} at $z=0$ vanishes if
 and only if 
\begin{equation}\label{eq:o0-2-3-log-b}
    \mu+2 = \frac{4}{q}\;.
\end{equation}
 However, there is no pair $(\mu,q)$ satisfying
 \eqref{eq:o0-2-3-residue-b} and \eqref{eq:o0-2-3-log-b} simultaneously.
 Hence this case is also impossible.
\end{proof}
\begin{proposition}\label{prop:o1-2-3}
 There exists no \cmcone{} immersion of type $\gO(1,-2,-3)$ with
 $\TA(f)\leq 8\pi$.
\end{proposition}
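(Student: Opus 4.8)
The plan is to follow the template of the proof of Proposition~\ref{prop:o0-2-3}, adapting it to the \emph{absence} of umbilic points. First I would record the constraints forced on the data. Since $d_1=1\geq -1$, the inequality \eqref{eq:g0n3-8pi} gives $\TA(f)\geq 8\pi$, so together with the hypothesis $\TA(f)\leq 8\pi$ we get $\TA(f)=8\pi$; moreover $d_1\geq -1$ makes the end $p_1$ regular with $\mu_1=d_1+2=3$ (an integer), while \eqref{eq:fact-b}, \eqref{eq:fact-c} and \eqref{eq:fact-g} force $\mu_2+\mu_3=-1$ with $\mu_2,\mu_3\in(-1,0)\setminus\Z$. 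By \eqref{eq:fact-a} the total order of $Q$ at the umbilic points is $(4\gamma-4)-\sum_j d_j=-4-(-4)=0$, so there are \emph{no} umbilic points, and $Q$ has its only zero (of order $1$) at the end $p_1$. Normalizing the ends to $(p_1,p_2,p_3)=(0,1,\infty)$ then pins down the Hopf differential up to scale:
\[
   Q=\theta\,\frac{z}{(z-1)^2}\,dz^2\qquad(\theta\in\C\setminus\{0\})\;.
\]

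Since $\mu_1\in\Z$ but $\mu_2,\mu_3\notin\Z$, Lemma~\ref{lem:hyp-1-red-0} shows that $d\sigma^2$ is $\Hyp^1$-reducible, so Corollary~\ref{cor:metone-0-normal} applies: there is a developing map $g$ with $dg=t\,z^{\alpha_1}(z-1)^{\alpha_2}\big/\prod_{k=1}^N(z-a_k)^2\,dz$, where $\alpha_1\in\{3,-5\}$ and $\alpha_2\in\{\mu_2,-\mu_2-2\}$. After replacing $g$ by $1/g$ if necessary I may assume $\alpha_2=\mu_2=:\mu$, and then the relation \eqref{eq:devel-can-inf} at $\infty$, combined with $\mu_2+\mu_3=-1$ and the parity of $\alpha_1$, leaves only $(\alpha_1,N)=(3,2)$; the value $\alpha_1=-5$ is discarded because it would force $N=-2$. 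This is where the argument is genuinely shorter than that of Proposition~\ref{prop:o0-2-3}, which retained two exponent cases because of the umbilic factor. Thus
\[
   dg=t\,\frac{z^3(z-1)^{\mu}}{(z-a)^2(z-b)^2}\,dz\qquad(a,b\in\C\setminus\{0,1\},\ a\neq b,\ t\in\R^+)\;.
\]

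Next I would impose the two conditions needed for an actual immersion. By Remark~\ref{rem:residue}, $g$ descends to the universal cover only if the residues of $dg$ at the double poles $a,b$ vanish:
\[
   \frac{3}{a}+\frac{\mu}{a-1}-\frac{2}{a-b}=0,\qquad
   \frac{3}{b}+\frac{\mu}{b-1}-\frac{2}{b-a}=0\;.
\]
Independently, forming equation (E.1) of \cite{uy1} for $\omega=Q/dg$ (whose coefficient $\hat\omega=(\theta/t)(z-a)^2(z-b)^2/\{z^2(z-1)^{\mu+2}\}$ is exactly the one occurring in Proposition~\ref{prop:o0-2-3}) gives indicial roots $0$ and $-1$ at $z=0$, so by Theorem~2.2 of \cite{uy1} the vanishing of the log-term coefficient is necessary:
\[
   \mu+2=\frac{2}{a}+\frac{2}{b}\;.
\]

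The heart of the matter is to show this system has no solution in the admissible range, and this is the step I expect to be the main obstacle. Writing $s=a+b$ and $p=ab$, the sum of the two residue equations together with the log-term relation yields $s^2+s=p(s+4)$ and $\mu=6/(s+1)$, while the difference of the residue equations gives a third relation in $s,p,\mu$. Substituting the first two into the third, I expect the system to collapse to $s=-4$, hence $\mu=6/(s+1)=-2$, which contradicts $\mu\in(-1,0)$ (and is excluded anyway since $\mu\notin\Z$). The degenerate subcases $s=2$, $p=0$, or $(a-1)(b-1)=0$ are ruled out directly, since they force $a=b$ or $a,b\in\{0,1\}$. This contradiction establishes that no such immersion exists.
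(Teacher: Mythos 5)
Your proposal is correct and takes essentially the same route as the paper's own proof: the identical reduction (the paper just places the ends at $(1,0,\infty)$ rather than $(0,1,\infty)$, so its $dg=t\,z^{\mu}(z-1)^3/\bigl((z-a)^2(z-b)^2\bigr)\,dz$ is your form after $z\mapsto 1-z$), followed by the same two obstructions, namely the vanishing residues of $dg$ at $a,b$ and the vanishing log-term coefficient of equation (E.1) of \cite{uy1} at the end where $Q$ has its simple zero. The only difference is the final algebra: the paper solves the two residue equations explicitly for $a,b$ in terms of $\mu$ and then finds the log-term coefficient equals $-\tfrac{1}{3}(\mu+2)\neq 0$, whereas you eliminate via $s=a+b$, $p=ab$; your anticipated collapse is in fact correct (your relations $p(s+4)=s^2+s$ and $\mu=6/(s+1)$ hold, and substituting them into the difference of the residue equations does yield $s=-4$, hence $\mu=-2$, the desired contradiction).
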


\begin{proof}
 Assume such an immersion
 $f\colon{}\C\cup\{\infty\}\setminus\{p_1,p_2,p_3\}\to H^3$ exists.
 Then we have $\TA(f)=8\pi$ because of \eqref{eq:g0n3-8pi}, 
 and by \eqref{eq:fact-b}, \eqref{eq:fact-c} and 
 \eqref{eq:fact-g}, it holds that
\begin{equation}\label{eq:go1-2-3-mu-rel}
      \mu_1=3 \qquad \text{and} \qquad
      \mu_2+\mu_3=-1,\qquad -1<\mu_j<0\quad (j=2,3) \;.
\end{equation}

 We set $(p_1,p_2,p_3)=(1,0,\infty)$.
 By \eqref{eq:fact-a}, there are no umbilic points.
 Then the Hopf differential $Q$ is written as
\[
   Q = \theta\,\frac{z-1}{z^2}\,dz^2 \qquad (\theta\in\C\setminus\{0\})\;.
\]
 The secondary Gauss map $g$ branches at $0$, $\infty$ and 
 $1$ with orders $\mu_2$, $\mu_3$ and $3$, respectively.
 Then by Corollary~\ref{cor:metone-0-normal},
 $dg$ can be put in the following form:
\[
    dg = t\, z^{\mu_2}\frac{(z-1)^{\alpha}}{\prod_{j=1}^N(z-a_j)^2}\,dz \; ,
\]
 where $a_j\in \C\setminus\{0,1\}$ $(j=1,\dots,N)$ are mutually distinct
 numbers, $t$ is a positive real number, and $\alpha=3$ or $-5$, and
\[
    -\mu_2 -\alpha +2N-2 = \mu_3\quad \text{or}\quad -\mu_3-2\;.
\]
 The second case is impossible because of \eqref{eq:go1-2-3-mu-rel}.
 Hence $2N = \alpha+1$, and then 
 $\alpha=3$ and $N=2$.  Thus we have the form
\[
     dg = t\, z^{\mu}\frac{(z-1)^3}{(z-a)^2(z-b)^2}\,dz
      \qquad (\mu=\mu_2)\;.
\]
 Such a $g$ exists if the residues at $z=a$ and $z=b$ vanish:
\[
     \frac{\mu}{a}+\frac{3}{a-1}-\frac{2}{a-b}=0 \; \quad
         \text{and}\quad
     \frac{\mu}{b}+\frac{3}{b-1}-\frac{2}{b-a}=0 \; .
\]
 By direct calculation, we have
\begin{align*}
   a &=
 \frac{-2+\mu+\mu^2+\sqrt{2}\sqrt{2-\mu-\mu^2}}{(\mu+1)(\mu+2)} \; ,\\
   b &=
 \frac{-2+\mu+\mu^2-\sqrt{2}\sqrt{2-\mu-\mu^2}}{(\mu+1)(\mu+2)} \; .
\end{align*}
  Consider the equation (E.1) in \cite{uy1} with $\omega=Q/dg$.
  Then, the indicial equation at $z=1$ has the two roots $0$ and $-1$
  with difference $1$.
  By direct calculation again, the log-term at $z=1$ vanishes
  if and only if
\[
     \mu + 2 -\frac{2}{1-a}-\frac{2}{1-b}
        = -\frac{1}{3}(\mu+2)=0 \; , 
\]
  which is impossible because $\mu=\mu_2>-1$.
\end{proof}
\begin{proposition}\label{prop:o0-2-2}
 For a non-zero real number $\mu$ $(-1<\mu<0)$ and
 positive integer $m$, set
\begin{equation}\label{eq:data-o0-2-2}
    G= z^{m+1} \frac{mz-(m+2)}{(m+2)z-m}\qquad
    \text{and}
    \qquad
    g = z^{\mu+1} \frac{\mu z-(\mu+2)}{(\mu+2)z-\mu}\;.
\end{equation}
 Then there exists a one parameter family of conformal \cmcone{}
 immersions $f\colon{}\C\setminus\{0,1\}\to H^3$ of type $\gO(0,-2,-2)$
 with $\TA(f)=4\pi(\mu+2)$,
 whose hyperbolic Gauss map and secondary Gauss map are $G$ and $tg$
 $(t\in\R^+)$, respectively.
 
 Conversely, any \cmcone{} surface of type $\gO(0,-2,-2)$ with
 $\TA(f)\leq 8\pi$ is obtained in such a manner.
 In particular, $\TA(f)< 8\pi$.
\end{proposition}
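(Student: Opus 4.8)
The plan is to prove the two directions separately: existence via Proposition~\ref{prop:uy3}, and the converse via the normal form of Corollary~\ref{cor:metone-0-normal} together with the residue and log-term conditions.

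For existence, first I would record the elementary computation
\[
   dg = \frac{\mu(\mu+1)(\mu+2)\,z^{\mu}(z-1)^2}{\bigl((\mu+2)z-\mu\bigr)^2}\,dz,\qquad
   dG = \frac{m(m+1)(m+2)\,z^{m}(z-1)^2}{\bigl((m+2)z-m\bigr)^2}\,dz,
\]
so that $g$ (resp.\ $G$) branches only at the ends $0,1,\infty$, with branch orders $\mu,2,\mu$ (resp.\ $m,2,m$), and has a single simple pole inside $M=\C\setminus\{0,1\}$, at $\mu/(\mu+2)$ (resp.\ $m/(m+2)$). In particular the monodromy of $g$ about each end is a rotation, so $d\sigma^2=4\,dg\,d\bar g/(1+|g|^2)^2$ is single valued on $M$ with conical singularities (condition~(1) of Proposition~\ref{prop:uy3}). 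Since the Schwarzian depends only on the branch exponent and is unaffected by the simple poles of $g$ and $G$, expanding \eqref{eq:schwarz} at $z=0$ and counting degrees of a quadratic differential on $\CP^1$ (total degree $-4$, forced into two double poles with no zeros) gives the closed form
\[
   Q=\tfrac12\bigl(S(g)-S(G)\bigr)=\frac{(m+1)^2-(\mu+1)^2}{4z^2}\,dz^2,
\]
which is holomorphic and nowhere zero on $M$ (condition~(2)); this already exhibits $f$ as being of type $\gO(0,-2,-2)$ with no umbilic points.

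Finally, $(G,Q)$ satisfy the conditions \eqref{eq:cond1} and \eqref{eq:cond2}, since the branch order of $G$ minus $d_j$ equals $2$, $m+2$, $m+2$ at $1,0,\infty$; hence $ds^2{}^{\#}$, and therefore $ds^2$ by Lemma~\ref{lem:complete}, is complete and nondegenerate (condition~(3)). Proposition~\ref{prop:uy3} then produces the immersion, and \eqref{eq:fact-b} gives $\TA(f)=2\pi(2+\mu_1+\mu_2+\mu_3)=4\pi(\mu+2)$. Because $\mu_1=2\in\Z$ while $\mu_2=\mu_3=\mu\notin\Z$, the metric is $\Hyp^1$-reducible by Lemma~\ref{lem:hyp-1-red-0}; replacing $g$ by $tg$ ($t\in\R^+$) leaves $Q$ unchanged (Schwarzians are M\"obius invariant) and preserves all three conditions, yielding the asserted one-parameter family.

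For the converse, let $f$ be of type $\gO(0,-2,-2)$ with $\TA(f)\le 8\pi$. After normalizing the two order-$(-2)$ ends to $0,\infty$ and the order-$0$ end to $1$, \eqref{eq:fact-a} forces $\sum\xi_k=0$, so there are no umbilic points and $Q=c\,z^{-2}\,dz^2$ for some $c\neq0$. Since $d_1=0\ge-1$, \eqref{eq:fact-d} and regularity give $\mu_1\in\Z$, and \eqref{eq:fact-c} gives $\mu_1\ge2$. If all $\mu_j$ were integers the surface would be $\Hyp^3$-reducible with $g$ a rational function branching to order $\mu_1\ge2$ at $z=1$, forcing $\deg g\ge3$ and $\TA(f)=4\pi\deg g\ge12\pi$, a contradiction; so by Lemma~\ref{lem:metone-tear-drop} the surface is $\Hyp^1$-reducible with $\mu_2,\mu_3\notin\Z$. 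I would then write $dg$ in the normal form \eqref{eq:devel-can} of Corollary~\ref{cor:metone-0-normal} and impose the vanishing-residue conditions of Remark~\ref{rem:residue} together with the log-term conditions coming from equation~(E.1) and Theorem~2.2 of \cite{uy1}, exactly as in the proofs of Propositions~\ref{prop:o0-2-3} and \ref{prop:o1-2-3}.

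The main obstacle is this last step: solving the coupled residue and log-term system and showing its only solutions are the claimed ones. One must rule out the a priori possibilities $\mu_1=3$ and $\mu_2\neq\mu_3$, which are not excluded by the curvature bound alone. Closing the diagonal monodromy about the three ends forces either $\mu_2+\mu_3=-1$ or $\mu_2=\mu_3$, and the log-term equations at the two non-integral ends must be shown to be compatible only when $\mu_1=2$, $\mu_2=\mu_3=:\mu\in(-1,0)$ and $N=1$; this recovers $dg$ above up to the scaling $t$, hence $g$, with $G$ then determined by $S(G)=S(g)-2Q$. Since $-1<\mu<0$, this gives $\TA(f)=4\pi(\mu+2)\in(4\pi,8\pi)$, proving in particular the strict inequality $\TA(f)<8\pi$.
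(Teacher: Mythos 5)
Your existence half is correct and is essentially the paper's own argument: compute $Q=\tfrac12(S(g)-S(G))=\frac{m(m+2)-\mu(\mu+2)}{4}z^{-2}\,dz^2$ (your $\frac{(m+1)^2-(\mu+1)^2}{4}$ is the same constant), verify the hypotheses of Proposition~\ref{prop:uy3}, and get the one-parameter family from the $\Hyp^1$-reducible deformation $g\mapsto tg$. The first part of your converse is also sound: no umbilic points, $Q=\theta z^{-2}dz^2$, $\mu_1\in\Z$ with $\mu_1\geq 2$, the $\Hyp^3$-reducible case excluded by $\TA(f)=4\pi\deg g\geq 12\pi$, hence $\mu_2,\mu_3\notin\Z$ and $\Hyp^1$-reducibility.

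The genuine gap is the step you yourself flag as ``the main obstacle'': eliminating the spurious branches and pinning down the normal form. After the monodromy/parity constraint from Corollary~\ref{cor:metone-0-normal} one is left not only with the two cases you list ($\mu_1=3$, $\mu_2+\mu_3=-1$, and $\mu_1=2$, $\mu_2=\mu_3$) but also with $\mu_1=2$, $\mu_2+\mu_3=0$, $\mu_2=-\mu_3\notin\Z$ (with $N=2$ poles in the normal form), which your sketch omits entirely; and for none of the spurious cases do you actually carry out the residue-plus-log-term analysis you propose, you only assert that it ``must be shown to be compatible only when'' $\mu_1=2$, $\mu_2=\mu_3$. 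That assertion is precisely the content of the classification, and it is far from clear that the coupled residue/log-term system (as in Propositions~\ref{prop:o0-2-3} and \ref{prop:o1-2-3}, which are nonexistence results with fixed exponents) closes up tractably here, where one case must survive and the others must die. The paper avoids this computation altogether by a different device that your proposal never invokes in the converse: since all three ends are regular, $G$ is meromorphic on $\C\cup\{\infty\}$ with non-negative integer branch orders $\mu_j^{\#}$; writing both $S(g)$ and $S(G)$ in the normal form \eqref{eq:schwarz-0-2-2} and subtracting to get $2Q$ yields the identity \eqref{eq:claim-one-0-2-2}, $(\mu_2-\mu_3)(\mu_2+\mu_3+2)=(\mu_2^{\#}-\mu_3^{\#})(\mu_2^{\#}+\mu_3^{\#}+2)$, and then the integrality of $\deg G=\tfrac12(2+\mu_1^{\#}+\mu_2^{\#}+\mu_3^{\#})$ (Riemann--Hurwitz) kills $\mu_1=3$ and forces $\mu_2^{\#}=\mu_3^{\#}$, hence $\mu_2=\mu_3$ (since $\mu_2+\mu_3+2>0$), with no residue or log-term equations at all. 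To complete your proof you would either need to import this dual-Gauss-map argument or actually solve the elimination problem you deferred; as written, the converse is not proved.
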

\begin{proof}
 For $g$ and $G$ as in \eqref{eq:data-o0-2-2}, set
\[
   Q:=\frac{1}{2}\left(S(g)-S(G)\right)
     =\frac{m(m+2)-\mu(\mu+2)}{4}\frac{dz^2}{z^2}\;.
\]
 Since $\mu\not\in\Z$, the right-hand side is not identically zero.
 Moreover, one can easily check that the assumptions of
 Proposition~\ref{prop:uy3} hold.
 Hence there exists a complete \cmcone{} immersion
 $f\colon{}\C\setminus\{0,1\}\to H^3$
 with hyperbolic Gauss map $G$, secondary Gauss map $g$ and Hopf 
 differential $Q$.

 Conversely, suppose such a surface exists.
 Then without loss of generality, we set $M=\C\setminus\{0,1\}$ and 
 $(p_1,p_2,p_3)=(1,0,\infty)$.
 By \eqref{eq:fact-a}, there are no umbilic points.
 Then the Hopf differential $Q$ has poles of order $2$ at $0$ and
 $\infty$, and has no zeros. Hence we have
 $Q = \theta\,z^{-2}\,dz^2$ ($\theta\in\C\setminus\{0\})$.
 This implies that  the secondary and hyperbolic Gauss maps branch 
 only at the ends.
 Hence both $S(g)$ and $S(G)$ have poles of order $2$ at $z=0,1,\infty$
 and holomorphic on $M$.
 More precisely, we have
 \begin{equation}\label{eq:schwarz-0-2-2}
  \begin{aligned}
    S(g)&= \frac{c_3 z^2+(c_1-c_2-c_3)z+c_2}{z^2(z-1)^2}\,dz^2,\\
    S(G)&= \frac{c_3^{\#} z^2+(c_1^{\#}-c_2^{\#}-c_3^{\#})z + c_2^{\#}}
          {z^2(z-1)^2}\,dz^2\;,
  \end{aligned}
 \end{equation}
 where $c_j$ and $c_j^{\#}$ are as in \eqref{eq:c-j}.
 Here, $\mu_1 = \mu_1^{\#}$ because of \eqref{eq:fact-d}.
 Hence we have
  \begin{align*}
       2 Q &= S(g)-S(G)\\
           &= \left(
               \frac{c_2-c_2^{\#}}{z^2}
             + \frac{(c_2-c_3)-(c_2^{\#}-c_3^{\#})}{z}
             - \frac{(c_2-c_3)-(c_2^{\#}-c_3^{\#})}{z-1}\right)\,dz^2 \; .
  \end{align*}
 Thus we have
  \begin{equation}\label{eq:claim-one-0-2-2}
      (\mu_2-\mu_3)(\mu_2+\mu_3+2) =
      (\mu_2^\#-\mu_3^\#)(\mu_2^\#+\mu_3^\#+2)\;.
  \end{equation}

 On the other hand, \eqref{eq:fact-b}, \eqref{eq:fact-c} and 
 \eqref{eq:fact-g} imply that $\mu_1=\mu_1^{\#}=2$ or $3$.  
 If $\mu_1=3$, \eqref{eq:fact-b} implies that $\mu_2+\mu_3\leq -1$.
 Then by \eqref{eq:fact-g}, $-1<\mu_j<0$ for $j=2,3$.
 Hence using \eqref{eq:claim-one-0-2-2}, we have
\[
    1 > |\mu_2-\mu_3| > |\mu_2^\#-\mu_3^\#|\;.
\]
 Here $\mu_2^{\#}$ and $\mu_3^{\#}$ are integers, 
 hence $\mu_2^{\#}=\mu_3^{\#}$.  
 The hyperbolic Gauss map $G$ is a meromorphic function on $\C\cup\{\infty\}$.
 Then the Riemann-Hurwitz relation implies that
\[
    \Z\ni\deg G = \frac{1}{2}(2 + \mu_1^{\#}+\mu_2^{\#}+\mu_3^{\#})
                = \mu_2^{\#}+2 +\frac{1}{2}\;.
\]
 This is impossible.

 When $\mu_1=\mu_1^{\#}=2$, by similar arguments, we have
 $-1<\mu_j<1$ $(j=2,3)$ and $\mu_2+\mu_3\leq 0$.
 This implies that $|\mu_2-\mu_3|<2$.
 Thus by \eqref{eq:claim-one-0-2-2}, we have
\[
    |\mu_2^{\#}-\mu_3^{\#}|= 0 \quad\text{or}\quad 1\;.
\]
  We may assume that 
  $\mu_3^{\#}\geq \mu_2^{\#}$ (if not, exchange the ends $0$ and
  $\infty$).  Assume $\mu_3^{\#}-\mu_2^{\#}=1$.  In this case, 
\[
   \Z\ni\deg G = \frac{1}{2}(2 + \mu_1^{\#}+\mu_2^{\#}+\mu_3^{\#}) 
            =\mu_2^{\#}+2 +\frac{1}{2} \; , 
\]
  which is impossible.
  Hence, using also \eqref{eq:claim-one-0-2-2}, we have 
  $\mu_3^{\#}-\mu_2^{\#}=\mu_3-\mu_2=0$.
  Moreover, $\mu_2+\mu_3=2\mu_2\leq 0$, so $\mu_2\leq 0$.  Putting all this 
  together, we have
\[
    \mu_1=\mu_1^{\#}=2 \; ,\quad
    -1<\mu_2=\mu_3 \leq 0 \; ,\quad
    \mu_2^{\#}=\mu_3^{\#} \; , 
    \quad\text{and} \quad 
    Q = \frac{c_2-c_2^{\#}}{2 z^2}\,dz^2\; .  
\]
 If $\mu_2=0$, the secondary Gauss map $g$ is a meromorphic function on
 $\C\cup\{\infty\}$ with only one branch point, which is impossible.
 Hence $\mu_2<0$.
 In this case, the pseudometric $d\sigma^2$ branches on the divisor
 $\mu_1 p_1 + \mu_2 p_2 + \mu_3 p_3$
 because there are no umbilic points.
 Thus the secondary Gauss map $g$ satisfies \eqref{eq:schwarz-0-2-2}.
 One possibility of such a $g$ is in the form \eqref{eq:data-o0-2-2}
 with $\mu=\mu_2$.
 On the other hand, since the surface is $\Hyp^1$-reducible, 
 $g$ can be normalized as in \eqref{eq:data-o0-2-2} because 
 of Corollary~\ref{cor:metone-0-normal}.
 Since $S(G)=S(g)-2Q$, the Schwarzian derivative of the hyperbolic 
 Gauss map $G$ is uniquely determined, and $G$ is determined 
 up to M\"obius transformations.
 Then such a surface is unique, with given $g$ and $G$.
\end{proof}

\begin{proposition}\label{prop:o-1-2-2}
 Let $\mu\in(-1,0)$, $m\geq 2$ an integer, 
\[
    a:=-\frac{m+\mu+2}{m-\mu-2},\qquad
    p:=\frac{a\mu+a-a^2}{a\mu+a-1}\;,
\]
 and $M=\C\cup\{\infty\}\setminus\{0,1,p\}$.
 Then there exist a meromorphic function $G$ on $\C\cup\{\infty\}$
 and a meromorphic function $g$ on the universal cover $\widetilde M$
 of $M$ such that
\begin{equation}\label{eq:go-1-2-2-gauss}
    dG = z \frac{(z-p)^{m-2}}{(z-1)^{m+2}}\,dz\qquad\text{and}\qquad
    dg = t\, z \frac{(z-1)^{\mu}(z-p)^{-\mu-2}}{(z-a)^2}\,dz
\end{equation}
 respectively, where $t\in\R^+$, 
 and there exists a complete \cmcone{} immersion $f\colon{}M\to H^3$
 whose hyperbolic Gauss map and secondary Gauss map are $G$ and $g$, 
 respectively.
 Moreover $\TA(f)=4\pi(\mu+2)\in(4\pi,8\pi)$.

 Conversely, an $\Hyp^1$-reducible complete \cmcone{} surface of class
 $\gO(-1,-2,-2)$ with $\TA(f)< 8\pi$ is obtained in such a way.
\end{proposition}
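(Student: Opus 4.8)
The plan is to treat the two assertions separately: the existence half is a direct application of Proposition~\ref{prop:uy3} to the explicit pair $(G,g)$, while the converse is a structure argument paralleling the proof of Proposition~\ref{prop:o0-2-2}.

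For existence, I would first check that the data in \eqref{eq:go-1-2-2-gauss} make sense: for $m\geq 2$ and $\mu\in(-1,0)$ the denominators $m-\mu-2$ and $a\mu+a-1$ are nonzero and $a\notin\{0,1\}$, so $a\in M$ and the three ends $0,1,p$ are distinct. The function $G$ is single-valued on $\C\cup\{\infty\}$ because the only pole of $dG$ is at $z=1$ and the numerator $z(z-p)^{m-2}$ is a polynomial of degree $m-1<m+1$, so that residue vanishes automatically; by Riemann--Hurwitz $\deg G=m+1$. For $g$, the only pole of $dg$ in $M$ is the double pole at $z=a$ (the exponents at $1$ and $p$ are non-integral, hence produce branch points rather than residues, cf.\ Remark~\ref{rem:residue}). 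A short computation shows the residue of $dg$ at $z=a$ equals $t\,h(a)\bigl(\tfrac1a+\tfrac{\mu}{a-1}-\tfrac{\mu+2}{a-p}\bigr)$ with $h(z)=z(z-1)^\mu(z-p)^{-\mu-2}$, and solving the vanishing of the bracket for $p$ reproduces exactly the stated value $p=(a\mu+a-a^2)/(a\mu+a-1)$. Hence, by Remark~\ref{rem:residue}, the prescribed $dg$ integrates to a meromorphic $g$ on $\widetilde M$.

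Next I would verify the three hypotheses of Proposition~\ref{prop:uy3}. Around $0,1,p$ the map $g$ transforms by $g\mapsto e^{2\pi i\nu}g$, so $d\sigma^2=4\,dg\,d\bar g/(1+|g|^2)^2$ descends to $M$ with conical singularities of orders $\mu_1=1$ and $\mu_2=\mu_3=\mu$, giving (1). For (2), put $Q:=(S(g)-S(G))/2$; away from the ends the only candidate poles are at $z=a$, where $g$ has a mere simple pole so that $S(g)$ is regular, and at $z=\infty$, where $g$ and $G$ both branch to order $1$. The value $a=-(m+\mu+2)/(m-\mu-2)$ is precisely the one that cancels the residual pole of $S(g)-S(G)$ at $\infty$, so that $Q$ is holomorphic on $M$; a check of the leading coefficients at $0,1,p$ shows $Q$ has a simple pole, two double poles, and $\ord_\infty Q=1$, so $f$ is of class $\gO(-1,-2,-2)$ with a single umbilic point at $\infty$. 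Hypothesis (3) holds because $\ord_qQ=\ord_qd\sigma^2$ on $M$ (nondegeneracy) and the conical orders at the ends exceed $-1$ (completeness). Proposition~\ref{prop:uy3} then produces the complete immersion $f$ with Gauss maps $G$ and $g$, and \eqref{eq:fact-b} gives $\TA(f)/2\pi=(2\gamma-2)+\sum_j(\mu_j-d_j)=-2+2+2(\mu+2)=2(\mu+2)$, i.e.\ $\TA(f)=4\pi(\mu+2)\in(4\pi,8\pi)$.

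For the converse, let $f$ be $\Hyp^1$-reducible of class $\gO(-1,-2,-2)$ with $\TA(f)<8\pi$. Normalizing the ends to $0,1,p$ with the $-1$-end at $0$, facts \eqref{eq:fact-c}, \eqref{eq:fact-d} and \eqref{eq:fact-g} give $\mu_1\in\{1,2\}$ with $\mu_1=\mu_1^\#$, while by Lemma~\ref{lem:hyp-1-red-0} and Lemma~\ref{lem:metone-tear-drop} the orders $\mu_2,\mu_3$ at the two $-2$-ends are the non-integral ones; \eqref{eq:fact-a} leaves a single umbilic point of order $1$, determining the pole structure of $Q$. Matching $S(g)-S(G)=2Q$ at the ends yields the identity $(\mu_2-\mu_3)(\mu_2+\mu_3+2)=(\mu_2^\#-\mu_3^\#)(\mu_2^\#+\mu_3^\#+2)$ exactly as in \eqref{eq:claim-one-0-2-2}. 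Combining this identity with $\TA(f)<8\pi$ (equivalently $\mu_1+\mu_2+\mu_3<1$ by \eqref{eq:fact-b}), the reducibility constraint $\mu_2\pm\mu_3\in\Z$, and the integrality of $\deg G$ forced by Riemann--Hurwitz, I would exclude $\mu_1=2$ and force $\mu_2=\mu_3=:\mu\in(-1,0)$, just as $\mu_1=3$ is excluded in the proof of Proposition~\ref{prop:o0-2-2}. Corollary~\ref{cor:metone-0-normal} then normalizes $g$ to the form in \eqref{eq:go-1-2-2-gauss}; the residue condition of Remark~\ref{rem:residue} together with the vanishing of the logarithmic coefficient of equation~(E.1) of \cite{uy1} at $z=0$ select a single pole $a$ and recover the formulas for $a$ and $p$, while $S(G)=S(g)-2Q$ determines $G$ up to a M\"obius transformation, identifying $f$ as a member of the family.

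The hardest step will be the converse, and within it the elimination of the asymmetric cases $\mu_2\neq\mu_3$ (and $\mu_1=2$): reducibility alone only pins down $\mu_2\pm\mu_3\in\Z$, leaving an a priori one-parameter ambiguity in the branch data consistent with $\TA(f)<8\pi$, so ruling these out requires playing the matching identity off against the period/residue and log-term conditions, in the spirit of Propositions~\ref{prop:o0-2-3} and \ref{prop:o1-2-3}. The existence half, by contrast, is essentially bookkeeping once the two tuning conditions---vanishing residue of $dg$ at $a$ (fixing $p$) and cancellation of the pole of $Q$ at $\infty$ (fixing $a$)---are in place.
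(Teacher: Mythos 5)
Your existence half is correct and is essentially the paper's own argument: the residue of $dG$ at $z=1$ vanishes for degree reasons, the vanishing of the residue of $dg$ at $z=a$ is exactly the stated formula for $p$, one computes $Q=(S(g)-S(G))/2$, applies Proposition~\ref{prop:uy3}, and reads off $\TA(f)=4\pi(\mu+2)$ from \eqref{eq:fact-b}. The converse, however, rests on a false identity. The relation you import from Proposition~\ref{prop:o0-2-2}, namely $(\mu_2-\mu_3)(\mu_2+\mu_3+2)=(\mu_2^\#-\mu_3^\#)(\mu_2^\#+\mu_3^\#+2)$, is equivalent to $c_2-c_2^{\#}=c_3-c_3^{\#}$; but for type $\gO(-1,-2,-2)$ the Hopf differential is $Q=\theta\,dz^2/\bigl(z(z-1)^2(z-p)^2\bigr)$, and \eqref{eq:q-first} gives the \emph{asymmetric} relations \eqref{eq:c-o-1-2-2}: $c_2-c_2^{\#}=2\theta/(1-p)^2$ while $c_3-c_3^{\#}=2\theta/\bigl(p(1-p)^2\bigr)$, which differ by the factor $p\neq 1$. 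The symmetry held in Proposition~\ref{prop:o0-2-2} only because there $Q=\theta z^{-2}dz^2$ has no umbilic points and treats the two $-2$-ends at $0$ and $\infty$ identically; here the simple pole at $0$ and the umbilic point at $\infty$ destroy it. (Your two exclusions happen to survive without the identity: the parity constraint \eqref{eq:devel-can-inf} together with the ranges forced by $\TA(f)<8\pi$ already rules out $\mu_1=2$ and forces $\mu_2=\mu_3$; the paper instead quotes Corollary~4.7 of \cite{ruy4} for the $\mu_1=2$ case.)

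The more serious gap is the endgame, where your proposal has nothing in place of the paper's key mechanism. Far from being a defect, the asymmetry of \eqref{eq:c-o-1-2-2} is precisely what determines $G$: since $c_2=c_3$ (because $\mu_2=\mu_3$), these two relations force $p$ and $\theta$ to be real and positive, whence $c_2^{\#}-c_3^{\#}=2\theta/\bigl(p(1-p)\bigr)$ is nonzero, so $\mu_2^{\#}>\mu_3^{\#}$ \emph{strictly}; the Riemann--Hurwitz relation then gives $\deg G=2+(\mu_2^{\#}+\mu_3^{\#})/2<\mu_2^{\#}+2$, which against $\deg G\geq\mu_2^{\#}+1$ pins down $\deg G=\mu_2^{\#}+1$ and $\mu_3^{\#}=\mu_2^{\#}-2$; setting $m:=\mu_2^{\#}$, the map $G$ has its only pole at $z=1$, yielding $dG=z(z-p)^{m-2}(z-1)^{-(m+2)}dz$, and finally compatibility $S(g)-S(G)=2Q$ forces $a=-(m+\mu+2)/(m-\mu-2)$. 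Your sketch never produces the integer $m$, the branch orders of $G$, or the relation between $a$ and $m$. Invoking the log-term coefficient of (E.1) at $z=0$ together with ``$S(G)=S(g)-2Q$ determines $G$ up to M\"obius'' cannot fill this hole: the nontrivial point is that this Schwarzian equation admits a \emph{single-valued meromorphic} solution on $\C\cup\{\infty\}$ only for the special parameter values indexed by $m\geq 2$, and identifying those values is exactly the content of the paper's argument, not a consequence of general uniqueness of Schwarzian solutions.
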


\begin{proof}
 The residue of $dG$ in \eqref{eq:go-1-2-2-gauss}
 at $z=1$ and the residue of $dg$ in \eqref{eq:go-1-2-2-gauss} at $z=a$
 vanish.
 Thus there exist $G$ and $g$ such that \eqref{eq:go-1-2-2-gauss}
 hold.  Moreover, by direct calculation, we have
\[
    Q:=\frac{1}{2}\bigl(S(g)-S(G)\bigr)
      = \frac{4m^2\bigl(m(m+2)-\mu(\mu+2)\bigr)}{
              (m+\mu)^2(2-m+\mu)^2}
        \frac{dz^2}{z(z-1)^2(z-p)^2}\;.
\]
 Then by Proposition~\ref{prop:uy3}, there exists a \cmcone{} immersion
 $f\colon{}M\to H^3$.
 One can easily check that $f$ is complete and $\TA(f)=4\pi(\mu+2)$.

 Conversely, assume such a surface exists.
 Then by \eqref{eq:fact-a}, there is only one umbilic point $q_1$ of
 order one.
 We set $(p_1,p_2,p_3)=(0,1,p)$ and $q_1=\infty$, where
 $p\in\C\setminus\{0,1\}$.

 By \eqref{eq:fact-b}, \eqref{eq:fact-c}, \eqref{eq:fact-d}, \eqref{eq:fact-f} 
 and \eqref{eq:fact-g}, we have $\mu_1=1$ or $2$.  
 When $\mu_1=2$, by Corollary 4.7 of \cite{ruy4}, $\mu_2+\mu_3\geq -1$ 
 holds, and then $\TA(f)\geq 8\pi$.

 Assume $\mu_1=1$ and $\TA(f)\leq 8\pi$. 
 If one of $\mu_j$ $(j=2,3)$ is an integer, by
 Lemma~\ref{lem:metone-tear-drop}, the other is also an integer, and
 hence the surface is $\Hyp^3$-reducible.
 Thus both $\mu_2$ and $\mu_3$ are non-integers.
 By \eqref{eq:fact-b} and \eqref{eq:fact-g}, we have
\begin{equation}\label{eq:mu-range-o-1-2-2}
     -2 < \mu_2 + \mu_3 \leq 0 \; , \qquad 
      -1 < \mu_j < 1 \quad (j=2,3)\;.
\end{equation}
 Then the secondary Gauss map $g$ branches at $0$, $1$, $p$ and $\infty$
 with orders $\mu_1$, $\mu_2$, $\mu_3$ and $1$, respectively.
 Then by Corollary~\ref{cor:metone-0-normal}, $g$ can be chosen in the form
\[
    dg = t\, \frac{(z-1)^{\nu_2} (z-p)^{\nu_3} z^{\alpha}}{
                  \prod_{k=1}^N (z-a_k)^2}\,dz\qquad 
        (t\in\R^+)\;,
\]
 where $\nu_j = \mu_j$ or $ -\mu_j-2$, and 
 $\alpha = 1$ or $-3$,
 and $\{a_1,\dots,a_N\}\subset \C\setminus\{0,1,p\}$ are mutually
 distinct points.  
 We may assume $\nu_2=\mu_2$ (if not, take $1/g$ instead of $g$).
 Then by \eqref{eq:devel-can-inf}, 
\[
        -\mu_2 -\nu_3 -\alpha+2N-2= 1 \quad\text{or}\quad -3\;
\]
 holds.
 This implies that $\mu_2+\mu_3$ or $\mu_2-\mu_3$ is an even integer.
 Then by \eqref{eq:mu-range-o-1-2-2}, we have
\[
     \mu:=\mu_2 = \mu_3\in(-1,0),\qquad \nu_3 = -\mu-2,\qquad
     \alpha=1,\quad \text{and}\quad N=1\;.
\]
 Hence we have
\begin{equation}\label{eq:dg-o-1-2-2}
      dg = t\, \frac{z(z-1)^\mu (z-p)^{-\mu-2}}{(z-a)^2}\,dz\qquad
       (t\in\R^+, \quad a\in \C\setminus\{0,1,p\})\;.
\end{equation}
 Such a map $g$ exists on the universal cover of $\C\setminus\{0,1,p\}$ if 
 and only if the residue at $z=a$ of the right-hand
 side of \eqref{eq:dg-o-1-2-2} vanishes, that is, if 
 $p = (a\mu + a - a^2)/(a\mu+a-1)$.
 The Hopf differential of such a surface is written in the form
\begin{equation}\label{eq:hopf-o-1-1-2}
   Q = \frac{\theta\,dz^2}{z(z-1)^2(z-p)^2}\qquad(\theta\in\C\setminus\{0\})
\end{equation}
 because it has poles of order $2$ at $z=1$ and $p$, a pole of order $1$
 at $z=0$ and a zero of order $1$ at $z=\infty$.
 Let $\mu_1^{\#}$, $\mu_2^{\#}$ and $\mu_3^{\#}$ be the branch orders
 of the hyperbolic Gauss map at $p_1$, $p_2$ and $p_3$, respectively.
 Then by \eqref{eq:q-first}, we have
\begin{equation}\label{eq:c-o-1-2-2}
    c_2-c_2^{\#}=\frac{2\theta}{(1-p)^2},\qquad
    c_3-c_3^{\#}=\frac{2\theta}{(1-p)^2p}\;,
\end{equation}
 where $c_j$ and $c_j^{\#}$ are as in \eqref{eq:c-j}.
 Then $p$ and $\theta$ are positive real numbers.
 Without loss of generality, we may assume $\mu_2^{\#}\geq \mu_3^{\#}$.
 Then we have
\begin{equation}\label{eq:c2-c3-o-1-2-2}
    0 \geq c_2^{\#}-c_3^{\#}=\frac{2\theta}{p(1-p)}\;.
\end{equation}
 Hence $\mu_2^{\#}\neq \mu_3^{\#}$, that is, $\mu_2^{\#}>\mu_3^{\#}$.
 Since $\mu_1^{\#}=1$ by \eqref{eq:fact-d}, the hyperbolic Gauss map
 branches at $0$, $1$, $p$ and $\infty$ with branching order $1$,
 $\mu_2^{\#}$, $\mu_3^{\#}$ and $1$, respectively.
 Then the Riemann-Hurwitz relation implies that
 $\deg G = 2 + (\mu_2^{\#}+\mu_3^{\#})/2<\mu_2^{\#}+2$.
 On the other hand, we have $\deg G\geq\mu_2^{\#}+1$.
 Hence we have 
\[
  \deg G = \mu_2^{\#}+1 \qquad \text{and}\qquad
     \mu_3^{\#}=\mu_2^{\#}-2\;.
\]
 We set $m:=\mu_2^{\#}$.
 By a suitable M\"obius transformation, we may set $G(p_2)=G(1)=\infty$.
 Since $z=1$ is a point of multiplicity $m+1$, $G$ has no pole except
 $z=1$.
 Then $dG$ is written in the form
\[
    dG = c\,z\frac{(z-p)^{m-2}}{(z-1)^{m+2}}\,dz\qquad 
      \bigl(c\in\C\setminus\{0\}\bigr)\;,
\]
 and we can choose $c=1$ by a  M\"obius transformation again.
 Moreover, the Hopf differential $Q=(S(g)-S(G))/2$ is as in 
 \eqref{eq:hopf-o-1-1-2} if and only if 
 $a = -(m+\mu+2)/(m-\mu-2)$.
\end{proof}
\begin{proposition}\label{prop:o-1-2-2-a}
 Let $m$ be a positive integer and $\mu\in(-1,0)$ a real number.
 \begin{enumerate}
  \item\label{item:o-1-2-2-a:1} 
       If $m\geq 3$ and
    \[
        \hspace*{3em}
        p := \frac{m(m+2)-\mu(\mu+2)}{(m-2)^2-\mu^2},\quad
        \theta:=\frac{(\mu-3m+2)^2(m(m+2)-\mu(\mu+2))}{((m-2)^2-\mu^2)^2}\;,
    \]
	then there exists a complete \cmcone{} immersion
	$f\colon{}M:=\C\cup\{\infty\}\setminus\{0,1,p\}\to H^3$
        with hyperbolic Gauss map $G$ and Hopf differential $Q$ 
        so that 
    \[
        dG = z^2 \frac{(z-p)^{m-3}}{(z-1)^{m+2}}\,dz\;,\qquad
         Q = \frac{\theta}{z(z-1)^2(z-p)^2}\,dz^2\;.
    \]
  \item\label{item:o-1-2-2-a:2} 
       If $m\geq 1$ and
    \begin{multline*}
        \hspace*{3em}p := \frac{\mu+m+2}{\mu+m}\;,\quad
        \theta:=\frac{(m-\mu)(\mu+m+2)}{(m+\mu)^2}\\
        \text{and}\quad
        a:=\frac{m-\mu\pm\sqrt{9(m-\mu)^2+16m(\mu+1)+16\mu(m+1)}}{2(\mu+m)}\;,
    \end{multline*}
	then there exists a complete \cmcone{} immersion
	$f\colon{}M:=\C\cup\{\infty\}\setminus\{0,1,p\}\to H^3$
        with hyperbolic Gauss map $G$ and Hopf differential $Q$ 
        so that
    \[
        dG = z^2 \frac{(z-p)^{m-1}}{(z-1)^{m+2}(z-a)^2}\,dz\;,\qquad
         Q = \frac{\theta}{z(z-1)^2(z-p)^2}\,dz^2\;.
    \]
  \end{enumerate}
  In each case, the immersion $f$ is complete, of type $\gO(-1,-2,-2)$, 
  $\Hyp^1$-reducible and satisfies $\TA(f)=8\pi$.

  Conversely, any $\Hyp^1$-reducible \cmcone{} immersion of class
  $\gO(-1,-2,-2)$ with $\TA(f)=8\pi$ is obtained in this way.
\end{proposition}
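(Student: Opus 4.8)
The plan is to complete the analysis of $\Hyp^1$-reducible surfaces of type $\gO(-1,-2,-2)$ begun in Proposition~\ref{prop:o-1-2-2}, which disposed of the subcase $\mu_1=1$ (giving $\TA(f)<8\pi$); exactly the subcase $\mu_1=2$, yielding $\TA(f)=8\pi$, remains. Throughout I normalize the ends as $(p_1,p_2,p_3)=(0,1,p)$ and place the unique umbilic point, of order $1$ by \eqref{eq:fact-a}, at $q_1=\infty$, so that $Q=\theta\,dz^2/\bigl(z(z-1)^2(z-p)^2\bigr)$ with $\theta\in\C\setminus\{0\}$. The existence half will be handled by supplying the pair $(G,g)$ to Proposition~\ref{prop:uy3}, while the converse is a rigidity argument forcing the branching data of $G$ into one of the two listed shapes and then solving for $p,\theta,a$.

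For existence I first check that $dG$ integrates to a single-valued meromorphic $G$ on $\C\cup\{\infty\}$: in item~(1) the numerator of $dG$ has degree $m-1<m+2$, so the residue at the pole $z=1$ vanishes automatically, while in item~(2) the stated $p$ and $a$ are precisely the values making the residues of $dG$ at $z=1$ and $z=a$ vanish. I then take $g$ to be the developing map with $S(g)=S(G)+2Q$, write $dg$ in the normal form of Corollary~\ref{cor:metone-0-normal}, and verify via Remark~\ref{rem:residue} that its residues vanish, so that $g$ is $\Hyp^1$-reducible and $d\sigma^2=4\,dg\,d\bar g/(1+|g|^2)^2$ is single-valued on $M$ with conical singularities. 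Since $Q=(S(g)-S(G))/2$ is the given holomorphic differential and $ds^2=(1+|g|^2)^2|Q/dg|^2$ is complete and nondegenerate, Proposition~\ref{prop:uy3} produces $f$, and reducibility supplies the $t\in\R^+$ deformation of Section~\ref{sec:red}. The conical orders are $\mu_1=2$, $\mu_2=\mu$, $\mu_3=-1-\mu$, so \eqref{eq:fact-b} gives $\TA(f)/2\pi=-2+3+(\mu+2)+(1-\mu)=4$, i.e.\ $\TA(f)=8\pi$.

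For the converse, let $f$ be $\Hyp^1$-reducible of type $\gO(-1,-2,-2)$ with $\TA(f)=8\pi$. By \eqref{eq:fact-d} $\mu_1\in\Z$, and \eqref{eq:fact-b}, \eqref{eq:fact-c}, \eqref{eq:fact-g} give $\mu_1\in\{1,2\}$; but $\mu_1=1$ would put $f$ in the setting of Proposition~\ref{prop:o-1-2-2}, forcing $\TA(f)=4\pi(\mu+2)<8\pi$, so $\mu_1=2$ and $\mu_2+\mu_3=-1$. Lemmas~\ref{lem:hyp-3-red-0} and \ref{lem:metone-tear-drop} then force $\mu_2,\mu_3\in(-1,0)\setminus\Z$. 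As $d_1\geq-2$ and $d_2=d_3=-2$, the map $G$ is meromorphic on $\C\cup\{\infty\}$ with $\mu_1^\#=\mu_1=2$ and non-negative integers $\mu_2^\#,\mu_3^\#$ (by \eqref{eq:fact-f}); Riemann--Hurwitz gives $2\deg G=5+\mu_2^\#+\mu_3^\#$, so $\mu_2^\#+\mu_3^\#$ is odd and, after relabeling, $\mu_2^\#>\mu_3^\#$. Normalizing $G(1)=\infty$, the pole of $G$ there has order $\mu_2^\#+1$, and the remaining poles carry total order $\deg G-(\mu_2^\#+1)=\bigl(3-(\mu_2^\#-\mu_3^\#)\bigr)/2\geq0$, whence $\mu_2^\#-\mu_3^\#\leq3$; with the odd-difference condition this leaves only $\mu_2^\#-\mu_3^\#\in\{1,3\}$. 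The value $3$ leaves $z=1$ as the only pole (item~(1), $m=\mu_2^\#\geq3$), and the value $1$ forces exactly one extra simple pole $z=a$ (item~(2), $m=\mu_2^\#\geq1$); in each case the divisor of $G$ determines $dG$ up to scale. Finally, matching $(S(g)-S(G))/2$ against $Q$ — the double-pole relations \eqref{eq:q-first}, \eqref{eq:c-j} give $c_2-c_2^\#=2\theta/(1-p)^2$ and $c_3-c_3^\#=2\theta/\bigl(p(1-p)^2\bigr)$, hence $p=(c_2-c_2^\#)/(c_3-c_3^\#)$, combined with the residue conditions of Remark~\ref{rem:residue} — fixes $p,\theta,a$ in the stated closed forms.

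The crux is the converse. The degree bound $\mu_2^\#-\mu_3^\#\leq3$ is what reduces the problem to exactly two families, and obtaining it cleanly relies on correctly reading off the pole of $G$ of order $\mu_2^\#+1$ at $z=1$. After that, the genuinely laborious part is solving the coupled residue and Schwarzian-matching equations to produce the rational expressions for $p$ and $\theta$ and the quadratic (hence the $\pm$ square root) for $a$ in item~(2); care is also needed to confirm that no spurious umbilic points or poles intrude and that the completeness hypothesis of Proposition~\ref{prop:uy3} holds in every case.
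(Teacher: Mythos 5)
Your converse half is correct and essentially identical to the paper's argument: $\mu_1=\mu_1^{\#}=2$ and $\mu_2+\mu_3=-1$, Riemann--Hurwitz forcing $\mu_2^{\#}+\mu_3^{\#}$ to be odd and $\mu_2^{\#}-\mu_3^{\#}\in\{1,3\}$ (the paper phrases this as $\deg G=\mu_2^{\#}+1$ or $\mu_2^{\#}+2$, which is the same dichotomy), the two resulting shapes of $dG$, and the determination of $p$, $\theta$, $a$ from \eqref{eq:q-first} together with the vanishing of the residue of $dG$ at $z=a$. (One small slip: that residue condition is ordinary single-valuedness of the meromorphic function $G$; Remark~\ref{rem:residue} concerns twisted residues of $dg$ and is not what is being used there.)

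The existence half, however, has a genuine gap, and it sits exactly where the content of the proposition lies. You ``take $g$ to be the developing map with $S(g)=S(G)+2Q$'' and then ``write $dg$ in the normal form of Corollary~\ref{cor:metone-0-normal}'' --- but that corollary applies only to developing maps of metrics already known to be reducible elements of $\metone(\C\cup\{\infty\})$, i.e.\ to solutions $g$ whose monodromy is already known to be diagonal and unitary. For a solution of the Schwarzian equation this is precisely the period problem, and it is not automatic: at $z=0$ the indicial roots of the associated linear ODE (the paper's $\mbox{(E.1)}^{\#}$) are $0$ and $-2$, an integer difference, so the local monodromy of $g$ around $z=0$ is either trivial or parabolic; a nontrivial parabolic element is not conjugate into $\PSU(2)$, and in that case $d\sigma^2$ is single-valued on $M$ for no choice of developing map and no normal form exists at all. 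Which case occurs is detected by a log-term coefficient, and its vanishing is exactly what the special values of $p$ and $\theta$ achieve; note that in your item (1) none of the verification steps you describe ever uses those formulas (the residue of $dG$ at $z=1$ vanishes for every $p$), so your argument would ``prove'' existence for arbitrary $(p,\theta)$, which is false. The paper's proof is organized around this point: it constructs $f$ on $\widetilde M$ from $(G,Q)$ via the dual, computes directly that the log-term of $\mbox{(E.1)}^{\#}$ at $z=0$ vanishes for the stated $(p,\theta)$ (hence $\rho_F(\tau_1)=\pm\id$), uses that the indicial difference $\mu+1\notin\Z$ at $z=1$ to make $\rho_F(\tau_2)$ diagonal unitary, and then invokes Proposition~\ref{prop:sutwo}, since $\tau_1,\tau_2$ generate $\pi_1(M)$; $\Hyp^1$-reducibility falls out as a byproduct rather than being assumed. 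To salvage your route you would have to exhibit the normal-form $dg$ explicitly (unknown double-pole locations included) and verify both the twisted residue conditions and the identity $S(g)-S(G)=2Q$ for the stated data --- the computation you deferred, and where the proof actually lives. Your closing remark that ``the crux is the converse'' has it backwards: the converse is bookkeeping, and the period problem in the existence half is the crux.
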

\begin{proof}
 Since the residue of  $dG$ as in \ref{item:o-1-2-2-a:1} at $z=1$
 vanishes, there exists a meromorphic function $G$.
 Since the metric $ds^2{}^{\#}$ as in \eqref{eq:fund-forms} is
 non-degenerate and complete on $M:=\C\cup\{\infty\}\setminus\{0,1,p\}$, 
 there exists a \cmcone{} immersion $f\colon{}\widetilde M\to H^3$
 with hyperbolic Gauss map $G$ and Hopf differential $Q$ as in 
 \ref{item:o-1-2-2-a:1}, where $\widetilde M$ is the universal
 cover of $M.$
 (In fact, there exists a \cmcone{} immersion 
 $f^{\#}\colon\widetilde M\to H^3$ with Weierstrass data $(G,-Q/dG)$.
 Then taking the dual yields the desired immersion.)
 Let $F$ be the lift of $f$.
 Then $F$ is a solution of \eqref{eq:Fsharp}, and there 
 exists a representation $\rho_F$ as in \eqref{eq:F-repr}.

 The components $F_{21}$ and $F_{22}$ of $F$ 
 satisfy the equation $\mbox{(E.1)}^{\#}$ in
 \cite{ruy3}:
\begin{equation}\label{eq:o-1-2-2-ode}
    X'' - \bigl(\log(\hat\omega^{\#})'\bigr)X'+\hat QX=0,\quad
     \left(\omega^{\#}:=\hat\omega^{\#}\,dz=\frac{Q}{dG},\quad
           Q=\hat Q\,dz^2\right)\;.
\end{equation}
 By a direct calculation, the roots of the indicial equation of
 \eqref{eq:o-1-2-2-ode} at $z=0$ are $0$ and $-2$, and the log-term
 coefficient at $z=0$ vanishes (see Appendix~A of \cite{ruy3}).
 Hence $F_{21}$ and $F_{22}$ are meromorphic on a neighborhood of $z=0$,
 and then, the secondary Gauss map $g=-dF_{22}/dF_{21}$ is meromorphic 
 at $z=0$.
 Hence, by \eqref{eq:g-F-repr},
 $\rho_F(\tau_1)=\pm\rho_g(\tau_1)=\pm\id$, 
 where $\rho_F$ is a representation  corresponding to the secondary Gauss
 map $g$, and $\tau_1$ is a deck transformation corresponding to a loop 
 surrounding $z=0$.
 Moreover, the difference of the roots of the indicial equation 
 at $z=1$ is $\mu+1\not\in\Z$.
 This implies that one can choose the secondary Gauss map $g$ 
 such that $g\circ\tau_2=e^{2\pi i\mu} g$,
 where $\tau_2$ is a deck transformation of $\widetilde M$ corresponding
 to a loop surrounding $z=1$.
 Then $\rho_F(\tau_2)=\pm\rho_g(\tau_2)=
  \diag\{e^{\pi i \mu}, e^{-\pi i \mu} \}\in\SU(2)$.
 Hence the representation $\rho_F$ lies in $\SU(2)$,
 since $\tau_1$ and $\tau_2$ generate the fundamental group of $M$. 
 Then by Proposition~\ref{prop:sutwo}, the immersion $f$ is well-defined
 on $M$, and by Lemma~\ref{lem:complete}, $f$ is a complete immersion.
 Using \eqref{eq:q-first}, we have
 $\mu_1 =2,\quad \mu_2 = \mu,\quad \mu_3 = -\mu-1$.
 Then by \eqref{eq:fact-b}, we have $\TA(f)=8\pi$.

 In the case \ref{item:o-1-2-2-a:2}, we can prove the existence of $f$
 in a similar way.

 Conversely, we assume a complete $\Hyp^1$-reducible immersion $f\colon M\to H^3$ 
 of type $\gO(-1,-2,-2)$ with $\TA(f)=8\pi$ exists.
 Without loss of generality, we may set $(p_1,p_2,p_3)=(0,1,p)$ and the
 only umbilic point $q=\infty$.
 As shown in the proof of Proposition~\ref{prop:o-1-2-2}, we have
 $\mu_1=\mu_1^{\#}=2$.
 Thus, by \eqref{eq:fact-b} and the assumption $\TA(f)=8\pi$,
 we have $\mu_2+\mu_3=-1$.
 Hence by \eqref{eq:fact-g}, we can set
\[
    \mu_2 = \mu,\qquad \mu_3 = -1-\mu\qquad (-1<\mu<0)\;.
\]
 Without loss of generality, we may assume $\mu_2^{\#}\geq \mu_3^{\#}$.
 Then by the Riemann-Hurwitz relation, we have
\begin{equation}\label{eq:o-1-2-2-rh}
     \deg G=\frac{1}{2}\bigl(2 +\mu_1^{\#}+\mu_2^{\#}+\mu_3^{\#}+1\bigr)
           =\frac{5}{2}+\frac{\mu_2^{\#}+\mu_3^{\#}}{2}\leq 
            \frac{5}{2}+\mu_2^{\#}\;.
\end{equation}
 On the other hand, $\deg G\geq \mu_2^{\#}+1$.
 Thus we have $\deg G=\mu_2^{\#}+1$ or $\mu_2^{\#}+2$.
 We set $m:=\mu_2^{\#}$.

 Assume $\deg G=\mu_2^{\#}+1=m+1$.
 Then by \eqref{eq:o-1-2-2-rh}, $\mu_3^{\#}=m-3$.
 Hence, the hyperbolic Gauss map $G$ branches at $0$, $1$, $p$ and
 $\infty$ with branch orders $2$, $m$, $m-3$ and $1$, respectively.
 By a suitable M\"obius transformation, we assume $G(1)=\infty$.
 The multiplicity of $G$ at $z=1$ is $m+1=\deg G$. 
 Then $G$ has no other poles on $\C\cup\{\infty\}$.
 Thus, $dG$ can be written in the form
\[
    dG = c\,z^2 \frac{(z-p)^{m-3}}{(z-1)^{m+2}}\,dz \;,
\]
 where $c\in\C\setminus\{0\}$.
 By a suitable M\"obius transformation, we may set $c=1$.
 On the other hand, the Hopf differential $Q$ is written in the form
\[
    Q = \frac{\theta}{z(z-1)^2(z-p)^2}\,dz^2
\]
 because $f$ is type $\gO(-1,-2,-2)$ and $\infty$ is the umbilic point
 of order $1$.
 Thus, by \eqref{eq:q-first}, we have
\begin{equation}\label{eq:o-1-2-2-a-qfirst}
    c_2-c_2^{\#}=\frac{2\theta}{(1-p)^2}\;,\qquad
    c_3-c_3^{\#}=\frac{2\theta}{(1-p)^2p}\;,
\end{equation}
 where $c_j$ and $c_j^{\#}$ are as in \eqref{eq:c-j}.
 Thus we have the case \ref{item:o-1-2-2-a:1}

 Next, we assume $\deg G=m+2$.
 Then by \eqref{eq:o-1-2-2-rh}, we have $\mu_3^{\#}=m-1$.
 If we set $G(1)=\infty$, then $G$ has only one simple pole 
 other than the pole  $z=1$, since the multiplicity of $G$ at $z=1$ is $m+1$.
 So $dG$ is written in the form
\[
    dG = c\,z^2\frac{(z-p)^{m-1}}{(z-1)^{m+2}(z-a)^2}\,dz\qquad
      (a\in\C\setminus\{0,1,p\})\;,
\]
 where $c\in\C\setminus\{0\}$, which can be set to $c=1$
 by a suitable M\"obius transformation.
 The residue of $dG$ at $z=a$ vanishes if and only if 
 $p = (a(m+1)+a^2)/(ma+2)$.
 On the other hand, the relation \eqref{eq:o-1-2-2-a-qfirst} also 
 holds in this case.
 Thus we have the case \ref{item:o-1-2-2-a:2}.
\end{proof}
\subsubsection*{The case \boldmath{$(\gamma,n)=(0,4)$}}
  If two of the $\mu_j$ are integers, \eqref{eq:fact-b} and
  \eqref{eq:fact-c} imply that $\TA(f)>8\pi$.
  So at most one $\mu_j$ is an integer.

  By \eqref{eq:fact-b}, \eqref{eq:fact-a} and \eqref{eq:fact-g}, we 
  have  $-9 \leq d_1+d_2+d_3+d_4 \leq -4$.
  When all $\mu_j \not\in \Z$, all $d_j\leq -2$.  
  Hence the possible cases are $\gO(-2,-2,-2,-3)$ and
  $\gO(-2,-2,-2,-2)$ (see Example~\ref{exa:four-noids}).

  Assume $\mu_1\geq 0$ is an integer.  
  Then $\mu_2, \mu_3, \mu_4 \not\in \Z$ and $d_2, d_3, d_4 \leq -2$.  
  In this case, by \eqref{eq:fact-b} and \eqref{eq:fact-c}, 
  we have  $-6 \leq d_2+d_3+d_4$. 
  Hence $d_2=d_3=d_4=-2$ and $\mu_1-d_1=2$.
  This implies that $d_1\geq -2$.
  Moreover, by \eqref{eq:fact-a}, we have $d_1\leq 2$.
  Hence the possible cases are $\gO(d,-2,-2,-2)$ with $-2 \leq d \leq 2$.  
  Moreover, $\mu_1=2+d$ holds and there are $2-d$ umbilic
  points.  Then when $d \geq -1$, we have $\mu_1 \in \Z^+$, and so 
  by Corollary 4.7 in \cite{ruy4}, we have 
  $\TA(f)\geq 8\pi$.  So $\TA(f) = 8 \pi$.  
\begin{proposition}\label{prop:go2-2-2-2}
  There exist no \cmcone{} surfaces in $H^3$ with $\TA(f)\leq 8\pi$ of
  class $\gO(2,-2,-2,-2)$.
\end{proposition}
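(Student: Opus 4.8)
The plan is to first extract the precise numerical data forced on such a surface and then show it is incompatible with the existence of a globally defined hyperbolic Gauss map. From the analysis of the case $(\gamma,n)=(0,4)$ preceding this proposition, a surface of type $\gO(2,-2,-2,-2)$ with $\TA(f)\le 8\pi$ must satisfy $\mu_1=\mu_1^\#=4$ (by \eqref{eq:fact-d}), $d_2=d_3=d_4=-2$, $\mu_2,\mu_3,\mu_4\in(-1,0)\setminus\Z$, and $\TA(f)=8\pi$, which by \eqref{eq:fact-b} forces $\mu_2+\mu_3+\mu_4=-2$. By \eqref{eq:fact-a} the surface has no umbilic points, so $Q$ has a single zero of order $2$ (at $p_1$) and double poles at the other three ends. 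Normalizing by $(p_1,p_2,p_3,p_4)=(\infty,0,1,p)$ with $p\in\C\setminus\{0,1\}$, I would write
\[
   Q=\frac{\theta\,dz^2}{z^2(z-1)^2(z-p)^2}\qquad(\theta\in\C\setminus\{0\}).
\]

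Next I would exploit that every end is regular, so $G$ is a meromorphic function on $\C\cup\{\infty\}$ whose only branch points are the four ends, with orders $\mu_1^\#=4$ and $\mu_j^\#\in\Z_{\ge 0}$ ($j=2,3,4$). The Riemann--Hurwitz relation gives $2\deg G=6+\mu_2^\#+\mu_3^\#+\mu_4^\#$ together with $\deg G\ge\mu_1^\#+1=5$, so $G$ is a rational function all of whose critical points lie at $\{p_1,p_2,p_3,p_4\}$. Comparing the double-pole coefficients of $Q$ at $p_2,p_3,p_4$ through \eqref{eq:q-first} and \eqref{eq:c-j} yields
\[
   c_2-c_2^\#=\frac{2\theta}{p^2},\qquad
   c_3-c_3^\#=\frac{2\theta}{(1-p)^2},\qquad
   c_4-c_4^\#=\frac{2\theta}{p^2(p-1)^2},
\]
with $c_j=-\tfrac12\mu_j(\mu_j+2)$ and $c_j^\#=-\tfrac12\mu_j^\#(\mu_j^\#+2)$. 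Since each $c_j-c_j^\#>0$, these three relations in the two unknowns $(p,\theta)$ force $p$ and $\theta$ to be real and impose a compatibility relation on the cone orders.

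The heart of the argument is the interplay between the two Gauss maps, along the lines of Proposition~\ref{prop:o-1-2-2-a}. I would build $f$ as the dual of a surface with Weierstrass data $(G,-Q/dG)$; its lift then solves the equation $\mbox{(E.1)}^{\#}$ of \cite{ruy3} with $\omega^\#=Q/dG$, and the secondary Gauss map $g$ is recovered from the solutions. Since $\mu_1=4\in\Z$, the map $g$ must be single-valued around $p_1$; equivalently the monodromy of $\mbox{(E.1)}^{\#}$ at $p_1$ is $\pm\id$, so its log-term coefficient at $p_1$ must vanish. This vanishing, combined with the residue conditions that make $G$ single-valued and with the coefficient relations above, gives an overdetermined algebraic system for $p$, $\theta$ and the $\mu_j^\#$, which I would show has no admissible solution.

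The main obstacle is that $\deg G=\TA(f^\#)/4\pi$ is \emph{not} bounded by $\TA(f)\le 8\pi$ --- precisely the feature that makes the $\TA(f)$ classification more delicate than the $\TA(f^\#)$ one --- so one cannot simply enumerate finitely many $G$. The delicate step is therefore to convert the log-term and residue conditions into a single relation in $p$ that is uniform in the branching data $(\mu_2^\#,\mu_3^\#,\mu_4^\#)$ and to contradict the relation forced by the double-pole coefficients of $Q$. A secondary difficulty is that, in contrast to the genus-zero three-ended cases, three of the cone orders here are non-integral, so Lemma~\ref{lem:hyp-1-red-0} does not force $\Hyp^1$-reducibility and the irreducible case must be excluded as well; I expect the equality $\mu_2+\mu_3+\mu_4=-2$ coming from Corollary~4.7 of \cite{ruy4} to supply the needed rigidity.
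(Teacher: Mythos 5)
Your setup reproduces the paper's first half faithfully: $\mu_1=\mu_1^\#=4$, $\mu_j\in(-1,0)$ for $j=2,3,4$, $\mu_2+\mu_3+\mu_4=-2$ from $\TA(f)=8\pi$ and \eqref{eq:fact-b}, no umbilic points, the explicit form of $Q$, the coefficient comparison \eqref{eq:q-first} forcing $p$ and $\theta$ real, and Riemann--Hurwitz for the globally meromorphic $G$. But the heart of your argument is a declaration of intent, not a proof: you reduce to ``an overdetermined algebraic system for $p$, $\theta$ and the $\mu_j^\#$, which I would show has no admissible solution,'' and you yourself name the reason this is not routine --- $\deg G$ is not bounded by $\TA(f)$, so there are infinitely many admissible integer triples $(\mu_2^\#,\mu_3^\#,\mu_4^\#)$, and your log-term/residue analysis of the equation $\mbox{(E.1)}^{\#}$ would have to be carried out uniformly in all of them. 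That uniform argument is exactly what is missing, so the proposal has a genuine gap precisely at the step where the contradiction must be produced.

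The paper closes this gap by a mechanism entirely different from monodromy or log-term analysis; it never touches the period problem at all. From the coefficient relations it extracts real numbers $\alpha_j$ with $4\alpha_j^2=c_j-c_j^\#>0$, and observes --- this is a partial-fractions identity in the positions of the ends --- that after normalization they satisfy the \emph{linear} relation $\alpha_2+\alpha_3=\alpha_4$ with all $\alpha_j>0$. From Riemann--Hurwitz it extracts not just $\deg G\geq 5$ but the \emph{parity} constraint that $\mu_2^\#+\mu_3^\#+\mu_4^\#$ is even, whence $m_2+m_3\neq m_4$ for $m_j:=\mu_j^\#+1$. Writing $\mu_j+1=\sqrt{m_j^2-8\alpha_j^2}$, the necessary condition $\mu_2+\mu_3+\mu_4=-2$ becomes
\[
  \varphi(\alpha_2,\alpha_3):=\sqrt{m_2^2-8\alpha_2^2}+\sqrt{m_3^2-8\alpha_3^2}
   +\sqrt{m_4^2-8(\alpha_2+\alpha_3)^2}=1\;,
\]
and a short minimization argument (both partials of $\varphi$ are negative on the open domain, so the minimum lies on the boundary, where integrality together with $m_2+m_3\neq m_4$ forces the value to be at least $1$; hence $\varphi>1$ at interior points) rules this out \emph{uniformly in all admissible integers} $m_2,m_3,m_4$. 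These two observations --- the linear relation among the $\alpha_j$ and the parity of $\sum_j\mu_j^\#$ --- are what make the unbounded-degree problem tractable, and your proposal contains neither; without them, or some substitute, your route does not go through. (Your concern about the irreducible case is moot for this argument, since it uses only necessary conditions that hold regardless of reducibility.)
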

\begin{proof}
  Assume such an immersion
 $f\colon{}\C\cup\{\infty\}\setminus\{p_1,\dots,p_4\}\to H^3$
 exists.
 Then there are no umbilic points, and by \eqref{eq:fact-b},
 \eqref{eq:fact-c} and \eqref{eq:fact-g}, $\mu_1=\mu^{\#}_1=4$ holds.

 Let $G$ be the hyperbolic Gauss map.
 Then $\deg G\geq 5$ because $\mu^{\#}_1=4$.  
 Hence by the Riemann-Hurwitz relation,
\[
    10\leq 2\deg G =\sum_{j=2}^4 \mu^{\#}_j + \mu^{\#}_1 +2
                   =\sum_{j=2}^4 \mu^{\#}_j + 6
\]
  holds.
  This implies that $\mu_2^{\#}+\mu_3^{\#}+\mu_4^{\#}$ is an even
  number not less than $4$:
\begin{equation}\label{eq:sum-musharp-go2-2-2-2}
  \mu_2^{\#}+\mu_3^{\#}+\mu_4^{\#}=2l,\qquad (l\in\Z, l\geq 2)\;.
\end{equation}
  Since $\TA(f)=8\pi$, we have
\begin{equation}\label{eq:sum-mu-go2-2-2-2}
   \mu_2 + \mu_3 + \mu_4 = -2\;.
\end{equation}
  Hence by \eqref{eq:fact-g},
\begin{equation}
    -1 < \mu_j < 0 \qquad (j=2,3,4)\;.
\end{equation}

 We set $(p_1,p_2,p_3,p_4)=(p,0,1,-1)$, where $p\in\C\cup\{\infty\}$.
 We may assume $p_1\in\C$.
 In fact, if $p_1=\infty$, the M\"obius transformation
 \begin{equation}\label{eq:mobius}
  z\longmapsto \frac{z-1}{3z+1} \; ,
 \end{equation}
 maps the ends $(p_1,p_2,p_3,p_4)=(\infty ,0,1,-1)$ to 
 $(1/3,-1,0,1)$.  

 The Hopf differential can be written as
\[
     Q = 2 \theta^2 \frac{(z-p)^2}{z^2(z^2-1)^2}\,dz^2\qquad
      (\theta\in\C\setminus\{0\}) \; .
\]
By the relation \eqref{eq:q-first},
we have
\begin{equation}\label{eq:exponent}
  c_2-c_2^{\#} = 4 \theta^2 p^2,\quad
  c_3-c_3^{\#} = \theta^2 (1-p)^2,\quad
  c_4-c_4^{\#} = \theta^2 (1+p)^2,
\end{equation}
where $c_j$ and $c_j^{\#}$ are as in \eqref{eq:c-j}.
Since $-1<\mu_j<0$ and $\mu_j^{\#}$ is a non-negative integer,
we have
\begin{equation}\label{eq:csign}
   0<c_j<\frac{1}{2} \; , \qquad c_j^{\#}\leq 0 \; ,
\end{equation}
and consequently, $c_j-c_j^{\#}>0$ ($j=2,3,4$).
Let 
\begin{equation}\label{eq:alphadef}
    \alpha_2 = \theta p \; ,\qquad 
    \alpha_3 = \frac{1}{2}\theta(1-p) \; ,\qquad
    \alpha_4 = \frac{1}{2}\theta(1+p) \; .
\end{equation}
Then we have $4 \alpha_j^2 = c_j-c_j^{\#}$,
which implies that the $\alpha_j$ ($j=2,3,4$) are real numbers.  
And then $p = \alpha_2/(\alpha_3+\alpha_4)$ and $\theta = \alpha_3+\alpha_4$ 
are real numbers.
Here, without loss of generality, we may set $0<p<1$.
(In fact, if $p<0$, applying the coordinate change $z\mapsto -z$, 
we have $p>0$.
Moreover, if $p>1$, by the transformation \eqref{eq:mobius},
we have $0<p<1$.)

We choose the sign of $\theta$ as $\theta>0$.
Then, we have $\alpha_2$, $\alpha_3$ and $\alpha_4$ are positive numbers.
Moreover, by \eqref{eq:alphadef},
we have
\begin{equation}\label{eq:alpharel}
   \alpha_2+\alpha_3=\alpha_4 \; .
\end{equation}
Using this, we have
\begin{equation}\label{eq:murel}
  \mu_2^{\#} \; , \quad \mu_3^{\#}~\leq~ \mu_4^{\#} \; .
\end{equation}
In fact, by \eqref{eq:alpharel} we have $\alpha_j < \alpha_4$ for
$j=2,3$.  
Then $c_j-c_j^{\#} < c_4-c_4^{\#}$.  
Hence by \eqref{eq:csign}, $-c_j^{\#} < \frac{1}{2}-c_4^{\#}$.  
By definition, this implies that 
\[
      \mu_j^{\#}(\mu_j^{\#}+2) < 1+\mu_4^{\#}(\mu_4^{\#}+2) \; .  
\]
Thus we have 
\[
      (\mu_j^{\#}-\mu_4^{\#})(\mu_j^{\#}+\mu_4^{\#}+2) < 1 \;\;\; 
      \text{for} \;\; j=2,3 \; .
\]
As the $\mu_j^{\#}$ are non-negative integers, 
$\mu_j^{\#}-\mu_4^{\#}\leq 0$, which implies \eqref{eq:murel}.

By \eqref{eq:exponent} and the definition of $\alpha_j$, we have
\[
 \mu_j(\mu_j+2) =\mu_j^{\#}(\mu_j^{\#}+2)- 8 \alpha_j^2 \qquad (j=2,3,4) \; . 
\]
Since $\mu_j\in(-1,0)$, this implies that
\begin{equation}\label{eq:muval}
    \mu_j +1 = \sqrt{1+\mu_j^{\#}(\mu_j^{\#}+2)-8 \alpha_j^2}
             = \sqrt{(\mu_j^{\#}+1)^2-8 \alpha_j^2}\qquad (j=2,3,4) \; .
\end{equation}
Now, defining $m_j:=\mu_j^{\#}+1\geq 1$ ($j=2,3,4$), 
\eqref{eq:murel} and \eqref{eq:alpharel} imply 
\begin{equation}\label{eq:marel}
    m_2 \; , \; m_3 \; \leq \; m_4\qquad 
    \text{and}\qquad
    \alpha_2+\alpha_3=\alpha_4 \; . 
\end{equation}
Moreover, 
\begin{equation}\label{eq:mrel}
   m_2+m_3\neq m_4
\end{equation}
holds.
To prove this, if $m_2+m_3=m_4$, then $\mu_2^{\#}+\mu_3^{\#}+2 = 
\mu_4^{\#}+1$.  
This implies that $\mu_2^{\#}+\mu_3^{\#}+\mu_4^{\#}$ is an
odd number, contradicting \eqref{eq:sum-musharp-go2-2-2-2}.  

Using \eqref{eq:muval} and \eqref{eq:marel}, 
the equality \eqref{eq:sum-mu-go2-2-2-2} is written as
\begin{equation}\label{eq:musum2}
   \sqrt{m_2^2-8 \alpha_2^2}+\sqrt{m_3^2-8 \alpha_2^2}+
      \sqrt{m_4^2-8 (\alpha_2+\alpha_3)^2}=1 \; .
\end{equation}
We shall prove that \eqref{eq:musum2} cannot hold, making a 
contradiction.  
Let $m_2$, $m_3$, $m_4$ be positive integers which satisfy
\eqref{eq:marel} and \eqref{eq:mrel}.  Define 
\[
    \varphi(\alpha_2,\alpha_3):=
     \sqrt{m_2^2-8 \alpha_2^2}+\sqrt{m_3^2-8 \alpha_3^2}+
           \sqrt{m_4^2-8 (\alpha_2+\alpha_3)^2} \; .
\]
on the closure $\overline D$ of the open domain 
\[
    D:=\left\{(\alpha_2,\alpha_3)\,:\, 0<\alpha_2<\frac{m_2}{\sqrt{8}}, 
                                  0<\alpha_3<\frac{m_3}{\sqrt{8}}, 
                                  0<\alpha_2+\alpha_3<\frac{m_4}{\sqrt{8}}
       \right\}
\]
in the $\alpha_2\alpha_3$-plane.
Then it holds that
\[
 \varphi(\alpha_2,\alpha_3) >1 \quad \text{if} \quad (\alpha_2,\alpha_3)\in 
 D \; .
\]
To prove this, note that 
since $\varphi$ is a continuous function on a compact set $\overline D$,
it takes a minimum on $\overline D$.
By a direct calculation, we have
\[
  \frac{\partial \varphi}{\partial\alpha_3}
    =\frac{-8 \alpha_3}{\sqrt{m_3^2-8 \alpha_3^2}}+
     \frac{-8 \alpha_2-8 \alpha_3}{\sqrt{m_4^2-8 
                         (\alpha_2+\alpha_3)^2}}<0\qquad
     \text{on}\quad D \; .
\]
So $\varphi$ does not take its minimum in the interior $D$ of $\overline D$, 
but rather on $\partial D$.  Similarly, $\partial \varphi / \partial 
\alpha_2 < 0$ on $D$, so the minimum occurs at $(\alpha_2,\alpha_3)=
(m_2/\sqrt{8},m_3/\sqrt{8})$, where 
\[ \varphi (m_2/\sqrt{8},m_3/\sqrt{8}) = \sqrt{m_4^2-
(m_2+m_3)^2} \geq 1 \; , \] 
if the line $\alpha_2+\alpha_3 = m_4/\sqrt{8}$ does not intersect 
$\partial D$.  
If the line $\alpha_2+\alpha_3 = m_4/\sqrt{8}$ does intersect $\partial D$, 
 then $m_2+m_3 >m_4$ holds, and the minimum occurs somewhere on this line 
 with $\alpha_2$ in the interval $[(m_4-m_3)/\sqrt{8},m_2/\sqrt{8}]$.  
 We have 
\[
  \varphi(\alpha_2,m_4/\sqrt{8}-\alpha_2)=
  \sqrt{m_2^2-8 \alpha_2^2} + \sqrt{m_3^2-8 (m_4/\sqrt{8}-\alpha_2)^2} \; , 
\]
which minimizes at the endpoints of the interval 
$[(m_4-m_3)/\sqrt{8},m_2/\sqrt{8}]$, where its values are 
\[ 
       \sqrt{(m_3+m_2-m_4)(m_3+m_4-m_2)} \; , \;\; 
       \sqrt{(m_3+m_2-m_4)(m_2+m_4-m_3)} \geq 1 \; . 
\] Hence $\varphi>1$ on $D$, contradicting \eqref{eq:sum-mu-go2-2-2-2} and 
 proving the theorem.  
\end{proof}

\begin{remark}\label{rem:o-2-2-20}
 There exist \cmcone{} surfaces of class $\gO(-2,-2,-2,0)$
 with $\TA(f)=8\pi$:
 We set $(p_1,p_2,p_3,p_4)=(1,-1,\infty,0)$ and 
 $M:=\C\cup\{\infty\}\setminus\{p_1,p_2,p_3,p_4\}$.
 Set
\begin{equation}\label{eq:o-2-2-20-gauss}
   dg := \frac{(z^2-1)^{\mu}(z^2-q^2)z^2}{(z^2-a^2)^2}\,dz\qquad
   (a,q \in\C\setminus\{0, 1\}, \; a\neq \pm q, \; \mu \in \R)
\end{equation}
 and
\begin{equation}\label{eq:q}
   Q := \frac{-\mu (\mu+2)}{q^2-1}\frac{z^2-q^2}{(z^2-1)^2}\,dz^2\;,
\end{equation}
   where
\[
  \frac{2\mu a}{a^2-1}+\frac{2a}{a^2-q^2}+\frac{1}{a}=0\;.
\]
 Then the residues of $dg$ at $z=\pm a$ vanish, and thus, there exists
 the secondary Gauss map $g$ as in \eqref{eq:o-2-2-20-gauss}.

 We assume
\[
    -1<\mu<-\frac{1}{2} \qquad 
    \text{and}\qquad
    a^2 = -\frac{1-\mu-q^2}{3+\mu-3q^2}\;.
\]
 Then by Theorem~2.4 of \cite{uy1}, there exists a \cmcone{} immersion 
 $f\colon{}M\to H^3$ with given $g$ and $Q$.
 One can check that such an immersion is complete and has 
 $\TA(f)=8\pi$.
\end{remark}

\subsection*{The case \boldmath{$(\gamma,n)=(0,5)$}}
In this case, by Corollary~\ref{cor:8pi-class}, the only possible case
is $\gO(-2,-2,-2,-2,-2)$.

\subsection*{The case of \boldmath$\gamma=1$}
By Corollary~\ref{cor:8pi-class}, a surface of this type has at most
three ends, and if the surface has $3$ ends, the only possible case is
$\gI(-2,-2,-2)$.
If a surface has only one end, 
part (3) of Lemma~\ref{lem:general-class} implies that
it must be of type $\gI(-3)$ or $\gI(-4)$.  

Now suppose there are two ends.  By \eqref{eq:fact-a}, \eqref{eq:fact-b} 
and \eqref{eq:fact-g}, we have 
\begin{equation}\label{genusonecase}
 -5 \leq d_1+d_2 \leq 0 \; . 
\end{equation}
Also, by \eqref{eq:fact-b} and \eqref{eq:fact-c}, $\TA(f) = 8 \pi$ if 
$d_1,d_2 \geq -1$.
  
Suppose that both ends are regular (i.e.\ $d_1,d_2 \geq -2$).  
Then Theorem~7 of \cite{ruy2} implies that if $d_1=-2$, then 
also $d_2=-2$.  
Furthermore, by Lemma~3 of \cite{uy5} combined with \eqref{eq:fact-b},
\eqref{eq:fact-c} and \eqref{eq:fact-d}, 
if $d_j \geq -1$, then the end at $p_j$ is embedded.  
Therefore, when $d_j \geq -1$, 
Proposition~\ref{prop:flux} implies that the flux at the end $p_j$ is
zero if and only if $d_j \geq 0$.  
By the balancing formula \eqref{eq:balancing} and
Proposition~\ref{prop:flux}, we conclude that 
the only possibilities are $\gI(-2,-2)$, $\gI(-1,-1)$, and $\gI(0,0)$.  
But in fact the case $\gI(0,0)$ cannot occur, because then \eqref{eq:fact-a} 
and \eqref{eq:fact-h} imply that the hyperbolic Gauss map $G$ has at most two 
branch points, contradicting \eqref{eq:fact-e}.  

If the end $p_1$ is irregular, $d_1\leq -3$.
Then by \eqref{genusonecase}, we have $d_2 \geq -2$.
In particular, the other end $p_2$ is regular.
When $d_2\geq -1$, then $\mu_1,\mu_2 \in \Z$, and 
\eqref{eq:fact-b} and \eqref{eq:fact-c} imply $\mu_1-d_1=\mu_2-d_2=2$.  In 
particular $d_1 = \mu_1-2 > -3$, a contradiction.  
Hence the only possible case is $\gI(-2,-3)$.
\begin{remark}\label{rem:i-2-2}
 The genus one catenoid cousin in \cite{rs} 
 is of type $\gI(-2,-2)$ (Figure~\ref{fig:genus-one} in
 Appendix~\ref{app:further-ex}). 
 However, the total absolute curvature
 seems to be strictly greater than $8\pi$.
\end{remark}
\begin{remark}\label{rem:i-2-2-2}
 There exists an example of \cmcone{} surface of type $\gI(-2,-2,-2)$,
 which is so-called the genus one trinoid
 (\cite{ruy1}, see Figure~\ref{fig:genus-one} in
 Appendix~\ref{app:further-ex}), which is obtained by
 deforming minimal surface in $\R^3$.
 However, the absolute total curvature of the original  minimal surface is
 $12\pi$, so the obtained \cmcone{} surface has $\TA(f)$ close to $12\pi$.  
 Thus, surfaces obtained by deformation 
 are far from satisfying $\TA(f)\leq 8\pi$.
\end{remark}
\section{Further examples}
\label{app:further-ex}
In this appendix, we introduce examples of interesting \cmcone{}
surfaces which do not appear in the classification table
(Table~\ref{tab:ta-8pi}).
\begin{figure}
\footnotesize
\begin{center}
  \begin{tabular}{c@{\hspace{4em}}c@{\hspace{4em}}c}
   \includegraphics[width=0.9in]{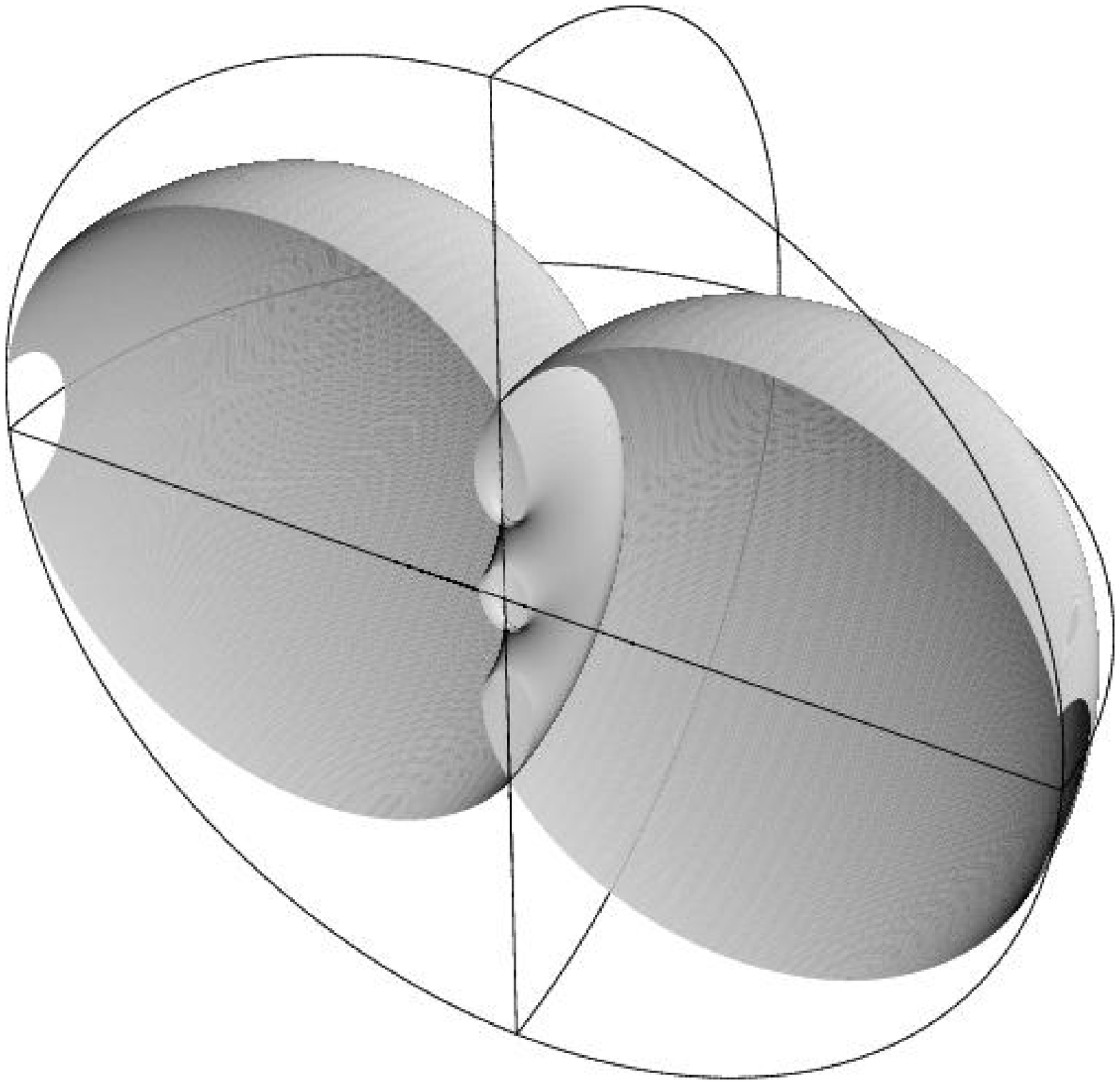} &
   \includegraphics[width=0.9in]{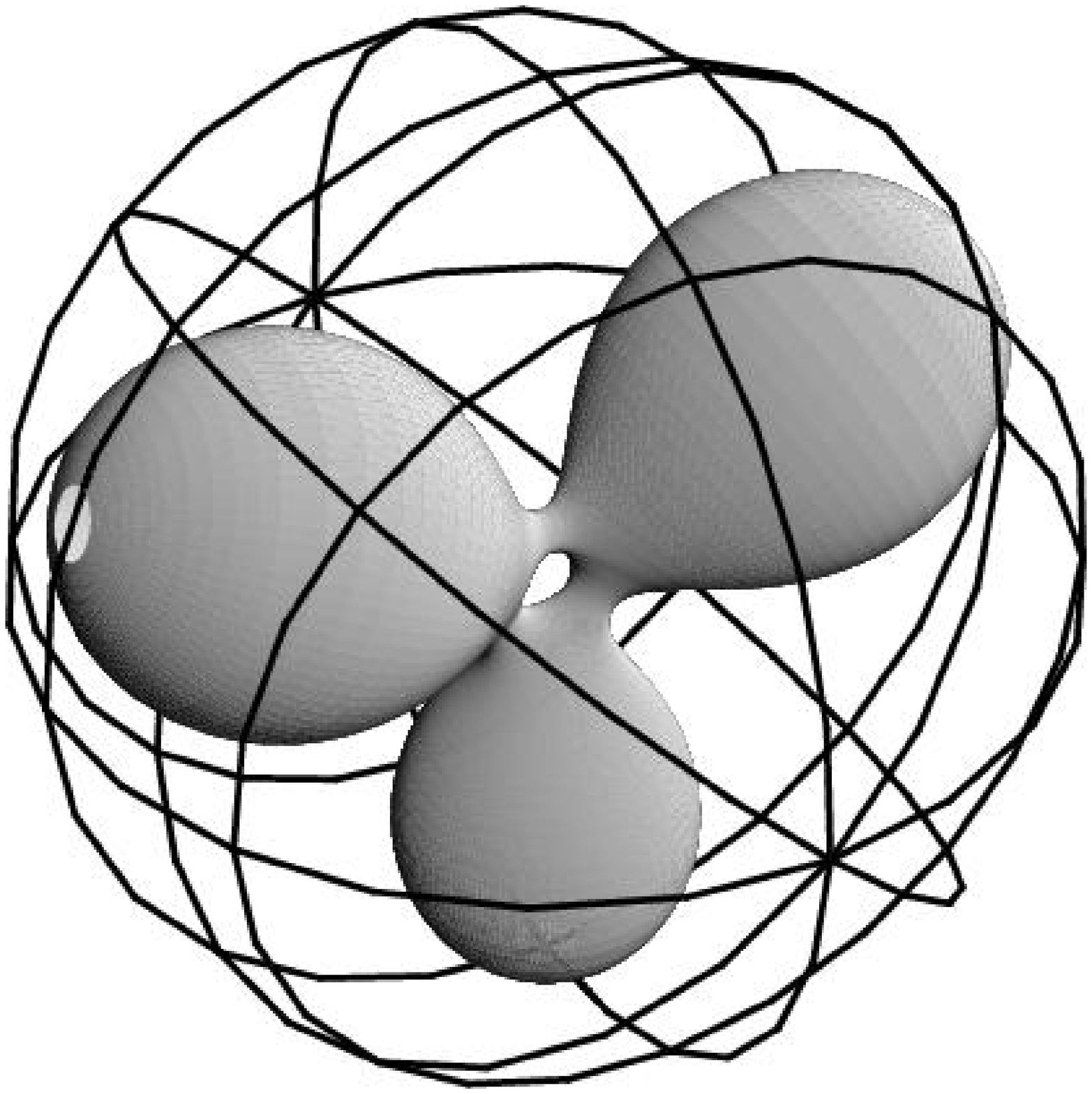} &
   \includegraphics[width=0.9in]{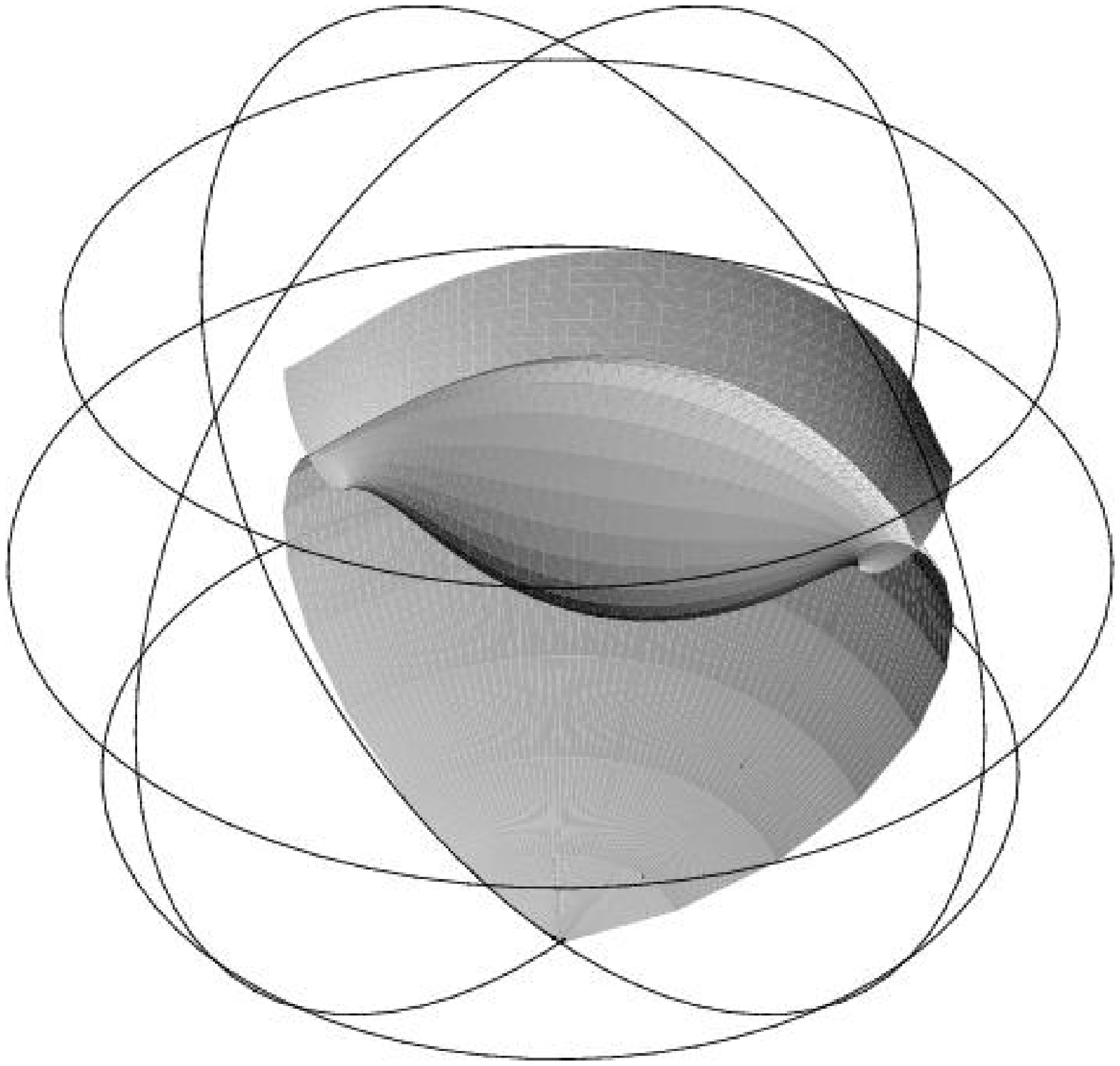} \\[6pt]
    (a) & (b) & (c)
  \end{tabular}
\end{center}
\caption{
 Examples~\ref{ex:genus-one-catenoid}, \ref{ex:genus-one-trinoid}
 and \ref{ex:uy1-5noid}.
 The first two graphics were made by Katsunori Sato of Tokyo Institute
 of Technology.  
}
\label{fig:genus-one}
\end{figure}
\begin{figure}
\begin{center}
\begin{tabular}{c@{\hspace{3em}}c}
  \includegraphics[width=0.9in]{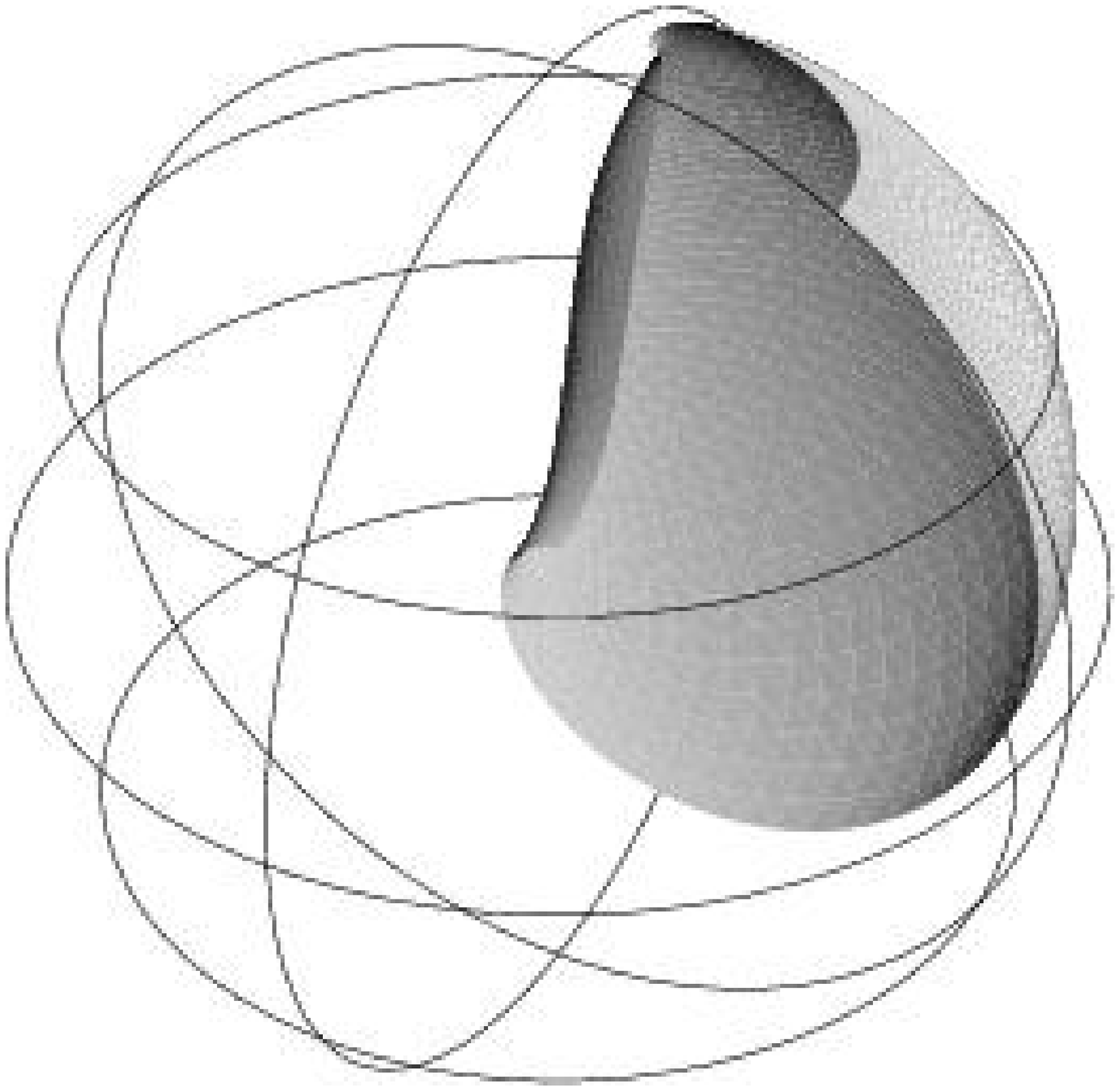} &
  \includegraphics[width=0.9in]{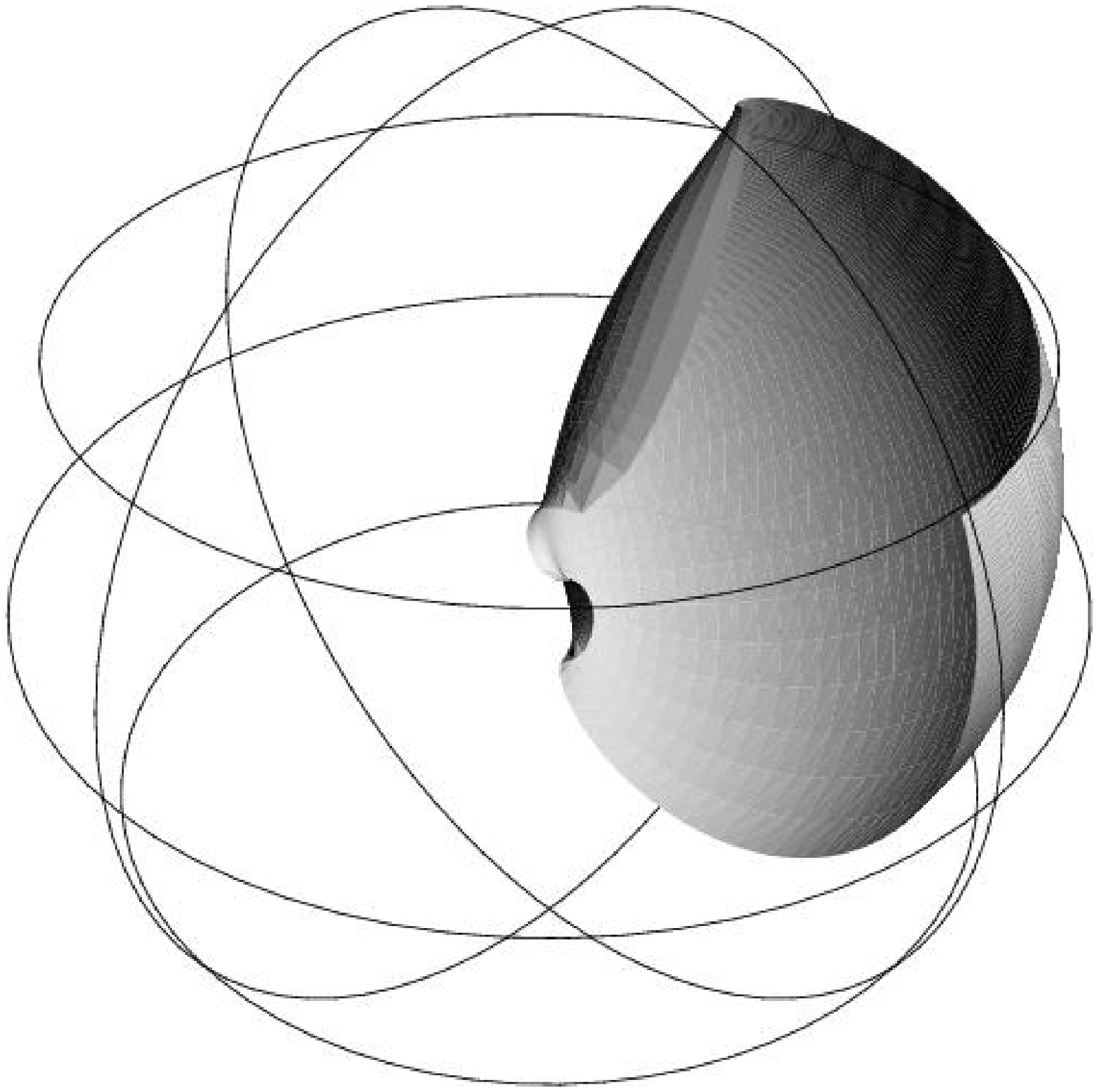}
\end{tabular}
\end{center}
\caption{Example~\ref{ex:genus-enneper}}
\label{fig:enneper2}
\end{figure}
\begin{figure}
\begin{center}
\begin{tabular}{c@{\hspace{3em}}c@{\hspace{3em}}c}
       \includegraphics[width=0.9in]{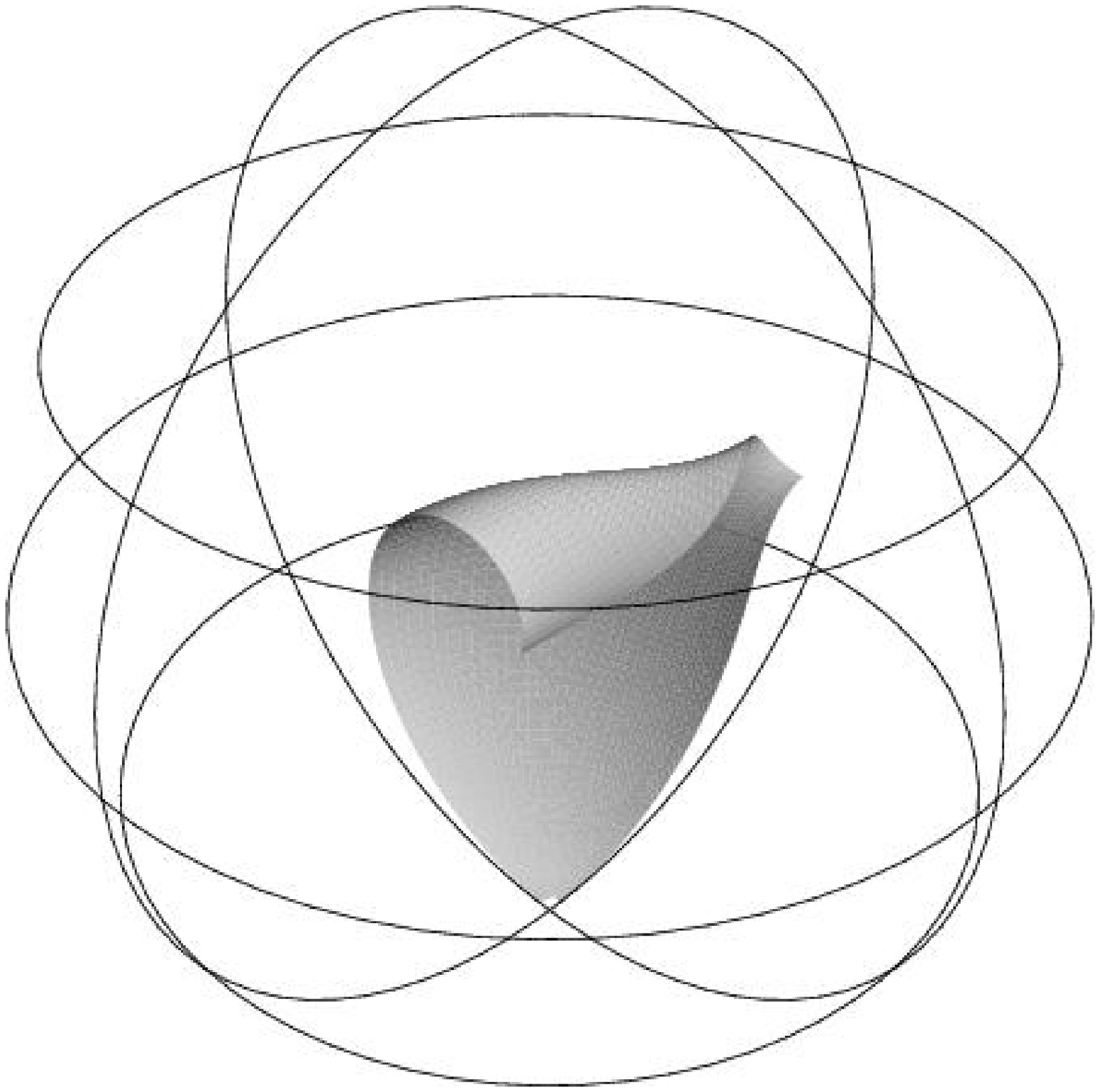}&
       \includegraphics[width=0.9in]{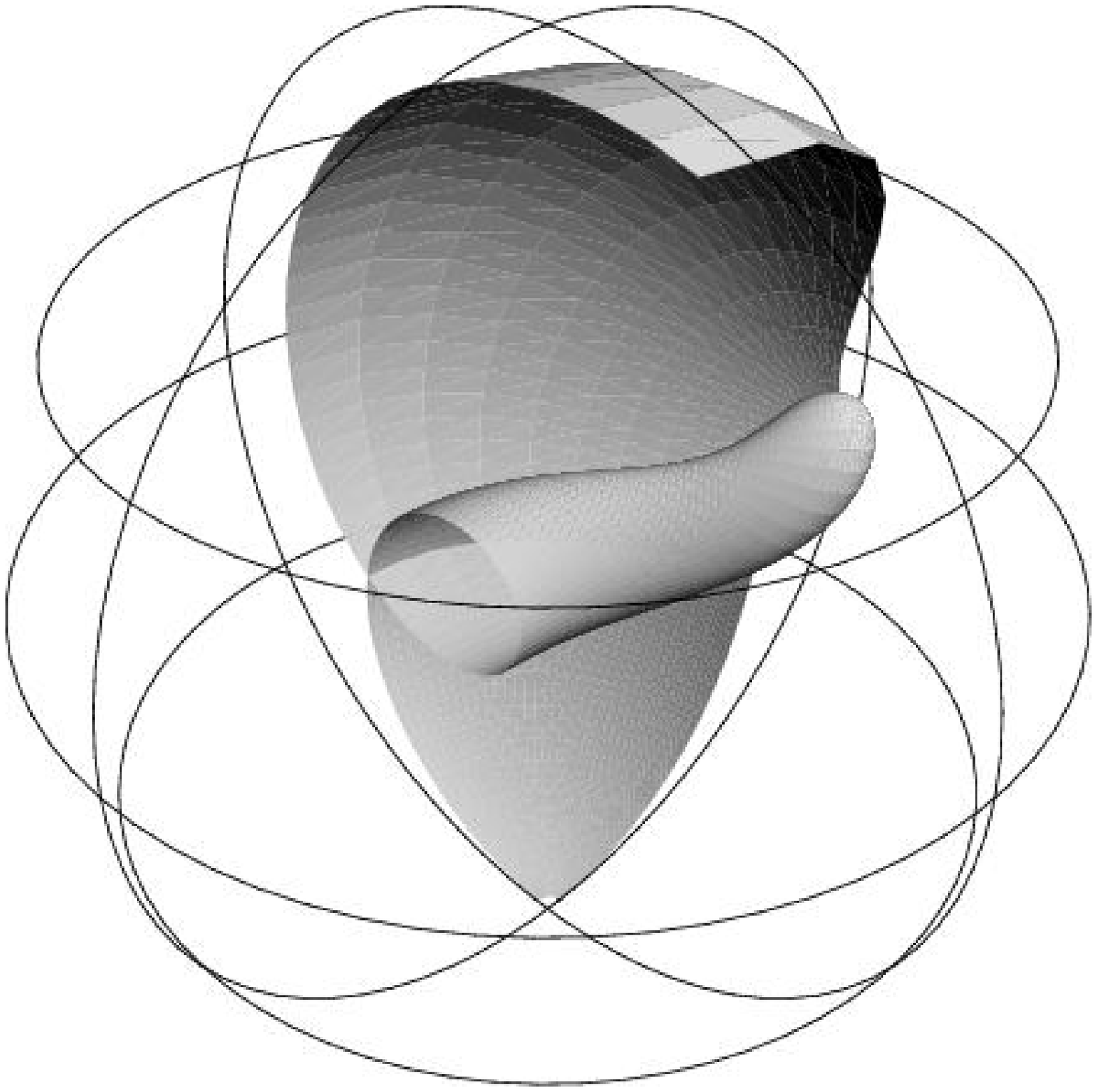}&
       \includegraphics[width=0.9in]{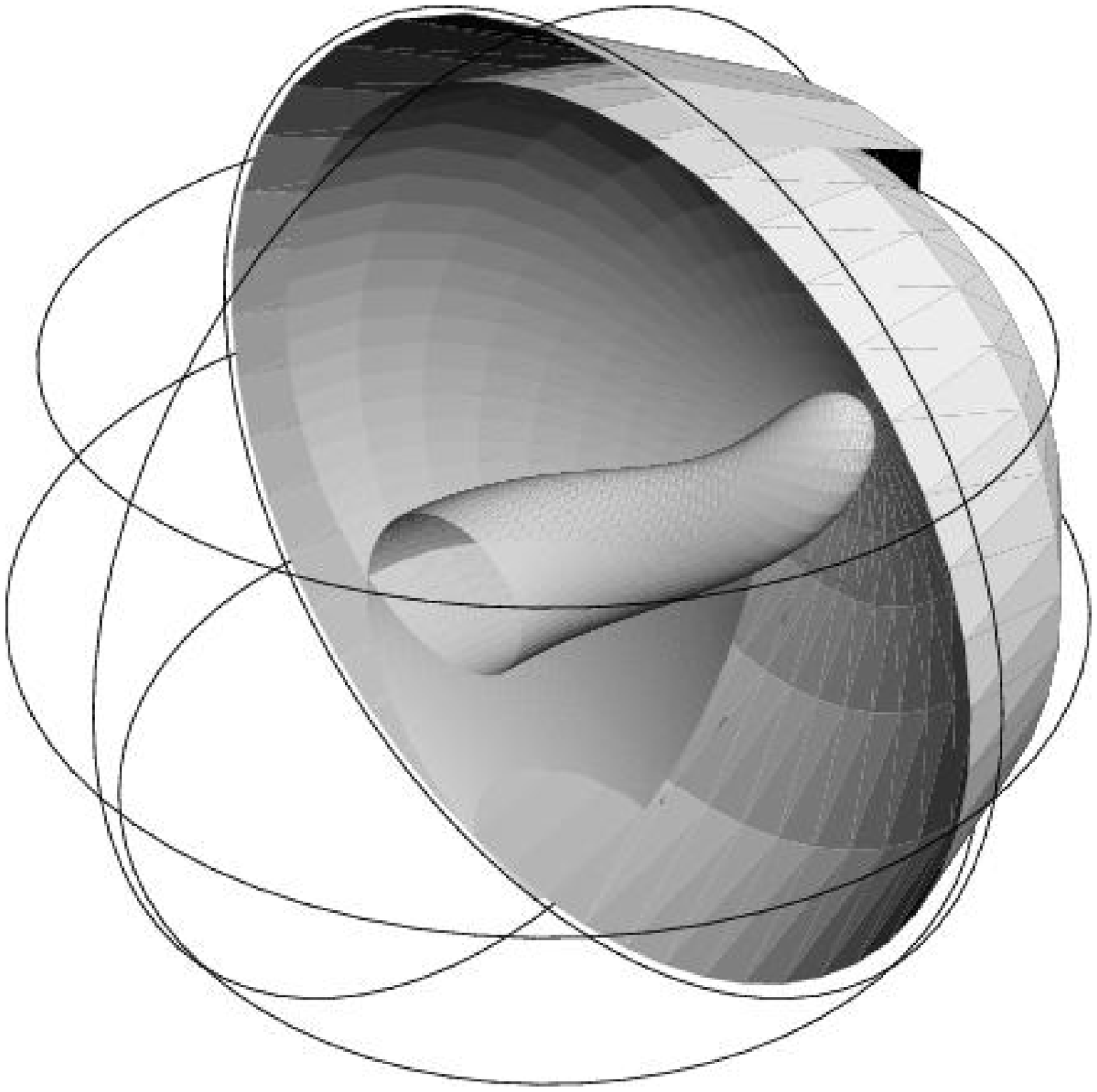} 
\end{tabular}
\end{center}
\caption{
 Example~\ref{ex:uy1-irreg}
}
\label{fig:uy1-irreg}
\end{figure}
\begin{figure}
\begin{center}
\begin{tabular}{c@{\hspace{3em}}c@{\hspace{3em}}c}
       \includegraphics[width=0.8in]{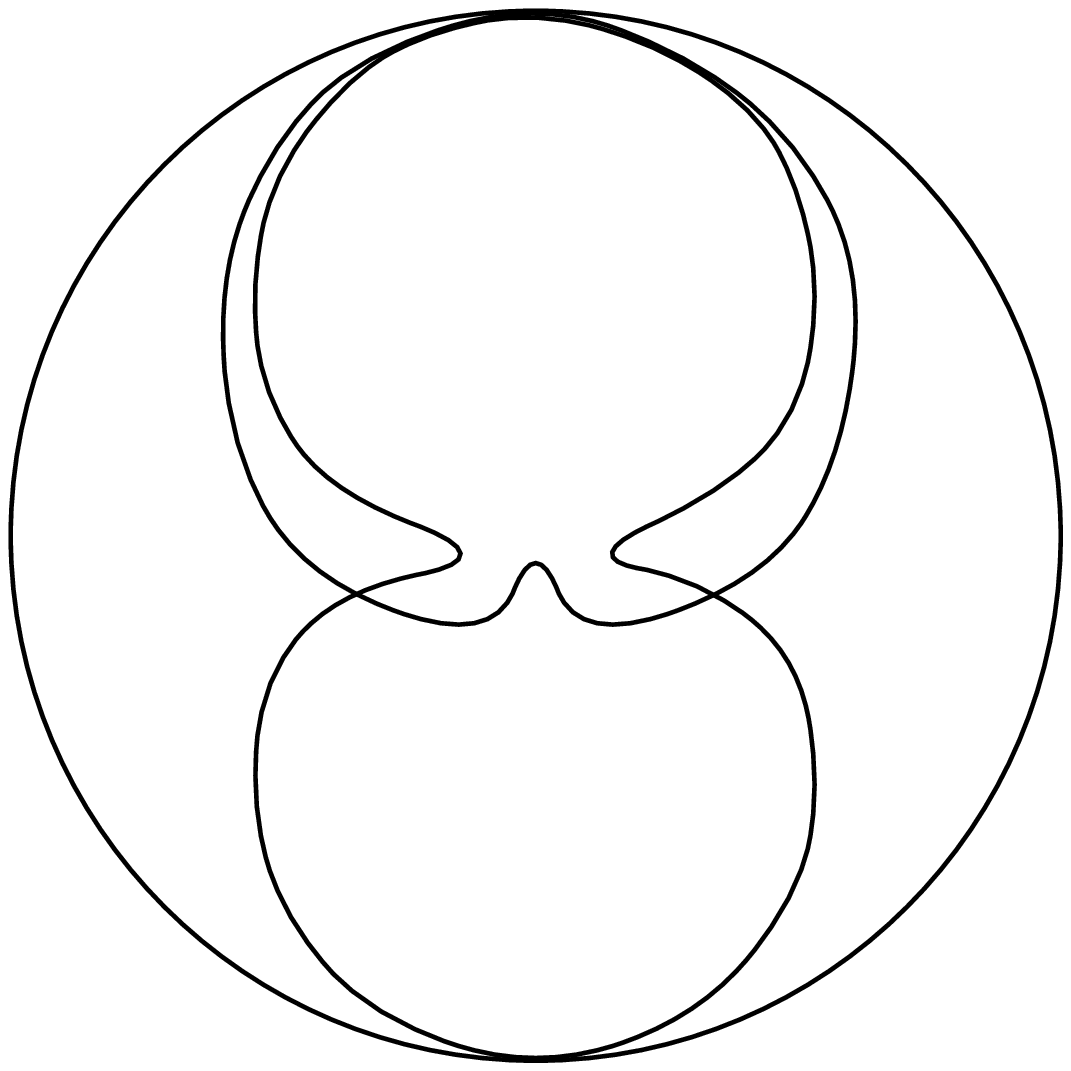} &
       \includegraphics[width=0.8in]{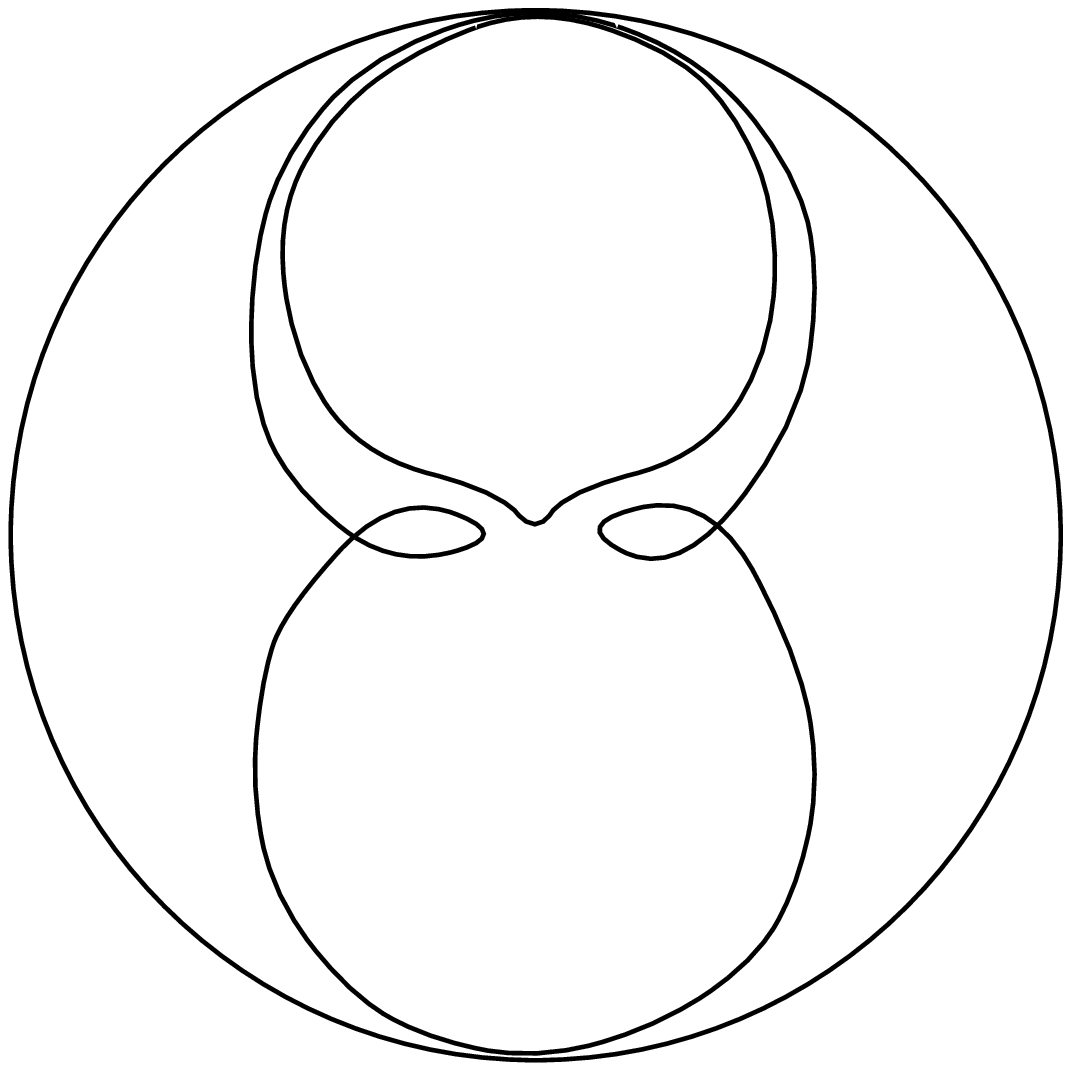} &
       \raisebox{4ex}{
       \includegraphics[width=0.9in]{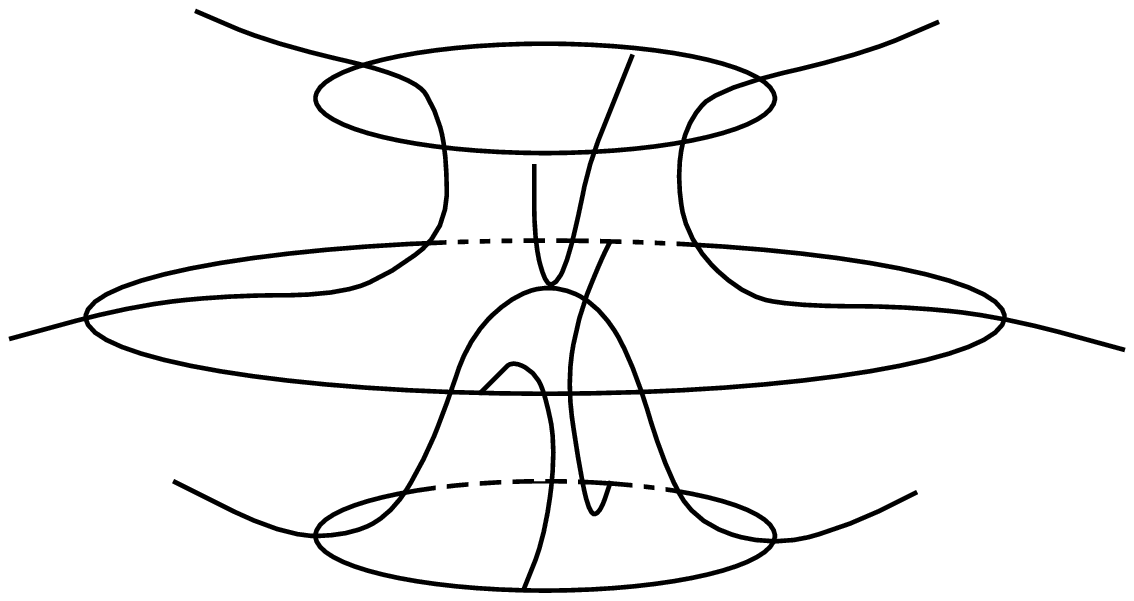}}
\end{tabular}
\end{center}
\caption{
  Example~\ref{ex:costa}
}
\label{fig:costa}
\end{figure}
\begin{example}\label{ex:genus-one-catenoid}
 There exists a genus $1$ catenoid cousin in $H^3$ \cite{rs}
 Figure~\ref{fig:genus-one} (a)).
 No corresponding minimal surface can exist, by Schoen's result \cite{S}.  
 Levitt and Rosenberg \cite{LR} have proved that any complete properly 
 embedded \cmcone{} surface in $H^3$ with asymptotic boundary 
 consisting of at most two points is a surface of revolution, which implies 
 that this example and the last two examples in 
 Figure~\ref{fig:catenoid} cannot  be embedded, and we see that they are
 not.
 \cmcone{} genus $1$ catenoid cousin in $H^3$, proven to exist in 
 \cite{rs}. 
 See Remark~\ref{rem:i-2-2} in Appendix~\ref{app:detailed}.
\end{example}
\begin{example}\label{ex:genus-one-trinoid}
 Figure~\ref{fig:genus-one} shows a genus $1$ trinoid in $H^3$
 proven to exist in \cite{ruy1}.
 See Remark~\ref{rem:i-2-2-2} in Appendix~\ref{app:detailed}.
\end{example}
\begin{example}\label{ex:uy1-5noid}
  Figure~\ref{fig:genus-one} (c) shows $5$ ended \cmcone{} surface in
  $H^3$  found in \cite{uy1}.  
  Here we show only one of six congruent disks that form the surface.  
  The full surface is constructed by reflections across planes 
  containing boundary curves of the disk shown here.
\end{example}
\begin{example}\label{ex:genus-enneper}
 Figure~\ref{fig:enneper2} shows genus $0$ and genus $1$ Enneper
 cousin duals.  
 Each surface has a single end that triply wraps around its 
 limiting point at the south pole of the sphere at infinity.  
 These surfaces are of type $\gO(-4)$ and $\gI(-4)$, and 
 have $\TA(f^\#)=4\pi$ and $\TA(f^\#)=8\pi$.  In both cases 
 only one of four congruent pieces (bounded by planar geodesics) 
 of the surface is shown.
\end{example}
\begin{example}\label{ex:uy1-irreg}
 Figure~\ref{fig:uy1-irreg} shows a \cmcone{} surface in $H^3$, proven
 to exist in \cite{uy1}.  
 This  example is interesting because the hyperbolic Gauss map has an
 essential singularity at one of its two ends, like the end of the
 Enneper cousin.  
 And the geometric behavior of the end here is strikingly similar to that 
 of the Enneper cousin's end (see the middle figure of 
 Figure~\ref{fig:enneper}).  
 Here we show three pictures consecutively 
 including more of this end.
\end{example}
\begin{example}\label{ex:costa}
 \cmcone{} ``Costa cousin'' in $H^3$ was proven to exist by 
 Costa and Sousa Neto \cite{CN}.  
 In Figure~\ref{fig:costa}, 
 rather than showing graphics of this surface, we show two vertical
 cross sections by which the surface is reflectionally symmetric
 (including the ``circles'' at infinity), 
 and a schematic of the central portion of the surface.
\end{example}


\end{document}